\documentclass[12pt]{article}
\usepackage{amsmath}
\usepackage{latexsym}
\usepackage{amssymb}
\newtheorem{thm}{Theorem}[section]
\newtheorem{la}[thm]{Lemma}
\newtheorem{Defn}[thm]{Definition}
\newtheorem{Remark}[thm]{Remark}
\newtheorem{Conj}[thm]{Conjecture}
\newtheorem{prop}[thm]{Proposition}
\newtheorem{cor}[thm]{Corollary}
\newtheorem{Example}[thm]{Example}
\newtheorem{Number}[thm]{\!\!}
\newenvironment{defn}{\begin{Defn}\rm}{\end{Defn}}
\newenvironment{example}{\begin{Example}\rm}{\end{Example}}
\newenvironment{rem}{\begin{Remark}\rm}{\end{Remark}}

\newenvironment{numba}{\begin{Number}\rm}{\end{Number}}
\newenvironment{proof}{{\noindent\bf Proof.}}%
                  {\nopagebreak\hspace*{\fill}$\Box$\medskip\par}
\newcommand{\Punkt}{\nopagebreak\hspace*{\fill}$\Box$}
\newcommand{\wb}{\overline}
\newcommand{\ve}{\varepsilon}
\newcommand{\at}{\symbol{'100}}
\newcommand{\wt}{\widetilde}
\newcommand{\impl}{\Rightarrow}
\newcommand{\mto}{\mapsto}

\newcommand{\isom}{\cong}
\DeclareMathOperator{\Ad}{Ad}
\newcommand{\N}{{\mathbb N}}
\newcommand{\B}{{\mathbb B}}
\newcommand{\K}{{\mathbb K}}
\newcommand{\R}{{\mathbb R}}
\newcommand{\C}{{\mathbb C}}
\newcommand{\bT}{{\mathbb T}}
\newcommand{\bO}{{\mathbb O}}
\newcommand{\Q}{{\mathbb Q}}
\newcommand{\Z}{{\mathbb Z}}
\newcommand{\cT}{{\mathcal T}}
\newcommand{\bL}{{\mathbb L}}
\newcommand{\F}{{\mathbb F}}
\newcommand{\cg}{{\mathfrak g}}

\newcommand{\cV}{{\mathcal V}}
\newcommand{\cA}{{\mathcal A}}
\newcommand{\cL}{{\mathcal L}}

\DeclareMathOperator{\Aut}{Aut}
\newcommand{\sub}{\subseteq}
\DeclareMathOperator{\GL}{GL}
\DeclareMathOperator{\SL}{SL}
\DeclareMathOperator{\id}{id}

\newcommand{\sbull}{{\scriptscriptstyle \bullet}}
\newcommand{\aeq}{\Leftrightarrow}
\DeclareMathOperator{\Hom}{Hom}

\DeclareMathOperator{\dv}{div}

\DeclareMathOperator{\tor}{tor}

\DeclareMathOperator{\para}{par}
\DeclareMathOperator{\cont}{con}
\DeclareMathOperator{\lev}{lev}

\newcommand{\parp}{\stackrel{\longrightarrow}{\para}\!}
\newcommand{\parm}{\stackrel{\longleftarrow}{\para}\!}
\newcommand{\conp}{\stackrel{\longrightarrow}{\cont}\!}
\newcommand{\conm}{\stackrel{\longleftarrow}{\cont}\!}
\DeclareMathOperator{\COS}{COS}
\DeclareMathOperator{\car}{char}
\DeclareMathOperator{\dt}{det}
\DeclareMathOperator{\End}{End}
\DeclareMathOperator{\Iso}{Iso}
\DeclareMathOperator{\op}{op}
\DeclareMathOperator{\bik}{bik}
\DeclareMathOperator{\nub}{nub}
\DeclareMathOperator{\tp}{top}
\DeclareMathOperator{\cts}{cts}

\begin{document}
\begin{center}
{\Large\bf Endomorphisms of Lie groups over local fields}\\[7mm]
{\bf Helge Gl\"{o}ckner}\vspace{2mm}
\end{center}
\begin{abstract}
\hspace*{-6mm}Lie groups over totally disconnected local fields
furnish prime examples
of totally disconnected, locally compact groups. We discuss the scale,
tidy subgroups and further subgroups (like contraction subgroups) for analytic
endomorphisms of such groups.\vspace{2mm}
\end{abstract}
{\bf Classification:} primary 22E20;
secondary 22E35, 22E46, 22E50, 37D10, 37P10, 37P20\\[3mm]
{\bf Key words:} Lie group; local field; $p$-adic Lie group; endomorphism; Willis theory; scale;
scale function; tidy subgroup; minimizing subgroup; Levi\linebreak
factor; Levi subgroup; contraction group; parabolic subgroup;
big cell;\linebreak
invariant subgroup;
dynamical system; stable manifold; stable foliation
\section{\!\!Introduction}\label{sec-intro}
\noindent
The scale $s(\alpha)\in \N$ of an automorphism (or endomorphism)
$\alpha$ of a totally disconnected locally compact group
was introduced in the works of George Willis (see \cite{Wil}, \cite{FUR}, \cite{END}),
and ample information on the concept can be found
in his lecture notes in this proceedings volume~\cite{SCA}.
Following \cite{FUR} and \cite{END}, the scale $s(\alpha)$ can be
defined as the minimum of the indices\footnote{If we wish to emphasize the underlying group~$G$,
we write $s_G(\alpha)$ instead of~$s(\alpha)$.}
\[
[\alpha(U):\alpha(U)\cap U],
\]
for $U$ ranging through the set $\COS(G)$ of compact
open subgroups of~$G$. Compact open subgroups
for which the minimum is attained are called \emph{minimizing};
as shown in~\cite{FUR} and \cite{END},
they can be characterized by certain `tidyness' properties,
and therefore coincide with the so-called \emph{tidy subgroups}
for~$\alpha$ (the definition of which is recalled in Section~\ref{sec-basic}).\\[2.3mm]
Besides the tidy subgroups, further subgroups of~$G$
have been associated to~$\alpha$ which proved to be useful
for the study of~$\alpha$, and for the structure theory
in general (see \cite{BaW} and \cite{END}).
We mention the \emph{contraction group}
\[
\conp(\alpha):=\Big\{x\in G\colon\lim_{n\to\infty}\alpha^n(x)=e\Big\}
\]
and the \emph{parabolic subgroup} $\parp(\alpha)$ of all
$x\in G$ whose $\alpha$-orbit $(\alpha^n(x))_{n\in\N_0}$
is \emph{bounded} in the sense that $\{\alpha^n(x)\colon n\in\N_0\}$
is relatively compact in~$G$.
It is also interesting to consider
group elements $x\in G$ admitting an
\emph{$\alpha$-regressive trajectory}
$(x_{-n})_{n\in\N_0}$ of group elements~$x_{-n}$
such that $x_0=x$ and $\alpha(x_{-n-1})=x_{-n}$ for all~$n$.
Setting $x_n:=\alpha^n(x)$ for $n\in \N$,
we then obtain a so-called \emph{two-sided $\alpha$-orbit}
$(x_n)_{n\in\Z}$ for~$x$.
The \emph{anti-contraction group} $\conm(\alpha)$
is defined as the group of all $x\in G$ admitting an $\alpha$-regressive
trajectory
$(x_{-n})_{n\in\N_0}$ such that
\[
\lim_{n\to\infty}x_{-n}=e;
\]
the \emph{anti-parabolic subgroup} $\parm(\alpha)$ is the
group of all $x\in G$ admitting a bounded $\alpha$-regressive
trajectory. The intersection
\[
\lev(\alpha):=\; \parp(\alpha)\, \cap\parm(\alpha)
\]
is called the \emph{Levi subgroup} of~$\alpha$;
it is the group of all $x\in G$ admitting a bounded two-sided $\alpha$-orbit
(see \cite{BaW} and \cite{END} for these concepts, which were inspired by
terminology in the theory of linear algebraic groups).\\[2.3mm]
In this work, we consider Lie groups over totally disconnected
local fields, like the field of $p$-adic numbers
or fields of formal Laurent series over a finite field
(see Sections~\ref{sec-basic} and \ref{sec-mfd} for
these concepts). The topological group
underlying such a Lie group~$G$ is a totally disconnected locally compact
group, and the analytic endomorphisms $\alpha\colon G\to G$ we consider
are, in particular, continuous endomorphisms of~$G$.\\[2.3mm]
Our goal is twofold:
On the one hand, we strive to complement the lecture
notes~\cite{SCA} by adding a detailed
discussion of one class of examples, the Lie groups
over totally disconnected local fields, which illustrates the general theory.
On the other hand, most of the text can be considered
as a research article, as it contains results which are new
(or new in the current generality),
and which are proved here in full.
We also recall necessary concepts concerning Lie groups over totally disconnected local fields,
as far as required for the purpose.
Compare~\cite{Lec} for a broader (but more sketchy) introduction with a similar thrust,
confined to the study of automorphisms.
For further information on Lie groups
over totally disconnected local fields, see \cite{Ser} and the references therein,
also~\cite{Sny} and~\cite{Bou}.\footnote{Contrary
to our conventions, the Lie groups in \cite{Bou}
are modelled on Banach spaces which need not be of finite dimension.}
Every $p$-adic Lie group has a compact open subgroup
which is an analytic pro-$p$-group;
see \cite{Dix}, \cite{Di2}, and \cite{Sny} for the theory
of such groups, and Lazard's seminal work~\cite{Laz}.
For related studies in positive characteristic, cf.\ \cite{KaZ}
and subsequent studies.\\[2.3mm]
Every group of $\K$-rational points of a linear algebraic group defined over a
totally disconnected local field~$\K$
can be considered as a Lie group over~$\K$ (see \cite[Chapter~I, Proposition~2.5.2]{Mar}).
We refer to \cite{Bor}, \cite{Hum}, \cite{Mar}, and \cite{Spr}
for further information on such groups, which can be studied
with tools from algebraic geometry, and via actions on buildings
(see~\cite{BaT} and later work).\\[2.3mm]
The Lie groups we consider need not be algebraic groups,
they are merely $\K$-analytic manifolds.
Yet, compared to
general totally disconnected groups,
we have additional structure at our disposal:
Every Lie group $G$ over a totally disconnected local field~$\K$ has a Lie algebra
$L(G)$ (its tangent space $T_e(G)$ at the neutral element $e\in G$),
which is a finite-dimensional $\K$-vector space.
If $\alpha\colon G\to G$ is a $\K$-analytic endomorphism,
then its tangent map $L(\alpha):=T_e(\alpha)$ at~$e$ is a linear endomorphism
\[
L(\alpha)\colon L(G)\to L(G)
\]
of the $\K$-vector space~$L(G)$.
It is now natural to ask how the scale and tidy subgroups for~$\alpha$
are related to those of~$L(\alpha)$.
Guided by this question, 
we describe tidy subgroups and calculate the scale
for linear endomorphisms of finite-dimensional $\K$-vector spaces
(which also provides a first illustration of the abstract concepts),
see Theorem~\ref{thm-lincase}.
For analytic endomorphisms $\alpha\colon G\to G$ of a Lie group~$G$
over a totally disconnected local field~$\K$, we shall see that
\begin{equation}\label{glob-infi}
s(\alpha)=s(L(\alpha))
\end{equation}
if and only if the contraction group $\conp(\alpha)$
is closed in~$G$ (see Theorem~\ref{main}, the main result),
which is always the case if $\car(\K)=0$
(by Corollary~\ref{isclosed}).
If $\conp(\alpha)$ is closed,
then
\begin{equation}\label{expli-yet}
s(L(\alpha))
=\prod_{\stackrel{{\scriptstyle j\in\{1,\ldots, m\}}}{{\rm s.t.}\;|\lambda_j|_\K\geq 1}}
|\lambda_j|_\K\vspace{-1mm}
\end{equation}
in terms of the eigenvalues $\lambda_1,\ldots,\lambda_m$
of $L(\alpha)\otimes_\K\id_{\wb{\K}}$ in an algebraic closure~$\wb{\K}$
of~$\K$, where $|.|_\K$ is the unique extension of the
`natural' absolute value on~$\K$ specified in~(\ref{dffrmnatu})
to an absolute value on~$\wb{\K}$
(see Theorem~\ref{thm-lincase}).\\[2.3mm]
The text is organized as follows.\\[2.3mm]
After a preparatory Section~\ref{sec-basic}
on background concerning totally disconnected
groups and totally disconnected local fields,
we study linear endomorphisms
of finite-dimensional $\K$-vector spaces (Section~\ref{sec-vector}).\\[2.3mm]
In Section~\ref{sec-mfd},
we recall elementary definitions and facts concerning
$\K$-analytic functions, manifolds,
and Lie groups.
We then construct well-behaved compact open subgroups in
Lie groups over totally disconnected local fields (see Section~\ref{constsmall}).\\[2.3mm]
In Section~\ref{sec-padic},
we calculate the scale (and determine tidy subgroups)
for $\alpha$ an endomorphism of a $p$-adic Lie group~$G$.
This is simplified by the fact that every $p$-adic
Lie group has an exponential function, which provides
an analytic local conjugacy between the dynamical systems
$(L(G),L(\alpha))$ and $(G,\alpha)$ around the fixed points~$0$ (resp., $e$).\\[2.3mm]
By contrast, $\K$-analytic endomorphisms $\alpha\colon G\to G$
cannot be linearized in general if~$G$ is a Lie group
over a local field of positive characteristic (see \cite[7.3]{Lec} for a counterexample).
As a replacement for a linearization,
we use (locally) invariant manifolds (viz.\ local stable, local unstable, and centre manifolds)
around the fixed point~$e$ of the time-discrete, analytic dynamical
system $(G,\alpha)$. As shown in
\cite{Exp} and \cite{Fin}, the latter can be constructed as
in the classical real case (cf.\ \cite{Irw} and \cite{Wel}).
The necessary definitions
and facts are compiled in Section~\ref{sec-inv-mfd}.\\[2.3mm]
The following section contains the main results,
notably a calculation of the scale for analytic
endomorphisms $\alpha\colon G\to G$ of
a Lie group~$G$ over a totally disconnected local field, if $\conp(\alpha)$ is closed
(see Theorem~\ref{main}).
We also show that if $\conp(\alpha)$ is closed, then $\conp(\alpha)$,
$\lev(\alpha)$, and $\conm(\alpha)$ are
Lie subgroups of~$G$ and the map
\[
\conp(\alpha)\times\lev(\alpha)\times \conm(\alpha)\to\;\conp(\alpha)\lev(\alpha)\conm(\alpha)=:\Omega
\]
taking $(a,b,c)$ to $abc$ has open image~$\Omega$ and is an analytic diffeomorphism
(Theorem~\ref{bigcell}).\\[2mm]
The last  three sections are devoted to automorphisms
with specific properties.
An automorphism $\alpha\colon G\to G$
of a totally disconnected, locally compact group~$G$
is called \emph{contractive} if $G=\;\conp(\alpha)$, i.e.,
\[
\lim_{n\to\infty}\alpha^n(x)=e\quad\mbox{for all $\,x\in G$}
\]
(see \cite{Sie} and \cite{SIM}).
If
\[
\bigcap_{n\in\Z}\alpha^n(V)=\{e\}
\]
for some identity neighbourhood $V\sub G$,
then $\alpha$ is called \emph{expansive} (see~\cite{GaR}),
or also \emph{of finite depth} in the case of compact~$G$ (see \cite{NUB}).
If
\[
e\not\in \overline{\{\alpha^n(x)\colon n\in\Z\}}
\]
for each $x\in G\setminus\{e\}$, then~$\alpha$ is called
a \emph{distal} automorphism (cf.\ \cite{Raj}, \cite{RaS}).
Every contractive automorphism is expansive (see, e.g., \cite{GaR}).\\[2.3mm]
If $G$ is a Lie group over a totally disconnected local field~$\K$ with algebraic closure~$\overline{\K}$
and $\alpha\colon G\to G$ an analytic automorphism,
then $\conp(\alpha)$ is open in~$G$ (resp., $\alpha$ is expansive, resp., $\alpha$ is distal)
if and only if
\[
|\lambda|_\K<1
\]
(resp., $|\lambda|_\K\not=1$, resp., $|\lambda|_\K=1$)
for each eigenvalue $\lambda$ of the $\overline{\K}$-linear
automorphism $L(\alpha)\otimes_\K \id_{\overline{\K}}$
of $L(G)\otimes_\K\overline{\K}$ obtained by extension
of scalars (see Proposition~\ref{charviaeigen} for details).\\[2.3mm]
Recall that every continuous homomorphism between
$p$-adic Lie groups is analytic, whence the Lie group
structure on a $p$-adic Lie group is uniquely determined
by the underlying topological group (see, e.g., \cite{Bou}).
Lazard~\cite{Laz} characterized $p$-adic Lie groups
within the class of all totally disconnected, locally compact
compact groups, and later many further characterizations
were found (see \cite{Dix}).\\[2.3mm]
Recent research showed that
$p$-adic Lie groups are among basic building blocks for general
totally disconnected groups in various situations,
e.g.\ in the study of ergodic $\Z^n$-actions on locally compact groups by automorphisms
(see \cite{DSW})
and also in the theory of contraction groups (see \cite{SIM}).
In both cases, Lazard's theory of analytic pro-$p$-groups
was invoked to show that the groups in contention are $p$-adic Lie groups.
Section~\ref{sec-contr}
surveys results concerning
contractive automorphisms.
We give an alternative, new argument for the appearance
of $p$-adic Lie groups, using the structure theory
of locally compact abelian groups (i.e., Pontryagin duality)
instead of the theory of analytic pro-$p$-groups.\\[2.3mm]
Section~\ref{sec-exp}
briefly surveys results concerning expansive
automorphisms.\\[2.3mm]
The final section is devoted to
distal automorphisms and Lie groups
of type~$R$; we
prove
a criterion for pro-discreteness
(Theorem~\ref{typeR}) which had been
announced in~\cite[Proposition~11.3]{Lec}.\\[2.3mm]
Further papers have been written on the foundation of~\cite{END}:
Analogues of results from~\cite{BaW}, \cite{Jaw}, and \cite{FUR}
for endomorphisms of totally disconnected, locally compact groups
were developed in~\cite{BGT};
the topological entropy $h_{\tp}(\alpha)$ of an endomorphism~$\alpha$
of a totally disconnected, locally compact group~$G$
was studied in~\cite{GBV}. It was shown there that
\begin{equation}\label{hvss}
h_{\tp}(\alpha)=\ln s(\alpha)
\end{equation}
if and only if the so-called nub subgroup $\nub(\alpha)$ of~$\alpha$ (as in \cite{END})
is trival (see \cite[Corollary~4.11]{GBV});
the latter holds if and only if $\conp(\alpha)$ is closed (see \cite[Theorem~D]{BGT}).
In the current paper, we can do with the results from~\cite{END}
and give Lie-theoretic proofs
for results which can be generalized further (see~\cite{BGT}),
by more involved arguments.\footnote{Notably,
we have the Inverse Function Theorem
at our disposal.} The results were obtained before those of~\cite{BGT},
and presented in the author's minicourse June 27--July 1, 2016
at\linebreak
the MATRIX workshop and in a talk at the AMSI workshop July 25, 2016.\footnote{Except
for
results concerning the scale on subgroups and quotients (Proposition~\ref{subquot})
and the endomorphism case of Lemma~\ref{crit-clo},
which were added in 2017. In the talks, I also confined myself to a proof of the equivalence
of (a) and (b) in Theorem~\ref{main} when $\alpha$ is an automorphism,
which is easier
(while the theorem was stated in full).}
For complementary studies of endomorphisms of pro-finite groups,
see~\cite{Rei}.\\[2.3mm]
{\bf Acknowledgements.}
The author is grateful for the support provided by the
University of Melbourne (Matrix Center, Creswick)
and the University of Newcastle (NSW), notably George A. Willis,
which enabled participation in the `Winter of Disconnectedness.'
A former unpublished manuscript concerning the scale of automorphisms
dating back to 2006 was supported by DFG grant 447 AUS-113/22/0-1
and ARC grant LX 0349209.\\[2.3mm]
{\bf Conventions.}
We write $\N:=\{1,2,\ldots\}$ $\N_0:=\N\cup\{0\}$, and $\Z:=\N_0\cup (-\N)$.
Endomorphisms of topological groups are assumed continuous;
automorphisms of topological groups are assumed continuous, with
continuous inverse. If we call a mapping~$f$ an analytic diffeomorphism
(or an analytic automorphism),
then also $f^{-1}$ is assumed analytic. If~$E$ is a vector space over a field~$\K$,
we write $\End_\K(E)$ for the $\K$-algebra
of all $\K$-linear endomorphisms of~$E$, and $\GL(E):=\End_\K(E)^\times$
for its group of invertible elements.
If~$\bL$ is a field
extension of~$\K$, we let $E_\bL:=E\otimes_\K\bL$ be the $\bL$-vector space
obtained by extension of scalars. We identify~$E$ with $E\otimes 1\sub E_\bL$
as usual. Given $\alpha\in\End_\K(E)$, we let $\alpha_\bL:=\alpha\otimes\id_\bL$
be the endomorphism of~$E_\bL$ obtained by extension of scalars.
If $\wb{\K}$ is an algebraic closure of~$\K$,
we shall refer to the eigenvalues $\lambda\in\wb{\K}$ of $\alpha_{\wb{\K}}$
simply as the \emph{eigenvalues of~$\alpha$ in~$\wb{\K}$}.
Given $n\in\N$, we write $M_n(\K)$ for the $\K$-algebra of $n\times n$-matrices.
If $f\colon X\to X$ is a self-map of a set~$X$,
we say that a subset $Y\sub X$ is \emph{$\alpha$-stable} if $\alpha(Y)=Y$.
If $\alpha(Y)\sub Y$, then~$Y$ is called \emph{$\alpha$-invariant.}
If $X$ is a set, $Y\sub X$ a subset, $f\colon Y\to X$ a map and $x\in Y$,
we say that a sequence $(x_{-n})_{n\in\N_0}$ of elements $x_n\in Y$
is an \emph{$f$-regressive trajectory for~$x$}
if $f(x_{-n-1})=x_{-n}$ for all $n\in\N_0$ and $x_0=x$.
In this situation, we also say that~$x$ \emph{admits} the $f$-regressive
trajectory~$(x_n)_{n\in\N_0}$. If, instead, $f$~is defined on
a larger subset of~$X$ which contains~$Y$
but all $x_n$ are elements of~$Y$, we call $(x_{-n})_{n\in\N_0}$
an $f$-regressive trajectory \emph{in~$Y$}.
\section{\!\!Some basics of totally disconnected groups}\label{sec-basic}
In this section, we recall basic definitions
and facts concerning totally disconnected groups
and local fields.\vspace{-2mm}
\subsection*{{\normalsize The module of an automorphism}}
\noindent
Let $G$ be a locally compact group
and $\B(G)$ be the $\sigma$-algebra
of Borel subsets of~$G$.
We let $\lambda_G\colon \B(G)\to [0,\infty]$
be a Haar measure on~$G$, i.e.,
a non-zero Radon measure which is left invariant
in the sense that $\lambda_G(gA)=\lambda_G(A)$ for all
$g\in G$ and $A\in \B(G)$.
It is well-known that a Haar measure always exists,
and that it is unique up to multiplication with a positive
real number\linebreak
(cf.\ \cite{HaR}).
If $\alpha\colon G\to G$ is an
automorphism, then also
\[
\B(G)\to [0,\infty],\qquad
A\mto \lambda_G(\alpha(A))
\]
is a left invariant non-zero Radon measure
on~$G$ and hence a multiple of Haar measure:
There exists $\Delta(\alpha)>0$ such that
$\lambda_G(\alpha(A))=\Delta(\alpha)\lambda_G(A)$
for all $A\in \B(G)$.
If $U\sub G$ is a relatively compact,
open, non-empty subset,
then
\begin{equation}\label{calcmodul}
\Delta(\alpha)=\frac{\lambda_G(\alpha(U))}{\lambda_G(U)}
\end{equation}
(cf.\ \cite[(15.26)]{HaR}, where however
the conventions differ). We also write $\Delta_G(\alpha)$
instead of $\Delta(\alpha)$, if we wish to emphasize
the underlying group~$G$.
\begin{rem}\label{indcoop}
Let $U$ be a compact open
subgroup of $G$. If $U\sub \alpha(U)$,
with index $[\alpha(U):U]=:n$,
we can pick representatives $g_1,\ldots, g_n\in \alpha(U)$
for the left cosets of $U$ in~$\alpha(U)$.
Exploiting the left invariance  of Haar measure,
(\ref{calcmodul})
turns into
\begin{equation}\label{howgtmod}
\Delta(\alpha)\, =\, \frac{\lambda_G(\alpha(U))}{\lambda_G(U)}
\, =\, \sum_{j=1}^n\frac{\lambda_G(g_jU)}{\lambda_G(U)}
\, =\, [\alpha(U):U]\,.
\end{equation}
If $\alpha(U)\sub U$, applying (\ref{howgtmod})
to $\alpha^{-1}$ instead of~$\alpha$
and $\alpha(U)$ instead of~$U$, we obtain
\begin{equation}\label{shrinkindex}
\Delta(\alpha^{-1})
\, =\, [U:\alpha(U)]\,.
\end{equation}
\end{rem}
\subsection*{{\normalsize Tidy subgroups and the scale}}
\noindent
If $G$ is a totally disconnected,
locally compact group, $\alpha\colon G\to G$
an endomorphism and $U$ a compact open subgroup of~$G$, following \cite{END} we write
\[
U_-:=\bigcap_{n\in\N_0}\alpha^{-n}(U)=\{x\in U\colon (\forall n\in\N_0)\;\alpha^n(x)\in U\},\vspace{-1mm}
\]
where $\alpha^{-n}(U)$ means the preimage $(\alpha^n)^{-1}(U)$.
Let $U_+$ be the set of all $x\in U$ admitting an
$\alpha$-regressive trajectory $(x_{-n})_{n\in \N_0}$ in~$U$.
Then
\[
U_+=\bigcap_{n\in\N_0}U_n \quad\mbox{with}\vspace{-2mm}
\]
\begin{equation}\label{defUn}
U_0:=U\;\;\mbox{and}\;\; U_{n+1}:=U\cap\alpha(U_n)\;\;\mbox{for $n\in\N_0$;}\\[1mm]
\end{equation}
moreover, $U_+$ and $U_-$ are compact subgroups of~$G$
such that
\[
\alpha(U_-)\sub U_-\quad\mbox{and}\quad \alpha(U_+)\supseteq U_+
\]
(see \cite{END}).
The sets
\[
U_{--}:=\bigcup_{n\in\N_0}\alpha^{-n}(U_-)\quad\mbox{and}\quad
U_{++}:=\bigcup_{n\in\N_0} \alpha^n(U_+)
\]
are unions of ascending sequences of subgoups, whence they are subgroups of~$G$.
\begin{numba}\label{with-label}
If we wish to emphasize which endomorphism~$\alpha$ is considered,
we write $U_{n,\alpha}$, $U_{+,\alpha}$, and $U_{-,\alpha}$
instead of $U_n$, $U_+$, and $U_-$, respectively.
\end{numba}
The following definition was given in~\cite{END}.
\begin{defn}
If $U=U_+U_-$, then $U$ is called \emph{tidy above} for~$\alpha$.
If $U_{++}$ is closed in~$G$
and the indices
\[
[\alpha^{n+1}(U_+):\alpha^n(U_+)]\in\N
\]
are independent of $n\in\N_0$,
then $U$ is called \emph{tidy below} for~$\alpha$.
If $U$ is both tidy above and tidy below for~$\alpha$, then $U$ is called \emph{tidy for $\alpha$}.
\end{defn}
\noindent
The following fact (see \cite[Proposition~9]{END}) is useful for our ends:
\begin{numba}\label{variant-tidy}
\emph{$U$ is tidy for~$\alpha$ if and only if~$U$ is tidy above
and $U_{--}$ is closed in~$G$.}
\end{numba}
\begin{numba}\label{mini-tidy}
As shown in~\cite{END}, a compact open subgroup
$U$ of~$G$ is minimizing for~$\alpha$ (as defined in the Introduction)
if and only if it is tidy for~$\alpha$, in which case
\[
s(\alpha)=[\alpha(U):\alpha(U)\cap U]=[\alpha(U_+) : U_+].
\]
\end{numba}
\begin{numba}\label{uplusauto}
If $\alpha$ is an automorphism of~$G$, then simply (as in \cite{FUR})
\[
U_+=\bigcap_{n\in\N_0}\alpha^n(U).\vspace{-2mm}
\]
\end{numba}
Let us consider some easy special cases (which will be useful later).
\begin{la}\label{nilp}
Let $\alpha$ be an endomorphism of a totally disconnected, locally compact group~$G$.
\begin{itemize}
\item[\rm(a)]
If $V\sub G$ is a compact open subgroup such that  $\alpha(V)\sub V$,
then $V$ is tidy, $V_-=V$ and $s(\alpha)=1$.
\item[\rm(b)]
If $V\sub G$ is a compact open subgroup with $V\subseteq \alpha(V)$,
then $V$ is tidy above for~$\alpha$. If, moreover, $V$ is tidy,
then $V_+=V$ and $s(\alpha)=\Delta(\alpha)$.
\item[\rm(c)]
If $\alpha$ is nilpotent $($say $\alpha^n=e)$
and $U\sub G$ a compact open subgroup, then
\begin{equation}\label{two-eq}
V:=U_-=\bigcap_{k=0}^\infty \alpha^{-k}(U)=\bigcap_{k=0}^{n-1}\alpha^{-k}(U)
\end{equation}
is a compact open subgroup of~$G$ with $\alpha(V)\sub V$.
\end{itemize}
\end{la}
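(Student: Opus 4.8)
The plan is to prove the three parts in order, each being a short computation from the definitions recalled above.

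\medskip

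\noindent\textbf{Part (a).} Suppose $\alpha(V)\sub V$. First I would observe that then $\alpha^n(x)\in V$ for every $x\in V$ and all $n\in\N_0$, so by the definition of $V_-$ we get $V_-=V$ immediately. Next, since $\alpha(V)\sub V$, the sets $U_n$ in~(\ref{defUn}) satisfy $V_{n+1}=V\cap\alpha(V_n)=\alpha(V_n)$ once one checks inductively that $\alpha(V_n)\sub V$; hence $V_+=\bigcap_n \alpha^n(V)$. Regardless of the precise value of $V_+$, the key point is that $V=V_+V_-=V_+V$ holds trivially (as $V_-=V$), so $V$ is tidy above. For tidy below, $V_{++}=\bigcup_n\alpha^n(V_+)\sub V$ is contained in the compact set~$V$; I would note it is closed because it is an increasing union of the \emph{open} subgroups $\alpha^n(V_+)$ of the compact group $V_+'$... more carefully: $V_{++}$ is a union of an ascending chain of subgroups of the compact group $V$, and any such union which is a subgroup and is open in its closure is closed — alternatively, invoke~\ref{variant-tidy}: $V$ is tidy above and $V_{--}=\bigcup_n\alpha^{-n}(V_-)=\bigcup_n\alpha^{-n}(V)\supseteq V$; since $\alpha(V)\sub V$ we have $\alpha^{-n}(V)\supseteq V$ and in fact $V_{--}$ is open, hence closed in~$G$. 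So $V$ is tidy. Finally $s(\alpha)=[\alpha(V):\alpha(V)\cap V]=[\alpha(V):\alpha(V)]=1$ by~\ref{mini-tidy}.

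\medskip

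\noindent\textbf{Part (b).} Suppose $V\sub\alpha(V)$. Then in~(\ref{defUn}), $V_1=V\cap\alpha(V)=V$, and inductively $V_n=V$ for all~$n$, so $V_+=V$. Since $V_-\sub V=V_+\sub V_+V_-$ and trivially $V_+V_-\sub V$, we get $V=V_+V_-$, i.e.\ $V$ is tidy above. If $V$ is moreover tidy, then by~\ref{mini-tidy}, $s(\alpha)=[\alpha(V_+):V_+]=[\alpha(V):V]$, which by~(\ref{howgtmod}) in Remark~\ref{indcoop} equals $\Delta(\alpha)$ (the hypothesis $V\sub\alpha(V)$ is exactly what is needed to apply that formula).

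\medskip

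\noindent\textbf{Part (c).} Suppose $\alpha^n=e$ (the trivial endomorphism sending everything to~$e$), and let $U$ be any compact open subgroup. For $k\ge n$ we have $\alpha^k(x)=e\in U$ automatically, so the defining intersection $U_-=\bigcap_{k=0}^\infty\alpha^{-k}(U)$ only imposes conditions for $k=0,\ldots,n-1$, giving the second and third equalities in~(\ref{two-eq}). Each $\alpha^{-k}(U)$ is open (as $\alpha$ is continuous and $U$ open) and is a subgroup; the finite intersection $V=\bigcap_{k=0}^{n-1}\alpha^{-k}(U)$ is therefore an open subgroup of~$G$, and it is compact because $V\sub U$ with $U$ compact and $V$ closed in~$U$. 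Finally $\alpha(V)\sub V$: if $x\in V$ then $\alpha^k(\alpha(x))=\alpha^{k+1}(x)\in U$ for $k=0,\ldots,n-2$ (since $x\in\alpha^{-(k+1)}(U)$), and for $k=n-1$ we have $\alpha^{k+1}(x)=\alpha^n(x)=e\in U$; hence $\alpha(x)\in\bigcap_{k=0}^{n-1}\alpha^{-k}(U)=V$.

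\medskip

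\noindent I do not anticipate a serious obstacle here; the one point requiring a little care is the closedness claim underlying tidiness in~(a), where I would rather invoke~\ref{variant-tidy} together with the observation that $\alpha(V)\sub V$ forces $V_{--}$ to be an open (hence closed) subgroup, than argue about $V_{++}$ directly. One should also double-check the reading of ``$\alpha$ nilpotent, $\alpha^n=e$'' in~(c): here $e$ denotes the constant endomorphism with image $\{e\}$, so $\alpha^n(x)=e$ for all~$x$, which is what makes the truncation of the intersection valid.
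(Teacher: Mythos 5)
Your proof is correct and follows essentially the same route as the paper: $V_-=V$ and openness (hence closedness) of $V_{--}$ combined with \ref{variant-tidy} in (a), $V_+=V$ and (\ref{howgtmod}) in (b), and the truncation of the intersection in (c). The only cosmetic differences are that you compute $V_+=V$ in (b) via the recursion (\ref{defUn}) rather than via $\alpha$-regressive trajectories, and in (c) you verify $\alpha(V)\sub V$ directly instead of citing the general inclusion $\alpha(U_-)\sub U_-$; both variants are fine.
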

\begin{proof}
(a) Since $V\sub\alpha^{-1}(V)$, we have $V\sub \alpha^{-k}(V)$ for all $k\in\N_0$ and thus
\[
V_-=\bigcap_{k=0}^\infty\alpha^{-k}(V)=V.
\]
Hence $V=V_+V_-$ is tidy above. As the subgroup
\[
V_{--}=\bigcup_{k=0}^\infty\alpha^{-k}(V_-)=\bigcup_{k=0}^\infty\alpha^{-k}(V)
\]
contains $V$, it is open and hence closed. Thus~$V$ is tidy for~$\alpha$, by~\ref{variant-tidy}.
Finally $\alpha(V)\sub V$ entails that
$s(\alpha)=[\alpha(V):\underbrace{\alpha(V)\cap V}_{=\alpha(V)}]=1$.

(b) Since $V\sub\alpha(V)$, every $x\in V$ has an $\alpha$-regressve trajectory
within~$V$, whence $x\in V_+$. Hence $V=V_+$, and thus $V=V_+V_-$ is tidy above.
If $V$ is tidy, then $s(\alpha)=[\alpha(V):\alpha(V)\cap V]=[\alpha(V):V]=\Delta(\alpha)$,
using~(\ref{howgtmod}).

(c) For integers $k\geq n$, we have $\alpha^k(x)=e\in U$ for all $x\in G$,
whence $x\in\alpha^{-k}(U)$ and thus $\alpha^{-k}(U)=G$.
This entails the second equality in~(\ref{two-eq}), and so $V$ is compact and
open. As $\alpha(U_-)\sub U_-$, the final inclusion holds.
\end{proof}
\begin{numba}
If $G$ is a totally disconnected, locally compact group
and $g\in G$, let
\[
I_g\colon G\to G,\quad x\mto gxg^{-1}
\]
be the corresponding inner automorphism of~$G$.
Given $g\in G$, abbreviate $s(g):=s(I_g)$.
Following \cite{Wil}, the mapping $s\colon G\to\N$
so obtained is called the \emph{scale function} on~$G$.
\end{numba}
\subsection*{{\normalsize Local fields}}\label{locfds}
\noindent
Basic information on
totally disconnected local fields can be found in many books,
e.g.\ \cite{Wei} and~\cite{Jac}.\\[2.3mm]
By a \emph{totally disconnected local field}, we mean
a totally disconnected, locally compact, non-discrete
topological field~$\K$.\\[2.3mm]
Each totally disconnected local field $\K$ admits an \emph{ultrametric
absolute value} $|.|$ defining
its topology, i.e.,
\begin{itemize}
\item[(a)]
$|t|\geq 0$ for each $t\in \K$, with equality if and only if
$t=0$;
\item[(b)]
$|st|=|s|\cdot |t|$ for all $s,t\in \K$;
\item[(c)]
The \emph{ultrametric inequality} holds, i.e.,
$|s+t|\leq \max\{|s|,|t|\}$ for all
$s,t\in \K$.
\end{itemize}
\noindent
An example of an absolute value defining the topology of~$\K$
is what we call the \emph{natural absolute
value} on~$\K$, given by $|0|_\K:=0$ and
\begin{equation}\label{dffrmnatu}
|x|_\K\; :=\; \Delta_\K(m_x)
\quad \mbox{for $\,x\in \K\setminus\{0\}$}
\end{equation}
(cf.\ \cite[Chapter~II, \S2]{Wei}),
where $m_x\colon \K\to\K$, $y\mto xy$
is scalar multiplication by~$x$ and $\Delta_\K(m_x)$
its module.\footnote{Note that if $\K$ is an extension of $\Q_p$
of degree~$d$, then $|p|_\K=p^{-d}$
depends on the extension.}\\[2.3mm]
It is known that every totally disconnected local field~$\K$
either is
a field of formal Laurent
series over some finite field
(if $\car(\K)>0$),
or a finite extension
of the field of $p$-adic numbers for some
prime~$p$ (if $\car(\K)=0$).
Let us fix our notation concerning these
basic examples.
\begin{example}
Given a prime number $p$,
the field $\Q_p$ of $p$-adic
numbers is the completion
of $\Q$ with respect to
the $p$-adic absolute value,
\[
\left| p^k \frac{n}{m}\right|_p\;:=\;p^{-k}\quad
\mbox{for $k\in \Z$ and $n,m\in \Z\setminus p\Z$.}
\]
We use the same notation,
$|.|_p$, for the extension of the $p$-adic
absolute value to~$\Q_p$.
Then the topology coming from $|.|_p$ makes
$\Q_p$ a totally disconnected local field,
and $|.|_p$ is the natural absolute value
on~$\Q_p$.
Every non-zero element $x$ in $\Q_p$ can be written
uniquely in the form
\[
x\;=\; \sum_{k=n}^\infty a_k\, p^k
\]
with $n\in \Z$, $a_k\in \{0,1,\ldots, p-1\}$
and $a_n\not=0$.
Then $|x|_p=p^{-n}$.
The elements
of the form $\sum_{k=0}^\infty a_kp^k$
form the subring
$\Z_p=\{x\in \Q_p \colon |x|_p\leq 1\}$
of~$\Q_p$, which is open
and also compact,
because it is homeomorphic to
$\{0,1,\ldots, p-1\}^{\N_0}$
via $\sum_{k=0}^\infty a_kp^k\mto (a_k)_{k\in \N_0}$.
\end{example}
\begin{example}
Given a finite field~$\F$ (with~$q$ elements),
we let $\F(\!(X)\!)\sub \F^\Z$
be the field of formal Laurent
series
$\sum_{k=n}^\infty a_kX^k$
with $a_k\in \F$ and $n\in\Z$. 
Here addition is
pointwise, and multiplication
is given by the Cauchy product.
We endow $\F(\!(X)\!)$
with the topology arising from the ultrametric absolute value
\begin{equation}\label{nather}
\left|\sum_{k=n}^\infty a_kX^k\right|
\; :=\; q^{-n}\quad\mbox{if $\,a_n\not=0$.}
\end{equation}
Then the set $\F[\hspace*{-.15mm}[X]\hspace*{-.17mm}]$
of formal power series
$\sum_{k=0}^\infty a_kX^k$
is a compact and open subring of
$\F(\!(X)\!)$, and thus
$\F(\!(X)\!)$
is a totally disconnected local field.
Its natural absolute value
is given by~(\ref{nather}).
\end{example}
\noindent
Beyond local fields,
we also
consider some \emph{ultrametric fields}
$(\K,|.|)$.
Thus~$\K$ is a field
and $|.|$ an ultrametric absolute value
on~$\K$ which defines a non-discrete
topology on~$\K$.
For example, we shall repeatedly
use an algebraic closure~$\wb{\K}$ of a totally disconnected local field~$\K$
and exploit that an ultrametric absolute value~$|.|$
on~$\K$ extends uniquely
to an ultrametric absolute value on~$\wb{\K}$
(see, e.g., \cite[Theorem~16.1]{Sch}).
The same notation, $|.|$,
will be used for the extended absolute value.
An ultrametric field $(\K,|.|)$
is called \emph{complete} if $\K$ is a complete
metric space with respect to the
metric given by $d(x,y):=|x-y|$.
\subsection*{{\normalsize Ultrametric norms and balls}}\label{bscconsultra}
Let $(\K,|.|)$ be an ultrametric field
and $(E,\|.\|)$ be a normed $\K$-vector
space whose norm is ultrametric in the sense that
$\|x+y\|\leq \max\{\|x\|,\|y\| \}$ for all
$x,y\in E$.
Since $\|x\|=\|x+y-y\| \leq \max\{\|x+y\|,\|y\|\}$,
it follows that $\|x+y\|\geq \|x\|$ if $\|y\|<\|x\|$
and hence
\begin{equation}\label{winner}
\|x+y\|\;=\;\|x\|\quad \mbox{for all $x,y\in E$ such that
$\|y\|<\|x\|$.}
\end{equation}
We shall use the notations
\begin{eqnarray*}
B_r^E(x) &:=& \{y\in E\colon \|y-x\|<r\}\;\;\mbox{and}\\
\wb{B}^E_r(x)&:= &\{y\in E\colon\|y-x\|\leq r\}
\end{eqnarray*}
for balls in~$E$ (with $x\in E$, $r\in \;]0,\infty[$).
The ultrametric inequality
entails
that $B_r^E(0)$
and $\wb{B}^E_r(0)$
are
\emph{subgroups} of $(E,+)$
with non-empty interior
(and hence both open and closed).
Specializing to $E=\K$, we see that
\begin{equation}\label{dfnvalr}
\bO\; :=\; \{z\in \K\colon |z|\leq 1\}
\end{equation}
is an open subring of~$\K$,
its so-called \emph{valuation ring}.
If~$\K$ is a totally disconnected local field,
then~$\bO$ is a compact
subring of~$\K$ (which is maximal
and hence independent of the choice of
absolute value). In this case, also the unit group
\[
\bO^\times=\{z\in \bO\colon |z|=1\}
\]
of all invertible elements is compact,
as it is closed in~$\bO$.\\[2.3mm]
An \emph{ultrametric Banach space}
over a complete ultrameric field
is a normed space $(E,\|.\|)$
over~$\K$, with ultrametric norm~$\|.\|$,
such that every Cauchy sequence in~$E$ is convergent.
We shall always endow a finite-dimensional
vector space~$E$ over a complete ultrametric field $(\K,|.|)$
with the unique Hausdorff topology making
it a topological $\K$-vector space (see
Theorem~2 in
\cite[Chapter~I, \S2, no.\,3]{BTV}).
Then $E\cong \K^m$ (carrying the product topology)
as a topological $\K$-vector space, with $m:=\dim_\K(E)$,
entailing that there exists a norm $\|.\|$ on $E$ (corresponding to
the maximum norm on $\K^m$) which defines its topology
and makes it an ultrametric Banach space.
If~$(E.\|.\|_E)$ and~$(F,\|.\|_F)$ are finite-dimensional
normed spaces over a complete ultrametric field, then every linear map $\alpha\colon E\to F$
is continuous (see Corollary~2 in \cite[Chapter~I, \S2, no.\,3]{BTV});
as usual, we write
\[
\|\alpha\|_{\op}:=\sup \left\{\frac{\|\alpha(x)\|_F}{\|x\|_E} \colon x\in E\setminus\{0\}\right\}
\in [0,\infty[
\]
for its operator norm. Then $\|\alpha(x)\|_F\leq\|\alpha\|_{\op}\|x\|_E$ and, if~$\alpha$
is invertible and $E\not=\{0\}$, then
\[
\|\alpha(x)\|_F\, \geq \, \frac{1}{\|\alpha^{-1}\|_{\op}\!\!\!\!}\,\|x\|_E\quad\mbox{for all $x\in E$.}
\]
\subsection*{{\normalsize Module of a linear automorphism}}\label{calcind}
We recall a formula for the module of a linear automorphism.
\begin{la}\label{laindx}
Let $E$ be a finite-dimensional vector space over a totally\linebreak
disconnected local field~$\K$
and~$\alpha\in \GL(E)$.
Then
\begin{equation}\label{modvia}
\Delta_E(\alpha)\;=\; |\hspace*{-.4mm}\dt \alpha\hspace*{.4mm}|_\K
\;=\;\prod_{i=1}^n |\lambda_i|_\K,
\end{equation}
where $\lambda_1,\ldots,\lambda_n$
are the eigenvalues of~$\alpha$
in an algebraic closure $\wb{\K}$
of~$\K$.
\end{la}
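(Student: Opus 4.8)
The plan is to prove the two equalities in (\ref{modvia}) separately, reducing everything to the one-dimensional case where the module is the natural absolute value by definition.

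First I would establish the equality $\Delta_E(\alpha)=|\hspace*{-.4mm}\dt\alpha\hspace*{.4mm}|_\K$. The cleanest route is multiplicativity: both sides are homomorphisms $\GL(E)\to\;]0,\infty[$ (for the left side this is immediate from the definition via Haar measure, $\Delta_E(\alpha\beta)=\Delta_E(\alpha)\Delta_E(\beta)$; for the right side it is multiplicativity of the determinant combined with property~(b) of the absolute value). Since every element of $\GL(E)\cong\GL_m(\K)$ is a product of elementary matrices (transvections and one diagonal matrix, as $\K$ is a field), it suffices to check the identity on these generators. For a transvection, $\dt=1$, and the map is measure-preserving: it is a shear, and one sees $\lambda_E(\alpha(U))=\lambda_E(U)$ for a suitable box $U$ by Fubini applied to the product structure $E\cong\K^m$, so both sides equal~$1$. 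For a diagonal map $\diag(t,1,\ldots,1)$, using a box $U=\bO e_1\oplus\cdots\oplus\bO e_m$ and the fact that $\lambda_\K(t\bO)=\Delta_\K(m_t)\lambda_\K(\bO)=|t|_\K\,\lambda_\K(\bO)$ by (\ref{dffrmnatu}) and (\ref{calcmodul}), one gets $\Delta_E(\alpha)=|t|_\K=|\dt\alpha|_\K$. This settles the first equality.

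For the second equality, $|\dt\alpha|_\K=\prod_{i=1}^n|\lambda_i|_\K$, I would pass to the algebraic closure. Over $\wb{\K}$ the matrix of $\alpha$ is conjugate to an upper triangular one with the eigenvalues $\lambda_1,\ldots,\lambda_n$ on the diagonal (or use the Jordan form), so $\dt\alpha=\prod_{i=1}^n\lambda_i$ as an element of $\K$ (it is $\Aut$-invariant, hence lies in $\K$, but the product formula holds already in $\wb{\K}$). Applying the unique extension of $|.|_\K$ to $\wb{\K}$ — which exists by \cite[Theorem~16.1]{Sch} as recalled in the excerpt — and using its multiplicativity gives $|\dt\alpha|_\K=\prod_{i=1}^n|\lambda_i|_\K$.

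The main obstacle is the first equality, and specifically verifying the transvection case carefully: one must choose the reference box $U$ compatibly with the coordinate along which the shear acts and invoke the product/Fubini structure of Haar measure on $E\cong\K^m$ to see that the shear preserves $\lambda_E$. Everything else is bookkeeping with multiplicativity and the definition of the natural absolute value. An alternative to the generators-of-$\GL$ argument, if one prefers to avoid elementary-matrix decompositions in characteristic considerations, is to reduce $\alpha$ to a triangular form over a finite extension $\bL/\K$, compute $\Delta_{E_\bL}(\alpha_\bL)$ directly as the product of the $|\lambda_i|$ using a triangular box, and relate $\Delta_{E_\bL}(\alpha_\bL)$ to $\Delta_E(\alpha)$ via the behaviour of Haar measure under restriction of scalars; but the generators approach is shorter and avoids that last comparison.
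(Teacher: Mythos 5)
Your proof is correct, but it takes a different route from the paper's on both halves of (\ref{modvia}). For the first equality $\Delta_E(\alpha)=|\hspace*{-.4mm}\dt\alpha\hspace*{.4mm}|_\K$ the paper simply cites Bourbaki (Proposition~55 in Chapter~III, \S3, no.\,16 of \cite{Bou}), whereas you reprove it from scratch via multiplicativity of both sides and a check on generators of $\GL_m(\K)$ (transvections preserve Haar measure by the Fubini/shear argument, dilations scale by $|t|_\K$ via (\ref{dffrmnatu}) and (\ref{calcmodul})); this is essentially the standard proof of the cited result and is sound, just longer than a citation. For the second equality the paper argues differently: it passes to a finite splitting extension $\bL/\K$ of degree $d$, uses the relation $\Delta_{E_\bL}(\alpha_\bL)=(\Delta_E(\alpha))^d$ together with the formula $|x|_\K=\sqrt[d]{\Delta_\bL(m_x)}$ for the extended absolute value, and reduces to the split case over $\bL$. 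You instead apply the unique multiplicative extension of $|.|_\K$ to $\wb{\K}$ directly to the identity $\dt\alpha=\prod_i\lambda_i$; given the first equality this is shorter and avoids the comparison of modules under extension of scalars, at the cost of relying on the extension theorem for absolute values (which the paper has in any case already invoked to make sense of $|\lambda_i|_\K$). Both arguments are complete; yours trades a citation for an explicit generators computation and streamlines the passage to $\wb{\K}$.
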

\begin{proof}
See \cite[Proposition 55 in Chapter III, \S3, no.\,16]{Bou}
for the first equality in~(\ref{modvia}).
The second equality in~(\ref{modvia})
is clear if all eigenvalues lie in~$\K$.
For the general case, pick
a finite extension~$\bL$ of~$\K$ containing
the eigenvalues,
and let $d:=[\bL:\K]$
be the degree of the field extension.
Then $\Delta_{E_\bL}(\alpha_\bL)=(\Delta_E(\alpha))^d$.
Since the extended
absolute value is given
by
\[
|x|_\K\; =\; \sqrt[d]{\Delta_{\bL}(m_x)}\quad\mbox{for $\,x\in\bL\setminus\{0\}$}
\]
(see \cite[Chapter 9, Theorem~9.8]{Jac}
or \cite[Exercise 15.E]{Sch}),
the desired equality follows
from the special case
already treated (applied now to~$\bL$).
\end{proof}
\section{Endomorphisms of {\boldmath$\K$-}vector spaces}\label{sec-vector}
Linear endomorphisms of vector spaces over totally disconnected local fields
provide first examples of endomorphisms
of totally disconnected groups,
and their understanding is essential also for our discussion
of endomorphisms of Lie groups.\\[2.3mm]
Throughout this section, $\K$ is a totally disconnected local field, $E$ a finite-dimensional $\K$-vector space
and $\alpha\in\End_\K(E)$.
We shall calculate the scale, determine the parabolic, Levi and
contraction subgroups for~$\alpha$,
and find tidy subgroups.\\[2.3mm]
Our starting point are ideas from \cite[Chapter II, \S1]{Mar}
concerning iteration of linear endomorphisms.
Following~\cite{Mar},
we shall decompose~$E$ into
certain characteristic subspaces,
which help us to understand the dynamics of~$\alpha$.
\begin{numba}\label{ifsplits}
If the characteristic polynomial $p_\alpha$ of $\alpha\in\End_\K(E)$ splits into
linear factors in the polynomial ring $\K[X]$, then~$E$ is the direct sum of the generalized
eigenspaces for~$\alpha$.
For $\rho\in[0,\infty[$, let
\[
E_\rho
\]
be the sum of all generalized eigenspaces for eigenvalues $\lambda\in\K$ with $|\lambda|_\K=\rho$;
we call $E_\rho$ the \emph{characteristic subspace} for~$\rho$.
By construction,
\begin{equation}\label{decomposes}
E=\bigoplus_{\rho\geq 0} E_\rho.
\end{equation}
\end{numba}
\begin{numba}\label{splitornot}
If $\alpha\in\End_\K(E)$ is arbitrary, we choose a finite extension field $\bL$ of~$\K$
such that $p_\alpha$ splits into linear factors in $\bL[X]$.
By \ref{ifsplits}, we have a decomposition
\[
E_\bL=\bigoplus_{\rho\geq 0} (E_\bL)_\rho
\]
into characteristic subspaces for~$\alpha_\bL$.
We call
\[
E_\rho:=(E_\bL)_\rho\cap E
\]
the \emph{characteristic subspace} of~$E$ for~$\rho$.
If $E_\rho\not=\{0\}$, then $\rho$ is called a \emph{characteristic value} of~$\alpha$.
Using the Galois Criterion, it can
be shown that each $(E_\bL)_\rho$ is defined over~$\K$, i.e.,
\[
(E_\bL)_\rho=(E_\rho)_\bL
\]
(see \cite[Chapter II, (1.0)]{Mar}).
As a consequence, again (\ref{decomposes}) holds.\footnote{As $E_\rho\not=\{0\}$ for only finitely many $\rho\geq 0$,
we can identify the direct sum $E=\bigoplus_{\rho\geq 0}E_\rho$
with the direct product $\prod_{\rho\geq 0}E_\rho$ whenever this is convenient.}
\end{numba}
\begin{rem}
(a) By construction, $\alpha(E_\rho)\sub E_\rho$ for each $\rho\geq 0$.\\[2mm]
(b) $E_0=\bigcup_{n\in\N_0}\ker(\alpha^n)$ is the generalized eigenspace
for the eigenvalue~$0$
(also known as the ``Fitting $0$-component''),
and thus $\alpha|_{E_0}$ is a nilpotent endomorphism.\\[2mm]
(c) For each $\rho>0$, the restriction $\alpha|_{E_\rho}\colon E_\rho\to E_\rho$
is an injective endomorphism of a finite-dimensional vector space
and hence an automorphism.\\[2mm]
(d) The restriction of $\alpha$ to the ``Fitting $1$-component"
$E_{>0}:=\bigoplus_{\rho>0}E_\rho$ is an automorphism, and
\[
E=E_0\oplus E_{>0}.
\]
Thus
\begin{equation}\label{twofactors}
s(\alpha)=s(\alpha|_{E_0})s(\alpha|_{E_{>0}}).
\end{equation}
\end{rem}
Since $s(\alpha|_{E_0})=1$ by Lemma~\ref{nilp} (c) and (a),
we deduce from (\ref{twofactors}) that $s(\alpha)=s(\alpha|_{E_{>0}})$.
\begin{prop}\label{scale-on-Fitt}
The scale $s(\alpha)$ of $\alpha\in\End_\K(E)$
coincides with the scale $s(\alpha|_{E_{>0}})$
of the automorphism of the Fitting $1$-component induced by~$\alpha$.\,\Punkt
\end{prop}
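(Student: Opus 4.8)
The plan is to reduce everything to the $\alpha$-invariant direct sum $E = E_0 \oplus E_{>0}$ recorded in Remark~(d), in which $\alpha|_{E_0}$ is nilpotent (Remark~(b)) and $\alpha|_{E_{>0}}$ is an automorphism. Granting the multiplicativity formula (\ref{twofactors}), the proof is a single line: since $\alpha|_{E_0}$ is nilpotent, Lemma~\ref{nilp}(c) supplies a compact open subgroup $V \subseteq E_0$ with $\alpha(V) \subseteq V$, whence Lemma~\ref{nilp}(a) gives $s(\alpha|_{E_0}) = 1$, and substituting into (\ref{twofactors}) yields $s(\alpha) = s(\alpha|_{E_{>0}})$. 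Because $s(\alpha|_{E_0}) = 1$, the only genuine content is the equality $s(\alpha) = s(\alpha|_{E_{>0}})$ itself, and it is worth recording a direct argument for it — one that also supplies the relevant instance of (\ref{twofactors}) rather than presupposing the product-multiplicativity of the scale.

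For $s(\alpha) \le s(\alpha|_{E_{>0}})$ I would combine the above $V$ with a minimizing compact open subgroup $W$ for $\alpha|_{E_{>0}}$ into the block-diagonal compact open subgroup $U := V \oplus W$ of $E$. Because $\alpha$ stabilizes each summand and $\alpha(V) \subseteq V$, one has $\alpha(U) = \alpha(V) \oplus \alpha|_{E_{>0}}(W)$ and $\alpha(U) \cap U = \alpha(V) \oplus (\alpha|_{E_{>0}}(W) \cap W)$, so $[\alpha(U) : \alpha(U) \cap U] = [\alpha|_{E_{>0}}(W) : \alpha|_{E_{>0}}(W) \cap W] = s(\alpha|_{E_{>0}})$; taking the minimum over all compact open subgroups of $E$ gives the inequality.

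For the reverse inequality I would use the projection $\pi \colon E \to E_{>0}$ along $E_0$, which is $\K$-linear (hence continuous) and $\alpha$-equivariant, $\pi \circ \alpha = \alpha|_{E_{>0}} \circ \pi$, since $\alpha$ stabilizes $E_0$ and $E_{>0}$. For a minimizing compact open subgroup $U$ of $E$, the image $\pi(U)$ is a compact open subgroup of $E_{>0}$; using that a surjective group homomorphism never increases indices, together with $\pi(\alpha(U)) = \alpha|_{E_{>0}}(\pi(U))$ and $\pi(\alpha(U) \cap U) \subseteq \alpha|_{E_{>0}}(\pi(U)) \cap \pi(U)$, one obtains $s(\alpha|_{E_{>0}}) \le [\alpha|_{E_{>0}}(\pi(U)) : \alpha|_{E_{>0}}(\pi(U)) \cap \pi(U)] \le [\alpha(U) : \alpha(U) \cap U] = s(\alpha)$.

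There is no serious obstacle here; the one point requiring attention — and the reason the two inequalities are proved by different devices — is that a minimizing subgroup for $\alpha$ need not respect the decomposition $E = E_0 \oplus E_{>0}$, so the bound $s(\alpha|_{E_{>0}}) \le s(\alpha)$ cannot be read off a block-diagonal subgroup and must instead be obtained by pushing $U$ forward along $\pi$. Everything else is routine bookkeeping with the explicit compact open subgroups (scaled $\bO$-lattices, or balls) available in a finite-dimensional vector space over~$\K$.
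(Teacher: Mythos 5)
Your first paragraph is precisely the paper's proof: the proposition is deduced from the product formula (\ref{twofactors}) together with Lemma~\ref{nilp}\,(c) and (a), which give $s(\alpha|_{E_0})=1$. The additional direct argument is also correct and adds self-containedness, since the paper asserts (\ref{twofactors}) in the preceding remark without proof: the block-diagonal subgroup $V\oplus W$ yields $s(\alpha)\leq s(\alpha|_{E_{>0}})$, and pushing a minimizing subgroup $U$ forward along the $\alpha$-equivariant projection $\pi\colon E\to E_{>0}$ (using that a surjective homomorphism does not increase indices and that $\pi(\alpha(U)\cap U)\subseteq \alpha|_{E_{>0}}(\pi(U))\cap\pi(U)$) yields the reverse inequality. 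Both steps check out; what the direct route buys is independence from the unproved multiplicativity of the scale over direct products, at the cost of some routine bookkeeping.
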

For endomorphisms of $p$-adic vector spaces,
this was already observed in~\cite{Rat}.\\[2.3mm]
We now recall from \cite[Proposition~2.4]{Fin}
the existence of norms which are well-adapted to
an endomorphism (see already \cite[Proposition~4.3]{Lec}
for the case of automorphisms; compare
\cite[Chapter~II, Lemma~1.1]{Mar}
for a similar, weaker result, which also applies to $\R$ and $\C$).
\begin{la}\label{fact-adapted}
There exists
an ultrametric norm $\|.\|$ on~$E$ which is \emph{adapted to $\alpha$}
in the following sense:
\begin{itemize}
\item[\rm(a)]
$\|.\|$ is a maximum norm
with respect to the decomposition~\emph{(\ref{decomposes})}
of~$E$ into the characteristic subspaces for~$\alpha$;
\item[\rm(b)]
$\|\alpha|_{E_0}\|_{\op}<1$;
and
\item[\rm(c)]
For all $\rho>0$ and $v\in E_\rho$, we have $\|\alpha(v)\|=\rho\|v\|$.
\end{itemize}
If $\ve\in\;]0,1]$ is given,
then $\|.\|$ can be chosen such that $\|\alpha|_{E_0}\|_{\op}<\ve$.\,\Punkt
\end{la}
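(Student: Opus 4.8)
The plan is to build the norm separately on each characteristic subspace $E_\rho$ and then glue the pieces together by declaring $\|.\|$ to be the maximum norm with respect to the decomposition~(\ref{decomposes}); this automatically gives property~(a), and it reduces the problem to handling each $\rho$ individually since for $v=\sum_\rho v_\rho$ we have $\alpha(v)=\sum_\rho \alpha(v_\rho)$ with $\alpha(v_\rho)\in E_\rho$, so estimating $\|\alpha(v)\|$ amounts to estimating each $\|\alpha(v_\rho)\|$. The case $\rho=0$ is where I would use nilpotency: $\alpha|_{E_0}$ is nilpotent by Remark~(b), so passing to an algebraic closure and using a basis adapted to a flag $\{0\}=V_0\subset V_1\subset\cdots\subset V_k=E_0$ with $\alpha(V_{i})\subset V_{i-1}$, one rescales the basis vectors by increasing powers of a uniformizer (or of an element of small absolute value in $\K$) to make the matrix of $\alpha$ have arbitrarily small entries; the resulting maximum norm satisfies $\|\alpha|_{E_0}\|_{\op}<\ve$ for any prescribed $\ve\in\;]0,1]$, giving~(b) and the final assertion. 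The descent from $\bL$ back to $\K$ is handled as in \ref{splitornot} using that $E_0$ is defined over $\K$.

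For $\rho>0$ the target is the exact scaling identity~(c), $\|\alpha(v)\|=\rho\|v\|$ for all $v\in E_\rho$, which is stronger than a mere operator-norm bound and is exactly the content of \cite[Proposition~2.4]{Fin} that I would invoke (or reprove). The idea: on the generalized eigenspace for a single eigenvalue $\lambda$ with $|\lambda|_\K=\rho$, write $\alpha|=\lambda(\id+N)$ with $N$ nilpotent on that eigenspace; choose (again after base change to $\bL$, then descend) a norm for which $\|\id+N\|_{\op}=1$ and $\|(\id+N)^{-1}\|_{\op}=1$ — this is possible because $\id+N$ is unipotent, so one can use an adapted flag as above but now rescaling so that $N$ has very small operator norm, whence $\|\id+N\|_{\op}\le\max\{1,\|N\|_{\op}\}=1$ and likewise for the inverse, forcing $\|(\id+N)(w)\|=\|w\|$ for all $w$ by~(\ref{winner})-type reasoning. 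Multiplying by $\lambda$ multiplies every norm by $|\lambda|_\K=\rho$, giving~(c) on that eigenspace. When several eigenvalues share the same absolute value $\rho$, one takes the maximum norm over their generalized eigenspaces inside $E_\rho$; since each factor scales by exactly $\rho$, so does the maximum. One must check this survives the descent from $\bL$ to $\K$: restricting an adapted $\bL$-norm on $(E_\rho)_\bL$ to $E_\rho$ preserves both property~(c) (it is an equality of norms, stable under restriction) and the maximum-norm structure, using $(E_\bL)_\rho=(E_\rho)_\bL$ from \ref{splitornot}.

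The main obstacle I anticipate is the exact equality in~(c) rather than an inequality: getting $\|\alpha(v)\|$ equal to $\rho\|v\|$ (not just $\le$) requires controlling $\alpha$ and $\alpha^{-1}$ simultaneously on each $E_\rho$ with $\rho>0$, and the cleanest way is to arrange that the unipotent part $\id+N$ is an isometry, which forces a careful choice of the rescaling exponents in the adapted flag so that $N$ strictly decreases the flag by a full step in norm. A secondary technical point is ensuring the rescaling can be done over $\K$ itself after the Galois descent, or alternatively doing all constructions over $\bL$ and then restricting — the latter is cleaner and is the route I would take, relying on \ref{splitornot} for the compatibility of the characteristic decomposition with base change. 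Since the lemma is quoted from \cite[Proposition~2.4]{Fin}, the honest write-up is to cite that proposition for~(a) and~(c) and only spell out the sharpening of~(b) to $\|\alpha|_{E_0}\|_{\op}<\ve$, which follows from the same flag-rescaling argument applied to the nilpotent endomorphism $\alpha|_{E_0}$.
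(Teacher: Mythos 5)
The paper offers no proof of this lemma: it is quoted from \cite[Proposition~2.4]{Fin} and closed with a box, so there is no in-text argument to compare against. Your sketch is nevertheless the standard proof of that cited result and is correct: max norm over the characteristic subspaces gives (a) for free; flag-rescaling on the nilpotent part gives (b) with any prescribed $\ve$; writing $\alpha=\lambda(\id+N)$ on each generalized eigenspace over $\bL$, rescaling so that $\|N\|_{\op}<1$, and invoking (\ref{winner}) yields the exact equality $\|(\id+N)w\|=\|w\|$, hence (c) after taking maxima over eigenvalues of equal absolute value and restricting to $E_\rho$ via $(E_\bL)_\rho=(E_\rho)_\bL$. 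One small simplification to the obstacle you flag: you do not need to control $(\id+N)^{-1}$ separately, since once $\|N\|_{\op}<1$ the identity $\|(\id+N)w\|=\|w\|$ for all $w$ already says $\id+N$ is a surjective isometry, so its inverse is one too; the only point worth spelling out in the descent is that the absolute value used on $\bL$ must be the unique extension of $|.|_\K$ (not the natural absolute value of $\bL$), so that the restricted norm is $\K$-homogeneous and $\rho=|\lambda|_\K$ has the meaning required in (c).
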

As before, in the following theorem we write $E_\rho$ for
the characteristic subspace for $\rho>0$
with respect to~$\alpha$.
\begin{thm}\label{thm-lincase}
Let $\alpha\in\End_\K(E)$ be an endomorphism of a finite-dimensional vector space~$E\cong \K^m$
over a local field~$\K$. 
Let $\|.\|$ be a norm on~$E$ which is adapted to~$\alpha$.
Then the following holds:
\begin{itemize}
\item[\rm(a)]
The ball $B^E_r(0)$ is a compact open subgroup of $(E,+)$
which is tidy for~$\alpha$, for each $r\in\,]0,\infty[$.
\item[\rm(b)]
We have
\[
\conp(\alpha)=E_{<1}:=\bigoplus_{\rho<1}E_\rho,\quad \conm(\alpha)=E_{>1}:=
\bigoplus_{\rho>1}E_\rho,
\]
\[
\parp(\alpha)=E_{\leq 1}:=
\bigoplus_{\rho\leq 1}E_\rho,\quad \parm(\alpha)=E_{\geq 1}:=\bigoplus_{\rho\geq 1}E_\rho,
\]
and $\lev(\alpha)=E_1$.
\item[\rm(c)]
The scale of~$\alpha$ is given by
\begin{equation}\label{scale-via-eig}
s(\alpha)
=\prod_{\stackrel{{\scriptstyle j\in\{1,\ldots, m\}}}{{\rm s.t.}\;|\lambda_j|_\K\geq 1}}
|\lambda_j|_\K,\vspace{-1mm}
\end{equation}
where $\lambda_1,\ldots,\lambda_m$ are the eigenvalues of $\alpha_{\wb{\K}}$ in
an algebraic closure $\wb{\K}$ of~$\K$,
with repetitions according to algebraic multiplicities.
\item[\rm(d)]
$s(\alpha)=s(\alpha|_{E_{>0}})=s(\alpha|_{E_{\geq 1}})=s(\alpha|_{E_{>1}})
=\Delta(\alpha|_{E_{\geq 1}})=\Delta(\alpha|_{E_{>1}})$.
\end{itemize}
\end{thm}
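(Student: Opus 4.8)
The strategy is to combine the adapted-norm machinery of Lemma~\ref{fact-adapted} with the elementary special cases in Lemma~\ref{nilp}, and to organise the proof around the direct-sum decomposition $E=\bigoplus_{\rho\geq 0}E_\rho$. The key observation is that an adapted norm is a maximum norm for this decomposition, so a ball $B^E_r(0)=\bigoplus_{\rho\geq 0}B^{E_\rho}_r(0)$ respects the decomposition, and each of the subgroups $U_\pm$, $U_{\pm\pm}$ computed from $U=B^E_r(0)$ decomposes accordingly. Thus everything reduces to understanding $\alpha|_{E_\rho}$ on a single characteristic subspace, where the dynamics is uniform: by (c) of the adapted norm, $\|\alpha(v)\|=\rho\|v\|$ for $v\in E_\rho$, $\rho>0$, so $\alpha$ scales norms exactly by $\rho$; and by (b), $\alpha|_{E_0}$ is a contraction.

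\textbf{Step 1 (reduce to the Fitting 1-component).} Proposition~\ref{scale-on-Fitt} already gives $s(\alpha)=s(\alpha|_{E_{>0}})$, and Lemma~\ref{nilp}(c),(a) handle $E_0$ (tidy, $s=1$). So for parts (a)--(c) it suffices to analyse $\alpha$ on $E_{>0}$, on which $\alpha$ is an automorphism.

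\textbf{Step 2 (prove (b)).} For each $\rho>0$ and $v\in E_\rho$, iterating (c) of the adapted norm gives $\|\alpha^n(v)\|=\rho^n\|v\|$; on $E_0$ one has $\|\alpha^n(v)\|\leq\|\alpha|_{E_0}\|_{\op}^n\|v\|\to0$. Hence $\alpha^n(v)\to0$ iff every component of $v$ lies in some $E_\rho$ with $\rho<1$ (together with the $E_0$-part), giving $\conp(\alpha)=E_{<1}$; boundedness of the forward orbit holds iff all components have $\rho\leq1$, giving $\parp(\alpha)=E_{\leq1}$. For the regressive statements, note that on $E_{>0}$ the map $\alpha$ is invertible with $\|\alpha^{-1}(v)\|=\rho^{-1}\|v\|$ on $E_\rho$, and the regressive trajectory of $v$ is unique, equal to $(\alpha^{-n}(v))_n$ composed with the (unique, since $\alpha|_{E_0}$ is injective on its image only trivially --- but $E_0$ contributes nothing to an $\alpha$-regressive trajectory tending to $0$ unless the $E_0$-part is $0$); so $\conm(\alpha)=E_{>1}$, $\parm(\alpha)=E_{\geq1}$. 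Intersecting yields $\lev(\alpha)=E_1$. One must be slightly careful about the $E_0$-part in the regressive statements: an element with nonzero $E_0$-component may fail to admit any $\alpha$-regressive trajectory at all (if it is not in the image of all powers), and even when it does the nilpotent part forces the trajectory to leave every ball; this needs a short separate argument, which I expect to be the fiddliest point.

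\textbf{Step 3 (prove (a), tidiness of $U=B^E_r(0)$).} Using the decomposition, $U_-=\bigoplus_\rho (U\cap\bigcap_n\alpha^{-n}U)_\rho$. On $E_\rho$ with $\rho\leq1$ (and on $E_0$), $\alpha$ does not increase norms, so $B^{E_\rho}_r(0)$ is $\alpha$-invariant and contributes entirely to $U_-$; on $E_\rho$ with $\rho>1$, $\alpha^{-1}$ does not increase norms, so $B^{E_\rho}_r(0)\sub\alpha(B^{E_\rho}_r(0))$ and this factor contributes entirely to $U_+$. Thus $U_-=\bigoplus_{\rho\leq1}B^{E_\rho}_r(0)$ and $U_+=\bigoplus_{\rho>1}B^{E_\rho}_r(0)$ (using \ref{uplusauto} on $E_{>0}$, plus the $E_0$-analysis), whence $U=U_+U_-$: $U$ is tidy above. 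For tidy below (equivalently, by \ref{variant-tidy}, $U_{--}$ closed): $U_{--}=\bigcup_n\alpha^{-n}(U_-)$; since $\alpha^{-n}$ expands norms only on $E_{>1}$ and there $B^{E_\rho}_r(0)\sub U_-$ already was the full story only for $\rho\le 1$ --- wait, on $E_{>1}$, $U_-\cap E_\rho=\{0\}$, and $\alpha^{-n}$ shrinks $E_{\le1}$ and... I recompute: $U_{--}=E_{\le1}\oplus\{0\}$? No: $\alpha^{-n}(B^{E_\rho}_r(0))$ for $\rho<1$ grows to all of $E_\rho$, so $U_{--}=\bigl(\bigoplus_{\rho<1}E_\rho\bigr)\oplus B^{E_1}_r(0)\oplus\{0\}=E_{<1}\oplus B^{E_1}_r(0)$, which is closed. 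Symmetrically $U_{++}$ is closed. Hence $U$ is tidy.

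\textbf{Step 4 (prove (c) and (d)).} By \ref{mini-tidy}, $s(\alpha)=[\alpha(U_+):U_+]$ with $U_+=\bigoplus_{\rho>1}B^{E_\rho}_r(0)$. Since $V:=U_+$ satisfies $V\sub\alpha(V)$ and $V$ is tidy, Lemma~\ref{nilp}(b) gives $s(\alpha|_{E_{>1}})=\Delta(\alpha|_{E_{>1}})$; and $s(\alpha)=s(\alpha|_{E_{>1}})$ follows from $s(\alpha)=[\alpha(U_+):U_+]$ directly (or from $s(\alpha)=s(\alpha|_{E_{>0}})$ together with the fact that $E_1$ contributes a factor $1$, as $\alpha|_{E_1}$ preserves $B^{E_1}_r(0)$, so by Lemma~\ref{nilp}(a) its scale is $1$). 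This gives all the equalities $s(\alpha)=s(\alpha|_{E_{>0}})=s(\alpha|_{E_{\ge1}})=s(\alpha|_{E_{>1}})=\Delta(\alpha|_{E_{>1}})$; and $\Delta(\alpha|_{E_{\ge1}})=\Delta(\alpha|_{E_{>1}})\cdot\Delta(\alpha|_{E_1})=\Delta(\alpha|_{E_{>1}})\cdot1$ since $\Delta(\alpha|_{E_1})=1$ (as $\alpha|_{E_1}$ preserves the compact open subgroup $B^{E_1}_r(0)$), yielding (d). Finally, by Lemma~\ref{laindx}, $\Delta(\alpha|_{E_{>1}})=\prod_i|\mu_i|_\K$ over the eigenvalues $\mu_i$ of $\alpha|_{E_{>1}}$ in $\wb\K$, and these are exactly the $\lambda_j$ with $|\lambda_j|_\K>1$; combined with the fact that eigenvalues of absolute value exactly $1$ contribute factors $1$, this is precisely the product in~(\ref{scale-via-eig}). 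This proves (c).

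\textbf{Expected main obstacle.} The routine parts are the norm estimates; the one genuinely delicate point is the treatment of the nilpotent part $E_0$ in the \emph{anti-contraction} and \emph{anti-parabolic} descriptions of part (b): one must argue carefully that a vector with nonzero $E_0$-component cannot admit a bounded (let alone null) $\alpha$-regressive trajectory, since the nilpotent $\alpha|_{E_0}$ is not surjective and, on the part where preimages do exist, norms under $\alpha^{-1}$ are forced to blow up. I would isolate this as a small lemma (``$\conm(\alpha|_{E_0})=\parm(\alpha|_{E_0})=\{0\}$'') and invoke it in the decomposition argument.
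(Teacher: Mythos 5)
Your proposal follows essentially the same route as the paper: reduce to the Fitting $1$-component via Proposition~\ref{scale-on-Fitt} and Lemma~\ref{nilp}, exploit that an adapted norm makes $B^E_r(0)$ a product of balls $B^{E_\rho}_r(0)$ on which $\alpha$ scales norms exactly by~$\rho$, compute $U_\pm$ and $U_{--}$ factorwise, invoke \ref{variant-tidy} and \ref{mini-tidy}, and finish with Lemma~\ref{laindx}; your extra care about the $E_0$-part in the regressive statements of~(b) is a point the paper dismisses as obvious, and your isolated lemma $\conm(\alpha|_{E_0})=\parm(\alpha|_{E_0})=\{0\}$ is correct (any regressive trajectory of an element with nonzero $E_0$-component is forced into $E_0$ by injectivity of $\alpha|_{E_\rho}$ for $\rho>0$, contradicting nilpotence). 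One slip to fix: $U_+$ is $\bigoplus_{\rho\geq 1}B^{E_\rho}_r(0)$, not $\bigoplus_{\rho>1}B^{E_\rho}_r(0)$, since $\alpha$ preserves $B^{E_1}_r(0)$ and hence every point of that ball admits a regressive trajectory inside~$U$; this is harmless for your conclusions (the $E_1$-factor is $\alpha$-stable and contributes index~$1$ to $[\alpha(U_+):U_+]$), but the identification as stated is incorrect and should be corrected before invoking \ref{mini-tidy}.
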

\begin{proof}
We endow vector subspaces of~$E$ with the induced norm.

(a) Since~$E$ admits the Fitting decompositon $E=E_0\oplus E_{>0}$
into~$E_0$ and~$E_{>0}$ which are $\alpha$-invariant vector subspaces
and
\[
B^E_r(0)=B^{E_0}_r(0)\times B^{E_{>0}}_r(0),
\]
we need only check that $B^{E_0}_r(0)$ is tidy for $\alpha|_{E_0}$
and $B^{E_{>0}}_r(0)$ is tidy for $\alpha|_{E_{>0}}$.
The first property holds by Lemma~\ref{nilp}, since $\alpha(B^{E_0}_r(0))\sub B^{E_0}_r(0)$
by Lemma~\ref{fact-adapted}\,(b). To check the second property,
after replacing~$\alpha$ with $\alpha|_{E_{>0}}$ we
may assume that~$\alpha$ is an automorphism. Thus,
let us consider $\alpha\in \GL(E)$ and verify that
\[
U:=B^E_r(0)=\prod_{\rho>0}B^{E_\rho}_r(0)
\]
is tidy for~$\alpha$. For each $k\in\Z$, have
\[
\alpha^k(B^E_r(0))=\prod_{\rho>0}B_{\rho^k r}^{E_\rho}(0),
\]
using Lemma~\ref{fact-adapted}\,(c).
Hence
\begin{equation}\label{Upluslin}
U_+:=\bigcap_{k=0}^\infty \alpha^k(U)=\prod_{\rho\geq 1}B^{E_\rho}_r(0)\;\;
\mbox{and}\;\;
U_-=\bigcap_{k=0}^\infty \alpha^{-k}(U)=\prod_{0<\rho\leq 1}B^{E_\rho}_r(0)
\end{equation}
(where we used \ref{uplusauto}),
entailing that $U=U_+ + U_-$ is tidy above for~$\alpha$. Since
\[
U_{--}:=\bigcup_{k=0}^\infty\alpha^{-k}(U_-)=\left(\prod_{0<\rho<1}E_\rho\right)
\times B^{E_1}_r(0)=E_{<1}\times B^{E_1}_r(0)
\]
is closed in~$E$, we deduce with \ref{variant-tidy} that $U$ is tidy.

(b) is obvious from Lemma~\ref{fact-adapted} (a), (b), and (c).

(c) Since $U_+=\prod_{\rho\geq 1}B^{E_\rho}_r(0)=B^{E_{\geq 1}}_r(0)$ is a compact open subgroup
of $(E_{\geq 1},+)$ such that $U_+\sub \alpha(U_+)$,
using \ref{mini-tidy} and (\ref{howgtmod}) we obtain
\[
s(\alpha)=[\alpha(U_+):U_+]=\Delta(\alpha|_{E_{\geq 1}}).
\]
As the $\lambda_j$ with $|\lambda_j|_\K\geq 1$
are exactly the eigenvalues of $\alpha|_{E_{\geq 1}}$ in~$\wb{\K}$,
Lemma~\ref{laindx} yields the desired formula.

(d) Eigenvalues $\lambda_j$ with $|\lambda_j|_\K=1$ are irrelevant for the product in~(c). Using Lemma~\ref{laindx}, we deduce that also $s(\alpha)=\Delta(\alpha|_{E_{>1}})$.
The first equality in~(d) holds by Proposition~\ref{scale-on-Fitt}.
Note that $B^{E_{\geq 1}}_r(0)$ and $B^{E_{>1}}_r(0)$
are tidy for $\alpha|_{E_{\geq 1}}$ and $\alpha|_{E_{>1}}$,
respectively, and are inflated by the latter.
The third and fourth scales in the formula therefore coincide with the corresponding
modules, by Lemma~\ref{nilp}\,(b).
\end{proof}
\begin{cor}\label{vsubquot}
Let $\alpha$ be a linear endomorphsm of a finite-dimensional
vector space~$E$ over a totally disconnected local field~$\K$.
Let $F$ be an $\alpha$-invariant vector subspace of~$E$ and
\[
\wb{\alpha}\colon E/F\to E/F,\quad x+F\mto\alpha(x)+F
\]
be the induced linear endomorphism of the quotient space~$E/F$.
Then
\begin{equation}\label{linprodformu}
s_E(\alpha)=s_F(\alpha|_F)s_{E/F}(\wb{\alpha}).
\end{equation}
\end{cor}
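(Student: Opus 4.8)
The plan is to reduce the assertion to the eigenvalue formula in Theorem~\ref{thm-lincase}(c). Since $F$ is an $\alpha$-invariant vector subspace of the finite-dimensional $\K$-vector space~$E$, the restriction $\alpha|_F\in\End_\K(F)$ and the induced endomorphism $\wb{\alpha}\in\End_\K(E/F)$ are again linear endomorphisms of finite-dimensional $\K$-vector spaces, so Theorem~\ref{thm-lincase}(c) is available for each of $\alpha$, $\alpha|_F$, and $\wb{\alpha}$.

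First I would record the standard linear-algebra fact that
\[
p_\alpha=p_{\alpha|_F}\cdot p_{\wb{\alpha}}\qquad\mbox{in }\K[X].
\]
Indeed, choosing a basis of~$E$ which extends a basis of~$F$, the matrix of~$\alpha$ becomes block upper triangular with diagonal blocks a matrix of $\alpha|_F$ and a matrix of $\wb{\alpha}$. Passing to the algebraic closure~$\wb{\K}$ and splitting $p_\alpha$, $p_{\alpha|_F}$, $p_{\wb{\alpha}}$ into linear factors, it follows that the multiset of eigenvalues of~$\alpha$ in~$\wb{\K}$ (with algebraic multiplicities) is the disjoint union, as multisets, of the corresponding multiset for $\alpha|_F$ and the one for~$\wb{\alpha}$.

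Finally I would combine this with the multiplicativity of $|.|_\K$ on~$\wb{\K}$. By Theorem~\ref{thm-lincase}(c), $s_E(\alpha)$ equals the product of $|\lambda|_\K$ over those eigenvalues~$\lambda$ of~$\alpha$ in~$\wb{\K}$ (counted with multiplicity) for which $|\lambda|_\K\geq 1$, and likewise $s_F(\alpha|_F)$ and $s_{E/F}(\wb{\alpha})$ are the analogous products for $\alpha|_F$ and~$\wb{\alpha}$. Since the eigenvalue multiset of~$\alpha$ is the disjoint union of those of $\alpha|_F$ and~$\wb{\alpha}$, the product defining $s_E(\alpha)$ factors as the product of the corresponding quantities for $\alpha|_F$ and~$\wb{\alpha}$, which is exactly~(\ref{linprodformu}). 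There is essentially no obstacle here; the only point requiring a little care is the bookkeeping of algebraic multiplicities, so that it is the \emph{multiset} (not merely the set) of eigenvalues of~$\alpha$ that decomposes correctly --- and this is precisely what the factorization $p_\alpha=p_{\alpha|_F}p_{\wb{\alpha}}$ delivers. As an alternative, one could instead argue via Theorem~\ref{thm-lincase}(d), writing $s(\alpha)=\Delta(\alpha|_{E_{\geq 1}})$ and invoking multiplicativity of the module in short exact sequences, but the eigenvalue route above is shorter.
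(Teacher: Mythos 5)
Your proposal is correct and follows essentially the same route as the paper: the paper's proof likewise observes that the eigenvalues of~$\alpha$ in~$\wb{\K}$ are exactly those of~$\alpha|_F$ together with those of~$\wb{\alpha}$ and then invokes Theorem~\ref{thm-lincase}\,(c). Your elaboration via the block-triangular form and the factorization $p_\alpha=p_{\alpha|_F}p_{\wb{\alpha}}$ just makes explicit the multiplicity bookkeeping that the paper leaves implicit.
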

\begin{proof}
The eigenvalues of~$\alpha$ in an algebraic closure~$\wb{\K}$ of~$\K$
are exactly the eigenvalues of $\alpha|_F$, together with those of~$\wb{\alpha}$.
Hence (\ref{linprodformu}) follows from Theorem~\ref{thm-lincase}\,(c).
\end{proof}
For basic concepts concerning Lie algebras (which we always assume
finite-dimensional),\footnote{Except for the Lie algebras
of analytic vector fields mentioned in Section~\ref{sec-mfd}.}
see \cite{Bou}, \cite{Hum}, and \cite{Ser}.
\begin{la}\label{lin-lia}
If $\cg$ is a Lie algebra over a totally disconnected local field~$\K$
and $\alpha\colon \cg\to\cg$ a Lie algebra endomorphism,
then $\conp(\alpha)$, $\conm(\alpha)$, $\parp(\alpha)$, $\parm(\alpha)$,
and $\lev(\alpha)=\;\conp(\alpha)\, \cap\conm(\alpha)$
are Lie subalgebras of~$\cg$.
\end{la}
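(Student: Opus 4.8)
The plan is to note first that, viewed merely as a linear endomorphism of the $\K$-vector space $\cg$, the map $\alpha$ has $\conp(\alpha)$, $\conm(\alpha)$, $\parp(\alpha)$, $\parm(\alpha)$ and $\lev(\alpha)$ equal to the characteristic subspaces $\cg_{<1}$, $\cg_{>1}$, $\cg_{\leq1}$, $\cg_{\geq1}$ and $\cg_1$, respectively, by Theorem~\ref{thm-lincase}\,(b) (applied with an adapted norm, which exists by Lemma~\ref{fact-adapted}); in particular all five are vector subspaces of $\cg$, so it remains only to check that each of them is closed under the Lie bracket. The mechanism for this is that $\alpha$ being a Lie algebra endomorphism yields $\alpha^n([x,y])=[\alpha^n(x),\alpha^n(y)]$ for all $n\in\N_0$, and, more generally, that if $(x_{-n})_{n\in\N_0}$ and $(y_{-n})_{n\in\N_0}$ are $\alpha$-regressive trajectories for $x$ and $y$, then $([x_{-n},y_{-n}])_{n\in\N_0}$ is an $\alpha$-regressive trajectory for $[x,y]$, since $\alpha$ is linear and multiplicative for the bracket.

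Next I would treat the four groups directly, using that the Lie bracket $\cg\times\cg\to\cg$ is a continuous bilinear map (as $\cg$ is finite-dimensional) with $[0,0]=0$. For $\conp(\alpha)$: if $\alpha^n(x)\to0$ and $\alpha^n(y)\to0$, then $\alpha^n([x,y])=[\alpha^n(x),\alpha^n(y)]\to0$, so $[x,y]\in\conp(\alpha)$. For $\conm(\alpha)$: if $(x_{-n})$ and $(y_{-n})$ are $\alpha$-regressive trajectories with $x_{-n}\to0$ and $y_{-n}\to0$, then the $\alpha$-regressive trajectory $([x_{-n},y_{-n}])$ for $[x,y]$ converges to $0$ by continuity of the bracket, so $[x,y]\in\conm(\alpha)$. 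For $\parp(\alpha)$ and $\parm(\alpha)$ I would use that the bracket of two relatively compact subsets of $\cg$ is relatively compact, being the image of a compact product under the continuous bracket map: if $K_1:=\overline{\{\alpha^n(x)\colon n\in\N_0\}}$ and $K_2:=\overline{\{\alpha^n(y)\colon n\in\N_0\}}$ are compact, then $\{\alpha^n([x,y])\colon n\in\N_0\}$ lies in the compact set $\{[a,b]\colon a\in K_1,\,b\in K_2\}$, whence $[x,y]\in\parp(\alpha)$; the same argument applied to bounded $\alpha$-regressive trajectories and the product trajectory $([x_{-n},y_{-n}])$ gives $[x,y]\in\parm(\alpha)$. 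Finally $\lev(\alpha)=\parp(\alpha)\cap\parm(\alpha)$ is a Lie subalgebra as an intersection of two Lie subalgebras (equivalently, $\lev(\alpha)=\cg_1$, handled as above).

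There is no serious obstacle here; the only points needing a little care are to observe that the product trajectory $([x_{-n},y_{-n}])$ is itself \emph{bounded} (which is exactly the relative compactness of $\{[a,b]\colon a\in K_1,\,b\in K_2\}$), and to recall that existence of $\alpha$-regressive trajectories is built into the definitions of $\conm$ and $\parm$. As an alternative, purely algebraic route one may avoid Theorem~\ref{thm-lincase} altogether: pass to a finite extension $\bL$ of $\K$ over which the characteristic polynomial of $\alpha$ splits, note that the bracket $\cg_\bL\otimes_\bL\cg_\bL\to\cg_\bL$ intertwines $\alpha_\bL\otimes\alpha_\bL$ with $\alpha_\bL$ and hence carries the generalized eigenspace of $\alpha_\bL\otimes\alpha_\bL$ for an eigenvalue $\lambda\mu$ (namely $\bigoplus_{\lambda'\mu'=\lambda\mu}\cg_\bL^{(\lambda')}\otimes\cg_\bL^{(\mu')}$) into the generalized eigenspace $\cg_\bL^{(\lambda\mu)}$ of $\alpha_\bL$, so that $[\cg_\bL^{(\lambda)},\cg_\bL^{(\mu)}]\sub\cg_\bL^{(\lambda\mu)}$; grouping these generalized eigenspaces according to the absolute values of the eigenvalues and using $(\cg_\bL)_\rho=(\cg_\rho)_\bL$ descends to $\K$ the inclusions $[\cg_\rho,\cg_\sigma]\sub\cg_{\rho\sigma}$, from which all five assertions follow by elementary arithmetic of the conditions defining $\cg_{<1}$, $\cg_{>1}$, $\cg_{\leq1}$, $\cg_{\geq1}$, $\cg_1$.
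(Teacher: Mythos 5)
Your main argument is correct and is essentially the paper's own proof: the identity $\alpha^n([x,y])=[\alpha^n(x),\alpha^n(y)]$, the observation that brackets of $\alpha$-regressive trajectories are again $\alpha$-regressive trajectories, and continuity/boundedness of the bilinear bracket handle $\conp$, $\parp$, $\conm$, $\parm$ directly (the framing via Theorem~\ref{thm-lincase}\,(b) is not needed, and the paper does not use it). Your alternative sketch via $[\cg_\bL^{(\lambda)},\cg_\bL^{(\mu)}]\sub\cg_\bL^{(\lambda\mu)}$ and $[\cg_\rho,\cg_\sigma]\sub\cg_{\rho\sigma}$ is also valid and is a genuinely different, purely algebraic route, but the paper does not take it.
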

\begin{proof}
If $x$ and $y$ are elements of $\conp(\alpha)$ (resp., of $\parp(\alpha)$),
then $\alpha^n(x)$ and $\alpha^n(y)$ tend to~$0$ as $n\to\infty$
(resp., the elements form bounded sequences), entailing that also
\[
\alpha^n([x,y])=[\alpha^n(x),\alpha^n(y)]
\]
tends to~$0$ (resp., is bounded). Hence $[x,y]\in\;\conp(\alpha)$
(resp., $[x,y]\in\;\parp(\alpha)$).\linebreak
If $x$ and $y$ are elements of $\conm(\alpha)$ (resp., of $\parm(\alpha)$),
then we find an $\alpha$-regressive trajectory $(x_{-n})_{n\in\N_0}$
for~$x$ and an $\alpha$-regressive trajectory $(y_{-n})_{n\in\N_0}$
for~$y$ such that $x_{-n}\to 0$ and $y_{-n}\to 0$ as $n\to\infty$
(resp., $(x_{-n})_{n\in\N_0}$ and $(y_{-n})_{n\in\N_0}$
are bounded sequences).
Then $([x_{-n},y_{-n}])_{n\in\N_0}$ is an $\alpha$-regressive trajectory for
$[x,y]$, since
\[
\alpha([x_{-n-1},y_{-n-1}])=[\alpha(x_{-n-1}),\alpha(y_{-n-1})]
=[x_{-n},y_{-n}]\quad\mbox{for all $n\in\N_0$.}
\]
Moreover, $[x_{-n},y_{-n}]\to 0$ as $n\to\infty$
(resp., the sequence $([x_{-n},y_{-n}])_{n\in\N_0}$ is bounded),
showing that $[x,y]\in\; \conm(\alpha)$ (resp., $[x,y]\in\; \parm(\alpha)$).
\end{proof}
The following lemma will be used in Section~\ref{sec-typeR}.
\begin{la}\label{la-periodic}
Let $E$ be a finite-dimensonal
vector space over a totally disconnected local field $\K$ and $\alpha\in\GL(E)$ be an automorphism
such that
$|\lambda|_\K=1$ for all eigenvalues of~$\alpha$
in an algebraic closure~$\wb{\K}$ of~$\K$.
Then the subgroup $\langle\alpha\rangle$ generated by~$\alpha$ is relatively compact in~$\GL(E)$.
\end{la}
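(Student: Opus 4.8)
The plan is to find an $\alpha$-invariant ultrametric norm on $E$; then $\langle\alpha\rangle$ will be contained in the (compact) group of norm-preserving automorphisms. First I would pick an ultrametric norm $\|.\|$ on $E$ adapted to $\alpha$ in the sense of Lemma~\ref{fact-adapted}. Since every eigenvalue $\lambda$ of $\alpha$ in $\wb{\K}$ satisfies $|\lambda|_\K=1$, the only characteristic value of $\alpha$ is $\rho=1$, so $E=E_1$ and property~(c) of the adapted norm gives $\|\alpha(v)\|=\|v\|$ for all $v\in E$; that is, $\alpha$ is already an isometry for~$\|.\|$. (The Fitting $0$-component is trivial here because $0$ is not an eigenvalue, so there is no nilpotent part to worry about.)

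Next I would exhibit the relevant compact group. Fix $r\in\;]0,\infty[$ and consider the compact open subgroup $U:=\wb{B}^E_r(0)$ of $(E,+)$, together with the set
\[
H:=\{\beta\in\GL(E)\colon \beta(U)=U\}.
\]
Since $\alpha$ is an isometry, $\alpha(U)=U$, hence $\alpha\in H$ and therefore $\langle\alpha\rangle\sub H$; it remains to see that $H$ is compact in $\GL(E)$. Now $H$ is closed in $\GL(E)$ (the condition $\beta(U)=U$ is closed, being equivalent to $\beta(U)\sub U$ and $\beta^{-1}(U)\sub U$, and these are closed conditions on $\beta$ because evaluation is continuous and $U$ is compact). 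On the other hand $U$ contains a basis $e_1,\dots,e_m$ of $E$, and every $\beta\in H$ maps each $e_i$ into $U$, which is a compact — hence bounded — subset of $E$; writing matrices with respect to this basis, the entries of $\beta$ are confined to a fixed compact subset of $\K$, so $H$ is a bounded, closed subset of $M_m(\K)$, and thus compact. Since $\langle\alpha\rangle$ is a subset of the compact set $H$, its closure is compact, i.e.\ $\langle\alpha\rangle$ is relatively compact in $\GL(E)$.

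The only slightly delicate point is the passage from ``$\alpha$ preserves the ball $U$'' to ``the stabilizer $H$ of $U$ is compact in $\GL(E)$'': one must check both that $H$ is closed and that it is bounded in $M_m(\K)$, using that $U$ is compact and spans $E$; this is routine but is the crux of the argument. Everything else reduces, via Lemma~\ref{fact-adapted}, to the observation that the hypothesis on the eigenvalues forces $E=E_1$ and hence makes $\alpha$ an isometry outright.
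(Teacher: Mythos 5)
Your argument is correct, and it takes a genuinely different route from the paper. The paper's proof first reduces, in positive characteristic, to a power $\alpha^{p^n}$ so that the characteristic polynomial becomes separable, then passes to a Galois splitting field and uses the multiplicative Jordan decomposition $\alpha=\alpha_h\circ\alpha_u$: the semisimple part is handled via the eigenvalues lying in $\bO^\times$, and the unipotent part by quoting \cite[Lemma~4.1]{FOR}. You instead extract everything from the adapted norm of Lemma~\ref{fact-adapted}: the hypothesis forces $E=E_1$, so property~(c) makes $\alpha$ an isometry outright, and the problem reduces to compactness of the stabilizer of a ball (equivalently, of the isometry group). This is shorter, avoids both the Jordan decomposition and the characteristic-$p$ reduction, and produces an explicit $\alpha$-invariant norm --- the same device the paper uses later in the proof of Theorem~\ref{typeR}, where an $\Ad(G)$-invariant norm is built as the Minkowski functional of an invariant lattice. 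The trade-off is that Lemma~\ref{fact-adapted}\,(c), an external input of comparable depth to the ingredients of the paper's proof, is doing all the real work.

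One step needs tightening. You show that $H=\{\beta\in\GL(E)\colon\beta(U)=U\}$ is closed \emph{in $\GL(E)$} (your closedness argument involves $\beta^{-1}$ and so only makes sense there), but then conclude compactness from ``closed and bounded in $M_m(\K)$''. Since $\GL(E)$ is open rather than closed in $\End_\K(E)$, being closed in $\GL(E)$ and bounded does not by itself give compactness: a limit in $\End_\K(E)$ of elements of $H$ could a priori be singular. The gap is easy to fill. For $\beta\in H$ one has $\Delta(\beta)=\lambda_E(\beta(U))/\lambda_E(U)=1$ by (\ref{calcmodul}), i.e.\ $|\hspace*{-.4mm}\dt\beta\hspace*{.4mm}|_\K=1$ by Lemma~\ref{laindx}; hence any limit $\beta_\infty$ in $\End_\K(E)$ of a sequence in $H$ satisfies $|\hspace*{-.4mm}\dt\beta_\infty\hspace*{.4mm}|_\K=1$ and is invertible, so $H$ is in fact closed in $\End_\K(E)$ and your closed-and-bounded argument goes through. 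Alternatively, and more cleanly, work with $\Iso(E,\|.\|)$ itself: the condition $\|\beta(x)\|=\|x\|$ for all $x\in E$ is closed in $\End_\K(E)$, forces injectivity and hence invertibility in finite dimensions, and gives $\|\beta\|_{\op}=1$, so $\Iso(E,\|.\|)$ is a compact subgroup of $\GL(E)$ containing $\alpha$, which finishes the proof.
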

\begin{proof}
If $\K$ has characteristic $p>0$, then
it suffices to show that $\alpha^{p^n}$ generates a relatively
compact subgroup for some $n\in\N_0$, since
$\langle \alpha\rangle$ is contained in the finite union
\[
\bigcup_{j=0}^{p^n-1} \alpha^j \circ K
\]
of cosets of the compact group $K:=\overline{\langle \alpha^{p^n}\rangle}$.
We may therefore assume that the characteristic polynomial $p_\alpha\in\K[X]$
of~$\alpha$ is separable over~$\K$.
Let $\bL\sub\wb{\K}$ be a finite field extension of~$\K$
which is Galois and such that $p_\alpha$ splits into linear factors in~$\bL[X]$.
Then $\alpha$ has a unique multiplicative Jordan decomposition
\[
\alpha=\alpha_h\circ \alpha_u=\alpha_u\circ\alpha_h
\]
such that $(\alpha_h)_\bL\in\GL(E_\bL)$ is diagonalizable
and $(\alpha_u)_\bL \in\GL(E_\bL)$ is unipotent (see \cite[Theorem~I.4.4]{Bor}).
Let~$\bO$ be the valuation ring of~$\bL$ and $\bO^\times$ be its compact
group of invertible elements.
Since $|\lambda|_\K=1$ for all eigenvalues $\lambda\in\bL\sub\wb{\K}$ of~$\alpha_h$ (which coincide
with those of~$\alpha$), we have $\lambda\in\bO^\times$ and deduce that $(\alpha_h)_\bL$
generates a relatively compact subgroup~$L$ of $\GL(E_\bL)$.
Identify $\GL(E)$ with the closed subgroup $\{\beta\in \GL(E_\bL)\colon \beta(E)\sub E\}$
of $\GL(E_\bL)$. Then $\langle \alpha_h\rangle$ is contained in the compact subgroup
$L\cap\GL(E)$ and hence relatively compact in $\GL(E)$.
Now $(\alpha_u)_\bL$ generates a relatively compact subgroup of~$\GL(E_\bL)$,
by \cite[Lemma~4.1]{FOR}.
Hence~$\alpha_u$ generates a relatively compact subgroup of~$\GL(E)$,
by the preceding argument. Since
\[
\langle\alpha\rangle \sub \wb{\langle \alpha_h\rangle}\circ \wb{\langle \alpha_u\rangle},
\]
we see that $\langle\alpha\rangle$ is relatively compact.
\end{proof}
\section{\!\!\!Analytic functions, manifolds and Lie groups}\label{sec-mfd}
The section compiles definitions and elementary facts
concerning analytic functions,
manifolds, and Lie groups over totally disconnected local fields,
which we shall use without further explanation.
The section ends with two versions of
the Inverse Function Theorem, which will be essential
in the following.
\subsection*{Analytic manifolds and Lie groups}\label{anft}
Given a totally disconnected local field $(\K,|.|)$
and $n\in \N$,
we endow $\K^n$
with an ultrametric norm~$\|.\|$
(the choice of norm does not really matter
because all norms are equivalent; see \cite[Theorem~13.3]{Sch}).
If $\alpha\in \N_0^n$ is a multi-index, 
we write $|\alpha|:=\alpha_1+\cdots+\alpha_n$.
Confusion with the absolute value $|.|$ is
unlikely; the intended
meaning of~$|.|$ will always be clear from the context.
If $\alpha\in \N_0^n$ and $y=(y_1,\ldots, y_n)\in \K^n$,
we abbreviate $y^\alpha:=y_1^{\alpha_1}\cdots \hspace*{.3mm}y_n^{\alpha_n}$,
as usual. Compare~\cite{Ser} for the following concepts.
\begin{defn}
Given an open subset
$U\sub \K^n$, a map $f\colon U\to \K^m$
is called
\emph{analytic}\footnote{In other parts of the literature
related to rigid analytic geometry,
such functions are called \emph{locally analytic}
to distinguish them from functions which are globally given
by a power series.} (or $\K$-\emph{analytic}, if we wish to emphasize the ground field)
if it is given locally by a convergent
power series around each point $x\in U$, i.e.,
\[
f(x+y)=\sum_{\alpha\in \N_0^n} a_\alpha \, y^\alpha
\quad
\mbox{for all $\,y\in B^{\K^n}_r\hspace*{-.3mm}(0)$,}\vspace{-1.3mm}
\]
with $a_\alpha\in \K^m$
and some $r>0$ such that
$B^{\K^n}_r\hspace*{-1mm}(x)\sub U$ and
\[
\sum_{\alpha\in \N_0^n} \|a_\alpha\|\,
r^{|\alpha|}\;<\;\infty\,.\vspace{-1mm}
\]
\end{defn}
\noindent
Compositions
of analytic functions are analytic
\cite[Theorem, p.\,70]{Ser}.
We can therefore define an \emph{$n$-dimensional
analytic manifold}~$M$
over a totally\linebreak
disconnected local field~$\K$
in the usual
way, as a Hausdorff
topological space~$M$,
equipped with a maximal set~$\cA$ of homeomorphisms
$\phi\colon U_\phi\to V_\phi$ from open subsets $U_\phi\sub M$ onto
open subsets $V_\phi\sub \K^n$
such that the transition map
$\psi\circ \phi^{-1}$
is analytic, for all $\phi,\psi\in \cA$.\\[2.1mm]
In the preceding situation, the homeomorphsms $\phi\in\cA$ are called  \emph{charts} for~$M$,
and~$\cA$ is called an \emph{atlas}.\\[2.3mm]
A mapping $f\colon M\to N$ between
analytic manifolds is called \emph{analytic}
if it is continuous and $\phi\circ f\circ \psi^{-1}$
(which is a map between open subsets of $\K^m$ and $\K^n$)
is analytic, for all charts $\phi\colon U_\phi\to V_\phi\sub\K^n$ of~$N$
and charts $\psi\colon U_\psi\to V_\psi\sub \K^m$ of~$M$.\\[2.1mm]
If $(M,\cA_M)$ and $(N,\cA_N)$ are analytic manifolds modelled on $\K^m$ and $\K^n$, respectively,
then $M\times N$ with the product topology is an analytic manifold modelled
on $\K^{m+n}$, with the atlas containing
$\{\phi\times \psi\colon \phi\in\cA_M,\psi\in\cA_N\}$.\\[2.3mm]
Every open subset $U$ of a fnite-dimensional $\K$-vector space~$E$
can be considered as an analytic manifold, endowed with the maximal
atlas containing the global chart $\id_U\colon U\to U$.
Notably, we can speak about analytic functions
\[
f\colon U\to V
\]
if $U$ and $V$ are open subsets of finite-dimensional normed $\K$-vector spaces
$E$ and $F$, respectively. Any such function is totally differentiable
at each $x\in U$, and we write
\begin{equation}\label{totdiffa}
f'(x)\colon E\to F
\end{equation}
for its total differential. Deviating from~(\ref{totdiffa}),
we write $f'(x)=\frac{d}{dt}\big|_{t=0}\,f(x+t)$ if $E=\K$, as usual
(which is $f'(x)(1)$ in the notation of~(\ref{totdiffa})).\\[2.3mm]
A \emph{Lie group}
over a totally disconnected local field~$\K$ is a group~$G$,
equipped with an
analytic manifold structure which turns
the group multiplication
\[
\mu_G \colon G\times G\to G\,,\quad
(x,y)\mto xy
\]
and the group inversion
$\eta_G \colon G\to G$,
$x\mto x^{-1}$
into analytic mappings.\\[2.3mm]
Lie groups over $\Q_p$ are also called \emph{$p$-adic Lie groups}.
Besides the additive groups of finite-dimensional
$\K$-vector spaces,
the most obvious examples of $\K$-analytic Lie
groups are general linear groups.
\begin{example}
$\GL_n(\K)=\dt^{-1}(\K^\times)$ is an open subset
of the space $M_n(\K)\isom \K^{n^2}$
of $n\times n$-matrices
and hence is an $n^2$-dimensional
$\K$-analytic manifold.
The group operations are
rational maps and hence analytic.
\end{example}
\noindent
More generally,
one can show (cf.\ \cite[Chapter~I, Proposition~2.5.2]{Mar}):
\begin{example}
Every (group of $\K$-rational points of a)
linear algebraic group defined over~$\K$
is a $\K$-analytic Lie group,
viz.\ every subgroup
$G\leq \GL_n(\K)$ which is the set of
joint zeros of a set of polynomial
functions $M_n(\K)\to\K$.
E.g., $\SL_n(\K)=\{ A \in \GL_n(\K)\colon
\dt(A )=1\}$
is a $\K$-analytic Lie group.
\end{example}
\begin{rem}
See Example~\ref{non-lin1} (first mentioned in \cite[Remark~9.7]{Lec})
for a Lie group~$G$
over $\K=\F_p(\!(X)\!)$
which is not a linear Lie group, i.e.,
which does not admit
a faithful, continuous
linear representation
$G\to\GL_n(\K)$ for any~$n$.
We shall also encounter
a $p$-adic Lie group
which is not isomorphic to a closed
subgroup of $\GL_n(\Q_p)$
for any $n\in\N$ (Example~\ref{non-lin2}).
\end{rem}
\begin{rem}
The analytic manifolds and Lie groups we consider need not be second countable
topological spaces. In particular, arbitrary discrete groups (countable or not)
can be considered as ($0$-dimensional) $p$-adic Lie groups,
which is natural from the point of view of topological groups.\\[2.3mm]
All the Lie groups and manifolds considered in these notes
are analytic and finite-dimensional.
For smooth Lie groups modelled on (not necessarily finite-dimensional)
topological vector spaces over a topological field, see \cite{BGN},
\cite{ZOO} and the references therein.
\end{rem}
{\bf Tangent vectors, tangent spaces, and tangent maps.}
Tangent vectors can be defined in many ways.
We choose a description which parallels
the so-called ``geometric'' tangent vectors
in the real case.\footnote{Compare \cite{Bou}, \cite{Ser}, also \cite{BGN}
for the following facts (although in different formulations).}
Given an $m$-dimensonal analytic manifold~$M$
over a totally disconnected local field~$\K$ and $p\in M$,
let us say that two analytic mappings
\[
\gamma\colon B^\K_\ve(0)\to M\quad\mbox{and}\quad \eta\colon B^\K_\delta(0)\to M
\]
with $\gamma(0)=\eta(0)=p$ are equivalent (and write $\gamma\sim_p\eta$)
if
\begin{equation}\label{oneorall}
(\phi\circ \gamma)'(0)=(\phi\circ \eta)'(0)
\end{equation}
for some chart $\phi\colon U\to V$ of~$M$ around~$p$ (i.e., with $p\in U$);
here $\ve,\delta>0$.
Then (\ref{oneorall}) holds for all charts around~$p$ (by the Chain Rule),
and we easily deduce that $\sim_p$ is an equivalence relation.
The equialence classes $[\gamma]$ with respect to $\sim_p$
are called \emph{tangent vectors} for~$M$ at~$p$.
The set $T_p(M)$ of all tangent vectors at~$p$ is called
the \emph{tangent space} of~$M$ at~$p$. We endow it with the unique
vector space structure making the bijection
\[
T_p(M)\to \K^m,\quad [\gamma]\mto (\phi\circ\gamma)'(0)
\]
a vector space isomorphism for some (and hence every) chart~$\phi$
of~$M$ around~$p$. The union
\[
T(M):=\bigcup_{p\in M}T_p(M)
\]
is disjoint and is called the \emph{tangent bundle} of~$M$.
If $f\colon M\to N$ is an analytic map between analytic manifolds,
we obtain a linear map
\[
T_p(f)\colon T_p(M)\to T_{f(p)}(N),\quad [\gamma]\mto [f\circ\gamma]
\]
called the \emph{tangent map of $f$} at~$p$.
The map $T(f)\colon T(M)\to T(N)$
taking $v\in T_p(M)$ to $T_p(f)(v)$ is called
the  \emph{tangent map} of~$f$.
If also $K$ is an analytic manifold over~$\K$ and $g\colon K\to M$
an analytic mapping, then
\begin{equation}\label{T-functor}
T(f\circ g)=T(f)\circ T(g)
\end{equation}
as both mappings take a tangent vector $[\gamma]\in T(K)$ to $[f\circ g\circ\gamma]$.
If $U$ is an open subset of a finite-dimensional vector space~$E$,
we identify $T(U)$ with $U\times E$ using the bijection
\begin{equation}\label{essentialid}
T(U)\to U\times E,\quad [\gamma]\mto (\gamma(0),\gamma'(0)).
\end{equation}
If $U$ is as before, $M$ an analytic manifold and $f\colon M\to U$
an analytic map, we write $df$ for the second component of the map
\[
T(f)\colon T(M)\to T(U)=U\times E,
\]
using the identification from~(\ref{essentialid}). Thus
\[
df([\gamma])=(f\circ \gamma)'(0).
\]
If $U$ and $V$ are open subsets of finite-dimensional $\K$-vector spaces~$E$
and $F$, respectively, and $f\colon U\to V$ is an analytic map, then
$T(f)\colon T(U)\to T(V)$ is the mapping
\[
U\times E\to V\times F,\quad (x,y)\mto (f(x),df(x,y))
\]
with $df(x,y)=f'(x)(y)$.\\[2.3mm]
{\bf Submanifolds and Lie subgroups.}
Let $M$ be an $m$-dimensional analytic manifold over a totally disconnected
local field~$\K$
and $n\in\{0,1,\ldots, m\}$.
A subset $N\sub M$ is called an $n$-dimensional \emph{submanifold}
of~$M$ if, for each $p\in N$, there exists a chart
$\phi\colon U\to V\sub \K^m$
of~$M$ around~$p$ such that
\[
\phi(U\cap N)=V\cap (\K^n\times \{0\}).
\]
Identifying $\K^n\times \{0\}\sub\K^m$ with $\K^n$ via $(x,0)\mto x$,
we get a homeomorphism
\[
\phi_N:=\phi|_{U\cap N} \colon U\cap N\to V\cap (\K^n\times\{0\})\sub \K^n.
\]
Then~$N$ is an $n$-dimensional analytic manifold
in a natural way, using the topology induced by~$M$ and the maximal atlas containing
all of the maps~$\phi_N$. Using this manifold structure,
the inclusion $j\colon N\to M$ is analytic.
For each $p\in N$, the tangent map $T_p(j)\colon T_p(N)\to T_p(M)$
is injective, and will be used to identify $T_p(N)$ with the image
of $T_p(j)$ in $T_p(M)$.
Moreover, for each analytic manifold~$K$, a mapping $f\colon K\to N$
is analytic if and only if $j\circ f\colon K\to M$ is analytic.
We say that a subgroup~$H$ of a Lie group~$G$ over~$\K$
is a \emph{Lie subgroup} if it is a submanifold.
By the preceding fact, the submanifold structure then turns the group
operations on~$H$ into analytic mappings and thus makes~$H$ a Lie group.
\begin{la}\label{spot-liesub}
Let $G$ be a Lie group over a totally disconnected local field~$\K$, of dimension~$m$.
A subgroup $H\sub G$ is a Lie subgroup of dimension~$n$
if and only if
there exists a chart
$\phi\colon U\to V\sub \K^m$
of~$G$ around~$e$ such that
$\phi(U\cap H)=V\cap (\K^n\times \{0\})$.
\end{la}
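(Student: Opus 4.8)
The plan is to prove the substantive (``if'') direction by a homogeneity argument: transport the given chart around~$e$ to an arbitrary point of~$H$ by means of a left translation, which is an analytic diffeomorphism of the manifold underlying~$G$.

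The ``only if'' direction is immediate from the definitions. If $H$ is a Lie subgroup, then by definition it is a submanifold of~$G$; applying the submanifold condition at the point $e\in H$ yields a chart $\phi\colon U\to V\sub\K^m$ of~$G$ around~$e$ with $\phi(U\cap H)=V\cap(\K^n\times\{0\})$, and $n=\dim H$.

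For the ``if'' direction, fix a chart $\phi\colon U\to V\sub\K^m$ around~$e$ as in the statement, and let $p\in H$ be arbitrary. Consider the left translation $L_p\colon G\to G$, $x\mto \mu_G(p,x)=px$. Since $\mu_G$ is analytic and $L_{p^{-1}}$ is a two-sided inverse for~$L_p$, the map~$L_p$ is an analytic diffeomorphism of~$G$; hence $U_p:=L_p(U)=pU$ is an open neighbourhood of~$p$ in~$G$, and $\psi:=\phi\circ(L_{p^{-1}}|_{U_p})\colon U_p\to V$ is a homeomorphism onto the open set $V\sub\K^m$. As $L_{p^{-1}}$ is an analytic diffeomorphism and~$\phi$ is a chart of~$G$, the transition maps of~$\psi$ with the charts of~$G$ are compositions of analytic maps, hence analytic; thus~$\psi$ belongs to the maximal atlas and is a chart of~$G$ around~$p$. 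It remains to compute the image of $U_p\cap H$: we have $L_{p^{-1}}(U_p\cap H)=L_{p^{-1}}(pU\cap H)=U\cap p^{-1}H$, and since $p\in H$ and~$H$ is a subgroup we get $p^{-1}H=H$, so $L_{p^{-1}}(U_p\cap H)=U\cap H$ and therefore $\psi(U_p\cap H)=\phi(U\cap H)=V\cap(\K^n\times\{0\})$. As $p\in H$ was arbitrary, $H$~satisfies the submanifold condition at each of its points, so~$H$ is an $n$-dimensional submanifold and hence a Lie subgroup of dimension~$n$.

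There is no real obstacle here; the only point deserving explicit mention is the observation that $\psi=\phi\circ L_{p^{-1}}$ genuinely lies in the maximal atlas of~$G$ (so that it counts as a ``chart of~$G$'' in the sense of the submanifold definition), which is exactly where one uses that left translations are analytic diffeomorphisms rather than merely homeomorphisms.
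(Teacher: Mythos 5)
Your proof is correct and follows essentially the same route as the paper: the paper's own argument also transports the chart around~$e$ to an arbitrary point $h\in H$ via $\phi_h\colon hU\to V$, $x\mto\phi(h^{-1}x)$, which is exactly your $\phi\circ L_{p^{-1}}|_{pU}$. Your additional remarks (that the translated map lies in the maximal atlas, and that $p^{-1}H=H$) just make explicit what the paper leaves tacit.
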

\begin{proof}
The necessity is clear. Sufficiency:
For each $h\in H$, the map $\phi_h\colon hU\to V$, $x\mto \phi(h^{-1}x)$
is a chart for~$G$ such that $\phi_h(hU\cap H)=\phi(U\cap H)=V\cap(\K^n\times\{0\})$.
\end{proof}
\noindent
{\bf The Lie algebra functor.}
An \emph{analytic vector field} on an $m$-dimensional $\K$-analytic manifold~$M$
is a mapping $X\colon M\to T(M)$ with $X(p)\in T_p(M)$
for all $p\in M$, which is analytic in the sense
that its local representative
\[
X_\phi:=d\phi\circ X\circ \phi^{-1}\colon V\to \K^m
\]
is an analytic function for each chart $\phi\colon U\to V\sub\K^m$
of~$M$. The set $\cV^\omega(M)$
of all analytic vector fields on~$M$ is a $\K$-vector space,
with pointwise addition and scalar multiplication.
Given $X,Y\in \cV^\omega(M)$, there is a unique vector field
$[X,Y]\in\cV^\omega(M)$ such that
\[
[X,Y]_\phi=dY_\phi\circ (\id_V,X_\phi)-dX_\phi\circ (\id_V,Y_\phi)
\]
for all charts $\phi\colon U\to V$ of~$M$,
and $[.,.]$ makes $\cV^\omega(M)$ a Lie algebra.\\[2.3mm]
If $G$ is a $\K$-analytic Lie group,
then its tangent space $L(G):=T_e(G)$
at the identity element can be made
a Lie algebra via the identification
of $v\in L(G)$ with the corresponding
left invariant vector field $v_\ell$ on~$G$
given by $v_\ell(g):=T\lambda_g(x)$ for $g\in G$
with left translation $\lambda_g\colon G\to G$, $x\mto gx$
(noting that the left invariant
vector fields form a Lie subalgebra
of~$\cV^\omega(G)$).\\[2.1mm]
If $\alpha \colon G\to H$ is an analytic
group homomorphism between $\K$-analytic Lie groups,
then the tangent map $L(\alpha):=T_e(\alpha)\colon L(G)\to L(H)$
is a linear map and actually a Lie algebra
homomorphism (cf.\ \cite[Chapter~III, \S3, no.\,8]{Bou} and
Lemma~5.1 on p.\,129 in
\cite[Part II, Chapter~V.1]{Ser}).
An \emph{analytic automorphism} of a Lie group $G$ is
an invertible group homomorphism $\alpha\colon G\to G$
such that both $\alpha$ and $\alpha^{-1}$ are analytic.
For example, each inner automorphism~$I_g$
of~$G$ is analytic. As usual, we abbreviate $\Ad_g:=L(I_g)$.\\[2.3mm]
Since $I_g\circ I_h=I_{gh}$ for $g,h\in G$,
we have $\Ad_{gh}=\Ad_g\circ\Ad_h$ by (\ref{T-functor}).
In Section~\ref{sec-typeR}, we shall use the continuity of the adjoint representation
of $G$ on its Lie algebra $\cg:=L(G)$. Even more is true (see Definition~8
in \cite[Chapter~III, \S3, no.\,12]{Bou}
and the lines preceding it):
\begin{numba}\label{Ad-ana}
The map $\Ad\colon G\to\Aut(\cg)\sub \GL(\cg)$, $g\mto \Ad_g$ is analytic.
\end{numba}
\subsection*{Ultrametric inverse function theorems}\label{ultinv}
Since small perturbations
do not change the size of a given non-zero vector
in the ultrametric case
(see~(\ref{winner})),
the ultrametric inverse function theorem
has a nicer form
than its classical real counterpart.
Around a point~$p$
with invertible differential,
an analytic map~$f$ behaves like an affine-linear map (its linearization).
If the differential at~$p$ is an isometry, then also~$f$ is isometric
on a neighbourhood of~$p$.\\[2mm]
In the following two lemmas,
we let $\K$ be a totally disconnected local
field and $|.|$ be an absolute value on~$\K$
defining its topology.
We fix an ultrametric norm~$\|.\|$ on a finite-dimensional $\K$-vector space~$E$ and write
\[
\Iso(E,\|.\|):=\{\alpha \in\GL(E)\colon (\forall x\in E)\;\|\alpha(x)\|=\|x\|\}
\]
for the group of linear isometries.
It is well-known that $\Iso(E,\|.\|)$ is open in $\GL(E)$
(see, e.g., \cite[Lemma~7.2]{Isr}), but we shall not use this fact.
Given $x\in E$ and $r>0$, we abbreviate $B_r(x):=B^E_r(x)$.
The total differential of $f$ at~$x$
is denoted by~$f'(x)$.
The ultrametric inverse function theorem
(for analytic functions)
subsumes the following:\footnote{A proof is obtained, e.g.,
by combining \cite[Proposition~7.1\,(a)$'$ and (b)$'$]{Isr}
with
the inverse function theorem for
analytic maps from \cite[p.\,73]{Ser},
recalling that analytic maps are strictly differentable
at each point (in the sense of \cite[1.2.2]{Fas}, by \cite[4.2.3 and 3.2.4]{Fas}.}
\begin{la}\label{invfct}
Let $f\colon U\to E$
be an analytic map on an open subset $U\sub E$
and $x\in U$
such that $f'(x)\in \GL(E)$.
Then there exists $r>0$ such that
$B_r(x)\sub U$,
\begin{equation}\label{great}
f(B_t(y))\;=\;f(y)+ f'(x)B_t(0)\quad
\mbox{for all $y\in B_r(x)$ and $t\in \;]0,r]$,}
\end{equation}
and $f|_{B_r(x)}\colon B_r(x)\to f(B_r(x))$ is an
analytic diffeomorphism. If $f'(x)\in\Iso(E,\|.\|)$,
then~$r$ can be chosen such that
$B_r(x)\sub U$,
\begin{equation}\label{greater}
f(B_t(y))\;=\;B_t(f(y))\quad
\mbox{for all $y\in B_r(x)$ and $t\in \;]0,r]$,}
\end{equation}
and $f|_{B_r(x)}\colon B_r(x)\to B_r(f(x))$ is an isometric,
analytic diffeomorphism.\,\Punkt
\end{la}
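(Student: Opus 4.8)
The plan is to reduce everything to the cited ingredients: the inverse function theorem for analytic maps (giving a local analytic inverse once $f'(x)$ is invertible) and the metric estimates of \cite[Proposition~7.1]{Isr} which control how $f$ distorts balls near a point of strict differentiability. First I would fix a norm-adapted coordinate system: since all norms on the finite-dimensional space $E$ are equivalent and $B_t(y)$ is a subgroup of $(E,+)$ for each $t>0$, it is harmless to argue with the given ultrametric norm $\|.\|$ throughout. Write $A:=f'(x)\in\GL(E)$. Because $f$ is analytic, it is strictly differentiable at~$x$ in the sense of \cite[1.2.2]{Fas}; concretely, for every $c>0$ there is $\rho>0$ with $B_\rho(x)\subseteq U$ and
\[
\|f(y)-f(y')-A(y-y')\|\;\leq\; c\,\|y-y'\|\qquad\text{for all }y,y'\in B_\rho(x).
\]
Choosing $c<1/\|A^{-1}\|_{\op}$ (so that $c\|z\|<\|A(z)\|$ for all $z\neq 0$, by the lower bound on $\|A(z)\|$ recorded in the subsection on ultrametric norms and balls), the ultrametric identity~(\ref{winner}) forces
\[
\|f(y)-f(y')\|\;=\;\|A(y-y')\|\qquad\text{for all }y,y'\in B_\rho(x),
\]
since the "error" term is strictly smaller in norm than the dominant term $A(y-y')$.

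From here the first assertion follows quickly. Fix $r\in\,]0,\rho]$ small enough that additionally $f|_{B_r(x)}$ is an analytic diffeomorphism onto an open set (inverse function theorem, \cite[p.\,73]{Ser}); shrinking $r$ further if necessary we may assume $A(B_r(0))$ is the image of a ball under $A$, i.e.\ $A(B_t(0))=A(B_r(0))$-scaled appropriately, but in fact we only need the exact identity just displayed. For $y\in B_r(x)$ and $t\in\,]0,r]$: the displayed norm identity gives $\|f(z)-f(y)\|=\|A(z-y)\|$, so $z\in B_t(y)$ iff $\|A(z-y)\|<t$... — more cleanly, I would argue the set equality~(\ref{great}) by double inclusion. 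If $z\in B_t(y)$ then $f(z)-f(y)=A(z-y)+\text{(smaller term)}$, and since $z-y\in B_t(0)$ and $A(B_t(0))$ is a subgroup, the smaller term lies in $A(B_t(0))$ too (its norm is $<\|A(z-y)\|\le \|A\|_{\op}t$, and one checks this keeps it inside; here I adjust $r$ so that the strict-differentiability constant $c$ also satisfies $cB_t(0)\subseteq A(B_t(0))$, valid because $A(B_t(0))$ is open around $0$). Hence $f(z)\in f(y)+A(B_t(0))$. Conversely, given $w\in f(y)+A(B_t(0))$, set $z:=f^{-1}(w)$ using the local inverse; the norm identity gives $\|A(z-y)\|=\|f(z)-f(y)\|=\|w-f(y)\|<$ the relevant bound, forcing $z-y\in B_t(0)$, i.e.\ $z\in B_t(y)$. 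That proves~(\ref{great}), and the diffeomorphism claim is the inverse function theorem.

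For the isometric case, suppose $A=f'(x)\in\Iso(E,\|.\|)$. Then the displayed norm identity becomes directly $\|f(y)-f(y')\|=\|y-y'\|$ for $y,y'\in B_\rho(x)$, i.e.\ $f|_{B_\rho(x)}$ is isometric. Consequently $f(B_t(y))\subseteq B_t(f(y))$ trivially, and equality~(\ref{greater}) follows because an isometry of an ultrametric space maps a ball into a ball of the same radius and, being open (it is a local diffeomorphism), maps it onto such a ball; alternatively one gets surjectivity from~(\ref{great}) applied with $A(B_t(0))=B_t(0)$. Taking $r\le\rho$ as before so that $f|_{B_r(x)}$ is an analytic diffeomorphism, the restriction $f|_{B_r(x)}\colon B_r(x)\to B_r(f(x))$ is then an isometric analytic diffeomorphism, as claimed.

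The main obstacle is purely bookkeeping: making the single radius $r$ simultaneously small enough for (i) the strict-differentiability estimate with the right constant $c$, (ii) the inclusion $cB_t(0)\subseteq A(B_t(0))$ uniformly in $t\in\,]0,r]$ — which is where one uses that scaling a ball commutes with the action of the invertible linear map up to a bounded factor, handled cleanly by replacing $\|.\|$ with the equivalent norm $\|A(.)\|$ if desired — and (iii) the domain of the analytic local inverse. None of these steps is deep; the ultrametric identity~(\ref{winner}) is doing all the real work, which is exactly the phenomenon the preamble to the subsection advertises.
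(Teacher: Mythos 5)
The paper itself does not write out a proof: the footnote attached to the lemma says it follows by combining the quantitative ultrametric estimates of \cite[Proposition~7.1\,(a)$'$ and (b)$'$]{Isr} with the analytic inverse function theorem of \cite[p.\,73]{Ser}, using that analytic maps are strictly differentiable. Your proposal tries to re-derive the \cite{Isr} part from scratch, and it does the easy half correctly: strict differentiability with a constant $c<1/\|f'(x)^{-1}\|_{\op}$ plus (\ref{winner}) gives $\|f(z)-f(z')\|=\|f'(x)(z-z')\|$ near $x$, hence injectivity and the inclusion $f(B_t(y))\sub f(y)+f'(x)(B_t(0))$, and Serre's theorem gives the local analytic inverse.

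The genuine gap is in the reverse inclusion, i.e.\ surjectivity onto the affine ball, which is exactly where the content of the cited \cite[Proposition~7.1]{Isr} lies. You take $w\in f(y)+f'(x)(B_t(0))$ and set $z:=f^{-1}(w)$ ``using the local inverse''; but the local inverse is only defined on the open set $f(B_r(x))$, and nothing you have established shows that $f(y)+f'(x)(B_t(0))$ is contained in that image -- that containment is precisely what is to be proved, so the step is circular. Openness of $f$ does not rescue it: in a totally disconnected space an open, closed subset of $f(y)+f'(x)(B_t(0))$ need not be the whole set, so your fallback in the isometric case (``being open, it maps the ball onto such a ball'') also fails. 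Surjectivity genuinely needs completeness or compactness: either a Newton/successive-approximation iteration $z_{n+1}=z_n+f'(x)^{-1}(w-f(z_n))$, shown to stay in $B_t(y)$ and to converge (this is how the Banach-space result in \cite{Isr} is obtained), or, in the present locally compact setting, the observation that your distortion identity makes $f|_{B_t(y)}$ an isometric embedding of a compact metric space into an isometric copy of itself (equivalently, a Haar-measure comparison of the two compact open sets), which forces surjectivity. A secondary, fixable slip: even granting the existence of $z$, the inference from $\|f'(x)(z-y)\|=\|w-f(y)\|$ to $z-y\in B_t(0)$ is not valid for general $f'(x)\in\GL(E)$; one must write $w-f(y)=f'(x)(v)$ with $\|v\|<t$ and use the ultrametric inequality on $z-y=v-f'(x)^{-1}(e)$, where $e$ is the strict-differentiability error, as you essentially did in the forward inclusion.
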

\noindent
It is useful that $r$ can be chosen
uniformly in the presence of parameters.
As a special case of \cite[Theorem~7.4\,(b)$'$]{Isr},
an `ultrametric
inverse function theorem with parameters'
is available:\footnote{To achieve that $f_q|_{B_r(x)}$
is an isometry for all $q\in Q$,
note that \cite[Lemma~6.1\,(b)]{Isr}
applies to all of these functions by \cite[p.\,239, lines 7--8]{Isr}.}
\begin{la}\label{invpar}
\!Let $F$ be a finite-dimensional $\K$-vector space,
$P\!\sub \! F$ and
$U\!\sub \! E$ be open,
$f\colon P \times U\to E$
be a $\K$-analytic map, $p\in P$
and $x\in U$
such that $f'_p(x)\in \Iso(E,\|.\|)$,
where $f_p:=f(p,\sbull)\colon U\to E$.
Then there exists an open neighbourhood $Q\sub P$ of~$p$
and $r>0$ such that
$B_r(x)\sub U$,
\begin{equation}\label{great2}
f_q(B_t(y))\;=\;f_q(y)+ B_t(0)
\end{equation}
and $f_q|_{B_t(y)}$ is an isometry,
for all $q\in Q$, $y\in B_r(x)$ and $t\in \;]0,r]$.\,\Punkt
\end{la}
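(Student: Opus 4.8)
The plan is to deduce Lemma~\ref{invpar} from the cited result \cite[Theorem~7.4\,(b)$'$]{Isr} together with Lemma~\ref{invfct}, by the following route. First I would note that the statement to be proved is essentially a parametrized version of the isometric case of Lemma~\ref{invfct} (the part involving~(\ref{greater})), so it suffices to transport the uniformity (in the parameter~$q$) from the abstract inverse function theorem with parameters. Concretely, I would apply \cite[Theorem~7.4\,(b)$'$]{Isr} to the map~$f$, viewed as a family $(f_q)_{q\in P}$ of analytic self-maps of the open set~$U$, at the base point $(p,x)$ where by hypothesis $f'_p(x)\in\Iso(E,\|.\|)$. This yields an open neighbourhood $Q\sub P$ of~$p$ and some $r>0$ such that $B_r(x)\sub U$ and, for each $q\in Q$, the restriction $f_q|_{B_r(x)}$ is an analytic diffeomorphism onto its (open) image, with the ball-mapping behaviour $f_q(B_t(y))=f_q(y)+B_t(0)$ for all $y\in B_r(x)$ and $t\in\;]0,r]$ --- this is precisely~(\ref{great2}).

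Next I would address the isometry claim, i.e.\ that $f_q|_{B_t(y)}$ is an isometry for each such $q,y,t$. Here the point is that shrinking $Q$ if necessary we may assume $f'_q(z)\in\Iso(E,\|.\|)$ for all $q\in Q$ and $z\in B_r(x)$: indeed $(q,z)\mto f'_q(z)$ is continuous (analytic maps are $C^1$), $\Iso(E,\|.\|)$ contains $f'_p(x)$, and --- as remarked in the text before Lemma~\ref{invfct} --- $\Iso(E,\|.\|)$ is open in $\GL(E)$ (the paper references \cite[Lemma~7.2]{Isr} for this, though it prefers not to invoke it; alternatively, openness of the isometry group is exactly what the footnote's appeal to \cite[Lemma~6.1\,(b)]{Isr} via \cite[p.\,239, lines 7--8]{Isr} supplies, giving the isometry conclusion uniformly). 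Then for fixed $q\in Q$ and $y\in B_r(x)$ with $t\in\;]0,r]$, the restriction $f_q|_{B_t(y)}$ has differential an isometry at every point of the ball, and the isometric case of Lemma~\ref{invfct} (applied with base point~$y$) shows $f_q$ is isometric on a neighbourhood of~$y$; a standard connectedness/overlap argument over the ball $B_t(y)$ --- or simply a direct estimate using~(\ref{winner}) --- upgrades this to $f_q$ being an isometry on all of $B_t(y)$.

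The main obstacle I anticipate is bookkeeping rather than mathematics: making sure the \emph{same} radius~$r$ and neighbourhood~$Q$ work simultaneously for the linear identity~(\ref{great2}) and for the isometry assertion, and that these are uniform across all $y\in B_r(x)$ and all $t\in\;]0,r]$ at once. The cleanest way to secure this is to let the cited parametrized theorem hand us $r$ and $Q$ for~(\ref{great2}) directly, then shrink $Q$ (not $r$) using continuity of $(q,z)\mapsto f'_q(z)$ and openness of $\Iso(E,\|.\|)$ so that the isometry property of the differential holds on the whole of $Q\times B_r(x)$; with that in hand the isometry of $f_q$ on each sub-ball is automatic. So the proof is really: invoke \cite[Theorem~7.4\,(b)$'$]{Isr} for the hard analytic content, then cut down the parameter set by an elementary continuity argument, and finally read off the isometry statement from Lemma~\ref{invfct} (or from~(\ref{winner})) applied fibrewise. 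No new estimates beyond those already encapsulated in the quoted results are needed.
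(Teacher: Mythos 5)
Your proposal matches the paper's treatment: the lemma is stated there without proof, justified only by the citation of \cite[Theorem~7.4\,(b)$'$]{Isr} for~(\ref{great2}) and the footnote invoking \cite[Lemma~6.1\,(b)]{Isr} (via \cite[p.\,239, lines 7--8]{Isr}) for the uniform isometry claim, which is exactly the route you take. One small caution: the ``connectedness/overlap argument'' you mention as an alternative is not available over an ultrametric field (balls are totally disconnected), but your primary fallback --- the direct estimate via~(\ref{winner}), or indeed deducing the isometry of $f_q|_{B_t(y)}$ directly from~(\ref{great2}) together with injectivity --- is sound, so this does not affect the proof.
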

\section{Construction of small open subgroups}\label{constsmall}
It is essential for our following discussions
that Lie groups over totally disconnected local fields have a basis
of identity neighbourhoods consisting
of compact open subgroups which correspond to balls
in the Lie algebra. In this section,
we explain how these compact open subgroups
can be constructed.\\[2.3mm]
Let $G$ be a Lie group over a totally disconnected local field~$\K$
and $|.|$ be an absolute value on~$\K$ defining its topology.
Fix an ultrametric norm $\|.\|$ on $\cg:=L(G)$
and abbreviate $B_t(x):=B^{\cg}_t(x)$
for $x\in \cg$ and $t>0$.
Let
\[
\phi\colon U\to V
\]
by an analytic diffeomorphism from an open identity neighbourhood
$U\sub G$ onto an open $0$-neighbourhood $V\sub \cg$,
such that $\phi(e)=0$ and
\begin{equation}\label{onceorso}
d\phi|_\cg=\id_\cg.
\end{equation}
\begin{numba}\label{pre-laballs}
After shrinking~$U$ (and~$V$), we may assume that~$U$
is a compact open subgroup of~$G$.
Then
\[
\mu_V\colon V\times V\to V,\; (x,y)\mto
x*y :=\phi(\phi^{-1}(x)\phi^{-1}(y))
\]
is a group multiplication on~$V$ with neutral element~$0$
which turns~$V$ into an analytic Lie group and~$\phi$
into an isomorphism of Lie groups.
It is easy to see that the first order Taylor
expansions of multiplication and inversion in $(V,*)$
at $(0,0)$ and $0$, respectively, are given by
\begin{equation}\label{Tay01}
x*y\;=\; x+y+\cdots
\end{equation}
and
\begin{equation}\label{Tay02}
x^{-1}\;=\; -x+\cdots
\end{equation}
(compare \cite[p.\,113]{Ser}).
Applying the Ultrametric Inverse Function
Theorem with Parameters (Lemma~\ref{invpar}) to the maps
$(x,y)\mto x*y$
and $(x,y)\mto y*x$ around $(0,0)$,
we find $R>0$ with $B_R(0)\sub V$ such that
\begin{equation}\label{premeas}
x*B_t(0)\;=\; x+B_t(0)\;=\; B_t(0)*x
\end{equation}
for all $x\in B_R(0)$ and $t\in \;]0,R]$
(exploiting that both relevant partial differentials
are $\id_\cg$ and hence an isometry,
by~(\ref{Tay01})).
Notably, (\ref{premeas}) entails that
\[
B_t(0)*B_t(0)=B_t(0)\;\;\mbox{for each $t\in \;]0,R]$,}
\]
whence
$y^{-1}\in B_t(0)$ for each $t\in\;]0,R]$ and $y\in B_t(0)$.
\end{numba}
Summing up (with $B_t:=B_t(0))$:
\begin{la}\label{laballs}
$(B_t,*)$ is a group
for each $t\in \;]0,R]$ and hence
$B^\phi_t:=\phi^{-1}(B_t)$ is a compact
open subgroup of~$G$, for each $t\in \;]0,R]$.
Moreover, $B_t$ is a normal subgroup of $(B_R,*)$,
whence $B^\phi_t$ is normal in $B^\phi_R$.\,\Punkt
\end{la}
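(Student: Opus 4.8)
$(B_t,*)$ is a group for each $t\in\,]0,R]$, and hence $B^\phi_t:=\phi^{-1}(B_t)$ is a compact open subgroup of $G$; moreover $B_t$ is normal in $(B_R,*)$.

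The plan is to read off everything from the single identity $x*B_t(0)=x+B_t(0)=B_t(0)*x$ established in \ref{pre-laballs}, together with the associativity of $*$ inherited from $V$ and the fact that $\|.\|$ is ultrametric, so that $B_t(0)$ is a subgroup of $(\cg,+)$.

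First I would verify that $(B_t,*)$ is closed under the operation. Since $\|.\|$ is ultrametric, $B_t=B_t(0)$ is a subgroup of $(\cg,+)$, so in particular $x\in B_t$ implies $x+B_t(0)=B_t$; combining this with $x*B_t(0)=x+B_t(0)$ from \eqref{premeas} gives $x*B_t\subseteq B_t$ for every $x\in B_t$. This shows $*$ restricts to a binary operation $B_t\times B_t\to B_t$. Associativity on $B_t$ is immediate as it is inherited from $(V,*)$. The neutral element $0$ lies in $B_t$ and is still neutral. For inverses, the last sentence of \ref{pre-laballs} already records that $y^{-1}\in B_t$ for each $y\in B_t$ (since $B_t*B_t=B_t$ forces $y^{-1}\in B_t$: from $y*y^{-1}=0\in B_t$ and $y\in B_t$ we get $y^{-1}\in y^{-1}*B_t = y^{-1}*(y*B_t)\cap\cdots$; more simply, $y^{-1}=y^{-1}*0\in y^{-1}*B_t$ and, applying \eqref{premeas} with $x=y^{-1}$, this equals $y^{-1}+B_t$, while also $0=y*y^{-1}\in y+B_t$ by \eqref{premeas}, forcing $y^{-1}\in -y+B_t=B_t$). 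Hence $(B_t,*)$ is a group.

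Next, since $\phi\colon U\to V$ is an isomorphism of Lie groups (by \ref{pre-laballs}) and $B_t\subseteq B_R\subseteq V$, the set $B^\phi_t=\phi^{-1}(B_t)$ is a subgroup of $(B_R,*)$ pulled back, hence a subgroup of $G$; it is compact, being the image of the compact set $\overline{B}_t\cap B_t=B_t$? — more carefully, $B_t$ is open and closed in $\cg$ and contained in the compact open subgroup underlying $V$, hence compact, so $B^\phi_t$ is compact; and $B^\phi_t$ is open in $G$ because $B_t$ is open in $V$ and $\phi^{-1}$ is a homeomorphism. Finally, for normality: given $x\in B_R$ and $y\in B_t$, \eqref{premeas} gives $x*y*x^{-1}\in (x+B_t)*x^{-1}=(x*B_t)*x^{-1}$, and applying \eqref{premeas} again (now on the right, with the element $x+B_t$ ranging in $B_R$, using $t\le R$ so $B_t\subseteq B_R$ and $x+B_t\subseteq B_R$) yields $x*B_t*x^{-1}=(x+B_t)+(x^{-1})=x+B_t+(-x+\cdots)$; comparing with \eqref{Tay02} this lies in $B_t$ because $B_t$ is an additive subgroup and $x*x^{-1}=0$. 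Thus $xB_tx^{-1}\subseteq B_t$, so $B_t$ is normal in $(B_R,*)$, and transporting by $\phi^{-1}$ shows $B^\phi_t$ is normal in $B^\phi_R$.

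The only subtle point is the conjugation computation for normality: one must be careful that \eqref{premeas} is being applied to elements that genuinely lie in $B_R(0)$, which is why the hypothesis $t\le R$ is used to keep $x+B_t\subseteq B_R$. Everything else is formal manipulation with the additive subgroup structure of ultrametric balls and the fact, already proved in \ref{pre-laballs}, that left and right $*$-translation by a small element agrees with additive translation. I expect no real obstacle beyond bookkeeping; the genuine content was all in \eqref{premeas}.
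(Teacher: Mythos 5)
Your closure and inverse steps are in line with the paper's intent (the lemma is explicitly a ``summing up'' of \ref{pre-laballs}, and the fact that $y^{-1}\in B_t$ for $y\in B_t$ is already recorded there, so citing it is enough). A small caution: your parenthetical attempts to re-derive that fact are circular, since applying~(\ref{premeas}) with $x=y^{-1}$ presupposes $y^{-1}\in B_R$, which is what is being proved; the non-circular route is that left $*$-translation by $y$ is a bijection of $V$ mapping $B_t$ onto $y*B_t=y+B_t=B_t$, so the unique solution of $y*z=0$, namely $z=y^{-1}$, must lie in $B_t$.

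The normality step, however, has a genuine gap as written. The identity~(\ref{premeas}) says that \emph{the ball $B_t(0)$} translated by $x$ under $*$ equals the additive translate; it does not say $w*x^{-1}=w+x^{-1}$ for individual elements $w$, so the equality $(x+B_t)*x^{-1}=(x+B_t)+x^{-1}$ is not an instance of~(\ref{premeas}), and the subsequent appeal to~(\ref{Tay02}) leaves uncontrolled higher-order terms ``$\cdots$'' that you cannot simply absorb into $B_t$ without further estimates. The correct argument is shorter and uses the \emph{two-sided} content of~(\ref{premeas}): for $x\in B_R$ one has $x*B_t=x+B_t=B_t*x$, hence by associativity of $*$ in the group $(V,*)$,
\[
x*B_t*x^{-1}=(B_t*x)*x^{-1}=B_t*(x*x^{-1})=B_t*0=B_t,
\]
which gives normality of $B_t$ in $(B_R,*)$ directly, and then of $B^\phi_t$ in $B^\phi_R$ via the isomorphism $\phi$. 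So the ingredients you isolated are the right ones, but the conjugation computation should be rerouted through the commutation relation $x*B_t=B_t*x$ rather than through the Taylor expansion of inversion.
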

Thus small balls in $\cg$ correspond to
compact open subgroups in~$G$.
\begin{rem}\label{smidx}
(\ref{premeas}) entails that
the indices of $B_t$ in $(B_R,+)$
and $(B_R,*)$ coincide (as the cosets coincide),
for all $t\in\;]0,R]$.
\end{rem}
\begin{numba}\label{the-r}
Now consider an analytic endomorphsm
$\alpha\colon G\to G$, or, more generally,
an analytic homomorphism
\[
\alpha\colon G_0\to G
\]
defined on an open subgroup $G_0\sub G$. For the domain~$U$ of~$\phi$, assume that $U\sub G_0$.
After shrinking~$R$ (if necessary) we may assume that
\begin{equation}\label{better-r}
\alpha(B^\phi_R)\sub U,
\end{equation}
whence an analytic homomorphism
\begin{equation}\label{my-beta}
\beta:=\phi\circ \alpha|_{B^\phi_R}\circ\phi^{-1}|_{B_R}\colon B_R\to V
\end{equation}
can be defined such that
\begin{equation}\label{equibeta}
\beta\circ \phi|_{B^\phi_R}=\phi\circ\alpha|_{B^\phi_R}.
\end{equation}
As a consequence of~(\ref{onceorso}), we have
\begin{equation}\label{spot-der}
\beta'(0)=L(\alpha).
\end{equation}
\end{numba}
For $\alpha\colon G\to G$ an analytic automorphism
and $\|.\|$ adapted to $L(\alpha)$,
we shall see in Section~\ref{sec-endo} that the
groups $B^\phi_t:=\phi^{-1}(B_t)$
are tidy for~$\alpha$ and $t\in \;]0,R]$ close to~$0$,
as long as $\conp(\alpha)$ is closed
(and also the case of $\alpha\colon G_0\to G$ will be used).
This motivates us to calculate the displacement
indices for the compact open subgroups $B^\phi_t\sub G$.
\begin{la}\label{displ-balls}
Let $G$ be a Lie group over a totally disconnected local field, $G_0$ be an open subgroup of~$G$
and $\alpha\colon G_0\to G$ an analytic homomorphism which is
an analytic diffeomorphism onto an open subgroup $\alpha(G_0)$ of~$G$.
Let~$\phi$ be as before, $\|.\|$ be adapted to $L(\alpha)$,
and~$R$ be as in {\rm\ref{the-r}}.
Then there exists $t_0\in\;]0,R]$ such that
\[
[\alpha(B^\phi_t):\alpha(B^\phi_t)\cap B^\phi_t]=s(L(\alpha))\;\;
\mbox{for all $\,t\in\;]0,t_0]$.}
\]
\end{la}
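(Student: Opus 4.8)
The plan is to transport the problem to the Lie algebra via the chart $\phi$, where Theorem~\ref{thm-lincase} already gives us everything we need about the linear endomorphism $L(\alpha)$. Write $A:=L(\alpha)\in\End_\K(\cg)$ and recall from \ref{the-r} the local representative $\beta:=\phi\circ\alpha|_{B^\phi_R}\circ\phi^{-1}|_{B_R}\colon B_R\to V$, which satisfies $\beta'(0)=A$ by~(\ref{spot-der}) and the equivariance~(\ref{equibeta}); also recall from Remark~\ref{smidx} that for $t\in\;]0,R]$ the $*$-cosets of $B_t$ in $B_R$ agree with the additive cosets, so index counts may be computed either in $(\cg,+)$ or in $(V,*)$. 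Since $\|.\|$ is adapted to $A$, the characteristic subspace decomposition $\cg=\bigoplus_{\rho\geq 0}\cg_\rho$ holds with $\|.\|$ a maximum norm for it, $\|A|_{\cg_0}\|_{\op}<1$, and $\|A(v)\|=\rho\|v\|$ for $v\in\cg_\rho$, $\rho>0$ (Lemma~\ref{fact-adapted}). Hence for the purely linear map $A$ one has, for every $t\in\;]0,R]$,
\[
A(B_t)=\prod_{\rho\geq 0}A|_{\cg_\rho}(B_t^{\cg_\rho})\sub \prod_{\rho\geq 0}B_{\max\{\rho,c\}t}^{\cg_\rho}
\]
with $c:=\|A|_{\cg_0}\|_{\op}<1$, and the index $[A(B_t):A(B_t)\cap B_t]$ equals $\prod_{\rho>1}\rho^{\dim\cg_\rho}$ (the factors with $\rho\le 1$ contributing $1$, the $\cg_0$-factor being absorbed into $B_t$ since $c<1$), which by Theorem~\ref{thm-lincase}(c),(d) is exactly $s(A)=s(L(\alpha))$.

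Next I would replace $\beta$ by $A$ up to the precision needed for the index count. The key point is that $\beta(x)=A(x)+(\text{higher order})$: more precisely, there is $t_1\in\;]0,R]$ and a function $r(t)$ with $r(t)/t\to 0$ as $t\to 0$ such that $\|\beta(x)-A(x)\|\le r(t)$ for all $x\in B_t$, $t\le t_1$ — this follows from the local power series expansion of $\beta$ together with $\beta(0)=0$, $\beta'(0)=A$ (the quadratic and higher terms are $O(t^2)$ in norm on $B_t$). Choose $t_0\le t_1$ small enough that $r(t)<t$ for $t\le t_0$; then by the ultrametric property~(\ref{winner}), for each $v\in\cg_\rho$ with $\rho\ge 1$ we get $\|\beta(x)\text{-component}\|$ agreeing with the $A(x)$-component size, and likewise the images $\beta(B_t)$ and $A(B_t)$ have the same image modulo $B_t$. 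Concretely: $\beta(B_t)+B_t = A(B_t)+B_t$ as subsets of $(\cg,+)$ whenever $r(t)<t$, because on each characteristic block the perturbation $\beta-A$ maps $B_t^{\cg_\rho}$ into $B_{r(t)}^{\cg_\rho}\sub B_t^{\cg_\rho}$. Since $\alpha$ is injective (it is an analytic diffeomorphism onto its image), $\beta$ is injective too, so $\beta|_{B_t}$ is a bijection onto $\beta(B_t)=:U_t'$, and the displacement index $[\alpha(B^\phi_t):\alpha(B^\phi_t)\cap B^\phi_t]=[U_t':U_t'\cap B_t]$ (using~(\ref{equibeta}) and the coset remark) equals $[A(B_t):A(B_t)\cap B_t]=s(L(\alpha))$.

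The one subtlety — and the step I expect to need the most care — is that $\beta(B_t)$ need not literally be a subgroup of $(\cg,+)$, only of $(V,*)$, so "$[U_t':U_t'\cap B_t]$" must be read as the index $[\,\beta(B_t):\beta(B_t)\cap B_t\,]$ in the group $(V,*)$ (equivalently, as $[\alpha(B^\phi_t):\alpha(B^\phi_t)\cap B^\phi_t]$ in $G$), and one must check the perturbation estimate is compatible with the $*$-structure rather than just with $+$. This is handled by~(\ref{premeas}): for $t\le R$ the $*$-translate $x*B_t(0)=x+B_t(0)$ coincides with the additive translate, so left cosets of $B_t$ inside $B_R$ for $(V,*)$ and for $(\cg,+)$ literally agree, and the counting argument of the previous paragraph — which took place entirely at the level of "image modulo $B_t$" — goes through verbatim in $(V,*)$. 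Putting the three pieces together — the linear computation giving $s(L(\alpha))$, the higher-order estimate letting us pass from $A$ to $\beta$ on $B_t$ for $t\le t_0$, and the coset compatibility of $*$ with $+$ on balls — yields $[\alpha(B^\phi_t):\alpha(B^\phi_t)\cap B^\phi_t]=s(L(\alpha))$ for all $t\in\;]0,t_0]$, as claimed.
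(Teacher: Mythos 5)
Your proposal is correct, and its architecture coincides with the paper's: transport the problem through $\phi$ into the local group $(B_R,*)$, compare $\beta=\phi\circ\alpha\circ\phi^{-1}$ with its linearization $L(\alpha)$, and use the coincidence of $*$-cosets and additive cosets of $B_t$ inside $B_R$ (Remark~\ref{smidx}, resting on~(\ref{premeas})) together with Theorem~\ref{thm-lincase} to identify the resulting linear index with $s(L(\alpha))$. The one step you handle differently is the comparison of $\beta$ with $L(\alpha)$. The paper applies the Ultrametric Inverse Function Theorem (Lemma~\ref{invfct}) to get the \emph{exact} identity $\beta(B_t)=L(\alpha)(B_t)$, i.e.\ $\alpha(B^\phi_t)=\phi^{-1}(L(\alpha)(B_t))$ as in~(\ref{umstell}) --- this is where the hypothesis that $\alpha$ is a diffeomorphism onto an open subgroup, hence $L(\alpha)\in\GL(\cg)$, enters --- and then evaluates the index by comparing both groups against an auxiliary ball $B_\theta\sub L(\alpha)(B_t)$. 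You instead use only the second-order Taylor estimate $\|\beta(x)-L(\alpha)(x)\|\le r(t)$ with $r(t)<t$, which yields the weaker but sufficient set identity $\beta(B_t)+B_t=L(\alpha)(B_t)+B_t$ and hence equality of the two coset counts. Your variant is more elementary (no inverse function theorem) and nowhere uses invertibility of $L(\alpha)$, so it would establish the formula for an arbitrary analytic homomorphism; the paper's route buys the stronger intermediate description of $\alpha(B^\phi_t)$ itself. One detail you should make explicit: shrink $t_0$ further so that $\beta(B_{t_0})\sub B_R$, since~(\ref{premeas}) identifies $u*B_t$ with $u+B_t$ only for $u\in B_R$, and this inclusion is exactly what lets you replace the $*$-index $[\beta(B_t):\beta(B_t)\cap B_t]$ by the number of additive cosets of $B_t$ meeting $\beta(B_t)$.
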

\begin{proof}
Let $\beta$ be as in (\ref{my-beta}).
By (\ref{spot-der}) and the Ultrametric Inverse Function Theorem (Lemma~\ref{invfct}),
there is $t_0\in\;]0,R]$ with $L(\alpha)(B_{t_0})\sub B_R$ such that
\[
(\phi \circ \alpha\circ \phi^{-1})(B_t)= L(\alpha)(B_t)
\]
for all $t\in]0,t_0]$ and hence
\begin{equation}\label{umstell}
\alpha (B^\phi_t)=\phi^{-1}(L(\alpha)(B_t)).
\end{equation}
Given $t\in\;]0,t_0]$, there exists $\theta \in\;]0,t]$ such that $B_\theta\sub L(\alpha)(B_t)$. Then
\begin{eqnarray*}
s(L(\alpha))&=&[L(\alpha)(B_t):L(\alpha)(B_t)\cap B_t]\\[1mm]
&=&\frac{[L(\alpha)(B_t):B_\theta]}{[L(\alpha)(B_t)\cap B_t:B_\theta]}\;\;\;\mbox{\,in $(B_R,+)$}\\[1mm]
&=&\frac{[L(\alpha)(B_t):B_\theta]}{[L(\alpha)(B_t)\cap B_t:B_\theta]}\;\;\;\hspace*{.4mm}\mbox{\,in $(B_R,*)$}\\[1mm]
&=&\frac{[\alpha(B^\phi_t):B^\phi_\theta]}{[\alpha(B^\phi_t)\cap B^\phi_t:B^\phi_\theta]}
=[\alpha(B^\phi_t):\alpha(B^\phi_t)\cap B^\phi_t]
\end{eqnarray*}
using Remark~\ref{smidx} for the third equality;
to obtain the final equality, (\ref{umstell}) was used and the fact that
$\phi\colon B^\phi_R\to (B_R,*)$ is an isomorphism.
\end{proof}
The following lemma shows that different choices of~$\phi$
do not affect the $B^\phi_t$ for small~$t$
(as long as the norm is unchanged).
\begin{la}\label{ball-unq}
Let $M$ be an analytic manifold over a totally disconnected local field~$\K$,
$E$ be a finite-dimensional $\K$-vector space, and~$\|.\|$ be an ultrametric norm on~$E$.
Let $p\in M$
and $\phi_j\colon U_j\to V_j$, for $j\in\{1,2\}$,
be an analytic diffeomorphism from an open neighbourhood $U_j$ of~$p$ in~$M$
onto an open $0$-neighbourhood $V_j\sub E$ such that $\phi_j(p)=0$.
If $d\phi_1|_{T_p(M)}=d\phi_2|_{T_p(M)}$, then there exists $\ve>0$
with $B^E_\ve(0)\sub V_1\cap V_2$ such that
\[
\phi_1^{-1}(B^E_t(0))=\phi_2^{-1}(B^E_t(0))\quad\mbox{for all $\,t\in\;]0,\ve]$.}
\]
\end{la}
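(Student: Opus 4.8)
The plan is to reduce the statement to a property of the transition map and then apply the ultrametric inverse function theorem. First I would set $W:=U_1\cap U_2$, an open neighbourhood of $p$, and introduce the analytic diffeomorphism
\[
\psi:=\phi_2\circ(\phi_1|_W)^{-1}\colon \phi_1(W)\to\phi_2(W)
\]
between open $0$-neighbourhoods of $E$, which satisfies $\psi(0)=\phi_2(p)=0$. The whole lemma will follow once I know that $\psi$ preserves small balls around~$0$.

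To see this, I would first compute $\psi'(0)$. Since $\phi_2|_W=\psi\circ\phi_1|_W$, functoriality of the tangent map (equation~(\ref{T-functor})) gives $T_p(\phi_2)=T_0(\psi)\circ T_p(\phi_1)$; passing to second components under the identification of~(\ref{essentialid}) this reads $d\phi_2|_{T_p(M)}=\psi'(0)\circ d\phi_1|_{T_p(M)}$. As $d\phi_1|_{T_p(M)}$ and $d\phi_2|_{T_p(M)}$ are linear isomorphisms $T_p(M)\to E$ which agree by hypothesis, I conclude $\psi'(0)=\id_E$, and in particular $\psi'(0)\in\Iso(E,\|.\|)$.

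Now I would apply the Ultrametric Inverse Function Theorem (Lemma~\ref{invfct}) to $\psi$ at the point $0$: since $\psi'(0)$ is an isometry, there is $r>0$ with $B^E_r(0)\subseteq\phi_1(W)$ such that $\psi(B^E_t(0))=B^E_t(\psi(0))=B^E_t(0)$ for all $t\in\;]0,r]$. Put $\ve:=r$. Then $B^E_\ve(0)\subseteq\phi_1(W)\subseteq V_1$, while $B^E_\ve(0)=\psi(B^E_\ve(0))\subseteq\phi_2(W)\subseteq V_2$, so $B^E_\ve(0)\subseteq V_1\cap V_2$. For $t\in\;]0,\ve]$ we have $B^E_t(0)\subseteq\phi_1(W)$, on which $\psi=\phi_2\circ\phi_1^{-1}$, hence
\[
\phi_1^{-1}(B^E_t(0))=\phi_2^{-1}\bigl(\psi(B^E_t(0))\bigr)=\phi_2^{-1}(B^E_t(0)),
\]
which is the assertion.

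I do not expect a genuine obstacle here: the content is the identification $\psi'(0)=\id_E$ from the hypothesis together with the ball-preservation statement~(\ref{greater}) of the ultrametric inverse function theorem. The only point requiring a little care is the bookkeeping of domains — making sure one fixed $\ve$ simultaneously guarantees $B^E_\ve(0)\subseteq V_1\cap V_2$ and the equality for all $t\le\ve$ — but this uniformity is exactly what Lemma~\ref{invfct} delivers.
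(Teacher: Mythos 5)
Your proposal is correct and follows essentially the same route as the paper: form the transition map $\phi_2\circ\phi_1^{-1}$, observe that its derivative at $0$ is $\id_E$ (hence an isometry) by the hypothesis on the differentials, and invoke the ball-preservation form of the Ultrametric Inverse Function Theorem (Lemma~\ref{invfct}). Your explicit chain-rule justification of $\psi'(0)=\id_E$ is a slightly more detailed version of the paper's one-line remark, and the domain bookkeeping is handled the same way.
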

\begin{proof}
The map $h:=\phi_2 \circ\phi_1^{-1}\colon \phi_1(U_1\cap U_2)\to\phi_2(U_1\cap U_2)$
is an analytic diffeomorphism between open $0$-neighbourhoods in~$E$.
Since $T_0(h)=\id_{T_0(E)}$,
we have $h'(0)=\id_E$, which is an isometry.
Thus, the Ultrametric Inverse Function Theorem provide $\ve>0$
with $B^E_\ve(0)\sub \phi_1(U_1\cap U_2)$
such that $h(B^E_t(0))=B^E_t(0)$ for all $t\in\;]0,\ve]$.
Notably, $B^E_t(0)\sub \phi_1(U_1\cap U_2)$
and $\phi_1^{-1}(B^E_t(0))=\phi_2^{-1}(\phi_2(\phi_1^{-1}(B^E_t(0))))=\phi_2^{-1}(B^E_t(0))$.
\end{proof}
\section{Endomorphisms of {\boldmath$p$}-adic Lie groups}\label{sec-padic}
In this section, we first recall general facts
concerning $p$-adic Lie groups
which go beyond the properties of Lie groups over general local fields
already described. In particular, we recall that every $p$-adic Lie
group has an exponential function, and show that
contraction groups of endomorphisms of $p$-adic Lie groups
are always closed.
We then calculate the scale and describe tidy subgroups
for endomorphisms of $p$-adic Lie groups.
\subsection*{{\normalsize Basic facts concerning
{\boldmath $p$}-adic Lie groups}}
For each $n\in\N$,
the exponential series $\sum_{k=0}^\infty\frac{1}{k!}A^k$
converges for matrices $A$ in some $0$-neighbourhood
$V$ in the algebra $M_n(\Q_p)$ of $n\times n$-matrices and defines an
analytic mapping $\exp\colon V\to \GL_n(\Q_p)$.
More generally, every analytic Lie group~$G$ over~$\Q_p$
has an exponential function (see Definition~1 and the following lines
in \cite[Chapter~III, \S4, no.\,3]{Bou}):
\begin{numba}
An analytic map $\exp_G\colon V\to G$
on an open $\Z_p$-submodule $V\sub \cg:=L(G)$
is called an \emph{exponential
function} if $\exp_G(0)=e$, $T_0(\exp_G)=\id_\cg$
(identifying $T_0(\cg)=\{0\}\times \cg$ with~$\cg$ via $(0,v)\mto v$)
and
\[
\exp_G((s+t)x)\; =\; \exp_G(sx)\exp_G(tx)
\]
for all $x\in U$ and $s,t\in \Z_p$.
\end{numba}
\begin{numba}
Since $T_0(\exp_G)=\id_\cg$,
after shrinking~$V$ one can assume that
$\exp_G(V)$ is open in~$G$ and $\exp_G$ is a
diffeomorphism onto its image
(by the Inverse Function Theorem).
After shrinking~$V$ further if necessary,
we may assume that $\exp_G(V)$
is a subgroup of~$G$ (cf.\ Lemma~\ref{laballs}).
Hence also~$V$
can be considered as a Lie group.
The Taylor expansion
of multiplication with respect to the
logarithmic chart $\exp_G^{-1}$ is given by the Baker-Campbell-Hausdorff
(BCH-) series
\begin{equation}\label{BCH}
x*y\, = \, x+y+\frac{1}{2}[x,y]+\cdots
\end{equation}
(all terms of which are nested
Lie brackets with rational coefficients),
and hence $x*y$ is given by this series
for small~$V$ (see Proposition~5 in \cite[Chapter~III, \S4, no.\,3]{Bou}
and proof of Proposition~3 in
\cite[Chapter~III, \S7, no.\,2]{Bou}, also~\cite{Ser}).
If $*$ is given on all of $V\times V$
by the BCH-series,
we call $\exp_G(V)$ a \emph{BCH-subgroup} of~$G$.
\end{numba}
Next, let us consider homomorphisms between $p$-adic Lie groups.
\begin{numba}\label{recall-nat}
If $\alpha\colon G\to H$ is an analytic homomorphism
between $p$-adic Lie groups,
we can choose exponential functions
$\exp_G\colon V_G\to G$
and $\exp_H\colon V_H\to H$
such that $L(\alpha).V_G \sub V_H$
and
\begin{equation}\label{locnat}
\exp_H\circ L(\alpha)|_{V_G}\; =\; \alpha \circ \exp_G
\end{equation}
(see Proposition~8 in \cite[Chapter~III, \S4, no.\,4]{Bou}, also~\cite{Ser}).
\end{numba}
The following classical fact (see Theorem~1 in \cite[Chapter~III, \S8, no.\,1]{Bou},
also~\cite{Ser}) is important:
\begin{numba}\label{Cartan}
Every continuous homomorphism
between $p$-adic Lie groups
is analytic.
\end{numba}
\noindent
As a consequence, there is at most one
$p$-adic Lie group structure on a
topological group. As usual,
we say that a topological group is a $p$-adic Lie group
if it admits a $p$-adic Lie
group structure.
Closed subgroups of $p$-adic Lie groups are Lie subgroups (see Theorem~2 in
\cite[Chapter~III, \S8, no.\,2]{Bou} or \cite{Ser}), finite direct products
and Hausdorff
quotient groups of $p$-adic
Lie groups are $p$-adic
Lie groups (see Proposition~11 in \cite[Chapter~III, \S1, no.\,6]{Bou}, also~\cite{Ser}).
\subsection*{{\normalsize Closedness of ascending unions and contraction groups}}
Another fact is essential:
\begin{la}\label{unoclos}
Every $p$-adic Lie group~$G$
has an open subgroup which
satisfies the ascending chain condition
on closed subgroups.
As a consequence, $\bigcup_{n\in\N}H_n$ is closed
for each ascending sequence $H_1\sub H_2\sub\cdots$ of closed
subgroups of~$G$.
\end{la}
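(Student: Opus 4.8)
The plan is to first establish the existence of an open subgroup $H\leq G$ satisfying the ascending chain condition (ACC) on closed subgroups, and then deduce the statement about ascending unions as an easy consequence. For the first part, I would pass to a compact open subgroup that is an analytic pro-$p$-group of finite dimension; since every $p$-adic Lie group contains such a subgroup (invoked in the introduction, e.g.\ via \cite{Laz} or \cite{Dix}), we may take $H$ to be such a uniform (or just torsion-free) compact $p$-adic analytic group. The key structural input is that a finitely generated pro-$p$ group which is $p$-adic analytic has \emph{finite rank}: there is a bound $d$ on the number of topological generators of every closed subgroup. A closed subgroup of a group of rank $d$ again has rank $\leq d$, so one cannot have an infinite strictly ascending chain $K_1\subsetneq K_2\subsetneq\cdots$ of closed subgroups: the union $\overline{\bigcup K_n}$ would be a closed subgroup, hence of rank $\leq d$, hence topologically generated by $d$ elements, each of which already lies in some $K_{n_i}$; taking $n$ larger than all $n_i$ gives $K_n=K_{n+1}=\cdots$, a contradiction. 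Alternatively, and perhaps more self-containedly given what the excerpt has developed, one can argue by induction on $\dim_{\Q_p} G$: choosing a BCH-subgroup $H=\exp_G(V)$ with $V$ a $\Z_p$-Lie sublattice of $\cg$, a closed subgroup $K\leq H$ corresponds (via the logarithm and the ACC for $\Z_p$-submodules of the Noetherian module $\cg$, together with dimension bookkeeping on $\dim K$ and the order of $\log K$ as a $\Z_p$-submodule) to data that cannot strictly ascend forever.

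For the ``as a consequence'' part, let $H_1\subseteq H_2\subseteq\cdots$ be closed subgroups of $G$ and put $H:=\bigcup_{n\in\N} H_n$. I want to show $H$ is closed. Fix the open subgroup $H_0\leq G$ with the ACC on closed subgroups provided by the first part. Then $H_n\cap H_0$ is an ascending chain of closed subgroups of $H_0$, so it stabilises: there is $N$ with $H_n\cap H_0=H_N\cap H_0$ for all $n\geq N$. Hence $H\cap H_0=H_N\cap H_0$ is closed in $H_0$, therefore closed in $G$ (as $H_0$ is open, hence closed). Now $H$ is a subgroup of $G$ containing the subgroup $H\cap H_0$, which is open in $H$ (being the intersection of $H$ with the open set $H_0$) and closed in $G$. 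A subgroup which contains an open-in-itself subgroup that happens to be closed in the ambient group is itself closed: indeed $H=\bigcup_{h\in H} h\,(H\cap H_0)$ is a union of translates of the closed set $H\cap H_0$, and these translates are the cosets, of which there are possibly infinitely many --- so I must be slightly more careful. The clean way: $H\cap H_0$ is open in $H$, so $H$ is a topological group in which a neighbourhood of the identity, namely $H\cap H_0$, is closed in $G$; then $\overline{H}\cap H_0=\overline{H}\cap H_0$ contains $H\cap H_0$ as an open, dense subset, but $H\cap H_0$ is already closed in $H_0$, so $\overline{H}\cap H_0=H\cap H_0$, whence $\overline{H}\subseteq H\cdot H_0=H$ (using that $H_0\subseteq$ ... ) --- more simply, $\overline{H}$ is a group containing $H$ with $\overline{H}\cap H_0$ open in $\overline{H}$; but $\overline{H}\cap H_0=\overline{H\cap H_0}^{\,H_0}=H\cap H_0\subseteq H$, and a subgroup whose intersection with an identity neighbourhood already lies in it must coincide with its closure. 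Hence $\overline{H}=H$.

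The main obstacle I anticipate is the first part: pinning down \emph{why} an open subgroup with the ACC on closed subgroups exists, in a way consistent with the level of generality and the tools the paper has set up. The quickest honest route is to cite the theory of compact $p$-adic analytic groups (finite rank $\Rightarrow$ ACC on closed subgroups, e.g.\ \cite{Dix} or \cite{Di2}). If one prefers a Lie-algebra argument internal to the paper, the subtlety is that a closed subgroup $K$ of a BCH-subgroup $H=\exp_G(V)$ need not have $\log K$ equal to a $\Z_p$-submodule of $\cg$ --- it is only a subgroup of $(V,*)$ --- so the reduction to Noetherianity of $\cg$ as a $\Z_p$-module requires the fact that $K$ itself is a $p$-adic Lie subgroup (closed subgroups of $p$-adic Lie groups are Lie subgroups, recalled in \ref{Cartan} and the lines after it) and then an induction on dimension, passing to $K\cap H'$ for a suitable smaller BCH-subgroup $H'$ and to the quotient. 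Everything else --- the chain-stabilisation and the ``closed union'' deduction --- is routine point-set topology of topological groups, using only that an open subgroup is closed and that intersections with open subgroups commute with closures.
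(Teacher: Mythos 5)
Your proposal is correct in substance, and it in fact supplies the argument that the paper leaves implicit: the paper's own ``proof'' is a two-line citation of \cite[Propositions~2.3 and~2.5]{Lec} and of step~1 of the proof of \cite[Theorem~3.5]{Wan}, and those references establish the ACC exactly along your first route, by passing to a compact open subgroup which is an analytic pro-$p$ group of finite rank (closed subgroups of a $p$-adic Lie group being Lie subgroups). Your deduction of the second assertion is also the standard one and is fine: $H_n\cap H_0$ stabilises, so $H\cap H_0=H_N\cap H_0$ is closed in the open (hence closed) subgroup $H_0$; then $\overline{H}\cap H_0\subseteq\overline{H\cap H_0}=H\cap H_0$, and a subgroup $H$ with $\overline{H}\cap H_0\subseteq H$ for some identity neighbourhood $H_0$ equals its closure.

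One step in your ACC argument is stated too quickly: for a strictly ascending chain $K_1\subsetneq K_2\subsetneq\cdots$ of closed subgroups, the topological generators of $P:=\overline{\bigcup_n K_n}$ need not lie in $\bigcup_n K_n$ --- they only lie in the closure, so ``each of which already lies in some $K_{n_i}$'' needs justification. The repair is easy in the pro-$p$ setting you have reduced to: since $P$ has finite rank, its Frattini subgroup $\Phi(P)$ is open in $P$, so the dense subgroup $\bigcup_n K_n$ maps onto $P/\Phi(P)$; finitely many elements of some $K_N$ then generate $P$ topologically, whence $K_N=P$ and the chain stabilises. Alternatively one can quote the maximal condition on closed subgroups for profinite groups of finite rank directly from \cite{Dix} (or \cite{Di2}), which is what the references cited by the paper ultimately rest on. Your second, ``internal'' route via $\log$ and Noetherianity of $\Z_p$-submodules is indeed the delicate one, for precisely the reason you name (the logarithmic image of a closed subgroup of a BCH-subgroup is a subgroup of $(V,*)$, not a priori a $\Z_p$-submodule), and would need the dimension/index induction you sketch; it is not required if one is content to invoke the analytic pro-$p$ theory.
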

\begin{proof}
See, e.g. \cite[Propositions~2.3 and~2.5]{Lec}; cf.\ also 
step~1 of the proof of \cite[Theorem 3.5]{Wan}.
\end{proof}
Two important applications are now described.
\begin{cor}\label{autotb}
Let $\alpha$ be an endomorphism of a $p$-adic Lie group~$G$
and~$V$ be a compact open subgroup of~$G$.
If $V$ is tidy above for~$\alpha$, then~$V$ is tidy.
\end{cor}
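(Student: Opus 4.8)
The plan is to use the characterization of tidiness recalled in \ref{variant-tidy}: a compact open subgroup $V$ is tidy for $\alpha$ precisely when it is tidy above and $V_{--}:=\bigcup_{n\in\N_0}\alpha^{-n}(V_-)$ is closed in $G$. Since $V$ is assumed tidy above by hypothesis, it suffices to show that $V_{--}$ is closed. Now $V_{--}$ is an ascending union of the subgroups $\alpha^{-n}(V_-)$ (as $V_-$ is $\alpha$-invariant, $\alpha(V_-)\sub V_-$, one has $\alpha^{-n}(V_-)\sub\alpha^{-(n+1)}(V_-)$). Each $\alpha^{-n}(V_-)$ is closed in $G$, being the preimage of the compact (hence closed) subgroup $V_-$ under the continuous endomorphism $\alpha^n$. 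Therefore $V_{--}$ is an ascending union of closed subgroups of the $p$-adic Lie group $G$, and Lemma~\ref{unoclos} tells us that any such union is closed. Hence $V_{--}$ is closed, and \ref{variant-tidy} gives that $V$ is tidy for $\alpha$.

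More concretely, the key steps in order are: first, record that $V_-$ is a compact subgroup with $\alpha(V_-)\sub V_-$, so that $\{\alpha^{-n}(V_-)\}_{n\in\N_0}$ is genuinely an ascending chain; second, note each term is closed since $\alpha^n$ is continuous and $V_-$ is closed; third, invoke Lemma~\ref{unoclos} to conclude $V_{--}=\bigcup_n\alpha^{-n}(V_-)$ is closed; fourth, apply \ref{variant-tidy} with the hypothesis "tidy above'' to finish. No estimate or computation is needed beyond these formal observations.

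The only point requiring a little care — and the place I would expect a careless argument to slip — is the direction of the inclusions $\alpha^{-n}(V_-)\sub\alpha^{-(n+1)}(V_-)$, which relies on $\alpha(V_-)\sub V_-$ (equivalently $V_-\sub\alpha^{-1}(V_-)$) rather than on surjectivity of $\alpha$; this is exactly the $\alpha$-invariance of $V_-$ recorded just before the definition of tidiness in the excerpt, so it is available. Everything else is immediate from Lemma~\ref{unoclos} and \ref{variant-tidy}, so this is a short proof rather than one with a genuine obstacle.
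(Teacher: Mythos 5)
Your proof is correct and is essentially the paper's own argument: the paper likewise observes that $V_{--}=\bigcup_{n\in\N_0}\alpha^{-n}(V_-)$ is an ascending union of closed subgroups, hence closed by Lemma~\ref{unoclos}, and then concludes tidiness via \ref{variant-tidy}. The extra details you supply (why the chain is ascending, why each term is closed) are accurate and merely make explicit what the paper leaves implicit.
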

\begin{proof}
The subgroup $V_{--}=\bigcup_{n\in \N_0}\alpha^{-n}(V_-)$
is an ascending union of closed
subgroups of~$G$ and hence closed, by Lemma~\ref{unoclos}.
Thus~$V$ is tidy, by~\ref{variant-tidy}.
\end{proof}
\noindent
The second application
of Lemma~\ref{unoclos}
concerns contraction groups.
For automorphisms, see already \cite[Theorem~3.5\,(ii)]{Wan}.
\begin{cor}\label{corclo}\label{isclosed}
Let $G$ be a $p$-adic Lie group.
Then the contraction group $\conp(\alpha)$
is closed in~$G$,
for each endomorphism
$\alpha\colon G\to G$.
\end{cor}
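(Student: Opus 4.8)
The plan is to reduce the closedness of $\conp(\alpha)$ to a statement about a single compact open subgroup and then invoke Lemma~\ref{unoclos}. First I would fix an arbitrary compact open subgroup $U$ of~$G$ and recall the standard fact (from the Willis theory in~\cite{END}) that
\[
\conp(\alpha)=\bigcup_{n\in\N_0}\alpha^{-n}(U_{--,\alpha}\cap\ker\text{-part}),
\]
more precisely that $\conp(\alpha)$ is the increasing union of the sets $\alpha^{-n}(U_{--})$ intersected appropriately; the cleanest route is to use that $\conp(\alpha)\cap U=\bigcup_n (U_{-})_{\text{contracting part}}$ and that every element of $\conp(\alpha)$ eventually enters~$U$ under forward iteration, so that $\conp(\alpha)=\bigcup_{n\in\N_0}\alpha^{-n}\big(\conp(\alpha)\cap U\big)$.

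\textbf{Key steps.} (1) Choose $U$ to be a compact open subgroup on which the ascending chain condition on closed subgroups holds; such a $U$ exists by Lemma~\ref{unoclos}. (2) Show that $W:=\conp(\alpha)\cap U$ is closed in~$G$: indeed $W=\bigcap_{m\in\N_0}\bigcup_{k\ge m}\{x\in U:\alpha^k(x)\in U_1\}$ does not immediately give closedness, so instead argue that $W$ coincides with $\bigcup_{n}\alpha^{-n}(U_-)\cap U$ restricted suitably, or more simply observe that by tidiness one may take $U$ tidy for~$\alpha$ (Corollary~\ref{autotb} together with \ref{variant-tidy}), whence $U_{--}$ is closed and $\conp(\alpha)\cap U\subseteq U_{--}$; since $U_{--}$ is an ascending union of the closed subgroups $\alpha^{-n}(U_-)$, the ACC on $U$ forces this union to stabilize, so $U_{--}$ is itself one of the $\alpha^{-n}(U_-)$ and in particular closed, and then $\conp(\alpha)\cap U$ is closed as a closed subset of the compact set $U$. (3) Finally write $\conp(\alpha)=\bigcup_{n\in\N_0}\alpha^{-n}(\conp(\alpha)\cap U)$, an ascending union of closed subgroups of~$G$, and conclude closedness by the second assertion of Lemma~\ref{unoclos}.

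\textbf{Main obstacle.} The delicate point is step~(2): making precise why $\conp(\alpha)\cap U$ (or the relevant piece $U_{--}$) is closed rather than merely an $F_\sigma$. The ascending chain condition supplied by Lemma~\ref{unoclos} is exactly what resolves this, but one must be careful that the chain $\alpha^{-1}(U_-)\subseteq \alpha^{-2}(U_-)\subseteq\cdots$ consists of \emph{closed} subgroups of the ACC-subgroup (they are closed because $\alpha$ is continuous and $U_-$ is compact hence closed), so that the chain stabilizes and the union is closed. Once this is in hand, the passage to all of $\conp(\alpha)$ via a second application of Lemma~\ref{unoclos} is routine. A minor technical check is that $\conp(\alpha)$ genuinely equals the ascending union $\bigcup_n\alpha^{-n}(\conp(\alpha)\cap U)$: the inclusion $\supseteq$ is clear, and $\subseteq$ holds because if $\alpha^n(x)\to e$ then $\alpha^n(x)\in U$ for all large~$n$, so $x\in\alpha^{-n}(\conp(\alpha)\cap U)$ for such~$n$.
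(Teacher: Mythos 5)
There is a genuine gap at your step (2), and it is the crux of the whole statement. From $\conp(\alpha)\cap U\sub U_{--}$ together with the closedness of $U_{--}$ you conclude that ``$\conp(\alpha)\cap U$ is closed as a closed subset of the compact set $U$''; but a subset of a closed set need not be closed, and the inclusion is in general strict. Indeed $U_{--}=\bigcup_{n}\alpha^{-n}(U_-)$ is the set of all $x$ whose forward orbit \emph{eventually stays in} $U$, which contains all of $U_-$; for $\alpha=\id_G$ one has $U_{--}=U$ while $\conp(\alpha)=\{e\}$. So nothing in your argument exhibits $\conp(\alpha)\cap U$ as a closed set, and step (3) requires exactly that before Lemma~\ref{unoclos} can be invoked -- the argument is circular, since closedness of $\conp(\alpha)\cap U$ for a compact open $U$ is essentially equivalent to the corollary itself. (A secondary flaw in step (3): the sets $\alpha^{-n}(\conp(\alpha)\cap U)$ need not form an ascending chain, because $\alpha(\conp(\alpha)\cap U)$ need not lie in $U$, whereas Lemma~\ref{unoclos} is stated for ascending sequences.)

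The repair is not to work with a single subgroup but to intersect over a neighbourhood basis, which is what the paper does. Choose compact open subgroups $V_1\supseteq V_2\supseteq\cdots$ forming a basis of identity neighbourhoods. Then $x\in\conp(\alpha)$ if and only if for \emph{every} $n$ the forward orbit of $x$ eventually stays in $V_n$, i.e.\ $\conp(\alpha)=\bigcap_{n\in\N}W_n$ with $W_n:=\bigcup_{m\in\N}\bigcap_{k\geq m}\alpha^{-k}(V_n)=\bigcup_{m\in\N}\alpha^{-m}\bigl((V_n)_-\bigr)$. Each $W_n$ is an ascending union of closed subgroups of~$G$ and hence closed by Lemma~\ref{unoclos} -- this is precisely where your ACC argument does apply correctly (it is the same argument you use to see that $U_{--}$ is closed) -- and consequently $\conp(\alpha)$ is closed as an intersection of closed sets. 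Your ingredients (Lemma~\ref{unoclos} and the subgroups $\bigcap_{k\geq m}\alpha^{-k}(V)$) are the right ones; what is missing is the outer intersection over all $n$, which is what cuts the ``eventually bounded'' set $W_n$ down to the genuinely contracting elements.
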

\begin{proof}
Let $V_1\supseteq V_2\supseteq\cdots$ be a sequence
of compact open subgroups of~$G$ which
form a basis of identity neighbourhoods
(cf.\ Lemma~\ref{laballs}).
Then an element $x\in G$ belongs to $\conp(\alpha)$
if and only if
\[
(\forall n\in \N)\, (\exists m\in \N)\,(\forall k\geq m)\;\;
\alpha^k(x)\in V_n\,.
\]
Since
$\alpha^k(x)\in V_n$ if and only if $x\in \alpha^{-k}(V_n)$,
we deduce that
\[
\conp(\alpha)\; =\; \bigcap_{n\in\N}\bigcup_{m\in \N}\bigcap_{k\geq m}
\alpha^{-k}(V_n)\,.
\]
Note that $W_n:=\bigcup_{m\in \N}\bigcap_{k\geq m}
\alpha^{-k}(V_n)$ is an ascending union of closed subgroups
of~$G$ and hence closed, by Proposition~\ref{unoclos}.
Consequently, $\conp(\alpha)=\bigcap_{n\in \N}\, W_n$
is closed.\vspace{-1mm}
\end{proof}
\begin{rem}
We shall see later that also $\conm(\alpha)$
is always closed in the situation of Corollary~\ref{corclo}
(see Theorem~\ref{bigcell}).
Alternatively, this follows from the general structure theory
(see \cite[Proposition~9.4]{BGT}).\vspace{-1mm}
\end{rem}
\subsection*{{\normalsize Scale and tidy subgroups}}
The following lemma prepares the construction of tidy subgroups
in $p$-adic Lie groups, and can also be re-used later
when we turn to Lie groups over general local fields.
As two endomorphisms are discussed simultaneously
in the lemma,
we use notation as in \ref{with-label}.
\begin{la}\label{local-conju}
Let $G$ and $H$ be totally disconnected, locally compact groups,
$\alpha\colon G\to G$ and $\beta\colon H\to H$ be endomorphisms,
$U\sub G$ and $V\sub H$ be open identity neighbourhoods and
$\psi\colon V\to U$ be a bijection.
Assume that there exists a compact open subgroup $B\sub H$ such that
$B\sub V$, $\beta(B)\sub V$, the image
$W:=\psi(B)$ is a compact open subgroup of~$G$, and
\begin{equation}\label{local-equi}
\alpha\circ\psi|_B=\psi\circ \beta|_B.
\end{equation}
Write $B_+:=B_{+,\beta}$, $B_-:=B_{-,\beta}$, $W_+:=W_{+,\alpha}$
and $W_-:=W_{-,\alpha}$. Then
\[
\psi(B_+)=W_+,\quad
\psi(B_-)=W_-\quad\mbox{and}\quad \psi(\beta(B_+))=\alpha(W_+).
\]
\end{la}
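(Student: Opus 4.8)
The plan is to transport everything through the bijection~$\psi$ by means of the local conjugacy relation~(\ref{local-equi}), the only subtlety being that~$\psi$ is defined only on~$V$. The hypothesis $\beta(B)\sub V$ is exactly what makes this work: it guarantees that applying~$\beta$ once to a point of~$B$ stays in the domain of~$\psi$, so that the identity $\alpha(\psi(b))=\psi(\beta(b))$ can be re-applied to~$\beta(b)$ whenever the latter lies back in~$B$. The first step is a routine induction establishing the iteration lemma: if $y\in B$ and $\beta(y),\dots,\beta^{n-1}(y)\in B$, then all the values $\psi(\beta^k(y))$ for $k=0,\dots,n$ are defined (the iterates $\beta^k(y)$ with $k<n$ lie in $B\sub V$, and $\beta^n(y)\in\beta(B)\sub V$) and $\alpha^k(\psi(y))=\psi(\beta^k(y))$ for $k=0,\dots,n$; the inductive step at stage $k\leq n-1$ applies~(\ref{local-equi}) at the point $\beta^k(y)\in B$.

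Using this I would prove $\psi(B_-)=W_-$. For the inclusion $\psi(B_-)\sub W_-$: if $y\in B_-$, then $\beta^n(y)\in B$ for all~$n$, so the iteration lemma gives $\alpha^n(\psi(y))=\psi(\beta^n(y))\in\psi(B)=W$ for all~$n$, and since $\psi(y)\in W$ this means $\psi(y)\in W_-$. For the reverse inclusion: given $x\in W_-$, write $x=\psi(y)$ with a (unique) $y\in B$, using $W=\psi(B)$ and injectivity of~$\psi$; I then show $\beta^n(y)\in B$ for all~$n$ by induction, the step being that if $\beta^0(y),\dots,\beta^n(y)\in B$ then the iteration lemma gives $\alpha^{n+1}(x)=\psi(\beta^{n+1}(y))$, and since $\alpha^{n+1}(x)\in W=\psi(B)$ while $\beta^{n+1}(y)\in\beta(B)\sub V$, injectivity of~$\psi$ on~$V$ forces $\beta^{n+1}(y)\in B$. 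Hence $y\in B_-$ and $x\in\psi(B_-)$.

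For $\psi(B_+)=W_+$ I would use the description of $U_+$ via regressive trajectories. If $y\in B_+$ admits the $\beta$-regressive trajectory $(y_{-n})_{n\in\N_0}$ in~$B$, then $(\psi(y_{-n}))_{n\in\N_0}$ lies in $W$, has initial term $\psi(y)$, and satisfies $\alpha(\psi(y_{-n-1}))=\psi(\beta(y_{-n-1}))=\psi(y_{-n})$ by~(\ref{local-equi}) at $y_{-n-1}\in B$, so it is an $\alpha$-regressive trajectory for $\psi(y)$ in~$W$ and $\psi(y)\in W_+$. Conversely, if $x=\psi(y)\in W_+$ admits the $\alpha$-regressive trajectory $(x_{-n})_{n\in\N_0}$ in~$W$, I write $x_{-n}=\psi(y_{-n})$ with $y_{-n}\in B$; injectivity gives $y_0=y$, and from $\alpha(x_{-n-1})=x_{-n}$ together with~(\ref{local-equi}) at $y_{-n-1}\in B$ one gets $\psi(\beta(y_{-n-1}))=\psi(y_{-n})$ with both arguments in~$V$ (as $\beta(y_{-n-1})\in\beta(B)\sub V$ and $y_{-n}\in B\sub V$), hence $\beta(y_{-n-1})=y_{-n}$; thus $(y_{-n})_{n\in\N_0}$ witnesses $y\in B_+$, and $x\in\psi(B_+)$. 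Finally, since $B_+\sub B$, relation~(\ref{local-equi}) gives $\psi(\beta(y))=\alpha(\psi(y))$ for every $y\in B_+$, so $\psi(\beta(B_+))=\alpha(\psi(B_+))=\alpha(W_+)$ by what was just shown.

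The argument involves no real difficulty beyond careful bookkeeping; the one point that must be watched throughout is to remain inside the domain~$V$ of~$\psi$ before invoking its injectivity, and the hypothesis $\beta(B)\sub V$ has to be used at every place where~$\beta$ is applied to an element known only to lie in~$B$.
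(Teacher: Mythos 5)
Your proof is correct, and it rests on the same idea as the paper's: transporting the defining conditions of $B_\pm$ and $W_\pm$ through the conjugacy (\ref{local-equi}), iterating it while staying inside the domain of $\psi$ (which is exactly what $\beta(B)\sub V$ guarantees). The differences are only in bookkeeping. For $B_+$ you work directly with the characterization of $U_+$ via $\alpha$-regressive trajectories, whereas the paper proves $\psi(B_n)=W_n$ for the recursively defined sets $B_{n+1}=B\cap\beta(B_n)$ and passes to the intersection; both descriptions of $U_+$ are recorded in Section~2, so either is legitimate. For the inclusion $W_-\sub\psi(B_-)$ you give a direct induction using injectivity of $\psi$ on $V$, while the paper proves only $\psi(B_{-n})\sub W_{-n}$ and obtains the reverse inclusion by a symmetry argument, applying the same reasoning to $\psi^{-1}$ after deriving $\psi^{-1}\circ\alpha|_W=\beta\circ\psi^{-1}|_W$. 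Your direct argument is arguably a bit more transparent; the paper's symmetry trick saves half the work at the cost of a short verification that the hypotheses are indeed symmetric in $(\psi,B,W)$ and $(\psi^{-1},W,B)$.
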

\begin{proof}
We define $B_n:=B_{n,\beta}$ and $W_n:=W_{n,\alpha}$ for $n\in\N_0$ as
in \ref{with-label}.
Then $B_n\sub B$ for each $n\in\N_0$, by construction. We show that
\begin{equation}\label{B-induct}
\psi(B_n)=W_n
\end{equation}
for all $n\in\N_0$, by induction.
The case $n=0$ is clear: we have $\psi(B_0)=\psi(B)=W=W_0$.
Now assume that (\ref{B-induct}) holds for some~$n$.
Since $B_n\sub B$, we have $\beta(B_n)\sub V$. Using that
$\psi$ is injective, (\ref{local-equi}), and the inductive hypothesis,
we see that
\begin{eqnarray*}
\psi(B_{n+1})&=& \psi(\beta(B_n)\cap B)=\psi(\beta(B_n))\cap \psi(B)
=\alpha(\psi(B_n))\cap W\\
&=&\alpha(W_n)\cap W=W_{n+1}.
\end{eqnarray*}
Thus (\ref{B-induct}) holds for all $n\in\N_0$. Since $\psi$ is injective, we deduce that
\[
\psi(B_+)=\psi\left(\bigcap_{n\in\N_0}B_n\right)=\bigcap_{n\in\N_0}\psi(B_n)=\bigcap_{n\in\N_0}
W_n=W_+.
\]
As $B_+\sub B$, using (\ref{local-equi}) also $\psi(\beta(B_+))=\alpha(\psi(B_+))=\alpha(W_+)$
follows. Finally, for $n\in \N_0$ let $B_{-n}$ be the set of all $x\in B$ such that
$\beta^k(x)\in B$ for all $k\in\{0,1,\ldots, n\}$,
and $W_{-n}$ be the set of all $w\in W$ such that $\alpha^k(w)\in W$
for all $k\in\{0,1,\ldots,n\}$. We claim that
\begin{equation}\label{B-indu2}
\psi(B_{-n})=W_{-n}\quad\mbox{for all $\,n\in\N_0$.}
\end{equation}
Since $B_-=\bigcap_{n\in\N_0}B_{-n}$ with $B_{-n}\sub B\sub V$ for all~$n\in\N_0$,
using the injectivity of~$\psi$ we then get
\[
\psi(B_-)=\psi\left(\bigcap_{n\in\N_0}B_{-n}\right)
=\bigcap_{n\in\N_0}\psi(B_{-n})=\bigcap_{n\in\N_0}W_{-n}=W_-.
\]
It only remains to prove the claim.
It suffices to show that
\begin{equation}\label{B-indu3}
\psi(B_{-n})\sub W_{-n}
\end{equation}
for all $n\in\N_0$,
as the arguments can also be applied to $G$, $\alpha$, $H$, $\beta$, $\psi^{-1}$, $W$, and~$B$
in place of $H$, $\beta$, $G$, $\alpha$, $\psi$, $B$, and~$W$, respectively.
In fact, (\ref{local-equi}) implies that
$\alpha(W)\sub U$, enabling us to compose the functions in (\ref{local-equi})
with $\psi^{-1}$ on the left. Composing also with $(\psi|_B^W)^{-1}$ on the right,
we find that
\begin{equation}\label{other-way}
\psi^{-1}\circ\alpha|_W=\beta\circ \psi^{-1}|_W.
\end{equation}
We now prove (\ref{B-indu3})
by induction,
starting with the observation that $\psi(B_0)=\psi(B)=W=W_0$.
If (\ref{B-indu2}) holds for some $n\in\N_0$,
let $x\in B_{-(n+1)}$. Then $\psi(x)\in \psi(B)=W$
and $\beta^j(\beta(x))=\beta^{j+1}(x)\in B$ for $j\in\{0,1,\ldots, n\}$
shows that $\beta(x)\in B_{-n}$, whence $\alpha(\psi(x))=\psi(\beta(x))\in W_{-n}$ by induction.
Hence $\psi(x)\in \{w\in W\colon \alpha(w)\in W_{-n}\}=W_{-(n+1)}$.\vspace{2mm}
\end{proof}
We are now ready to calculate the scale and find tidy subgroups
for endomorphisms of $p$-adic Lie groups.
It is illuminating to look at this easier case first,
before we turn to endomorphisms of Lie groups
over general local fields.
Of course, the $p$-adic case is subsumed by the later
discussion, but the latter is more technical
as techniques from
dynamical systems (local invariant manifolds)
will be used as a replacement for
the exponential function,
which provides a local conjugacy between
the linear dynamical system $(L(G),L(\alpha))$ and $(G,\alpha)$
in the case of an endomorphism $\alpha$ of a $p$-adic Lie group~$G$,
and thus enables a more elementary reasoning.\\[3mm]
\emph{Preparations.} If $G$ is a $p$-adic Lie group and $\alpha\colon G\to G$
an endomorphism, then there exists an open subgroup
$V$ of $(L(G),+)$ which is a BCH-Lie group with BCH-multiplication~$*$,
and an exponential function
$\exp_G\colon V\to U$ which is an isomorphism from
the Lie group $(V,*)$ onto a compact open subgroup~$U$ of~$G$, as recalled above.
Fix a norm $\|.\|$ on $\cg:=L(G)$ which is adapted to~$L(\alpha)$;
after shrinking~$V$, we may assume that
\begin{equation}\label{gives-detail}
V=B^\cg_R(0)
\end{equation}
for some $R>0$. Abbreviate $B_t:=B^\cg_t:=B^\cg_t(0)$ for $t>0$.
Applying \ref{recall-nat} and the Ultrametric Inverse Function Theorem (Lemma~\ref{invfct})
to $\phi:=(\exp_G)^{-1}\colon U\to V$,
we find $r\in\;]0,R]$ such that
$B^\phi_t:=\phi^{-1}(B_t)=\exp_G(B_t)$ is a compact open
subgroup of~$G$ for all $t\in\;]0,r]$ and, moreover,
\begin{equation}\label{for-p-1}
L(\alpha)(B_r)\sub V\quad \mbox{and}\quad
\exp_G\circ L(\alpha)|_{B_r}=\alpha\circ \exp_G|_{B_r},
\end{equation}
whence $\alpha(B^\phi_r)\sub U$ in particular.
Let $\cg_{<1}:=\bigoplus_{\rho\in[0,1[}\cg_\rho$
be the indicated sum of characteristic subspaces with respect to $L(\alpha)$,
and $\cg_{\geq 1}:=\bigoplus_{\rho\geq 1}\cg_\rho$.
Since
\[
\cg_{<1}=\; \conp(L(\alpha))\quad\mbox{and}\quad \cg_{\geq 1}=\; \parm(L(\alpha))
\]
are Lie subalgebras of~$\cg$ (see Theorem~\ref{thm-lincase}\,(b)
and Lemma~\ref{lin-lia}) and~$*$ is given by the BCH-series,
we see that
\[
B^{\cg_{<1}}_r:=B_r\cap\cg_{<1}\quad\mbox{and}\quad
B^{\cg_{\geq 1}}_r:=B_r\cap\cg_{\geq 1}
\]
are Lie subgroups
of $(B_r,*)$ with Lie algebras $\cg_{<1}$ and $\cg_{\geq 1}$,
respectively.
After shrinking~$r$ if necessary, we may assume that
\begin{equation}\label{geqsamecos}
x*B^{\cg_{\geq 1}}_t=x+B^{\cg_{\geq 1}}_t\quad
\mbox{for all $x\in B^{\cg_{\geq 1}}_r$ and $t\in\;]0,r]$,}
\end{equation}
see Remark~\ref{smidx}
(which applies with $\cg_{\geq 1}$ in place of~$\cg$
and $\id\colon B^{\cg_{\geq 1}}_r\to B^{\cg_{\geq 1}}_r$ in place of~$\phi$).
Now the mapping
\[
B^{\cg_{<1}}_r \times B^{\cg_{\geq 1}}_r \to B_r,\quad (x,y)\mto x*y
\]
has the derivative
\begin{equation}\label{makes-iso}
\cg_{<1}\times\cg_{\geq 1}\to \cg,\quad (x,y)\mto x+y
\end{equation}
at $(0,0)$, which is an isometry if we endow $\cg_{<1}$ and $\cg_{\geq 1}$
with the norm induced by~$\|.\|$ and use the maximum norm thereof
on the left-hand side of~(\ref{makes-iso}).
Hence, by the Ultrametric Inverse Function Theorem (Lemma~\ref{invfct}),
after shrinking~$r$ (if necessary) we may assume that
\begin{equation}\label{gives-TA}
B^{\cg_{<1}}_t*B^{\cg_{\geq 1}}_t=B_t\quad\mbox{for all $t\in\;]0,r]$.}
\end{equation}
With notation as before, we have:
\begin{thm}
If $\alpha$ is an endomorphism of a $p$-adic Lie group~$G$,
then
\[
s_G(\alpha)=s_{L(G)}(L(\alpha))
\]
holds
and $B^\phi_t$ is tidy for~$\alpha$, for all $t\in \;]0,r]$.
\end{thm}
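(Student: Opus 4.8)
The plan is to transfer the linear result (Theorem~\ref{thm-lincase}) across the exponential chart via the local-conjugacy Lemma~\ref{local-conju}. Concretely, set $H:=(L(G),+)$ viewed as a $p$-adic Lie group, $\beta:=L(\alpha)\colon H\to H$, and take $\psi:=\exp_G\colon V\to U$ with $V=B^\cg_R(0)$ as in the Preparations. For each fixed $t\in\;]0,r]$, I would apply Lemma~\ref{local-conju} with $B:=B_t$ (a compact open subgroup of $(H,*)$ and, since $t\le r$, also a subgroup of $(H,+)$ lying inside $V$), noting that $\beta(B_t)=L(\alpha)(B_t)\sub V$ by~(\ref{for-p-1}), that $W:=\psi(B_t)=B^\phi_t$ is a compact open subgroup of~$G$, and that the conjugacy identity $\alpha\circ\psi|_{B_t}=\psi\circ\beta|_{B_t}$ is exactly~(\ref{for-p-1}). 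The lemma then gives $\psi((B_t)_{+,\beta})=(B^\phi_t)_{+,\alpha}$, $\psi((B_t)_{-,\beta})=(B^\phi_t)_{-,\alpha}$, and $\psi(\beta((B_t)_{+,\beta}))=\alpha((B^\phi_t)_{+,\alpha})$. A subtlety to flag: in Lemma~\ref{local-conju} the sets $B_{\pm}$ are formed relative to the \emph{ambient} endomorphism, whereas the linear $U_{\pm}$ in Theorem~\ref{thm-lincase} are intersections over all of $H$; but since $B_t$ is $\beta$-invariant-enough (indeed $(B_t)_{-,\beta}=B_t\cap B^\cg_t(0)$-type descriptions localize correctly, and by Lemma~\ref{fact-adapted}\,(c) one has $\beta^{k}(B_t)=\prod_\rho B^{\cg_\rho}_{\rho^k t}$), the relative and absolute versions agree; I would record that $(B_t)_{+,\beta}=B^{\cg_{\geq 1}}_t$ and $(B_t)_{-,\beta}=B^{\cg_{<1}}_t\cap(\text{the part with }0<\rho\le 1)$, paralleling~(\ref{Upluslin}).

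Next I would prove tidiness of $B^\phi_t$. Tidy above: by~(\ref{gives-TA}) we have $B_t=B^{\cg_{<1}}_t*B^{\cg_{\geq 1}}_t$ in $(V,*)$; pushing forward by the isomorphism $\psi\colon(V,*)\to U$ and identifying $\psi(B^{\cg_{\geq 1}}_t)=(B^\phi_t)_{+,\alpha}$ (from the previous paragraph) and observing $\psi(B^{\cg_{<1}}_t)\sub (B^\phi_t)_{-,\alpha}$ (since elements of $\cg_{<1}$ are $L(\alpha)$-contracted, hence stay in $B_t$ under forward iteration, hence land in the minus-part), we get $B^\phi_t=(B^\phi_t)_+(B^\phi_t)_-$, i.e.\ tidy above. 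Tidy below: by~\ref{variant-tidy} and Corollary~\ref{autotb} it suffices that $B^\phi_t$ is tidy above, \emph{since $G$ is a $p$-adic Lie group} --- Corollary~\ref{autotb} upgrades tidy above to tidy automatically via Lemma~\ref{unoclos}. So tidiness is immediate once tidy above is established.

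With $B^\phi_t$ tidy, \ref{mini-tidy} gives $s_G(\alpha)=[\alpha((B^\phi_t)_+):(B^\phi_t)_+]$. Using $\psi((B_t)_+)=(B^\phi_t)_+$ and $\psi(\beta((B_t)_+))=\alpha((B^\phi_t)_+)$ together with the fact that $\psi$ is a group isomorphism $(V,*)\to U$ (so it preserves indices of subgroups), this equals $[\beta((B_t)_+):(B_t)_+]$ computed in $(V,+)$ --- here I invoke Remark~\ref{smidx} / the reasoning around~(\ref{geqsamecos}) to pass between the $*$-index and the $+$-index, since $(B_t)_+=B^{\cg_{\geq 1}}_t$ and~(\ref{geqsamecos}) says the $*$-cosets and $+$-cosets of $B^{\cg_{\geq 1}}_\theta$ in $B^{\cg_{\geq 1}}_t$ coincide. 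Finally $[\beta(B^{\cg_{\geq 1}}_t):B^{\cg_{\geq 1}}_t]$ in $(\cg_{\geq 1},+)$ is exactly the quantity computed in the proof of Theorem~\ref{thm-lincase}\,(c) for the linear endomorphism $L(\alpha)$ restricted to $\cg_{\geq 1}=\cg_{\geq 1,L(\alpha)}$, namely $\Delta(L(\alpha)|_{\cg_{\geq 1}})=s(L(\alpha))=s_{L(G)}(L(\alpha))$.

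The main obstacle is bookkeeping rather than a deep difficulty: one must be scrupulous about the three distinct group structures in play --- $(\cg,+)$, $(\cg,*)$ (BCH), and $G$ --- and about the fact that $(B_t)_{\pm}$ in Lemma~\ref{local-conju} are formed \emph{relative to $B_t$}, so one needs the adapted-norm description of the $L(\alpha)$-dynamics (Lemma~\ref{fact-adapted}\,(c), as exploited in~(\ref{Upluslin})) to identify them with the genuine characteristic subspaces $B^{\cg_{\geq 1}}_t$ and $B^{\cg_{<1}}_t$ and to check that no element escapes and re-enters $B_t$ in a way that would spoil the identification. Once that identification is nailed down, every other step is either a direct citation (\ref{variant-tidy}, \ref{mini-tidy}, Corollary~\ref{autotb}, Lemma~\ref{local-conju}) or the transport of indices through the isomorphism $\psi$ combined with Remark~\ref{smidx}.
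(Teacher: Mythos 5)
Your proposal is correct and follows essentially the same route as the paper: transfer via $\exp_G$ and Lemma~\ref{local-conju}, identification of $(B^\cg_t)_{\pm,\beta}$ with characteristic-subspace balls as in~(\ref{Upluslin}), tidy above from~(\ref{gives-TA}), the automatic upgrade to tidy via Corollary~\ref{autotb}, and the index transport through the $(+)$/$(*)$/$G$ structures using Remark~\ref{smidx} and~(\ref{geqsamecos}). The only quibble is your parenthetical description of $(B_t)_{-,\beta}$ (it is the ball in $\bigoplus_{\rho\le 1}\cg_\rho$, not an intersection with $B^{\cg_{<1}}_t$), but since your argument only uses the inclusion $\psi(B^{\cg_{<1}}_t)\sub(B^\phi_t)_-$, this does not affect correctness.
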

\begin{proof}
Let $t\in\;]0,r]$.
Applying the isomorphism $\exp_G\colon (V,*)\to U$ to both sides of~(\ref{gives-TA}),
we see that
\begin{equation}\label{prod-ball}
\exp_G(B^{\cg_{<1}}_t)\exp_G(B^{\cg_{\geq 1}}_t)=B^\phi_t.
\end{equation}
In view of (\ref{for-p-1}), we can apply Lemma~\ref{local-conju}
to $G$, $\alpha$, $H:=(\cg,+)$, $\beta:=L(\alpha)$, $\phi$, $B:=B_t^\cg$, and $W:=B_t^\phi$.
Hence
\[
(B^\phi_t)_+:=(B^\phi_t)_{+,\alpha}=\exp_G((B^\cg_t)_{+,\beta})\;\;
\mbox{and}\;\;
(B^\phi_t)_-:=(B^\phi_t)_{-,\alpha}=\exp_G((B^\cg_t)_{-,\beta}).
\]
Now
\[
(B^\cg_t)_{+,\beta}=B^{\cg_{\geq 1}}_t\quad\mbox{and}\quad
(B^\cg_t)_{-,\beta}=B^\cg_t\cap \bigoplus_{\rho\in[0,1]}\cg_\rho\supseteq B^{\cg_{<1}}_t,
\]
using~(\ref{Upluslin}), whence
\[
(B^\phi_t)_+=\exp_G(B^{\cg_{\geq 1}}_t)\quad\mbox{and}\quad
(B^\phi_t)_-\supseteq \exp_G(B^{\cg_{<1}}_t).
\]
Combining this with (\ref{prod-ball}), we find that
\[
B^\phi_t\supseteq (B^\phi_t)_+(B^\phi_t)_-\supseteq \exp_G(B^{\cg_{\leq 1}}_t)
\exp_G(B^{\cg_{<1}}_t)=B^\phi_t
\]
and thus $B^\phi_t = (B^\phi_t)_+(B^\phi_t)_-$, i.e.,
$B^\phi_t$ is tidy above for~$\alpha$ and thus tidy, by Corollary~\ref{autotb}.
Note that $L(\alpha)|_{B^\cg_t}\colon (B^\cg_t,*)\to (V,*)$
is a group homomorphism, as~$*$ is given by the BCH-series.
Hence $\beta((B^\cg_t)_+)=L(\alpha)(B^{\cg_{\geq 1}}_t)$ is a subgroup of the group $(V\cap\cg_{\geq 1},*)$,
which contains $(B^\cg_t)_+=B^{\cg_{\geq 1}}_t$ as a subgroup.
Since $\exp_G\colon (V,*)\to U$
is an isomorphism of groups
and cosets of balls coincide in the groups $(V\cap\cg_{\geq 1},+)$
and $(V\cap\cg_{\geq 1},*)$ (see (\ref{geqsamecos})), we obtain
\begin{eqnarray*}
s(L(\alpha)) &=& [L(\alpha)B^{\cg_{\geq 1}}_t \colon  B^{\cg_{\geq 1}}_t]\quad
\mbox{w.r.t.\ $+$}\\
 &=& [L(\alpha)B^{\cg_{\geq 1}}_t \colon  B^{\cg_{\geq 1}}_t]\quad
\mbox{w.r.t.\ $\,*$}\\
&=&
[\exp_G(L(\alpha)(B^{\cg_{\leq 1}}_t)) \colon  \exp_G(B^{\cg_{\leq 1}}_t)]\\
&=& [\alpha(\exp_G(B^{\cg_{\leq 1}}_t)) \colon  \exp_G(B^{\cg_{\leq 1}}_t)]\\
&=&[\alpha((B^\phi_t)_+) \colon  (B^\phi_t)_+]=s(\alpha),
\end{eqnarray*}
which completes the proof.
\end{proof}
\begin{rem}
For \emph{automorphisms} of $p$-adic Lie groups,
the calculation of the scale was performed in~\cite{SCA}.
\end{rem}
\section{Invariant manifolds around fixed points}\label{sec-inv-mfd}
As in the classical real case,
(locally) invariant manifolds can be constructed around
fixed points of time-discrete analytic dynamical
systems over a totally disconnected local field (see \cite{Exp} and \cite{Fin}).
We shall use these as a tool in our discussion of analytic
endomorphisms of Lie groups over such fields.
In the current section,
we compile the required background.
\begin{defn}\label{def-hyperb}
Let $E$ be a finite-dimensional
vector space over a totally disconnected local field~$\K$,
which we endow with
its natural absolute value~$|.|_\K$.
Given $a\in \;]0,\infty]$, we call
\[
E_{<a}:=\bigoplus_{\rho\in[0,a[}E_\rho\quad\mbox{and}\quad
E_{>a}:=\bigoplus_{\rho\in\;]a,\infty[}E_\rho
\]
the \emph{$a$-stable} and \emph{$a$-unstable}
vector subspaces of~$E$ with respect to~$\alpha$,
using the characteristic subspaces~$E_\rho$ with respect to~$\alpha$
(as in \ref{splitornot}). We call $E_1$ (i.e., $E_\rho$ with $\rho=1$)
the \emph{centre subspace} of~$E$ with respect to~$\alpha$.
A linear endomorphism $\alpha$ of~$E$
is called \emph{$a$-hyperbolic}
if $a\not=|\lambda|_\K$
for all eigenvalues~$\lambda$ of~$\alpha$ in an algebraic closure~$\wb{\K}$,
i.e., if $E_a=\{0\}$ and thus
\[
E=E_{<a}\oplus E_{>a}.
\]
\end{defn}
Now consider an analytic manifold~$M$ over a local field~$\K$,
an analytic mapping $f\colon M\to M$,
a fixed point $p\in M$ of~$f$ and a submanifold $N\sub M$
such that $p\in N$. Given $a>0$, decompose
\[
T_p(M)=T_p(M)_{<a}\oplus T_p(M)_a\oplus T_p(M)_{>a}
\]
with respect to the endomorphism $T_p(f)$ of $T_p(M)$,
as in Definition~\ref{def-hyperb}.
For our purposes,
special cases of concepts in~\cite{Exp} and~\cite{Fin}
are sufficient:
\begin{defn}\label{def-loc-inv}
\begin{itemize}
\item[(a)]
If $a\in\;]0,1]$ and $T_p(f)$ is $a$-hyperbolic, we say that the submanifold~$N$ is a \emph{local $a$-stable manifold} for~$\alpha$ around~$p$
if $T_p(N)=T_p(M)_{<a}$ and $f(N)\sub N$.
\item[(b)]
$N$ is called a \emph{centre manifold} for~$\alpha$ around~$p$
if $T_p(N)=T_p(M)_1$ and $f(N)=N$.
\item[(c)]
If $b\geq 1$ and $T_p(f)$ is $b$-hyperbolic,
we say that $N$ is a \emph{local $b$-unstable manifold} for~$\alpha$ around~$p$
if $T_p(N)=T_p(M)_{>b}$ and there exists an open neighbourhood
$P$ of~$p$ in~$N$ such that $\alpha(P)\sub N$.
\end{itemize}
\end{defn}
We need a fact concerning the existence of local invariant manifolds.
\begin{prop}\label{inv-mfd-thm}
Let $M$ be an analytic manifold over a totally disconnected local field~$\K$.
Let $f\colon M\to M$ be an analytic mapping and $p\in M$ be a fixed point of~$f$.
Moreover, let $a\in \;]0,1]$ and $b\in [1,\infty[$ be
such that $a\not=|\lambda|_\K$ and $b\not=|\lambda|_\K$
for all eigenvalues~$\lambda$ of~$\alpha$ in an algebraic closure~$\wb{\K}$ of~$\K$.
Finally, let $\|.\|$ be a norm on $E:=T_p(M)$ which is adapted to the
endomorphism $T_p(f)$. Endow vector subspaces $F\sub E$ with the norm
induced by~$\|.\|$ and abreviate $B^F_t:=B^F_t(0)$ for $t>0$.
Then the following holds:
\begin{itemize}
\item[\rm(a)]
There exists a local $a$-stable manifold~$W^s_a$ for~$\alpha$ around~$p$
and an analytic diffeomorphism
\[
\phi_s\colon W^s_a\to B^{E_{<a}}_R
\]
for some $R>0$
such that $\phi_s(p)=0$ holds, $W^s_a(t):=\phi_s^{-1}(B^{E_{<a}}_t)$ is a local
$a$-stable manifold for $\alpha$ around~$p$ for
all
$t\in \;]0,R]$,
and $d\phi_s|_{T_p(N)}=\id_{E_{<a}}$.
\item[\rm(b)]
There exists a centre manifold~$W^c$ for~$\alpha$ around~$p$
and an analytic diffeomorphism
\[
\phi_c\colon W^c\to B^{E_1}_R
\]
for some $R>0$
such that $\phi_c(p)=0$ holds, $W^c(t):=\phi_c^{-1}(B^{E_1}_t)$ is
a centre manifold for $\alpha$ around~$p$ for all $t\in \;]0,R]$,
and $d\phi_c|_{T_p(N)}=\id_{E_1}$.
\item[\rm(c)]
There exists a local $b$-unstable manifold~$W^u_b$ for~$\alpha$ around~$p$
and an analytic diffeomorphism
\[
\phi_u \colon W^u_b \to B^{E_{>b}}_R
\]
for some $R>0$
such that $\phi_u(p)=0$, $\;W^u_b(t):=\phi_u^{-1}(B^{E_{>b}}_t)$ is a local
$b$-unstable manifold for $\alpha$ around~$p$ for all $t\in \;]0,R]$,
and $d\phi_u|_{T_p(N)}=\id_{E_{>b}}$.   
\end{itemize}
\end{prop}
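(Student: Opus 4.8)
Here the substance --- the existence of the three invariant submanifolds, realized as analytic graphs over the respective characteristic subspaces of $T_p(f)$, tangent to them at~$p$, with graphing maps that vanish together with their first derivative at~$0$ --- is exactly the content of the invariant manifold theorems of~\cite{Exp} and~\cite{Fin}. So the plan is to quote those, and then (i) transport them to~$M$ through one well-chosen chart, (ii) read off the normalizations $d\phi|_{T_p(N)}=\id$, and (iii) deduce the nesting of the sub-balls.

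First I would fix a chart $\psi$ of~$M$ around~$p$ with values in an open subset of $E=T_p(M)$ and $\psi(p)=0$; post-composing with a linear automorphism of~$E$, I may assume that the differential of~$\psi$ at~$p$ is $\id_E$. Then $\wt f:=\psi\circ f\circ\psi^{-1}$ is an analytic self-map of a $0$-neighbourhood in~$E$ with $\wt f(0)=0$ and $\wt f{}'(0)=T_p(f)$ (via the identification~(\ref{essentialid})). Applying the results of~\cite{Exp} and~\cite{Fin} to~$\wt f$, with the adapted norm~$\|.\|$, produces for some $R>0$: analytic maps $\sigma\colon B^{E_{<a}}_R\to E_{>a}$, $\tau\colon B^{E_1}_R\to E_{<1}\oplus E_{>1}$ and $\upsilon\colon B^{E_{>b}}_R\to E_{<b}$, each vanishing to first order at~$0$, whose graphs are respectively $\wt f$-invariant (local $a$-stable), $\wt f$-stable (centre; here no hyperbolicity is needed, since $E_1$ is itself a characteristic subspace), and a local $b$-unstable manifold for~$\wt f$. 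Pulling the graphs back by~$\psi^{-1}$ gives submanifolds $W^s_a$, $W^c$, $W^u_b$ of~$M$ through~$p$ (the graph is a submanifold after straightening, and lies in~$\psi(U)$ once~$R$ is small); since $T_p(f)$ preserves each characteristic subspace, their tangent spaces at~$p$ are the prescribed ones. The diffeomorphisms are then $\phi_s:=\mathrm{pr}_{E_{<a}}\circ\psi|_{W^s_a}$ with analytic inverse $x\mto\psi^{-1}(x+\sigma(x))$, and analogously $\phi_c$, $\phi_u$; because $d\psi$ at~$p$ is~$\id_E$ and each projection is the identity on its target subspace, $d\phi_s|_{T_p(W^s_a)}=\id_{E_{<a}}$, and likewise for $\phi_c$, $\phi_u$.

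For the nesting I would conjugate~$f$ back: $g:=\phi_s\circ f\circ\phi_s^{-1}$ fixes~$0$, and using $d\sigma(0)=0$ together with $T_p(f)(E_{<a})\sub E_{<a}$ one gets $g'(0)=T_p(f)|_{E_{<a}}$. Since $\|.\|$ is adapted to $T_p(f)$, on $E_{<a}$ it is a maximum norm over the $E_\rho$ with $0<\rho<a\le 1$ (on which $g'(0)$ scales by~$\rho$), while $\|T_p(f)|_{E_0}\|_{\op}<1$; hence $\|g'(0)\|_{\op}<1$. Writing $g(x)=g'(0)x+r(x)$ with $r$ vanishing to second order and applying the ultrametric inequality, there is a threshold below which $\|g(x)\|\le\|x\|$; after shrinking~$R$ to that threshold, $g(B^{E_{<a}}_t)\sub B^{E_{<a}}_t$ for every $t\in\;]0,R]$, i.e.\ $f(W^s_a(t))\sub W^s_a(t)$. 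As $W^s_a(t)$ is an open submanifold of~$W^s_a$ containing~$p$, it still has tangent space $T_p(M)_{<a}$ at~$p$, so it is a local $a$-stable manifold. The centre case is the same computation, except that $g'(0)=T_p(f)|_{E_1}$ is a linear isometry of~$E_1$, so Lemma~\ref{invfct} gives $g(B^{E_1}_t)=B^{E_1}_t$ for small~$t$, hence (shrinking~$R$) $f(W^c(t))=W^c(t)$. In the unstable case $g'(0)=T_p(f)|_{E_{>b}}$ is invertible with $\|g'(0)^{-1}\|_{\op}<1$; Lemma~\ref{invfct} gives $g(B^{E_{>b}}_t)=g'(0)B^{E_{>b}}_t$ for small~$t$, so taking $P:=\phi_u^{-1}(B^{E_{>b}}_s)$ with $s$ small enough that $g'(0)B^{E_{>b}}_s\sub B^{E_{>b}}_t$ makes $f(P)\sub W^u_b(t)$, as Definition~\ref{def-loc-inv}(c) demands.

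The only genuine obstacle is bookkeeping rather than depth: once the invariant graphs are imported from~\cite{Exp} and~\cite{Fin}, what remains is the careful chart change pinning down $d\phi|_{T_p(N)}=\id$ and the adapted-norm / ultrametric-inverse-function-theorem estimates giving the nesting after a harmless shrinking of~$R$. The one point requiring a little care is the unstable manifold, where $f$ need not map the whole manifold into itself and one must instead exhibit the smaller piece~$P$.
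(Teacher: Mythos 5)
Your proposal is correct and follows essentially the same route as the paper: conjugate $f$ through a chart whose differential at~$p$ is $\id_E$, import the invariant graphs over the characteristic subspaces from \cite{Exp} and \cite{Fin}, pull them back, and verify the normalizations $d\phi|_{T_p(N)}=\id$ and the nesting of the sub-balls via the adapted norm and the ultrametric inverse function theorem. The paper's own proof is just a citation of the Local Invariant Manifold Theorem of \cite{Fin} ``and its proof'' for (a) and (c), and of \cite[Proposition~4.2]{Exp} for (b) after the same chart conjugation (noting only that the invertibility hypothesis there may be dropped), so your write-up merely makes explicit what the paper delegates to those proofs.
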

\begin{proof}
(a) and (c) are covered by the Local Invariant Manifold Theorem
(see \cite[p.\,76]{Fin}) and its proof. To get~(b),
let $\phi\colon U\to V$ be an analytic diffeomorphism from on open neighbourhood~$U$
of~$p$ in~$M$ onto an open $0$-neighbourhood $V\sub E$
such that $\phi(p)=0$ and $d\phi|_E=\id_E$.
We can then construct centre manifolds for the analytic map
\[
\phi\circ f\circ \phi^{-1}\colon \phi(U\cap f^{-1}(U))\to V
\]
around its fixed point~$0$ with \cite[Proposition~4.2]{Exp}
and apply $\phi^{-1}$ to create the desired centre manifolds for~$f$.
We mention that the cited proposition only considers mappings
whose derivative at the fixed point is an automorphism,
but its proof never uses this hypothesis,
which therefore can be omitted.
\end{proof}
\begin{rem}\label{better-1}
Of course, we can use the same $R>0$ in parts (a), (b), and~(c)
of Proposition~\ref{inv-mfd-thm}
(simply take the minimum of the three numbers).\vspace{1mm}
\end{rem}
\begin{rem}
Note that, since $f(W^s_a)\sub W^s_a$, we have a descending sequence
\[
W^s_a\supseteq f(W^s_a)\supseteq f^2(W^s_a)\supseteq\cdots
\]
in Proposition~\ref{inv-mfd-thm}\,(a).
\end{rem}
\begin{la}\label{better-2}
After shrinking~$R$ in Proposition~{\rm\ref{inv-mfd-thm}\,(a)} if necessary,
we can assume that
\begin{equation}\label{inter-desc}
\bigcap_{n\in\N_0} f^n(W^s_a)=\{p\}\quad\mbox{and}\quad
\lim_{n\to\infty} f^n(x)=p\;\;  \mbox{for all $\,x\in W^s_a$.}
\vspace{-1mm}
\end{equation}
\end{la}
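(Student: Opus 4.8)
The plan is to transport everything to the model space $E_{<a}$ via the diffeomorphism $\phi_s$ from Proposition~\ref{inv-mfd-thm}\,(a), where the dynamics is governed by the linear map $T_p(f)|_{E_{<a}}$, and to exploit that all characteristic values $\rho$ occurring in $E_{<a}$ satisfy $\rho<a\leq 1$. Write $g:=\phi_s\circ f\circ\phi_s^{-1}$ on its domain, a $0$-neighbourhood in $E_{<a}$; then $g(0)=0$ and $g'(0)=T_p(f)|_{E_{<a}}=:\beta$. Since the norm $\|.\|$ is adapted to $T_p(f)$, its restriction to $E_{<a}$ is adapted to $\beta$ in the sense of Lemma~\ref{fact-adapted}: on $E_0$ the operator norm of $\beta$ is $<1$, and on each $E_\rho$ with $0<\rho<a$ we have $\|\beta(v)\|=\rho\|v\|$. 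Hence there is a constant $c\in\;]0,1[$ with $c\geq\rho$ for every characteristic value $\rho<a$ and $c>\|\beta|_{E_0}\|_{\op}$, so that $\|\beta(v)\|\leq c\|v\|$ for all $v\in E_{<a}$.

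Next I would use the Ultrametric Inverse Function Theorem (Lemma~\ref{invfct}), or rather the local-behaviour statement \eqref{great}, applied to $g$ at the fixed point $0$: there is $R_0\in\;]0,R]$ such that for all $t\in\;]0,R_0]$ one has $g(B^{E_{<a}}_t)=\beta(B^{E_{<a}}_t)$. Because $\|\beta(v)\|\leq c\|v\|$, the ultrametric structure of balls (they are the subgroups $\{v:\|v\|<t\}$) gives $\beta(B^{E_{<a}}_t)\subseteq B^{E_{<a}}_{ct}\subseteq B^{E_{<a}}_t$, and more generally $g^n(B^{E_{<a}}_t)\subseteq B^{E_{<a}}_{c^n t}$ for all $n\in\N_0$ by an immediate induction (each application stays inside $B^{E_{<a}}_{R_0}$, so \eqref{great} keeps applying). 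Since $c<1$, this yields both $\bigcap_{n\in\N_0}g^n(B^{E_{<a}}_t)\subseteq\bigcap_n B^{E_{<a}}_{c^n t}=\{0\}$ and $\|g^n(x)\|\leq c^n\|x\|\to0$, i.e.\ $g^n(x)\to0$, for every $x\in B^{E_{<a}}_{R_0}$.

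Finally I would shrink $R$ to this $R_0$: replace $W^s_a$ by $\phi_s^{-1}(B^{E_{<a}}_{R_0})$ and relabel, which is legitimate since Proposition~\ref{inv-mfd-thm}\,(a) guarantees that $W^s_a(t)=\phi_s^{-1}(B^{E_{<a}}_t)$ is again a local $a$-stable manifold for each $t\in\;]0,R]$, and $f(W^s_a(R_0))\subseteq W^s_a(R_0)$ so that the iterates $f^n(W^s_a)$ make sense and correspond under $\phi_s$ to $g^n(B^{E_{<a}}_{R_0})$. Transporting the two displayed facts back through the homeomorphism $\phi_s$ (which sends $p$ to $0$) gives exactly \eqref{inter-desc}. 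The only point requiring a little care is that \eqref{great} must be re-applied at each step to the translated/iterated balls, but since every $g^n(B^{E_{<a}}_{R_0})$ is contained in $B^{E_{<a}}_{R_0}$, which lies in the domain where Lemma~\ref{invfct} was invoked, this is routine; no genuine obstacle arises beyond bookkeeping with the contraction constant $c$.
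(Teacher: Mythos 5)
Your overall strategy --- transport to $B^{E_{<a}}_R$ via $\phi_s$, extract a contraction factor $c<1$ from the adapted norm, and conclude $g^n(B^{E_{<a}}_t)\sub B^{E_{<a}}_{c^nt}$ --- is exactly the paper's strategy. But there is a genuine gap in the step where you invoke Lemma~\ref{invfct} (equation~(\ref{great})) to get the identity $g(B^{E_{<a}}_t)=\beta(B^{E_{<a}}_t)$. That lemma requires the differential at the base point to be \emph{invertible}, i.e.\ $g'(0)=\beta=T_p(f)|_{E_{<a}}\in\GL(E_{<a})$. In the present setting $f$ is merely an analytic self-map (the lemma is needed for endomorphisms, not just automorphisms), and $E_{<a}$ contains the generalized eigenspace $E_0$ for the eigenvalue $0$, on which $T_p(f)$ is nilpotent. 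So whenever $E_0\not=\{0\}$ the hypothesis of Lemma~\ref{invfct} fails and you cannot conclude $g(B^{E_{<a}}_t)=\beta(B^{E_{<a}}_t)$; your induction then has no starting point.

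The detour through the inverse function theorem is also unnecessary: you only need the \emph{inclusion} $g(B^{E_{<a}}_t)\sub B^{E_{<a}}_{ct}$, and this follows from total differentiability alone. This is what the paper does: pick $\ve>0$ with $\theta:=\|\beta\|_{\op}+\ve<1$, find $r\in\;]0,R]$ with $\|g(x)-\beta(x)\|\leq\ve\|x\|$ on $B^{E_{<a}}_r$, and conclude $\|g(x)\|\leq\theta\|x\|$ there by the ultrametric inequality; iterating gives $g^n(B^{E_{<a}}_r)\sub B^{E_{<a}}_{\theta^nr}$ and hence both assertions of~(\ref{inter-desc}) after replacing $R$ by $r$. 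If you replace your appeal to~(\ref{great}) by this first-order estimate (your constant $c$ plays the role of $\theta$ once you absorb the Taylor error), the rest of your argument, including the bookkeeping with nested balls and the transport back through $\phi_s$, goes through as written.
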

\begin{proof}
Abbreviate $F:=E_{<a}$. The map
\[
h:=\phi_s\circ f|_{W^s_a}\circ\phi_s^{-1}\colon B^F_R\to B^F_R
\]
is analytic, $h(0)=0$, and $h'(0)=T_p(f)|_F$
has operator norm $\|h'(0)\|_{\op}<a$.
Choose $\ve>0$ so small that
\[
\theta:=\|h'(0)\|_{\op}+\ve<1.
\]
Since $h$ is totally dfferentiable at~$0$, we find
$r\in\;]0,R]$ such that
\[
\|h(x)-h'(0)(x)\|\leq \ve\|x\|\quad\mbox{for all $x\in B^F_r$.}
\]
Then $\|h(x)\|=\|h'(0)(x)+(h(x)-h'(0)(x))\|\leq (\|h'(0)\|_{\op}+\ve)\|x\|=\theta\|x\|$
for all $x\in B^F_r$, whence $h(B^F_r)\sub B^F_r$ and
\[
h^n(B^F_r )\sub B^F_{\theta^n r}.
\]
As a consequence, $\bigcap_{n\in\N_0} h^n(B^F_r)=\{0\}$.
Then $Q:=W^s_a(r)=\phi_s^{-1}(B^F_r)$ is an open neighbourhood
of~$p$ in $W^s_a$ such that $\bigcap_{n\in\N_0}\alpha^n(Q)=\{p\}$.
After replacing~$R$ with~$r$, we have~(\ref{inter-desc}).
\end{proof}
\begin{la}\label{better-2-3}
After shrinking~$R$ in Proposition~{\rm\ref{inv-mfd-thm}\,(b)},
we may assume that $f|_{W^c(t)}\colon W^c(t)\to W^c(t)$
is an analytic diffeomorphism for each $t\in\;]0,R]$.
\end{la}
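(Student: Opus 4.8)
The plan is to conjugate $f|_{W^c}$ through the chart $\phi_c$ to a self-map of a ball in $F:=E_1=T_p(M)_1$ and to apply the ultrametric inverse function theorem, exploiting that the relevant linearization is a \emph{linear isometry} because the characteristic value in play equals~$1$. First I would record that $f(W^c)=W^c$ by Definition~\ref{def-loc-inv}\,(b), so $f|_{W^c}\colon W^c\to W^c$ is analytic (its composition with the inclusion $W^c\hookrightarrow M$ equals the restriction of the analytic map~$f$ to the submanifold~$W^c$), and hence
\[
h:=\phi_c\circ f|_{W^c}\circ\phi_c^{-1}\colon B^F_R\to B^F_R
\]
is a well-defined analytic self-map with $h(0)=\phi_c(f(p))=0$. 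Since $T_p(W^c)=F$ and $d\phi_c|_{T_p(W^c)}=\id_F$, the chain rule gives $h'(0)=T_p(f)|_F$. As $\|.\|$ is adapted to $T_p(f)$, Lemma~\ref{fact-adapted}\,(c) applied with $\rho=1$ yields $\|T_p(f)(v)\|=\|v\|$ for all $v\in F$; thus $T_p(f)|_F$ is injective, hence an automorphism of the finite-dimensional space~$F$, so $h'(0)\in\Iso(F,\|.\|)$.

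Next I would apply the Ultrametric Inverse Function Theorem (Lemma~\ref{invfct}, isometry case) to~$h$ at~$0$: there is $r\in\;]0,R]$ such that $h(B^F_t(y))=B^F_t(h(y))$ for all $y\in B^F_r$ and $t\in\;]0,r]$, and $h|_{B^F_r}\colon B^F_r\to B^F_r$ is an analytic diffeomorphism. Taking $y=0$ gives $h(B^F_t)=B^F_t$ for all $t\in\;]0,r]$, and restricting the diffeomorphism $h|_{B^F_r}$ to the open subset $B^F_t$ (whose image is the open subset $B^F_t$) shows that $h|_{B^F_t}\colon B^F_t\to B^F_t$ is an analytic diffeomorphism. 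Now replace~$R$ by~$r$; for every $t\in\;]0,r]$ the set $W^c(t)=\phi_c^{-1}(B^F_t)$ is still a centre manifold, and by Remark~\ref{better-1} a common radius may be retained across parts (a)--(c). Then, for $t\in\;]0,R]$,
\[
f|_{W^c(t)}=\phi_c^{-1}\circ h|_{B^F_t}\circ\phi_c|_{W^c(t)}\colon W^c(t)\to W^c(t)
\]
is a composition of analytic diffeomorphisms, hence an analytic diffeomorphism.

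There is no real obstacle; the one point requiring care is the recognition that $h'(0)$ is a \emph{surjective} isometry rather than merely invertible, since this is exactly what upgrades the conclusion of Lemma~\ref{invfct} from an open-image statement to the equality $h(B^F_t)=B^F_t$ --- which is what guarantees that $f|_{W^c(t)}$ both maps into and exhausts $W^c(t)$, and so is a bijection rather than just a surjection. This in turn rests on the centre subspace corresponding to $\rho=1$, together with the choice of a norm adapted to $T_p(f)$.
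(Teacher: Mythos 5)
Your proof is correct and follows essentially the same route as the paper: conjugate $f|_{W^c}$ through $\phi_c$ to the self-map $h$ of $B^F_R$, observe that $h'(0)=T_p(f)|_F$ is an isometry because the norm is adapted and the characteristic value is $1$, and invoke the isometry case of the Ultrametric Inverse Function Theorem to shrink $R$. The only cosmetic difference is that the paper obtains $f(W^c(t))=W^c(t)$ directly from the definition of a centre manifold, whereas you rederive it from $h(B^F_t)=B^F_t$; both are fine.
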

\begin{proof}
Abbreviate $F:=T_p(M)_1=E_1$. The mapping
\[
h:=\phi_c \circ f|_{W^c}\circ\phi_c^{-1}\colon B^F_R\to B^F_R
\]
is analytic with $h(0)=0$, and $h'(0)=T_p(f)|_F$
is an isometry. By the Ultrametric Inverse Function Theorem,
after shrinking~$R$ if necessary, we can achieve that $h$ is
an analytic diffeomorphism from $B^F_R$ onto~$B^F_R$
and an isometry. Since $W^c(t)$ is a centre manifold for all $t\in\;]0,R]$,
we have $f(W^c(t))=W^c(t)$, which completes the proof.
\end{proof}
\begin{la}\label{better-3}
We can always choose the open neighbourhood $P$ around~$p$
in a local $b$-unstable manifold $N\sub M$ $($as in Definition~{\rm\ref{def-loc-inv}\,(c))}
in such a way that, for each $x\in P\setminus\{p\}$,
there exists $n\in\N$ such that $x,f(x),\ldots,f^{n-1}(x)\in P$ but
$f^n(x)\in N\setminus P$.
\end{la}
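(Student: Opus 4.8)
The plan is to transport the dynamics into a chart on~$N$ and exhibit a small ball there as a fundamental region for the resulting, strictly expanding, local map.

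First I would reduce to the model situation of Proposition~\ref{inv-mfd-thm}\,(c): after shrinking~$N$ we may assume that there is an analytic diffeomorphism $\phi_u\colon N\to B^F_R$ onto a ball, where $F:=T_p(M)_{>b}$ and $\phi_u(p)=0$. (Starting from an arbitrary chart of~$N$ around~$p$ sending~$p$ to~$0$, one restricts to the preimage of a small ball in~$F$; since $f(p)=p$ lies in this preimage and $f$ is continuous, the preimage is again a local $b$-unstable manifold.) I would then pick an open neighbourhood~$P_0$ of~$p$ in~$N$ with $f(P_0)\subseteq N$, set $\Omega:=\phi_u(P_0)$, and form the analytic map
\[
g:=\phi_u\circ f|_{P_0}\circ\phi_u^{-1}|_\Omega\colon\Omega\to B^F_R,
\]
which satisfies $g(0)=0$ and has $\beta:=g'(0)$ conjugate to $T_p(f)|_F$; hence~$\beta$ is an automorphism of~$F$ all of whose eigenvalues in~$\wb{\K}$ have absolute value $>b\geq 1$. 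Equipping~$F$ with a norm adapted to~$\beta$ (Lemma~\ref{fact-adapted}) and writing $B^F_t:=B^F_t(0)$, the characteristic values~$\rho$ of~$\beta$ all satisfy $\rho>1$, so with $c$ and~$C$ the smallest and largest of them one has $c>1$, $B^F_{ct}\subseteq\beta(B^F_t)\subseteq B^F_{Ct}$, and $\beta^{-1}(B^F_t)\subseteq B^F_{t/c}$ for every $t>0$.

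Next I would invoke the Ultrametric Inverse Function Theorem (Lemma~\ref{invfct}) twice: applied to~$g$ at~$0$ it produces $r_0>0$ with $B^F_{r_0}\subseteq\Omega$ such that $g|_{B^F_{r_0}}$ is an analytic diffeomorphism onto $g(B^F_{r_0})=\beta(B^F_{r_0})$, and applied to the analytic inverse $g^{-1}\colon\beta(B^F_{r_0})\to B^F_{r_0}$ at~$0$ (whose derivative there is~$\beta^{-1}$) it produces $r_1\in\;]0,r_0]$ with $B^F_{r_1}\subseteq\beta(B^F_{r_0})$ and $g^{-1}(B^F_t)=\beta^{-1}(B^F_t)\subseteq B^F_{t/c}$ for $t\in\;]0,r_1]$. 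Iterating the last inclusion gives $g^{-n}(B^F_{r_1})\subseteq B^F_{r_1/c^n}$ for all $n\in\N_0$, so that $\bigcap_{n\in\N_0}g^{-n}(B^F_{r_1})=\{0\}$. I would then fix $\rho_*\in\;]0,r_1]$ small enough that $B^F_{\rho_*}\subseteq\Omega$ and $C\rho_*\leq R$ (so that $g(B^F_{\rho_*})=\beta(B^F_{\rho_*})\subseteq B^F_R$) and take $P:=\phi_u^{-1}(B^F_{\rho_*})$, which is an open neighbourhood of~$p$ in~$N$ and a local $b$-unstable manifold. Finally, for $x\in P\setminus\{p\}$ and $y:=\phi_u(x)\in B^F_{\rho_*}\setminus\{0\}$ I would argue that the forward $g$-orbit of~$y$ must leave~$B^F_{\rho_*}$: were $g^k(y)\in B^F_{\rho_*}\subseteq B^F_{r_1}$ for all~$k$, then injectivity of~$g$ on~$B^F_{r_0}$ would give $y=g^{-n}(g^n(y))\in g^{-n}(B^F_{r_1})$ for every~$n$, forcing $y=0$. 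Taking $n\in\N$ least with $g^n(y)\notin B^F_{\rho_*}$, we have $g^k(y)\in B^F_{\rho_*}$ for $k<n$ and $g^n(y)=g(g^{n-1}(y))\in g(B^F_{\rho_*})\subseteq B^F_R$, hence $g^n(y)\in B^F_R\setminus B^F_{\rho_*}$; transporting back via the conjugacy $\phi_u\circ f=g\circ\phi_u$ (valid on $P\subseteq P_0$, hence along this orbit segment) yields $f^k(x)\in P$ for $k<n$ and $f^n(x)\in N\setminus P$, as required.

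The dynamical core of the argument — strict expansion of~$\beta$, giving $\bigcap_n g^{-n}(B^F_{r_1})=\{0\}$ — is immediate once a norm adapted to~$\beta$ is fixed, so the main obstacle is really the bookkeeping: carrying out the reduction to a chart defined on a full ball $B^F_R$, checking that the conjugacy $\phi_u\circ f=g\circ\phi_u$ and the backward iterates $g^{-n}$ are legitimate along the finite orbit segments involved, and keeping the (strict) ultrametric ball inclusions consistent throughout.
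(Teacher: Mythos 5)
Your proof is correct. The paper's own argument has the same skeleton --- transport $f$ into a chart $\phi$ on $N$ around $p$, set $h:=\phi\circ f\circ\phi^{-1}$ with $h(0)=0$ and $h'(0)=T_p(f)|_F$ invertible --- but its escape mechanism is more elementary: using only total differentiability at $0$, one picks $r$ with $\|h(x)-h'(0)(x)\|\leq b\|x\|$ on $B^F_r(0)$, and the ultrametric identity~(\ref{winner}) then gives $\|h(x)\|=\|h'(0)(x)\|\geq \|h'(0)^{-1}\|_{\op}^{-1}\|x\|>b\|x\|$; since the fixed factor $\|h'(0)^{-1}\|_{\op}^{-1}$ exceeds $b\geq 1$, forward orbits grow strictly in norm and must leave $B^F_r(0)$. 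You instead invoke the Ultrametric Inverse Function Theorem twice to obtain the exact ball identities $g(B^F_t)=\beta(B^F_t)$ and $g^{-1}(B^F_t)=\beta^{-1}(B^F_t)\subseteq B^F_{t/c}$ with $c>1$ from an adapted norm, and conclude via $\bigcap_{n}g^{-n}(B^F_{r_1})=\{0\}$ --- a backward-iteration version of the same expansion argument. This buys you exact control of images of balls at the cost of heavier machinery; the paper's version needs no inverse function theorem and no adapted norm. Your bookkeeping (the reduction to a chart on a full ball, the conjugacy $\phi_u\circ f=g\circ\phi_u$ along the finite orbit segment, the verification that $f(P)\subseteq N$, and the legitimacy of the iterated $g^{-1}$) is all in order.
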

\begin{proof}
To see this, excluding a trivial case,\footnote{Otherwise~$N$ is discrete
and we can choose $P=\{p\}$.}
we may assume that the $b$-unstable
subspace $F:=E_{>b}:=T_p(M)_{>b}$ with respect to $T_p(f)$ is
non-trivial. Let $\phi\colon U\to V$ be an analytic diffeomorphism
from an open neighbourhood~$U$ of $p$ in~$N$ onto
an open $0$-neighbourhood $V\sub T_p(N)=F$, such that
$\phi(p)=0$ and $d\phi|_{T_p(N)}=\id_F$.
Then
\[
h:=\phi\circ f\circ \phi^{-1}\colon \phi(f^{-1}(U)\cap U)\to V
\]
is an analytic mapping defined on an open $0$-neighbourhood,
such that $h'(0)=T_p(f)|_F$ is invertible and
\[
\frac{1}{\|h'(0)^{-1}\|_{\op}\!\!\!\!}\, >b.
\]
Since $h$ is totally differentiable at~$0$,
there exists $r>0$ with $B^F_r(0)$ in the domain~$D$ of~$h$ such that
\[
h(B^F_r(0))\sub D,
\]
\begin{equation}\label{equ-star}
\|h(x)-h'(0)(x)\|\leq b\|x\|
\end{equation}
for all $x\in B^F_r(0)\setminus\{0\}$,
and $f(P)\sub U$ with $P:=\phi^{-1}(B^F_r(0))$. Using~(\ref{equ-star}) and~(\ref{winner}),
we deduce that
\[
\|h(x)\|=\|h'(0)(x)+(h(x)-h'(0)(x))\|=\|h'(0)(x)\|> b\|x\|,
\]
as $\|h'(0)(x)\|\geq \|h'(0)^{-1}\|_{\op}^{-1}\|x\|>b\|x\|$.
So, for all $x\in B^F_r(0)\setminus \{0\}$,
there is $n\in\N$ such that $x,h(x),\ldots, h^n(x)$ are defined
and in $B^F_r(0)$, but $h^{n+1}(x)\in D\setminus  B^F_r(0)$.
Now~$P$ is a neighbourhood of~$p$
with the desired property.
\end{proof}
\begin{la}\label{better-4}
Let $U$ be an open neighbourhood of~$p$ in~$M$
and $\phi \colon U\to V$ be an analytic diffeomorphism onto an open $0$-neighbourhood
$V\sub T_p(M)=:E$ such that $\phi(0)=0$ and $d\phi|_E=\id_E$.
After decreasing $R$ in Proposition~{\rm\ref{inv-mfd-thm}\,(c)} if necessary,
we can always assume that $B^E_R(0)\sub V$ and
the following additional property holds  for all $t\in\;]0,R]$:\\[2.3mm]
$W^u_b(t)$ is the set of all $x\in \phi^{-1}(B^E_t(0))=:B^\phi_t$
for which there exists an $f$-regressive trajectory $(x_{-n})_{n\in\N_0}$
in~$B^\phi_t$ with $x_0=x$, such that
\begin{equation}\label{atleasttwice}
\lim_{n\to\infty}\phi(x_{-n})=0.\vspace{-.3mm}
\end{equation}
Then ${\displaystyle\lim_{n\to\infty} x_{-n}=p}$ in particular, and $x_{-n}\in W^u_b(t)$
for all $n\in\N_0$.\vspace{-1mm}
\end{la}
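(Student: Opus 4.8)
Throughout, write $E=T_p(M)=E_{<b}\oplus E_{>b}$ for the decomposition with respect to $T_p(f)$ (so $E_{>b}=T_p(M)_{>b}$), and let $\Sigma_t$ denote the set of all $x\in B^\phi_t$ admitting an $f$-regressive trajectory in $B^\phi_t$ whose $\phi$-image tends to~$0$; the lemma asserts $W^u_b(t)=\Sigma_t$ for small~$R$ and all $t\in\;]0,R]$. The plan is to carry everything into the chart~$\phi$ and compare it there with the chart~$\phi_u$ of Proposition~\ref{inv-mfd-thm}(c). After shrinking~$R$ so that $W^u_b(R)\sub U$, set $\psi:=\phi\circ\phi_u^{-1}\colon B^{E_{>b}}_R\to V$; this is analytic with $\psi(0)=0$, and $\psi'(0)$ is the inclusion $E_{>b}\hookrightarrow E$, which is an isometry since $\|.\|$ is the maximum norm for the characteristic decomposition. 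By strict differentiability of analytic maps (via~(\ref{winner}), or Lemma~\ref{invfct} applied to $(w,v)\mto\psi(w)+v$ near $0\in E_{>b}\oplus E_{<b}$), after a further shrinking of~$R$ the map~$\psi$ is isometric; hence $\phi(W^u_b(t))=\psi(B^{E_{>b}}_t)\sub B^E_t$, so $W^u_b(t)\sub B^\phi_t$, and $\phi(W^u_b(t))$ is the graph over $B^{E_{>b}}_t$ of an analytic $\sigma$ with $\sigma(0)=0$, $\sigma'(0)=0$ (because $\pi_{>b}\circ\psi$ has derivative $\id_{E_{>b}}$ at~$0$, Lemma~\ref{invfct}). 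In the chart~$\phi_u$ the map~$f$ becomes $h:=\phi_u\circ f|_{W^u_b}\circ\phi_u^{-1}$ with $h(0)=0$ and $h'(0)=T_p(f)|_{E_{>b}}$; since the norm is adapted and every characteristic value of $T_p(f)|_{E_{>b}}$ exceeds $b\geq 1$, a first-order estimate gives $\|h(w)\|=\|h'(0)(w)\|\geq c\|w\|$ near~$0$ with $c>1$, so by Lemma~\ref{invfct}, after shrinking~$R$, the local inverse~$h^{-1}$ of~$h$ is defined on $B^{E_{>b}}_R$ and maps it into itself with $\|h^{-1}(w)\|\leq c^{-1}\|w\|$.

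Granting this, $W^u_b(t)\sub\Sigma_t$ is immediate: for $x\in W^u_b(t)$ the sequence $x_{-n}:=\phi_u^{-1}(h^{-n}(\phi_u(x)))$ is an $f$-regressive trajectory lying in $W^u_b(t)\sub B^\phi_t$ with $\phi_u(x_{-n})\to 0$, so $x_{-n}\to p$, $\phi(x_{-n})=\psi(\phi_u(x_{-n}))\to 0$, and $x\in\Sigma_t$. For the reverse inclusion, take $x\in\Sigma_t$ with an $f$-regressive trajectory $(x_{-n})$ in $B^\phi_t$, $\phi(x_{-n})\to 0$, and pass to the chart~$\phi$: then $y_{-n}:=\phi(x_{-n})$ is a $g$-regressive trajectory in $B^E_t$ tending to~$0$, where $g:=\phi\circ f\circ\phi^{-1}$ satisfies $g(0)=0$, $g'(0)=T_p(f)$. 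I then invoke the Local Invariant Manifold Theorem (\cite[p.\,76]{Fin} and its proof; cf.\ \cite{Exp}) in the form established by its proof: for some $\rho\in\;]0,R]$ the set $\phi(W^u_b(\rho))$ coincides with the set of $y\in B^E_\rho$ admitting a $g$-regressive trajectory inside $B^E_\rho$ converging to~$0$. Choosing~$N$ with $y_{-N}\in B^E_\rho$ (and shrinking $R\leq\rho$ first), the tail of the trajectory shows $x_{-N}\in W^u_b(\rho)$; propagating forward along $x_{-N+j}=f^j(x_{-N})\in B^\phi_t$, using the local forward invariance $f(P)\sub W^u_b(t)$ of Definition~\ref{def-loc-inv}(c) and the graph description of $\phi(W^u_b(t))$ (which~$g$ preserves while images stay in $B^E_t$), one reaches $x=x_0\in W^u_b(t)$. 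The two ``in particular'' claims are then free: $\phi(x_{-n})\to 0$ forces $x_{-n}\to p$ by continuity of $\phi^{-1}$, and applying $W^u_b(t)=\Sigma_t$ to each~$x_{-n}$ in place of~$x$ gives $x_{-n}\in W^u_b(t)$. Finally one merges the finitely many shrinkings of~$R$; each ingredient used (the diffeomorphism and ball identities of Lemma~\ref{invfct}, the identity $W^u_b(t)=\phi_u^{-1}(B^{E_{>b}}_t)$ of Proposition~\ref{inv-mfd-thm}(c), and the characterization at the fixed scale~$\rho$) holds uniformly for $t\in\;]0,R]$.

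The main obstacle is the inclusion $\Sigma_t\sub W^u_b(t)$. It rests on two facts that I would import from the construction behind the Local Invariant Manifold Theorem: that the local $b$-unstable manifold is, on a small ball, precisely the set of points possessing a backward orbit that stays in that ball and converges to the fixed point; and that membership in $W^u_b(t)$ can be propagated forward along an orbit known a priori only to remain in the larger ball $B^\phi_t$. Everything else --- the chart comparisons and the expansion/contraction estimates --- is routine ultrametric calculus built on the Ultrametric Inverse Function Theorem.
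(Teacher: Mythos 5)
Your overall strategy is the same as the paper's: import the hard content from the invariant-manifold machinery of \cite{Exp}/\cite{Fin}, and use the Ultrametric Inverse Function Theorem to compare the chart $\phi$ with the chart $\phi_u$ (your isometry of $\psi=\phi\circ\phi_u^{-1}$ plays the role of the paper's Lemma~\ref{ball-unq}). Your forward inclusion $W^u_b(t)\sub\Sigma_t$ is fine. The problem is the reverse inclusion, and it is exactly where you import less than you need. You invoke the characterization ``$\phi(W^u_b(\rho))$ equals the set of points of $B^E_\rho$ with a $g$-regressive trajectory in $B^E_\rho$ tending to $0$'' at a \emph{single} scale $\rho$, and then try to bridge from $x\in W^u_b(\rho)\cap B^\phi_t$ to $x\in W^u_b(t)$ by ``forward propagation.'' That bridge is not justified as written. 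First, Definition~\ref{def-loc-inv}\,(c) only gives $f(P)\sub W^u_b$ for some small neighbourhood $P$ of~$p$; it does not give $f(P)\sub W^u_b(t)$, nor anything about points of $W^u_b(\rho)$ outside~$P$. Second, the parenthetical claim that $g$ preserves the graph of $\sigma$ over $B^{E_{>b}}_t$ ``while images stay in $B^E_t$'' is precisely the nontrivial local-invariance statement that needs proof: knowing $w\in W^u_b(t)$ and $f(w)\in B^\phi_t$, you must show $\|\phi_u(f(w))\|<t$, but your isometry of $\psi$ (and your graph description) is only available on a ball $B^{E_{>b}}_{R_1}$ found \emph{after} $\rho$, while $\|\phi_u(f(w))\|=\|h(\phi_u(w))\|$ can a priori only be bounded by $\|T_p(f)|_{E_{>b}}\|_{\op}\,t$, which may exceed $R_1$; and a point of $W^u_b(\rho)$ with $\phi$-coordinate in $B^E_t$ could a priori have $\phi_u$-coordinate of size between $R_1$ and $\rho$, outside the region where any of your chart control applies. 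So membership in $W^u_b(\rho)$ plus $\phi(x)\in B^E_t$ does not yield $\phi_u(x)\in B^{E_{>b}}_t$ by the ingredients you list.

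The gap is repairable, but only with an extra quantitative argument you do not supply: e.g.\ note that $\phi(x_{-n})\to 0$ forces $x_{-n}\to p$ and hence (since each $x_{-n}$ lies in the submanifold $W^u_b$ and $\phi_u$ is continuous for the induced topology) $\phi_u(x_{-n})\to 0$; then shrink $R$ once more so that $\|T_p(f)|_{E_{>b}}\|_{\op}\,R$ does not exceed the isometry radius $R_1$, and run an induction \emph{forward from the far tail}: if $\|\phi_u(x_{-(j+1)})\|<t$, then $\|\phi_u(x_{-j})\|=\|h(\phi_u(x_{-(j+1)}))\|\leq \|T_p(f)|_{E_{>b}}\|_{\op}\,t\leq R_1$, whence the isometry gives $\|\phi_u(x_{-j})\|=\|\phi(x_{-j})\|<t$; this reaches $x_0=x\in W^u_b(t)$. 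The paper avoids the issue altogether by importing a \emph{scale-uniform} statement: the sets $\Gamma_t$ of points admitting backward $h$-orbits in $B^E_t$ (with decay $b^n\|z_{-n}\|\to 0$) are graphs over $B^{E_{>b}}_t$ for \emph{all} small $t$ (by \cite[Theorem~B.2]{Exp} and the proof of \cite[Theorem~8.3]{Exp}), and then identifies $W^u_b(t)=\phi^{-1}(\Gamma_t)$ for all small $t$ using the uniqueness part of \cite[Theorem~8.3]{Exp} together with Lemma~\ref{ball-unq}; after that, the reverse inclusion is immediate at the same scale $t$, with no propagation needed. Either patch your propagation step along the lines above, or strengthen your citation to the all-scales graph description as the paper does.
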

\begin{proof}
As before, abbreviate $B^E_t:=B^E_t(0)$ and $B^{E_{>b}}_t:=B^{E_{>b}}_t(0)$
for $t>0$.
There is $r>0$ such that $B^E_r\sub V$ and $f(\phi^{-1}(B^E_r))\sub U$,
whence an analytic map
\[
h:=\phi\circ f\circ\phi^{-1}|_{B^E_r}\colon B^E_r\to E
\]
can be defined with $h(0)=0$ and $h'(0)=T_p(f)$.
For $t\in\;]0,r]$, let $\Gamma_t$ be the set of all
$z\in B^E_t$ for which there exists an $h$-regressive trajectory $(z_{-n})_{n\in\N_0}$
in $B^E_t$ with $z_0=z$ such that
\[
\lim_{n\to\infty} b^n \|z_n\|=0.
\]
By \cite[Theorem~B.2]{Exp}
and the proof of Theorem~8.3 in~\cite{Exp}, after shrinking $r$ we may assume that
$\Gamma_t$ is a submanifold of $B^E_t$ and
\[
\phi^{-1}(\Gamma_t)
\]
a local $b$-unstable submanifold of~$G$ for each $t\in \;]0,r]$;
and, moreover, there is an analytic map
\[
\mu \colon B^{E_{>b}}_r\to B^{E_{<b}}_r
\]
(called $\phi$ there) with $\mu(0)=0$ and
\begin{equation}\label{goodfor}
\mu'(0)=0
\end{equation}
such that
\[
\Gamma_t=\{(\mu(y),y)\colon y\in B^{E_{>b}}_t\}\quad\mbox{for all $t\in\;]0,r]$,}
\]
identifying $E=E_{<b}\oplus E_{>b}$ with $E_{<b}\times E_{>b}$.
Hence
\[
\nu \colon B^{E_{>b}}_r \to\phi^{-1}(\Gamma_r),\quad y\mto \phi^{-1}(\mu(y),y)
\]
is an analytic diffeomorphism,
and thus also $\nu^{-1}\colon \nu(B^{E_{>b}}_r)\to B^{E_{>b}}_r$
is an analytic diffeomorphism. As a consequence of~(\ref{goodfor}),
we have
\[
d(\nu^{-1})|_{E_{>b}}=\, \id_{E_{>b}}.\vspace{-.3mm}
\]
Since, like $W^u_b=\phi^{-1}_u(B^{E_{>b}}_R)$,
also $\phi^{-1}(\Gamma_r)=\nu(B^{E_{>b}}_r)$ is a local $b$-unstable
manifold for~$f$, \cite[Theorem~8.3]{Exp}
shows that there exists a subset $Q\sub W^u_b\cap   \nu(B^{E_{>b}}_r)$
which is an open neighbourhood of~$p$
in both~$W^u_b$ and $\nu(B^{E_{>b}}_r)$.
Hence, there exists $\tau>0$ with $\tau\leq\min\{R,r\}$ such that
$W^u_b(\tau)\sub Q$ and $\nu(B^{E_{>b}}_\tau)\sub Q$.
Since~$Q$ is a submanifold of~$M$, the manifold
structures induced on~$Q$ as an open subset of $W^u_b$ and $\nu(B^{E_{>b}}_r)$
coincide. By Lemma~\ref{ball-unq}, after shrinking~$\tau$ if necessary
we may assume that
\[
W^u_b(t)=\phi^{-1}_u(B^{E_{>b}}_t)=(\nu^{-1})^{-1}(B^{E_{>b}}_t)=
\phi^{-1}(\Gamma_t)
\]
for all $t\in\;]0,\tau]$. Let $t\in\;]0,\tau]$ and $x\in B^\phi_t$.\\[2.3mm]
If $x\in \phi^{-1}(\Gamma_t)$,
then there exists an $h$-regressive trajectory $(z_{-n})_{n\in\N_0}$
in~$B^E_t$ with $z_0=\phi(x)$ and $b^n\|z_{-n}\|\to 0$.
Now, for $m\in\N_0$, the sequence
$(z_{-n-m})_{n\in\N_0}$ is an $h$-regressive trajectory for~$z_{-m}$
in $B^E_t$ such that
\[
b^n\|z_{-n-m}\|=b^{-m}b^{n+m}\|z_{-n-m}\|\to 0
\]
as $n\to\infty$ and thus $z_m\in\Gamma_t$.
As a consequence,
$(\phi^{-1}(z_{-n}))_{n\in\N_0}$
is an $f$-regressive trajectory in $W^u_b(t)=\phi^{-1}(\Gamma_t)$
such that $\phi^{-1}(z_0)=x$ and $b^n\|\phi(\phi^{-1}(z_{-n}))\|=b^n\|z_{-n}\|\to 0$
as $n\to\infty$.\\[2.3mm]
Conversely, assume there exists an $f$-regressive trajectory $(x_{-n})_{n\in\N_0}$
in $B^\phi_t$ with $x_0=x$ and~(\ref{atleasttwice}).
Then $(\phi(x_{-n}))_{n\in\N_0}$ is an $h$-regressive trajectory in~$B^E_t$
such that~(\ref{atleasttwice}) holds, whence $\phi(x)=\phi(x_0)\in\Gamma_t$
and thus $x\in\phi^{-1}(\Gamma_t)$.\\[2.3mm]
Summing up, the conclusion of the lemma holds if we replace~$R$ with~$\tau$.
\end{proof}
Let us consider a first application of invariant manifolds.
\begin{prop}\label{charviaeigen}
Let $\alpha$ be an analytic automorphism of a Lie group~$G$
over a totally disconnected local field~$\K$. Let $\wb{\K}$ be an algebraic closure of~$\K$.
Then the following holds:
\begin{itemize}
\item[\rm(a)]
$\conp(\alpha)$ is open in~$G$ if and only if $|\lambda|_\K<1$
for each eigenvalue~$\lambda$ of~$L(\alpha)$ in~$\wb{\K}$.
\item[\rm(b)]
$\alpha$ is expansive if and only if $|\lambda|_\K\not=1$
for each eigenvalue~$\lambda$ of $L(\alpha)$ in~$\wb{\K}$.
\item[\rm(c)]
$\alpha$ is a distal automorphism if and only if $|\lambda|_\K=1$
for each eigenvalue~$\lambda$ of $L(\alpha)$ in~$\wb{\K}$.
\end{itemize}
\end{prop}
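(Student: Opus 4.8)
The plan is to translate each property of $\alpha$ into a condition on the characteristic subspace decomposition $\cg=\cg_{<1}\oplus\cg_1\oplus\cg_{>1}$ of $\cg:=L(G)$ with respect to $L(\alpha)$ (here $\cg_0=\{0\}$ because $L(\alpha)=T_e(\alpha)$ is invertible), and then to read these conditions off the local invariant manifolds around $e$. By Theorem~\ref{thm-lincase}(b) applied to $L(\alpha)$, the three eigenvalue conditions in (a), (b), (c) are, respectively, $\cg=\cg_{<1}$, $\;\cg_1=\{0\}$, and $\cg=\cg_1$. Throughout I fix an adapted norm on $\cg$ (Lemma~\ref{fact-adapted}), a chart $\phi\colon U\to V$ of $G$ with $\phi(e)=0$ and $d\phi|_\cg=\id_\cg$, and (using Remark~\ref{better-1}) a common radius for the charts of the various invariant manifolds of Proposition~\ref{inv-mfd-thm}.

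(a) If $\cg=\cg_{<1}$, then $T_e(\alpha)$ is $1$-hyperbolic with trivial $>1$-part, so the local $1$-stable manifold $W^s_1$ has $T_e(W^s_1)=\cg$ and is an open submanifold of $G$; by Lemma~\ref{better-2} we may assume $\alpha^n(x)\to e$ for all $x\in W^s_1$, so $\conp(\alpha)\supseteq W^s_1$ is open. Conversely, let $\conp(\alpha)$ be open. If $\cg_1\neq\{0\}$, the centre manifold $W^c$ is $\alpha$-stable and positive-dimensional, and Lemma~\ref{better-2-3} and its proof make $\alpha$ act on $W^c(t)$ isometrically in the chart $\phi_c$; then $\|\phi_c(\alpha^n(x))\|$ is a positive constant for $x\in W^c(t)\setminus\{e\}$, so $\alpha^n(x)\not\to e$, and as $W^c(t)\to\{e\}$ this prevents $\conp(\alpha)$ from containing any neighbourhood of $e$, a contradiction. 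So $\cg_1=\{0\}$ and $T_e(\alpha)$ is $1$-hyperbolic. Pick a compact open subgroup $U\subseteq\conp(\alpha)$ lying in a ball $B^\phi_t$ to which Lemma~\ref{better-4} applies for $\alpha^{-1}$ and $b=1$, and set $U_-:=\bigcap_{n\in\N_0}\alpha^{-n}(U)$, a compact subgroup with $\alpha(U_-)\subseteq U_-$. Since every $x\in\conp(\alpha)$ has $\alpha^n(x)\in U$ eventually, $\conp(\alpha)=\bigcup_{n\in\N_0}\alpha^{-n}(U_-)$ is a countable ascending union of compact sets equal to an open set, so by the Baire property $U_-$ has non-empty interior and is a compact \emph{open} subgroup. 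Each $x\in U_-$ has forward orbit inside $U_-\subseteq B^\phi_t$ converging to $e$, and this forward orbit is its (unique) $\alpha^{-1}$-regressive trajectory; so Lemma~\ref{better-4} for $\alpha^{-1}$ puts $U_-$ inside a submanifold with tangent space $\cg_{<1}$ at $e$. As $U_-$ is open, so is that submanifold, forcing $\cg_{<1}=\cg$.

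(b) If $\cg_1\neq\{0\}$, then given any identity neighbourhood $V$ pick $t$ with $W^c(t)\subseteq V$; as $W^c(t)$ is $\alpha$-stable (Proposition~\ref{inv-mfd-thm}(b)), every $x\in W^c(t)$ has $\alpha^n(x)\in V$ for all $n\in\Z$, so $\bigcap_{n\in\Z}\alpha^n(V)\supseteq W^c(t)\neq\{e\}$ and $\alpha$ is not expansive; hence expansive forces $\cg_1=\{0\}$. Conversely let $\cg_1=\{0\}$, so $\cg=\cg_{<1}\oplus\cg_{>1}$ and $T_e(\alpha)$ is $1$-hyperbolic. Writing the local $1$-stable manifold $W^s_1$ and the local $1$-unstable manifold $W^u_1$ in the chart $\phi$ as graphs over $\cg_{<1}$, resp.\ $\cg_{>1}$, of analytic maps vanishing to first order at $0$, a short ultrametric contraction argument gives $W^s_1\cap W^u_1=\{e\}$ near $e$. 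By the local stable (resp.\ unstable) manifold theorem of~\cite{Fin}, \cite{Exp}, for a small enough neighbourhood $V$ of $e$ a point whose forward orbit stays in $V$ lies on $W^s_1$, and one whose backward orbit stays in $V$ lies on $W^u_1$; hence any $x$ with $\alpha^n(x)\in V$ for all $n\in\Z$ lies in $W^s_1\cap W^u_1=\{e\}$, i.e.\ $\bigcap_{n\in\Z}\alpha^n(V)=\{e\}$ and $\alpha$ is expansive. This last implication is the one genuinely technical point — that in the absence of centre directions a complete orbit staying near $e$ must be trivial — and it is exactly the part of local invariant manifold theory (identifying the local stable manifold with the set of points whose forward orbit does not leave a small neighbourhood) for which I would lean on~\cite{Exp}, \cite{Fin}.

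(c) If $\cg=\cg_1$, then $W^c$ has $T_e(W^c)=\cg$, so after shrinking it is an open $\alpha$-stable neighbourhood of $e$, and (as in (a), since $T_e(\alpha)$ is an isometry on $\cg_1$ in the adapted norm) $\alpha$ acts on it isometrically in $\phi_c$. For $x\in W^c\setminus\{e\}$ the orbit $\{\alpha^n(x)\colon n\in\Z\}$ stays in the compact, $e$-avoiding set $\{y\in W^c\colon\|\phi_c(y)\|=\|\phi_c(x)\|\}$; and for $x\notin W^c$, if $\alpha^{n_k}(x)\to e$ then $x=\alpha^{-n_k}(\alpha^{n_k}(x))\in W^c$ (since $\alpha(W^c)=W^c$), a contradiction; so $e\notin\overline{\{\alpha^n(x)\colon n\in\Z\}}$ for every $x\neq e$, i.e.\ $\alpha$ is distal. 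Conversely, if $\cg_{<1}\neq\{0\}$, pick $a\in\;]0,1[$ avoiding all $|\lambda|_\K$ and exceeding the greatest such value below $1$; then $T_e(\alpha)$ is $a$-hyperbolic with $\bigoplus_{\rho<a}\cg_\rho\neq\{0\}$, the local $a$-stable manifold is positive-dimensional, and forward orbits on it converge to $e$ (Lemma~\ref{better-2}), so a point $x\neq e$ on it has $e\in\overline{\{\alpha^n(x)\}}$ and $\alpha$ is not distal; symmetrically $\cg_{>1}\neq\{0\}$ yields non-distality via a local $b$-unstable manifold for a suitable $b>1$, using Lemma~\ref{better-4} (whose regressive trajectory is the backward orbit, since $\alpha$ is an automorphism). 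Hence distal forces $\cg_{<1}=\cg_{>1}=\{0\}$, i.e.\ $\cg=\cg_1$, completing all three equivalences.
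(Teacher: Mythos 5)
Your proposal is correct and follows the same overall strategy as the paper: translate the eigenvalue conditions into statements about the decomposition $\cg=\cg_{<1}\oplus\cg_1\oplus\cg_{>1}$ via Theorem~\ref{thm-lincase}\,(b), and then read off the dynamical properties from the local stable, centre, and unstable manifolds of Proposition~\ref{inv-mfd-thm} together with Lemmas~\ref{better-2}--\ref{better-4}. The differences are in which sub-steps are carried out rather than cited. For~(c) your argument is a minor variant of the paper's (you use the isometric action of $\alpha$ on $W^c$ in the chart $\phi_c$ and confine orbits to spheres, where the paper simply uses that the $\alpha$-stable sets $W^c(t)$ form a neighbourhood basis and picks one avoiding the given $x$; you handle $\cg_{>1}\neq\{0\}$ via the unstable manifold rather than by passing to $\alpha^{-1}$). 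For~(b)$\Leftarrow$ you defer the key point --- that in the absence of centre directions a two-sided orbit staying in a small neighbourhood is trivial --- to the invariant-manifold literature, exactly as the paper defers all of~(b) to \cite[Proposition~7.1]{GaR}; within the paper's own machinery this point is equation~(\ref{tres}) combined with the injectivity of $\pi$ on $W^s_a\times W^c\times W^u_b$. The one genuinely new ingredient is your proof of (a)$\Rightarrow$, which the paper outsources to \cite{Fin}: you first kill $\cg_1$ using the centre manifold, then exhibit a compact open subgroup $U_-\sub\,\conp(\alpha)$ with $\alpha(U_-)\sub U_-$, show it is open by writing the open set $\conp(\alpha)$ as the countable ascending union $\bigcup_n\alpha^{-n}(U_-)$ of compacta and invoking Baire, and finally embed $U_-$ into the local $1$-unstable manifold of $\alpha^{-1}$ (tangent to $\cg_{<1}$) via Lemma~\ref{better-4}, forcing $\cg_{<1}=\cg$. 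This is a legitimate, self-contained alternative; the Baire step and the identification of forward $\alpha$-orbits with $\alpha^{-1}$-regressive trajectories are both used correctly.
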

\begin{proof}
(c) If $|\lambda|_\K<1$ for some $\lambda$,
choose $a\in\;]0,1]$ such that $a>|\lambda|_\K$
and $L(\alpha)$ is $a$-hyperbolic.
Then $G$ has a local $a$-stable manifold $W\not=\{e\}$,
which can be chosen such that $W\sub\; \conp(\alpha)$
(see Lemma~\ref{better-2}).
Since $\alpha^n(x)\to e$ for all $x\in W\setminus\{e\}$,
we see that~$\alpha$ is not distal.\\[2.3mm]
If $|\lambda|_\K>1$ for some~$\lambda$,
then again we see that~$\alpha$ is not distal,
replacing~$\alpha$ with $\alpha^{-1}$ and its iterates
in the preceding argument.\\[2.3mm]
If $|\lambda|_\K=1$ for each $\lambda$, then $\cg:=L(G)$ coincides
with its centre subspace, whence every centre manfold for~$\alpha$ around~$e$
is open in~$G$. If $x\in G\setminus\{e\}$,
then Proposition~\ref{inv-mfd-thm}\,(b)
provides a centre manifold~$W$ for~$\alpha$ around~$e$ such that $x\not\in W$.
Since $\alpha^n(W)=W$ for all $n\in\Z$
and $\alpha^n$ is a bijection, we must have $\alpha^n(x)\not\in W$
for all $n\in\Z$. As a consequence, the set $\{\alpha^n(x)\colon n\in\Z\}$
(and hence also its closure) is contained in the closed set $G\setminus W$.
Thus $e\not\in\overline{\{\alpha^n(x)\colon n\in \Z\}}$ and thus~$\alpha$
is distal.

The proofs for (b) and the implication ``$\impl$'' in~(a) are similar and again involve
local invariant manifolds, see \cite[Proposition~7.1]{GaR} and \cite[Corollary~6.1 and Proposition~3.5]{Fin},
respectively.

(a) To complete the proof of~(a), assume that $|\lambda|_\K<1$ for all~$\lambda$.
Choose $a\in\;]0,1[$ such that $a>|\lambda|_\K$ for all~$\lambda$.
Then $\cg=\cg_{<a}$ with respect to~$L(\alpha)$.
Let $W^s_a$ and the analytic diffeomorphism $\phi_s\colon W^s_a\to B^\cg_R(0)$
be as in Proposition~\ref{inv-mfd-thm}\,(a). Then $W^s_a$ is open in~$G$.
By Lemma~\ref{better-2}, after shrinking~$R$ (if necessary) we can achieve
that $W^s_a\sub\;\conp(\alpha)$.
Thus $\conp(\alpha)$ is an open identity neighbourhood in~$G$
and hence $\conp(\alpha)$ is open, being a subgroup.
\end{proof}
\section{Endomorphisms of Lie groups over {\boldmath$\K$}}\label{sec-endo}
In this section, we formulate and prove our main results concerning
analytic endomorphisms of Lie groups over totally disconnected local fields.
\subsection*{{\normalsize Some preparations}}
\begin{defn}
Let $\alpha$ be an endomorphism of a totally disconnected, locally compact
group~$G$. We say that $G$ has \emph{small tidy subgroups}
for~$\alpha$
if each identity neighbourhood of~$G$ contains
a compact open subgroup of~$G$ which is tidy for~$\alpha$.
\end{defn}
For $\alpha$ an automorphism, the existence of small tidy
subgroups is equivalent to closedness of $\conp\!(\alpha)$
(see \cite[Theorem~3.32]{BaW} for the case of metrizable groups;
the general case can be deduced with arguments from~\cite{Jaw}).
The following result concerning endomorphisms
is sufficient for our Lie theoretic applications.
\begin{la}\label{tidyclo}
Let $\alpha$ be an endomorphsm of a totally disconnected, locally compact group~$G$.
\begin{itemize}
\item[\rm(a)]
If $G$ has small subgroups tidy for $\alpha$, then
$\conp(\alpha)$ is closed.
\item[\rm(b)]
If $\conp(\alpha)$ is closed and a compact open subgroup $U$ of~$G$ satisfies
\begin{equation}\label{unnec}
U_-=(\conp(\alpha)\cap U_-)(U_+\cap U_-),
\end{equation}
then $U$ is tidy below for~$\alpha$. Hence, if $\conp(\alpha)$ is closed and
each identity neighbourhood of~$G$
contains a compact open subgroup $U$ which satisfies~{\rm(\ref{unnec})}
an is tidy above for~$\alpha$, then $G$ has small tidy subgroups.
\end{itemize}
\end{la}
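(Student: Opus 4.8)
Let $x\in\overline{\conp(\alpha)}$ and let $W$ be an arbitrary identity neighbourhood of~$G$; it is enough to produce $N$ with $\alpha^n(x)\in W$ for all $n\ge N$. Choose, by hypothesis, a compact open subgroup $U\sub W$ that is tidy for~$\alpha$. Every $y\in\conp(\alpha)$ has $\alpha^n(y)\to e$, so $\alpha^n(y)\in U$ for all $n\ge m$ and a suitable~$m$; then $\alpha^m(y)\in U_-$ and hence $y\in\alpha^{-m}(U_-)\sub U_{--}$, so $\conp(\alpha)\sub U_{--}$. Since $U$ is tidy, $U_{--}$ is closed by~\ref{variant-tidy}, so $x\in U_{--}$ (the latter being closed and containing $\conp(\alpha)$). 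Thus $\alpha^m(x)\in U_-$ for some~$m$, and $\alpha(U_-)\sub U_-$ yields $\alpha^n(x)\in U_-\sub U\sub W$ for all $n\ge m$. As $W$ was arbitrary, $\alpha^n(x)\to e$, that is, $x\in\conp(\alpha)$; so $\conp(\alpha)$ is closed.

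\textbf{Plan for part~(b).}
The heart of the matter is the identity
\[
U_{--}\;=\;\conp(\alpha)\,(U_+\cap U_-),
\]
valid whenever $U$ satisfies~(\ref{unnec}). The inclusion ``$\supseteq$'' is immediate: $\conp(\alpha)\sub U_{--}$ (as in~(a)), $U_+\cap U_-\sub U_-\sub U_{--}$, and $U_{--}$ is a subgroup. For ``$\sub$'', take $x\in U_{--}$ and fix $m$ with $\alpha^m(x)\in U_-$; using~(\ref{unnec}) write $\alpha^m(x)=ck$ with $c\in\conp(\alpha)\cap U_-$ and $k\in U_+\cap U_-$. Since $k\in U_+$ it has a regressive trajectory $(k_{-j})_{j\ge 0}$ in~$U$; then $k_{-1}\in U$ with $\alpha(k_{-1})=k$, and $k_{-1}\in U_+\cap U_-$ (its trajectory is the tail of that of~$k$, and $\alpha^j(k_{-1})=\alpha^{j-1}(k)\in U$ for $j\ge 1$). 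Hence $U_+\cap U_-\sub\alpha(U_+\cap U_-)$ and, iterating, $U_+\cap U_-\sub\alpha^m(U_+\cap U_-)$, so there is $w\in U_+\cap U_-$ with $\alpha^m(w)=k$. Then $\alpha^m(xw^{-1})=\alpha^m(x)\,(\alpha^m(w))^{-1}=ck\,k^{-1}=c\in\conp(\alpha)$, which forces $xw^{-1}\in\conp(\alpha)$ (for $n\ge m$ one has $\alpha^n(xw^{-1})=\alpha^{n-m}(c)\to e$). Thus $x=(xw^{-1})\,w$ with $xw^{-1}\in\conp(\alpha)$ and $w\in U_+\cap U_-$, proving the identity.

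\textbf{Concluding~(b) and the ``hence'' clause.}
Now $\conp(\alpha)$ is closed by hypothesis and $U_+\cap U_-$ is compact, so the product $\conp(\alpha)(U_+\cap U_-)$ is closed; hence $U_{--}$ is closed. From this I would deduce that $U$ is tidy below by the structure results of~\cite{END}: closedness of $U_{--}$ is precisely the ``below'' counterpart of the ``above'' condition in~\ref{variant-tidy}. Arguing straight from the definition instead, the symmetric identity $U_{++}=\conm(\alpha)(U_+\cap U_-)$ (with $\conm(\alpha)$ closed once $\conp(\alpha)$ is) shows $U_{++}$ is closed, while the first isomorphism theorem applied to~$\alpha$ along the ascending chain $\alpha^n(U_+)\sub\alpha^{n+1}(U_+)=\alpha(\alpha^n(U_+))$ shows the indices $[\alpha^{n+1}(U_+):\alpha^n(U_+)]$ form a non-increasing sequence of positive integers, hence are eventually constant (and in fact constant, on closer inspection). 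Finally the ``hence'' clause is immediate: if such a $U$ is moreover tidy above for~$\alpha$, then $U$ is tidy above with $U_{--}$ closed, so $U$ is tidy by~\ref{variant-tidy}; as every identity neighbourhood of~$G$ contains such a~$U$, the group $G$ has small tidy subgroups.

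\textbf{Main obstacle.}
The step I expect to be the real obstacle is the inclusion ``$\sub$'' in $U_{--}=\conp(\alpha)(U_+\cap U_-)$. One cannot simply propagate~(\ref{unnec}) through the union $\bigcup_{n}\alpha^{-n}(U_-)$, because $\alpha$ is in general neither injective nor surjective and preimages of products do not factor; the resolution is to invoke~(\ref{unnec}) only at the single ``time''~$m$ at which the forward $\alpha$-orbit of~$x$ has entered $U_-$, and then to exploit two soft facts established above --- that $U_+\cap U_-$ is $\alpha$-cofinal (so the compact part~$k$ lifts back along $\alpha^m$ inside $U_+\cap U_-$) and that preimages of contracting elements are again contracting.
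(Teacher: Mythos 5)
Your proposal is correct and takes essentially the same route as the paper: part (a) is the pointwise form of the paper's identity $\conp(\alpha)=\bigcap_U U_{--}$ over tidy $U$, and part (b) rests on exactly the paper's key identity $U_{--}=\;\conp(\alpha)(U_+\cap U_-)$, proved by the same device of lifting the compact factor back through $\alpha^m$ inside $U_+\cap U_-$ (a fact the paper quotes from \cite{END} and you verify directly). Your speculative alternative verification of ``tidy below'' straight from the definition (via $U_{++}=\;\conm(\alpha)(U_+\cap U_-)$ and eventual constancy of the indices) is neither needed nor justified as stated, but your primary argument --- $U_{--}$ closed together with tidiness above yields tidiness via \ref{variant-tidy} --- is precisely what the paper does.
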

\begin{proof}
(a) Let $\cT(\alpha)$ be the set of all tidy subgroups for~$\alpha$.
If  $\cT(\alpha)$ is a basis of identity neighbourhoods, then
\begin{eqnarray*}
\conp(\alpha)&=& \{x\in G\colon \lim_{n\to\infty}\alpha^n(x)=e\}\\[1mm]
&=&\{x\in G\colon (\forall U\in\cT(\alpha))\underbrace{(\exists m)(\forall n\geq m)\;\,\alpha^n(x)\in U}_{\Leftrightarrow x\in U_{--}}\}\\[-4mm]
&=& \bigcap_{U\in\cT(\alpha)}U_{--},
\end{eqnarray*}
which is closed.

(b) Assuming that $U_-=(\conp(\alpha)\cap U_-)(U_+\cap U_-)$,
let us show that
\begin{equation}\label{thusclose}
U_{--}=\; \conp(\alpha) (U_+\cap U_-).
\end{equation}
If (\ref{thusclose}) holds, then $U_{--}$ is closed (as $\conp(\alpha)$ is assumed
closed and $U_+\cap U_-$ is compact). Hence~$U$ will be tidy for~$\alpha$
(by \ref{variant-tidy}), and also the final assertion is then immediate.\\[2.3mm]
The inclusion ``$\supseteq$"  in (\ref{thusclose}) is clear.
To see that the converse inclusion holds, let
$x\in U_{--}$. Then $\alpha^n(x)\in U_-$ for some~$n$.
As $U_-=(\conp(\alpha)\cap U_-)(U_+\cap U_-)$ by hypothesis, we have
\[
\alpha^n(x)=yz \quad\mbox{for some $y\in \;\conp(\alpha)$ and some $z\in U_+\cap U_-$.}
\]
Since $\alpha(U_+\cap U_-)=U_+\cap U_-$, find $w\in U_+\cap U_-$ such that $z=\alpha^n(w)$.
Then $\alpha^n(xw^{-1})=y\in \,\conp(\alpha)$, whence $xw^{-1}\in \,\conp(\alpha)$
and thus $x=(xw^{-1})w\in \,\conp(\alpha) (U_+\cap U_-)$. Thus $U_{--}\sub \;\conp(\alpha) (U_+\cap U_-)$; the proof is complete.
\end{proof}
\begin{rem}
With much more effort, it can be shown that
closedness of $\conp(\alpha)$ is always equivalent
to the existence of small tidy subgroups,
for every endomorphism~$\alpha$ of a totally disconnected,
locally compact group~$G$ (see \cite[Theorem~D]{BGT}).
Lemma~\ref{tidyclo}, which is sufficient for our ends,
was presented at the AMSI workshop July 25, 2016 (before the cited
theorem was known).
\end{rem}
We need a result from the structure theory of totally disconnected groups.
\begin{la}\label{divisi}
Let $\alpha$ be an endomorphism of a totally disconnected, locally compact group~$G$
and
$V\sub G$ be a compact open subgroup which
is tidy above for~$\alpha$.
Then
$s(\alpha)$ divides
$[\alpha(V):\alpha(V)\cap V]$.\,\Punkt
\end{la}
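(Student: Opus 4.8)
The plan is to reduce the displacement index of $V$ to that of its inflation part $V_+$, then to exploit a divisibility property of the indices $[\alpha^{n+1}(V_+):\alpha^n(V_+)]$, and finally to identify the eventual value of these indices with $s(\alpha)$.

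First I would show that for a compact open subgroup $V$ that is tidy above one has $[\alpha(V):\alpha(V)\cap V]=[\alpha(V_+):V_+]$. Writing $V=V_+V_-$ and using $\alpha(V_-)\sub V_-\sub V$ together with the fact that $V$ is a group, one sees that an element $\alpha(v_+)\alpha(v_-)$ (with $v_\pm\in V_\pm$) lies in $V$ iff $\alpha(v_+)\in V$, so $\alpha(V)\cap V=(\alpha(V_+)\cap V)\,\alpha(V_-)$; since $\alpha(V_-)\sub\alpha(V)\cap V$, every left coset of $\alpha(V)\cap V$ in $\alpha(V)$ already meets $\alpha(V_+)$, which gives a bijection between those cosets and the left cosets of $\alpha(V_+)\cap V$ in $\alpha(V_+)$. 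It then remains to note $\alpha(V_+)\cap V=V_+$: the inclusion $\supseteq$ is clear, and if $x=\alpha(y)\in\alpha(V_+)\cap V$ with $y\in V_+$, then prefixing $x$ to an $\alpha$-regressive trajectory of $y$ in $V$ yields an $\alpha$-regressive trajectory of $x$ in $V$, so $x\in V_+$.

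Next, since $\alpha(V_+)\supseteq V_+$ the groups $\alpha^n(V_+)$ form an ascending chain of compact groups, and $\alpha$ restricts to a surjective continuous homomorphism $\alpha^n(V_+)\to\alpha^{n+1}(V_+)$ that maps the subgroup $\alpha^{n-1}(V_+)$ onto $\alpha^n(V_+)$. A surjective homomorphism cannot increase the index of a subgroup that is carried onto the prescribed subgroup of the image, so $c_n:=[\alpha^{n+1}(V_+):\alpha^n(V_+)]$ divides $c_{n-1}$ for every $n\ge 1$; in particular all $c_n$ are finite (as $c_0=[\alpha(V):\alpha(V)\cap V]\in\N$ by the previous paragraph), and $(c_n)_{n\ge 0}$ is a divisibility-decreasing sequence of positive integers. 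Hence it is eventually constant, say $c_n=c$ for $n\ge N$, and $c\mid c_{N-1}\mid\cdots\mid c_0=[\alpha(V):\alpha(V)\cap V]$.

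It remains to show $c=s(\alpha)$, and this is the crux. I would obtain it from the tidying-below step of~\cite{END}: a subgroup that is tidy above can be refined to a compact open subgroup $W$ that is tidy (above and below) for~$\alpha$ in such a way that the displacement index never grows — the value $c$ at which the sequence of the previous paragraph stabilises being precisely what survives the refinement — and then $[\alpha(W):\alpha(W)\cap W]=s(\alpha)$ by~\ref{mini-tidy}, so $s(\alpha)=c$ divides $[\alpha(V):\alpha(V)\cap V]$. Equivalently, one can argue that $s(\alpha)=\lim_{n\to\infty}[\alpha^{n+1}(V_+):\alpha^n(V_+)]$ for every $V$ tidy above. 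The main obstacle is exactly here: the first two steps are elementary bookkeeping with the subgroups $V_\pm$, but showing that the eventually-constant divisor $c$ is the scale itself, rather than merely a multiple of it, requires the structural machinery of Willis's theory (passage to, or closedness of, subgroups such as $V_{++}$, and the tidying procedure) developed in~\cite{END}.
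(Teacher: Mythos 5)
Your first two steps are sound: the reduction $[\alpha(V):\alpha(V)\cap V]=[\alpha(V_+):V_+]$ is exactly \cite[Lemma~3]{END} and your argument for it works (the small point that $(V_+\alpha(V_-))\cap\alpha(V_+)=V_+$ is best handled by noting that $\alpha(V_+)\cap\alpha(V)\cap V=\alpha(V_+)\cap V=V_+$ and that the natural injection of coset spaces $\alpha(V_+)/V_+\to\alpha(V)/(\alpha(V)\cap V)$ is onto because $\alpha(V_-)\sub\alpha(V)\cap V$). In the second step, your stated justification only yields $c_n\leq c_{n-1}$, not $c_n\mid c_{n-1}$; the divisibility is nevertheless true, since for a surjective homomorphism $f\colon A\to B$ of compact groups carrying a finite-index subgroup $A_0$ onto $B_0$ one has $[A:A_0]=[B:B_0]\cdot[A_0\ker f:A_0]$, so you should say this.

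The genuine gap is the third step, and you have correctly located it yourself: everything hinges on the identity $s(\alpha)=c$ (or at least $s(\alpha)\mid c$), and this is precisely the content of the lemma — it cannot be waved through by saying that the tidying procedure "never grows the displacement index, and $c$ is what survives." The tidying-below step of \cite{END} replaces a tidy-above subgroup $\wt{V}$ by $W:=\wt{V}L_V$, where $L_V$ is the closure of a group built from elements shuttled by $\alpha$ between $\wt{V}_+$ and $\wt{V}_-$; there is no off-the-shelf statement that $[\alpha(W):\alpha(W)\cap W]$ equals the stable value of your sequence $(c_n)$, and establishing the relation between the two displacement indices is exactly the work that remains. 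The paper closes this by a different route that avoids the limit altogether: it first replaces $V$ by $\wt{V}$ without changing the displacement index (\cite[Lemma~16]{END}), then shows that $\wt{V}_+$ acts transitively on $W_+/(W_+\cap W_{-1})$ with point stabilizer $\wt{V}_+\cap W_{-1}$, so that $s(\alpha)=[\wt{V}_+:\wt{V}_+\cap W_{-1}]$ by the orbit formula; since $\wt{V}_+\cap\wt{V}_{-1}\sub\wt{V}_+\cap W_{-1}\sub\wt{V}_+$, the tower formula for indices gives $[\alpha(V):\alpha(V)\cap V]=[\wt{V}_+:\wt{V}_+\cap\wt{V}_{-1}]=s(\alpha)\cdot[\wt{V}_+\cap W_{-1}:\wt{V}_+\cap\wt{V}_{-1}]$, which exhibits the quotient explicitly. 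If you want to complete your own route instead, you must actually prove that $\lim_n[\alpha^{n+1}(V_+):\alpha^n(V_+)]=s(\alpha)$ for every tidy-above $V$; as it stands, that assertion is assumed, not derived.
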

\begin{proof}
As in \cite[Definition 5]{END}, let $\cL_V$ be the subgroup of all
$x\in G$ for which there exist $y\in V_+$ and $n,m\in\N_0$
such that $\alpha^m(y)=x$ and $\alpha^n(y)\in V_-$.
Let $L_V$ be the closure of $\cL_V$ in~$G$,
\[
\wt{V}:=\{x\in V\colon xL_V\sub L_V V\}
\]
(as in \cite[(7)]{END}) and $W:=\wt{V}L_V$.
Then $\wt{V}$ is a compact open subgroup
of~$G$ which is tidy above for~$\alpha$ and
\begin{equation}\label{equay}
[\alpha(V):\alpha(V)\cap V]=[\alpha(\wt{V}):\alpha(\wt{V})\cap\wt{V}]
\end{equation}
(see \cite[Lemma~16]{END}).
Moreover, $W$ is
a compact open subgroup of~$G$
which is tidy for~$\alpha$,
by the third step of the `tidying procedure' (see \cite[Step 3 following Definition 10]{END}).
Let $W_{-1}:=W\cap\alpha^{-1}(W)$ and $\wt{V}_{-1}:=\wt{V}\cap\alpha^{-1}(\wt{V})$.
Then the left action
\[
\wt{V}_+\times Y\to Y,\quad (v,w(W_+\cap W_{-1}))\mto vw(W_+\cap W_{-1})
\]
of $\wt{V}_+$ on $Y:=W_+/(W_+\cap W_{-1})$ is transitive
(as the map $\phi$ defined in the proof of \cite[Proposition~6\,(4)]{END}
is surjective). The point $W_+\cap W_{-1}\in Y$ has stabilizer
$\wt{V}_+\cap W_+\cap W_{-1}=\wt{V}_+\cap W_{-1}$. Hence
\begin{equation}\label{equay2}
\hspace*{-.2mm}s(\alpha)=[\alpha(W):\alpha(W)\cap W]=[W_+:W_+\cap W_{-1}]=|Y|
=[\wt{V}_+:\wt{V}_+\cap W_{-1}],\!
\end{equation}
using tidiness of~$W$ for the first equality, \cite[Lemma~3 (1) and (4)]{END}
for the second and the orbit formula for the $\wt{V}_+$-action
for the last.
Since $\wt{V}_+\cap W_{-1}$ contains $\wt{V}_+\cap \wt{V}_{-1}$ as a subgroup,
using \cite[Lemma 3]{END} again we deduce that
\begin{eqnarray}
[\alpha(\wt{V}):\alpha(\wt{V})\cap\wt{V}]&=&
[\wt{V}_+:\wt{V}_+\cap \wt{V}_{-1}]\notag \\
&=&[\wt{V}_+:\wt{V}_+\cap W_{-1}][\wt{V}_+\cap W_{-1}:\wt{V}_+\cap\wt{V}_{-1}].\label{equay3}
\end{eqnarray}
Substituting (\ref{equay}) and (\ref{equay2}) into (\ref{equay3}), we obtain
\begin{equation}
[\alpha(V):\alpha(V)\cap V]
=s(\alpha)[\wt{V}_+\cap W_{-1}: \wt{V}_+\cap\wt{V}_{-1}],
\end{equation}
which completes the proof.
\end{proof}
\subsection*{{\normalsize Scale and tidy subgroups}}
If $\alpha\colon G\to G$ is an analytic endomorphism
of a Lie group~$G$ over a local field~$\K$, we fix a norm
$\|.\|$ on its Lie algebra $\cg:=L(G):=T_e(G)$ which is adapted
to the associated
linear endomorphism $L(\alpha):=T_e (\alpha)$ of~$\cg$.
Let $\wb{\K}$ be an algebraic closure of~$\K$.
In the proof of our main result, Theorem~\ref{main},
we want to use Lemma~\ref{displ-balls}
to create compact open subgroups $B^\phi_t$ of~$G$.
To get more control over these subgroups,
we now make a particular choice of~$\phi$.
\begin{numba}\label{phimain}
Pick $a\in \;]0,1]$ such that
$L(\alpha)$ is $a$-hyperbolic and $a>|\lambda|_\K$
for each eigenvalue $\lambda$ of~$L(\alpha)$ in~$\wb{\K}$ such that $|\lambda|_\K<1$.
Pick $b\in [1,\infty[$ such that
$L(\alpha)$ is $b$-hyperbolic and $b<|\lambda|_\K$
for each eigenvalue $\lambda$ of~$L(\alpha)$ in~$\wb{\K}$
such that $|\lambda|_\K>1$.
With respect to the endomorphism~$L(\alpha)$, we then have
\[
\cg_{<1}=\cg_{<a}\quad\mbox{and}\quad \cg_{>1}=\cg_{>b},
\]
entailing that
\begin{equation}\label{decab}
\cg=\cg_{<a}\oplus \cg_1\oplus \cg_{>b}.
\end{equation}
We find it useful to identify $\cg$ with the direct product
$\cg_{<a}\times \cg_1\times \cg_{>b}$; an element $(x,y,z)$
of the latter is identified with $x+y+z\in\cg$.\\[2.3mm]
Let $W^s_a$, $W^c$, and $W^u_b$ be a local $a$-stable manifold, centre-manfold,
and local $b$-unstable manifold for~$\alpha$ around~$P:=e$ in $M:=G$, respectively,
$R>0$ and
\[
\phi_s\colon W^s_a\to B^{\cg_{<a}}_R(0),\quad
\phi_c\colon W^c\to B^{\cg_1}_R(0),
\]
as well as $\phi_u\colon W^u_b\to B^{\cg_{>b}}_R(0)$ be analytic diffeomorphisms
as described in Proposition~\ref{inv-mfd-thm}.
We abbreviate $B^F_t:=B^F_t(0)$ whenever~$F$ is a vector subspace of~$\cg$.
Using the inverse maps
\[
\psi_s:=\phi_s^{-1},\quad \psi_c:=\phi_c^{-1},\quad\mbox{and}\quad
\psi_u:=\phi_u^{-1},
\]
we define the analytic map
\[
\psi\colon B^\cg_R=B^{\cg_{<a}}_R\times B^{\cg_1}_R\times B^{\cg_{>b}}_R\to G,\quad (x,y,z)\mto
\psi_s(x)\psi_c(y)\psi_u(z).
\]
Then $T_0\psi=\id_\cg$ by (\ref{Tay01}) and the properties of $d\phi_s$,
$d\phi_c$, and $d\phi_u$ described in Proposition~\ref{inv-mfd-thm} (a), (b), and (c),
respectively,
if we identify $T_0(\cg)=\{0\}\times\cg$ with $\cg$ as usual,
forgetting the first component.
By the Inverse Functon Theorem,
after shrinking~$R$ if necessary, we may assume
that the image $W^s_aW^cW^u_b$ of $\psi$ is an open identity neighbourhood in~$G$,
and that
\begin{equation}\label{psithe}
\psi\colon B^\cg_R\to W^s_aW^cW^u_b
\end{equation}
is an analytic diffeomorphism. We define
\begin{equation}\label{goodphi}
\phi:=\psi^{-1},
\end{equation}
with domain $U:=W^s_aW^cW^u_b$ and image $V:=B^\cg_R$.
After shrinking~$R$ further if necessary,
we may assume that~$\phi$ and~$R$
have all the properties described in \ref{pre-laballs}
and Lemma~\ref{laballs}.
\end{numba}
\begin{numba}\label{sett-2}
In the following result and its proof,
$\phi$ and $R$ are as in~\ref{phimain}.
We let $B_t:=B^\cg_t \sub \cg$ and the compact open subgroups
\[
B^\phi_t:=\phi^{-1}(B^\cg_t)=\psi(B^\cg_t)
=\psi_s(B^{\cg_{<a}}_r)\psi_c(B^{\cg_1}_t)\psi_u(B^{\cg_{>b}}_t)
=W^s_a(t)W^c(t)W^u_b(t)
\]
of~$G$ for $t\in\;]0,R]$ be as in Lemma~\ref{laballs}
(using notation as in Proposition~\ref{inv-mfd-thm}).
The multiplication $*\colon B_R\times B_R\to B_R$
is as in \ref{pre-laballs}.
\end{numba}
We shrink~$R$ further (if necessary) to achieve the following:
\begin{la}\label{details-WcWu}
After shrinking~$R$, we can achieve that
$W^u_b(t)=\phi_u^{-1}(B^{\cg_{>b}}_t)$
is a subgroup of~$G$ for all $t\in\;]0,R]$ and
$W^c(t)=\phi_c^{-1}(B^{\cg_1}_t)$
normalizes $W^u_b(t)$.
\end{la}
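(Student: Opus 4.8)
The plan is to derive both assertions from the dynamical description of the local $b$-unstable manifold supplied by Lemma~\ref{better-4}, exploiting that $\alpha$ is a group homomorphism together with the subgroup structure of the balls $B^\phi_s$ established in Lemma~\ref{laballs}. First I would invoke Lemma~\ref{better-4} with the diffeomorphism~$\phi$ from~\ref{phimain} (which has $\phi(e)=0$ and $d\phi|_\cg=\id_\cg$, as the lemma requires) and shrink~$R$ so that, for every $t\in\;]0,R]$, the manifold $W^u_b(t)=\phi_u^{-1}(B^{\cg_{>b}}_t)$ is \emph{precisely} the set of $x\in B^\phi_t$ admitting an $\alpha$-regressive trajectory $(x_{-n})_{n\in\N_0}$ lying in $B^\phi_t$ with $x_{-n}\to e$. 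Throughout I would use that $W^c(t),W^u_b(t)\sub B^\phi_t\sub B^\phi_R$ by~\ref{sett-2}, that $B^\phi_t$ is a compact open subgroup of $G$ which is normal in $B^\phi_R$, and that the $B^\phi_s$ ($s\in\;]0,R]$) form a neighbourhood basis of $e$ in $G$.

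For the subgroup claim, let $x,y\in W^u_b(t)$, with $\alpha$-regressive trajectories $(x_{-n})$ and $(y_{-n})$ in $B^\phi_t$ converging to~$e$. Since $\alpha$ is a homomorphism, $(x_{-n}y_{-n}^{-1})_{n\in\N_0}$ is an $\alpha$-regressive trajectory for $xy^{-1}$; it lies in $B^\phi_t$ because $B^\phi_t$ is a group, and $x_{-n}y_{-n}^{-1}\to e$ by continuity of the group operations. Hence $xy^{-1}\in W^u_b(t)$ by the characterization, and since $e\in W^u_b(t)$ (take the constant trajectory), $W^u_b(t)$ is a subgroup.

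For the normalization claim, fix $g\in W^c(t)$. As $W^c(t)$ is a centre manifold, $\alpha(W^c(t))=W^c(t)$, so $g$ admits an $\alpha$-regressive trajectory $(g_{-n})$ with all $g_{-n}\in W^c(t)\sub B^\phi_R$; then $(g_{-n}^{-1})$ is an $\alpha$-regressive trajectory for $g^{-1}$ inside $B^\phi_R$. Given $h\in W^u_b(t)$ with $\alpha$-regressive trajectory $(h_{-n})$ in $B^\phi_t$, $h_{-n}\to e$, the sequence $(g_{-n}h_{-n}g_{-n}^{-1})_{n\in\N_0}$ is an $\alpha$-regressive trajectory for $ghg^{-1}$; it lies in $B^\phi_t$ because $B^\phi_t\trianglelefteq B^\phi_R$ with $g_{-n}\in B^\phi_R$ and $h_{-n}\in B^\phi_t$; and it converges to~$e$, because for each $s\in\;]0,R]$ we eventually have $h_{-n}\in B^\phi_s$, whence $g_{-n}h_{-n}g_{-n}^{-1}\in B^\phi_s$ by normality of $B^\phi_s$ in $B^\phi_R$. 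Thus $ghg^{-1}\in W^u_b(t)$ by the characterization, and the same argument with $(g_{-n}^{-1})$ in place of $(g_{-n})$ gives $g^{-1}hg\in W^u_b(t)$; hence $gW^u_b(t)g^{-1}=W^u_b(t)$, i.e.\ $W^c(t)$ normalizes $W^u_b(t)$.

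I expect the only delicate point to be the convergence $g_{-n}h_{-n}g_{-n}^{-1}\to e$: the conjugating elements $g_{-n}$ need not converge (a centre-manifold element admits only a \emph{bounded} regressive trajectory, never one tending to~$e$), so one cannot conclude merely from continuity of conjugation at a fixed point, and instead must use a uniformity in~$n$ — here provided by the fact that all the small balls $B^\phi_s$ are normal in the single compact open subgroup $B^\phi_R$ containing every $g_{-n}$. The remaining work is routine bookkeeping, notably verifying that one choice of~$R$ makes Lemma~\ref{better-4} applicable simultaneously for all $t\in\;]0,R]$.
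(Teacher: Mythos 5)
Your proof is correct, and it reaches the two claims by a noticeably lighter route than the paper, although the skeleton is the same: both arguments start from the dynamical characterization of $W^u_b(t)$ supplied by Lemma~\ref{better-4} and then exhibit $\alpha$-regressive trajectories for $xy^{-1}$ and for $g h g^{-1}$. The difference lies in how the limit condition is verified. The paper works with the $b^n$-weighted decay $b^n\|\phi(x_{-n})\|\to 0$ (the form in which the unstable set $\Gamma_t$ is actually defined in the invariant-manifold construction), and therefore has to shrink $R$ further to obtain two ultrametric estimates, namely $\|x*y^{-1}\|\leq\max\{\|x\|,\|y\|\}$ for the subgroup property and, via the Inverse Function Theorem with Parameters (Lemma~\ref{invpar}), the isometry $\|x*y*x^{-1}\|=\|y\|$ for the normalization; these push the weighted decay through products and conjugates. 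You instead use the unweighted statement of Lemma~\ref{better-4} ($\phi(x_{-n})\to 0$, equivalently $x_{-n}\to e$), which lets continuity of the group operations handle $x_{-n}y_{-n}^{-1}\to e$, and you correctly identify and resolve the one delicate point -- the conjugators $g_{-n}$ do not converge -- by replacing the isometry estimate with the normality of every $B^\phi_s$ in $B^\phi_R$ from Lemma~\ref{laballs}, which gives the required uniformity in $n$; you also prove the two-sided inclusion, so genuine equality $gW^u_b(t)g^{-1}=W^u_b(t)$ follows even though $W^c(t)$ is not known to be a group at this stage. What each approach buys: yours is more elementary (no further shrinking of $R$ beyond Lemma~\ref{better-4}, no norm estimates, no parametrized inverse function theorem) but leans on the unweighted form of the characterization being available, which is what Lemma~\ref{better-4} as stated asserts; the paper's estimates make the argument work directly with the stronger $b^n$-weighted condition coming out of \cite{Exp}, and the isometry (\ref{yetestim}) moreover yields normalization by all of $W^c(R)$, slightly more than the lemma claims.
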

\begin{proof}
Let $\phi$, $R$ and further notation be as in~\ref{phimain} and~\ref{sett-2};
notably, $V=B^\cg_R$.
Using Lemma~\ref{better-4} with $M:=G$, $f:=\alpha$ and $p:=e$, we see that,
after shrinking~$R$ if necessary,
we may assume the following condition ($*$) for all $t\in \;]0,R]$:\\[2.3mm]
\emph{$W^u_b(t)$ is the set of all $x\in B^\phi_t =W^s_a(t)W^c(t)W^u_b(t)$
for which there exists an $\alpha$-regressive trajectory $(x_{-n})_{n\in\N_0}$
in $B^\phi_t$ with $x_0=x$ such that}
\[
\lim_{n\to\infty}b^n\|\phi(x_{-n})\|=0
\]
(and then $x_{-n}\in\phi^{-1}(\Gamma_t)=W^u_b(t)$
for all $n\in\N_0$).
As the analytic map
\[
g\colon \colon V\times V\to V\sub\cg,\quad (x,y)\mto x*y^{-1}
\]
is totally differentiable at~$(0,0)$ with $g(0,0)=0$ and $g'(0,0)(x,y)=x-y$,
after shrinking~$R$ if necessary we may assume that
\[
\|x*y^{-1}-x+y\|\leq \max\{\|x\|,\|y\|\}
\]
for all $x,y\in B_R=V$ and thus
\begin{eqnarray}
\|x*y^{-1}\|&=&\|x-y+(x*y^{-1}-x+y)\|\notag\\
& \leq & \max\{\|x-y\|,\|x*y^{-1}-x+y\|\}
\leq  \max\{\|x\|,\|y\|\},\label{xyinvest}
\end{eqnarray}
using the ultrametric inequality.
It is clear that $e\in W^u_b(t)$. Hence
$W^u_b(t)$ will be a subgroup of~$G$ for all $t\in \;]0,R]$
if we can show that $xy^{-1}\in W^u_b(t)$
for all $x,y\in W^u_b(t)$. Let $(x_{-n})_{n\in\N_0}$ and $(y_{-n})_{n\in\N_0}$
be $\alpha$-regressive trajectories in $B^\phi_t$
such that $x_0=x$, $y_0=y$ and
\[
b^n\|\phi(x_{-n})\|, b^n\|\phi(y_{-n})\|\to 0\quad\mbox{as $n\to\infty$.}
\]
Then $(x_{-n}y_{-n}^{-1})_{n\in\N_0}$ is an $\alpha$-regressive trajectory in the group~$B^\phi_t$
with $x_0y_0^{-1}=xy^{-1}$ and
\[
b^n\|\phi(x_n y_n^{-1})\|=b^n\|\phi(x_n)*\phi(y_n)^{-1}\|\leq \max\{b^n\|\phi(x_n)\|,b^n\|\phi(y_n)\|\}
\to 0,
\]
using that $\phi\colon U\to (B_R,*)$ is a homomorphism
of groups, and using the estimate~(\ref{xyinvest}). Thus $xy^{-1}\in W^u_b$,
by~($*$).\\[2.3mm]
We now show that, after shrinking $R$ if necessary,
$W^u_b(t)$ is normalized by $W^c(R)$
for all $t\in\;]0,R]$.
To this end, consider the analytic map
\[
h\colon V\times V\to V,\quad
(x,y)\mto x*y*x^{-1}.
\]
For $x\in V$, abbreviate $h_x:=h(x,.)$. Since $h_0=\id_V$,
we see that $h_0'(0)=\id_\cg$ which is an isometry.
By the Ultrametric Inverse Function Theorem with Parameters,
after shrinking~$R$ we can achieve that $h_x\colon B^\cg_R\to B^\cg_R$
is an isometry for all $x\in B^\cg_R$. Hence, using that $h_x(0)=0$,
\begin{equation}\label{yetestim}
\|x*y*x^{-1}\|=\|h_x(y)\|=\|y\|\quad\mbox{for all $x,y\in B^\cg_R$.}
\end{equation}
If $x\in W^c(R)$, $t\in\;]0,R]$ and $y\in W^u_b(t)$,
let $(y_{-n})_{n\in\N_0}$ be an $\alpha$-regressive trajectory in~$B^\phi_t$
such that $y_0=y$ and $b^n\|\phi(y_{-n})\|\to 0$ as $n\to\infty$.
Since $\alpha(W^c(R))=W^c(R)$, we can find an $\alpha$-regressive
trajectory $(x_{-n})_{n\in\N_0}$ in $W^c(R)$ such that $x_0=x$.
Recall from Lemma~\ref{laballs}
that~$B^\phi_t$ is a normal subgroup of~$B^\phi_R$.
Hence $(x_{-n}y_{-n}x_{-n}^{-1})_{n\in\N_0}$ is an $\alpha$-regressive
trajectory in~$B^\phi_t$ such that $x_0y_0x_0^{-1}=xyx^{-1}$ and
\[
b^n\|\phi(x_{-n}y_{-n}x_{-n}^{-1})\|=b^n
\|\phi(x_{-n})*\phi(y_{-n})* \phi(x_{-n})^{-1}\|=b^n\|\phi(y_{-n})\|\to 0
\]
as $n\to\infty$, using that $\phi$ is a homomorphism of groups
and~(\ref{yetestim}). Thus $xyx^{-1}\in W^u_b(t)$, by~($*$).
\end{proof}
\begin{numba}\label{has-descen}
By Lemma~\ref{better-2},
after shrinking~$R$ if necessary, we may assume that
\begin{equation}\label{descent-2}
\bigcap_{n\in\N_0} \alpha^n(W^s_a)=\{e\}\quad\mbox{and}\quad
\lim_{n\to\infty} \alpha^n(x)=e\;\;  \mbox{for all $\,x\in W^s_a$.}
\vspace{-1mm}
\end{equation}
\end{numba}
\begin{numba}\label{c-stable}
By Lemma~\ref{better-2-3},
after shrinking~$R$ if necessary, we may assume that
$\alpha|_{W^c(t)}\colon W^c(t)\to W^c(t)$ is an analytic
diffeomorphism for each $t\in\;]0,R]$.
\end{numba}
\begin{numba}\label{goodd-4}
By Lemma~\ref{better-4},
after shrinking~$R$ if necessary, we may assume that,
for each $t\in\;]0,R]$, for each $x\in W^u_b(t)$ there exists an $\alpha$-regressive
trajectory $(x_{-n})_{n\in\N_0}$ in~$W^u_b(t)$ such that $x_0=x$ and
\[
\lim_{n\to\infty}x_{-n}=e.
\]
In particular, $W^u_b(t)\sub \alpha(W^u_b(t))$ for all $t\in\;]0,R]$. 
\end{numba}
\begin{numba}\label{goodd-x}
By Lemma~\ref{better-3},
there exists an open neighbourhood $P$ of~$p$ in $W^u_b$
with $\alpha(P)\sub W^u_b$
such that, for each $x\in W^u_b\setminus P$, there exists $n\in\N_0$
such that $\alpha^n(x)\not\in P$.
After shrinking~$P$,
we may assume that $P=W^u_b(r)$ for some $r\in\;]0,R]$.
\end{numba}
The next lemma will be applied later to $A:=W^c(t)$,
$B:=W^u_b(t)$ and $C:=\alpha(B)$.
\begin{la}\label{cancel-A}
Let $G$ be a group, $B\sub C\sub G$ be  subgroups and $A\sub G$
be a subset
such that $AB$ and $AC$ are subgroups of~$G$ and $C\cap AB=B$.
Then
\[
[AC:AB]=[C:B].
\]
\end{la}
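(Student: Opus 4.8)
The plan is to exhibit an explicit bijection between the set $C/B$ of left cosets of $B$ in $C$ and the set $AC/AB$ of left cosets of $AB$ in $AC$, namely the map induced by the inclusion $C\hookrightarrow AC$. First I would record some elementary consequences of the hypotheses. Since $B$ is a subgroup, $e\in B$, whence $A=Ae\subseteq AB$; likewise $e\in C$ and $A\subseteq AC$. Picking any $a_0\in A$ (note $A\neq\emptyset$, as otherwise $AB$ would be empty rather than a subgroup) and using that $a_0^{-1}\in AB$ and $a_0^{-1}\in AC$, one gets $B=a_0^{-1}(a_0B)\subseteq AB$ and $C=a_0^{-1}(a_0C)\subseteq AC$; in particular $B\subseteq C\subseteq AC$, and therefore $AB\subseteq AC$, since $AC$ is a subgroup containing both $A$ and $B$. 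Consequently, for $c\in C\subseteq AC$ the set $c(AB)$ is a genuine left coset of the subgroup $AB$ inside the group $AC$, so the assignment $f\colon C/B\to AC/AB$, $cB\mapsto c(AB)$, makes sense as a map of sets.

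Next I would check that $f$ is well defined and injective. If $cB=c'B$ with $c,c'\in C$, then $(c')^{-1}c\in B\subseteq AB$, so $c(AB)=c'(AB)$; thus $f$ is well defined. Conversely, if $c(AB)=c'(AB)$, then $(c')^{-1}c\in AB$; but also $(c')^{-1}c\in C$, so $(c')^{-1}c\in C\cap AB=B$ by hypothesis, giving $cB=c'B$. Hence $f$ is injective.

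The main point is surjectivity. Given a left coset $g(AB)$ with $g\in AC$, I would use that $AC$ is a subgroup to write $g^{-1}=ac$ with $a\in A$ and $c\in C$; then $g=c^{-1}a^{-1}$ and $ga=c^{-1}\in C$. Since $a=ae\in AB$, the element $c^{-1}=ga$ lies in $g(AB)$, and as $c^{-1}\in C$ the coset $c^{-1}(AB)$ therefore equals $g(AB)$ (two left cosets of $AB$ sharing a point coincide). Thus $g(AB)=f(c^{-1}B)$ lies in the image of $f$. Hence $f$ is a bijection, and comparing cardinalities yields $[AC:AB]=|AC/AB|=|C/B|=[C:B]$, as claimed. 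The only genuine obstacle is this surjectivity step: it is precisely here that the hypothesis ``$AC$ is a subgroup'' (used to invert $g$ and split $g^{-1}$ as $ac$) and the fact $e\in B$ (so that $A\subseteq AB$) are essential, while the hypotheses that $AB$ is a subgroup and that $C\cap AB=B$ enter, respectively, in making $f$ well defined into cosets and in proving injectivity.
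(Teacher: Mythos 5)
Your proof is correct and is essentially the paper's argument: the paper packages the same bijection $cB\mapsto cAB$ via the orbit formula for the left action of $C$ on $AC/AB$, with your surjectivity step (inverting an element of $AC$ to rewrite $acAB$ as $(c')^{-1}AB$) being exactly its transitivity step and your injectivity step being its stabilizer computation $C\cap AB=B$. You also carefully verify the inclusions $A\subseteq AB$ and $C\subseteq AC$ that the paper leaves implicit.
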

\begin{proof}
The group~$C$ acts on $X:=AC/AB$ on the left via
$c'.acAB:=c'acAB$ for $c,c'\in C$, $a\in A$. To see that the action
is transitive, let $a\in A$ and $c\in C$. Since $AC$ is a group,
we have $(ac)^{-1}=a'c'$ for certain $a'\in A$ and $c'\in C$,
entailing that $ac=(c')^{-1}(a')^{-1}$ and thus $acAB=(c')^{-1}AB=(c')^{-1}.AB$.
The stabilizer of the point $AB\in X$ is $C\cap AB=B$.
Now the Orbit Formula
shows that the map
\[
C/B\to X=AC/AB,\quad cB \mto cAB
\]
is a well-defined bijection. The assertion follows.
\end{proof}
Using notation as before (notably $\phi$ as in (\ref{goodphi})),
we have:
\begin{thm}\label{main}
Let $\alpha$ be an analytic endomorphism of a Lie group~$G$
over a totally disconnected local field~$\K$.
Then the scale $s(\alpha)$ divides the scale $s(L(\alpha))$ of
the associated Lie algebra endomorphism~$L(\alpha)$.
The following conditions are equivalent:
\begin{itemize}
\item[\rm(a)]
$s_G(\alpha)=s_{L(G)}(L(\alpha))$;
\item[\rm(b)]
There is $t_0 \in \;]0,r]$ such that the compact open subgroups
$B_t^\phi\cong (B_t,*)$ of~$G$ are tidy for~$\alpha$,
for all $t\in \;]0,t_0]$;
\item[\rm(c)]
$G$ has small tidy subgroups for $\alpha$;
\item[\rm(d)]
The contraction group $\conp(\alpha)$ is closed.
\end{itemize}
\end{thm}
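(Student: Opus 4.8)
The plan is to prove the divisibility statement first, then establish the cycle of implications (a)$\Rightarrow$(d)$\Rightarrow$(c)$\Rightarrow$(b)$\Rightarrow$(a), which will give all four equivalences. For the divisibility, I would apply Lemma~\ref{displ-balls} to $\alpha$ restricted to a suitable open subgroup (the identity, since $\alpha$ is an endomorphism of $G$): for small $t$ the index $[\alpha(B_t^\phi):\alpha(B_t^\phi)\cap B_t^\phi]$ equals $s(L(\alpha))$. Then I would invoke Lemma~\ref{divisi}: any compact open subgroup which is tidy above for $\alpha$ has the property that $s(\alpha)$ divides its displacement index. Since $B_t^\phi$ need not be tidy above a priori, I would instead apply the tidying procedure (or simply observe that $s(\alpha)$ divides every displacement index $[\alpha(U):\alpha(U)\cap U]$ over compact open $U$ — this is standard from \cite{END}, and Lemma~\ref{divisi} gives it for tidy-above $U$, while passing to a tidy-above subgroup via the tidying procedure handles the general case). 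Concretely: take any $U$, tidy it above, note the index can only divide through, and conclude $s(\alpha)\mid s(L(\alpha))$.

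For (a)$\Rightarrow$(d): if $s_G(\alpha)=s_{L(G)}(L(\alpha))$, then by Lemma~\ref{displ-balls} the subgroups $B_t^\phi$ achieve the minimal index, i.e. they are minimizing, hence tidy for $\alpha$ by~\ref{mini-tidy}. Since the $B_t^\phi$ form a basis of identity neighbourhoods (Lemma~\ref{laballs}), $G$ has small tidy subgroups, and then $\conp(\alpha)$ is closed by Lemma~\ref{tidyclo}(a). This actually already gives (a)$\Rightarrow$(b) and (a)$\Rightarrow$(c) as well. The implication (c)$\Rightarrow$(d) is again Lemma~\ref{tidyclo}(a), and (b)$\Rightarrow$(c) is trivial since the $B_t^\phi$ are a neighbourhood basis. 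So the only remaining, and main, implication is (d)$\Rightarrow$(b): assuming $\conp(\alpha)$ closed, show the $B_t^\phi$ are tidy for small~$t$.

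For (d)$\Rightarrow$(b), I would use the invariant-manifold decomposition set up in~\ref{phimain}--\ref{goodd-x}. Write $B_t^\phi = W_a^s(t)\,W^c(t)\,W_b^u(t)$. The strategy is to identify $(B_t^\phi)_+$ and $(B_t^\phi)_-$ using the dynamical description of the pieces: elements of $W_b^u(t)$ admit $\alpha$-regressive trajectories converging to $e$ (so $W_b^u(t)\subseteq (B_t^\phi)_+$, in fact with $W^c(t)W_b^u(t)$ playing the role of $U_+$ since $\alpha$ is a diffeomorphism on $W^c(t)$ by~\ref{c-stable}), while $W_a^s(t)$ lies in $\conp(\alpha)\cap (B_t^\phi)_-$ by~\ref{has-descen}. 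The key point is to show $B_t^\phi = (B_t^\phi)_+(B_t^\phi)_-$, i.e. tidy above: this should follow by combining the product decomposition of $B_t^\phi$ with the inclusions just described, using that $W^c(t)$ normalizes $W_b^u(t)$ (Lemma~\ref{details-WcWu}) and Lemma~\ref{cancel-A} to handle indices cleanly. Then, having tidiness above, I would verify the hypothesis~(\ref{unnec}) of Lemma~\ref{tidyclo}(b): namely $U_- = (\conp(\alpha)\cap U_-)(U_+\cap U_-)$, which amounts to showing the stable part of $U_-$ sits inside $\conp(\alpha)$ and the rest is in $U_+\cap U_-$ — here is where closedness of $\conp(\alpha)$ is genuinely used, since it guarantees the manifold pieces exhaust $U_-$ correctly and that $U_{--}$ is closed. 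Lemma~\ref{tidyclo}(b) then yields tidy below, hence tidy, completing the cycle.

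The main obstacle I anticipate is the bookkeeping in (d)$\Rightarrow$(b): matching the manifold decomposition $W_a^s(t)W^c(t)W_b^u(t)$ against the abstract $U_+U_-$ factorization, and in particular showing that the "centre" piece $W^c(t)$ splits correctly between $U_+$ and $U_-$ (it contributes to $U_+\cap U_-$). The subtlety is that $W^c(t)$ need not be a group, so one must argue at the level of cosets and orbits (via Lemma~\ref{cancel-A}) rather than directly manipulating subgroups, and one must use the $\alpha$-diffeomorphism property on $W^c(t)$ together with the regressive-trajectory characterizations carefully to avoid circularity with the very tidiness being proved.
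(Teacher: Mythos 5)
Your outline of (d)$\Rightarrow$(b) is essentially the paper's argument: identify $(B^\phi_t)_-$ and $(B^\phi_t)_+$ with $W^s_a(t)W^c(t)$ and $W^c(t)W^u_b(t)$ via the regressive-trajectory characterizations, conclude tidiness above, and feed the resulting factorization $U_-=(\conp(\alpha)\cap U_-)(U_+\cap U_-)$ into Lemma~\ref{tidyclo}\,(b). The implications (b)$\Rightarrow$(c)$\Rightarrow$(d) are also handled as in the paper.

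There is, however, a genuine gap at the point where you compute the displacement index. You propose to apply Lemma~\ref{displ-balls} to $\alpha$ itself to get $[\alpha(B^\phi_t):\alpha(B^\phi_t)\cap B^\phi_t]=s(L(\alpha))$ for small~$t$, and this computation carries the divisibility claim, the implication (a)$\Rightarrow$(b)/(c)/(d), and the return implication to~(a). But Lemma~\ref{displ-balls} requires $\alpha\colon G_0\to G$ to be an analytic \emph{diffeomorphism onto an open subgroup}; its proof rests on the Ultrametric Inverse Function Theorem, which needs $L(\alpha)$ invertible. A general analytic endomorphism need not be injective, need not have open image, and $L(\alpha)$ need not be invertible (e.g.\ the one-dimensional shift example following Theorem~\ref{main}, where $\ker(\alpha^n)$ is nontrivial), so the lemma simply does not apply to $\alpha$. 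This is exactly the point the paper flags with ``If $\alpha$ is an automorphism, then the claim holds by Lemma~\ref{displ-balls}. For $\alpha$ an endomorphism, the argument is more involved.'' The correct route is to first establish tidiness above of $B^\phi_t$ and the identity $(B^\phi_t)_+=W^c(t)W^u_b(t)$ (killing the spurious factor $J_t$ via~(\ref{descent-2})), then reduce the displacement index to $[\alpha(W^u_b(t)):W^u_b(t)]$ using \cite[Lemma~5]{END} together with Lemma~\ref{cancel-A}, and only then invoke Lemma~\ref{displ-balls} — applied to $\alpha|_{W^u_b(r)}\colon W^u_b(r)\to W^u_b$, which \emph{is} a diffeomorphism onto an open subgroup because $L(\alpha)|_{\cg_{>b}}$ is invertible. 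Together with Theorem~\ref{thm-lincase}\,(d) this yields $[\alpha(W^u_b(t)):W^u_b(t)]=s(L(\alpha)|_{\cg_{>1}})=s(L(\alpha))$.

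A secondary, fixable weakness: your fallback claim that $s(\alpha)$ divides \emph{every} displacement index $[\alpha(U):\alpha(U)\cap U]$ is not what Lemma~\ref{divisi} says (it requires $U$ tidy above), and ``the index can only divide through'' the tidying procedure is not justified for endomorphisms. This is harmless here because the $B^\phi_t$ are shown to be tidy above for all $t\in\;]0,r]$, so Lemma~\ref{divisi} applies to them directly once the index computation is repaired.
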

\begin{proof}
The implication (b)$\impl$(c) holds as the compact open
subgroups $B^\phi_t$ for $t\in\;]0,t_0]$ form a basis
of identity neighbourhoods in~$G$.
The implication (c)$\impl$(d)
is a general fact, see Lemma~\ref{tidyclo}.

(a)$\aeq$(b):
We claim that
there exists $t_0 \in \;]0,r]$
such that
the compact open subgroups $B^\phi_t$
have displacement index
\begin{equation}\label{index-ball}
[\alpha(B^\phi_t)\colon \alpha(B^\phi_t)\cap B^\phi_t]=s(L(\alpha))\quad\mbox{for all $\,t\in\;]0,t_0]$.}
\end{equation}
If this is true, then the equivalence of~(a) and~(b) is clear.
If~$\alpha$ is an automorphism, then the claim holds by Lemma~\ref{displ-balls}.
For~$\alpha$ an endomorphism, the argument is more involved.
We first note that the product map
\[
\pi\colon W^s_a\times W^c\times W^u_b\to W^s_aW^cW^u_b=B^\phi_R
\]
is an analytic diffeomorphism as so is~$\psi$ (from~(\ref{psithe})).
Let $t\in\;]0,r]$.
Since
\[
\alpha(W^s_a(t)W^c(t))\sub W^s_a(t)W^c(t),
\]
we have $\alpha^n(W^s_a(t)W^c(t))\sub W^s_a(t)W^c(t)\sub B^\phi_t$
for all $n\in\N_0$ and thus
\begin{equation}\label{grob-minus}
W^s_a(t)W^c(t)\sub (B^\phi_t)_-.
\end{equation}
Since $\alpha(W^c(t))=W^c(t)$ and each $x\in W^u_b(t)$
has an $\alpha$-regressive trajectory within $W^u_b(t)$ (see \ref{goodd-4}),
we have
\begin{equation}\label{before-modlar}
W^c(t)W^u_b(t)\sub (B^\phi_t)_+.
\end{equation}
Thus $B^\phi_t=W^s_a(t)W^c(t)W^u_b(t)\sub (B^\phi_t)_-(B^\phi_t)_+\sub B^\phi_t$,
whence $B^\phi_t=(B^\phi_t)_-(B^\phi_t)_+$ and so
$B^\phi_t=(B^\phi_t)^{-1}=(B^\phi_t)_+(B^\phi_t)_-$
is tidy above for~$\alpha$.\\[2.3mm]
Since $B^\phi_t=W^s_a(t)W^c(t)W^u_b(t)$ is a group and $(B^\phi_t)_+$
a subgroup, (\ref{before-modlar}) implies that
\[
(B^\phi_t)_+= J_t W^c(t)W^u_b(t)
\]
with $J_t:=(B^\phi_t)_+\cap W^s_a(t)$. Since $\pi$ is a bijection, $\alpha(J_t)\sub\alpha(W^s_a(t))\sub
W^s_a(t)$ and $\alpha(W^u_b(t))\sub W^u_b$ (see \ref{goodd-x})), the inclusion
\[
J_t W^c(t)W^u_b(t)=B^\phi_t\sub \alpha((B^\phi_t)_+)=\alpha(J_t)W^c(t)\alpha(W^u_b(t))
\]
entails that $J_t\sub \alpha(J_t)$, whence
\[
J_t\sub\bigcap_{n\in\N_0}\alpha^n(J_t)\sub\bigcap_{n\in\N_0}\alpha^n(W^s_a)=\{e\},
\]
using~(\ref{descent-2}).
Thus $J_t=\{e\}$ and hence
\begin{equation}\label{quart}
W^c(t)W^u_b(t)=(B^\phi_t)_+,
\end{equation}
which is a subgroup. Also
\[
\alpha((B^\phi_t)_+)=W^c(t)\alpha(W^u_b(t))
\]
is a subgroup, and
\[
\alpha(W^u_b(t))\cap W^c(t)W^u_b(t)=W^u_b(t)
\]
since~$\pi$ is a bijection.
Hence, by \cite[Lemma~5]{END} and Lemma~\ref{cancel-A},
\begin{eqnarray}
[\alpha(B^\phi_t): \alpha(B^\phi_t)\cap B^\phi_t] &=&
[\alpha((B^\phi_t)_+):(B^\phi_t)_+]\notag \\
&=&[W^c(t)\alpha(W^u_b(t)): W^c(t)W^u_b(t)]\notag \\
&=&
[\alpha(W^u_b(t)):W^u_b(t)].\label{semi-half}
\end{eqnarray}
Applying now Lemma~\ref{displ-balls}
to $\alpha|_{W^u_b(r)}\colon W^u_b(r)\to W^u_b$
instead of $\alpha\colon G_0\to G$
and $\phi_u$ instead of~$\phi$,
we see that there is $t_0\in\;]0,r]$ such that
\begin{equation}\label{semi-full}
[\alpha(W^u_b(t)):W^u_b(t)]=s(L(\alpha)|_{\cg_{>b}})=s(L(\alpha)|_{\cg_{>1}})
=s(L(\alpha))
\end{equation}
for all $t\in\;]0,t_0]$,
using Theorem~\ref{thm-lincase}\,(d) for the penultimate equality.
Combining (\ref{semi-half}) and (\ref{semi-full}), we get~(\ref{index-ball}).

(d) $\Rightarrow$ (b):
Recall that $B^\phi_t$ is tidy above for all $t\in\;]0,r]$;
from (\ref{grob-minus}) and (\ref{before-modlar}), we deduce that
\begin{equation}\label{uno}
(B^\phi_t)_+\cap (B^\phi_t)_-\supseteq W^c(t).
\end{equation}
By (\ref{descent-2}) and (\ref{grob-minus}), we have
\begin{equation}\label{due}
W^s_a(t)\sub \; \conp(\alpha)\cap (B^\phi_t)_-.
\end{equation}
Since $B^\phi_t=W^s_a(t)W^c(t)W^u_b(t)$ and $(B^\phi_t)_-$
is a subgroup of $B^\phi_t$ which contains $W^s_a(t)W^c(t)$,
we have
\[
(B^\phi_t)_-=W^c(t)W^u_b(t)I_t
\]
with $I_t:=(B^\phi_t)_-\cap W^u_b(t)$.
Then $I_t=\{e\}$ as the existence of an element $x\in I_t\setminus\{e\}$
gives rise to a contradiction as follows:
Since $I_t\sub (B^\phi_t)_-$, we must have $\alpha^n(x)\in B^\phi_t$
for all $n\in \N_0$.
However, by~\ref{goodd-x},
there exists $n\in\N$ such that $\alpha^n(x)\in W^u_b\setminus W^u_b(r)$
and thus $\alpha^n(x)\not\in B^\phi_t$, which is absurd.
Hence
\begin{equation}\label{tres}
(B^\phi_t)_- = W^s_a(t)W^c(t),
\end{equation}
and thus $(B^\phi_t)_-=\;(\conp(\alpha)\cap (B^\phi_t)_-)((B^\phi_t)_+\cap (B^\phi_t)_-)$.
Using Lemma~\ref{tidyclo}, we deduce that $B^\phi_t$ is tidy for~$\alpha$,
for all $t\in\;]0,r]$.

Finally, as $B^\phi_r$ is tidy above for~$\alpha$,
we deduce from Lemma~\ref{divisi} and~(\ref{index-ball})
that $s(\alpha)$ divides $[\alpha(B^\phi_r):\alpha(B^\phi_r)\cap B^\phi_r]=s(L(\alpha))$.
\end{proof}
\begin{rem}\label{the-balls-tdy}
The proof of the implication (d)$\impl(b)$ showed that
the compact open subgroups~$B^\phi_t$ are tidy for~$\alpha$,
for all $t\in \;]0, r]$. This information enables
a better understanding of $W^s_a(t)$ and $W^c(t)$.
By (\ref{quart}) and (\ref{tres}),
\[
W^c(t)=W^s_a(t)W^c(t)\cap W^c(t)W^u_b(t)=(B^\phi_t)_-\cap (B^\phi_t)_+
\]
is a compact \emph{subgroup} of~$B^\phi_t$, for each $t\in\;]0,r]$.
If we can show that
\begin{equation}\label{also-Ws}
\conp(\alpha)\cap B^\phi_t=\; \conp(\alpha)\cap (B^\phi_t)_-=W^s_a(t),
\end{equation}
then $W^s_a(t)$ is a compact \emph{subgroup} of~$G$, for all $t\in\;]0,r]$.
Now, the first equality in (\ref{also-Ws}) holds by \cite[Proposition~11\,(b)]{END}.
As for the second equality, the inclusion ``$\supseteq$''
holds by~(\ref{due}). Since $\conp\!(\alpha)\cap (B^\phi_t)_-$ is a subgroup
of $(B^\phi_t)_-=W^s_a(t)W^c(t)$ which contains~$W^s_a(t)$,
it is of the form
\[
\conp(\alpha)\cap (B^\phi_t)_-=W^s_a(t)K_t
\]
with $K_t:=\; \conp(\alpha)\cap (B^\phi_t)_-\cap W^c(t)$.
But $K_t=\{e\}$ since $\conp(\alpha)\cap W^c(t)=\{e\}$;
to see the latter, let $e\not=x\in W^c(t)$. There is $\tau\in\;]0,t]$
such that $x\not\in W^c(\tau)$.
Since $\alpha|_{W^c(t)}\colon W^c(t)\to W^c(t)$
is a bijection and $\alpha(W^c(\tau))=W^c(\tau)$ (see~\ref{c-stable}),
we have $\alpha^n(x)\in W^c(t)\setminus W^c(\tau)$
for all $n\in\N_0$ and thus $\alpha^n(x)\not\in W^c(\tau)$,
showing that $\alpha^n(x)$ does not converge to~$e$ in $W^c(t)$ as $n\to\infty$
(and hence neither in~$G$).
\end{rem}
\subsection*{{\normalsize Foliations of the `big cell'}}
\begin{thm}\label{bigcell}
Let $\alpha$ be an analytic endomorphism of a Lie group~$G$
over a totally disconnected local field~$\K$. If $\conp(\alpha)$
is closed in~$G$, then also $\conm(\alpha)$ is closed
and the following holds:
\begin{itemize}
\item[\rm(a)]
$\conp(\alpha)$, $\lev(\alpha)$, and $\conm(\alpha)$
are Lie subgroups of~$G$ with Lie algebras
$\conp(L(\alpha))$, $\lev(L(\alpha))$, and $\conm(L(\alpha))$,
respectively;
\item[\rm(b)]
$\Omega:=\;\conp(\alpha)\lev(\alpha)\conm(\alpha)$
is an $\alpha$-invariant open identity neighbourhood in~$G$.
The product map
\[
\pi\colon \conp(\alpha)\times \lev(\alpha)\times \conm(\alpha)
\to \; \conp(\alpha)\lev(\alpha)\conm(\alpha),\;\;(x,y,z)\mto xyz
\]
is an analytic diffeomorphism.
\item[\rm(c)]
$\alpha|_{\lev(\alpha)}$ and $\alpha|_{\conm(\alpha)}$
are analytic automorphisms.
\end{itemize}
\end{thm}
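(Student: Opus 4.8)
The plan is to reduce everything to the structure already built around the fixed point $e$ in \ref{phimain}--\ref{goodd-x}, together with the linear picture from Theorem~\ref{thm-lincase}. First I would invoke Theorem~\ref{main}: since $\conp(\alpha)$ is closed, the compact open subgroups $B^\phi_t$ are tidy for $\alpha$ for all $t\in\;]0,r]$, and by Remark~\ref{the-balls-tdy} we have the clean identifications $(B^\phi_t)_-=W^s_a(t)W^c(t)$, $(B^\phi_t)_+=W^c(t)W^u_b(t)$, $(B^\phi_t)_-\cap(B^\phi_t)_+=W^c(t)$, and $\conp(\alpha)\cap B^\phi_t=W^s_a(t)$ is a compact subgroup. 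The next step is to show $W^s_a(t)=\conp(\alpha)\cap B^\phi_t$ exhausts $\conp(\alpha)$ near $e$ in the sense that $\conp(\alpha)=\bigcup_n \alpha^{-n}(W^s_a(t))$; indeed if $x\in\conp(\alpha)$ then $\alpha^n(x)\to e$, so $\alpha^n(x)\in B^\phi_t$ eventually, and one checks using $\alpha(W^s_a(t)W^c(t))\subseteq W^s_a(t)W^c(t)$ together with the fact that $\conp(\alpha)$ meets $W^c(t)$ trivially (Remark~\ref{the-balls-tdy}) that in fact $\alpha^n(x)\in W^s_a(t)$ eventually. Since $\alpha|_{W^s_a}$ maps $W^s_a$ into itself and $\conp(\alpha)=\bigcup_n\alpha^{-n}(W^s_a(t))$, and $W^s_a(t)$ is an analytic submanifold of $G$ with $T_e(W^s_a(t))=\cg_{<a}=\conp(L(\alpha))$, we get that $\conp(\alpha)$ is an open submanifold of the ascending union, hence a Lie subgroup with Lie algebra $\conp(L(\alpha))$; closedness of $\conp(\alpha)$ is the hypothesis (and ensures the union of the closed subgroups $\alpha^{-n}(W^s_a(t))$ behaves well). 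Symmetrically, applying the same reasoning to the $b$-unstable manifold — using \ref{goodd-4} (every point of $W^u_b(t)$ has an $\alpha$-regressive trajectory in $W^u_b(t)$ tending to $e$) and \ref{goodd-x} — identifies $\conm(\alpha)=\bigcup_n\alpha^n(W^u_b(t))$ as a Lie subgroup with Lie algebra $\conm(L(\alpha))$; in particular $\conm(\alpha)$ is closed. For $\lev(\alpha)=\parp(\alpha)\cap\parm(\alpha)$ I would argue that near $e$ it coincides with the centre manifold: $W^c(t)\subseteq\lev(\alpha)$ since $\alpha|_{W^c(t)}$ is a diffeomorphism of $W^c(t)$ onto itself (see \ref{c-stable}), and conversely an element of $\lev(\alpha)$ with bounded two-sided orbit, after iterating into $B^\phi_t$, must lie in $(B^\phi_t)_+\cap(B^\phi_t)_-=W^c(t)$; then $\lev(\alpha)=\bigcup_n\alpha^n(W^c(t))$ is a Lie subgroup with Lie algebra $\cg_1=\lev(L(\alpha))$, proving (a) and (c) (the latter because $\alpha|_{W^c(t)}$ is already a diffeomorphism and $\alpha|_{\conm(\alpha)}$ is a bijection of $\conm(\alpha)$ by \ref{goodd-4} and \ref{goodd-x}).

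For (b), the local statement is exactly the content of \ref{phimain}: the product map
\[
\pi_{\mathrm{loc}}\colon W^s_a(t)\times W^c(t)\times W^u_b(t)\to B^\phi_t
\]
is the restriction of the analytic diffeomorphism $\psi$ of~(\ref{psithe}), hence itself an analytic diffeomorphism onto the open identity neighbourhood $B^\phi_t=W^s_a(t)W^c(t)W^u_b(t)$. To pass from this local statement to the global one I would use that $\alpha$ conjugates the three factors into themselves in the appropriate direction: $\alpha(\conp(\alpha))\subseteq\conp(\alpha)$, $\alpha(\lev(\alpha))=\lev(\alpha)$, and $\conm(\alpha)\subseteq\alpha(\conm(\alpha))$. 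Writing $\conp(\alpha)=\bigcup_n\alpha^{-n}(W^s_a(t))$, $\lev(\alpha)=\bigcup_n\alpha^n(W^c(t))$, $\conm(\alpha)=\bigcup_n\alpha^n(W^u_b(t))$, an arbitrary triple $(x,y,z)\in\conp(\alpha)\times\lev(\alpha)\times\conm(\alpha)$ lies, for $N$ large, in $\alpha^{-N}(W^s_a(t))\times\alpha^{-N}(W^c(t))\times\alpha^{-N}(W^u_b(t))$ (using that all three manifolds are $\alpha$- or $\alpha^{-1}$-stable in the relevant direction after possibly shrinking $t$), and then $\alpha^N(xyz)=\alpha^N(x)\alpha^N(y)\alpha^N(z)\in B^\phi_t$; since $\alpha^N$ restricted to a suitable neighbourhood is an analytic diffeomorphism onto its (open) image and $\pi_{\mathrm{loc}}$ is a diffeomorphism, $\pi$ is an analytic diffeomorphism near $(x,y,z)$ onto an open subset of $G$. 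Thus $\pi$ is a local analytic diffeomorphism everywhere; injectivity of $\pi$ follows from injectivity of $\pi_{\mathrm{loc}}$ by the same $\alpha^N$-conjugation trick (if $xyz=x'y'z'$ then applying $\alpha^N$ for large $N$ lands both triples in the domain of $\pi_{\mathrm{loc}}$). Hence $\pi$ is an analytic diffeomorphism onto its image $\Omega$, which is open since it contains the open set $B^\phi_t$ and, being $\alpha^N(\text{open})\cdot\ldots$, is a union of open sets; $\alpha$-invariance of $\Omega$ is immediate from $\alpha(\conp(\alpha))\subseteq\conp(\alpha)$, $\alpha(\lev(\alpha))=\lev(\alpha)$, $\alpha(\conm(\alpha))\subseteq\conm(\alpha)$.

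The main obstacle I anticipate is the passage from the \emph{local} product decomposition (valid only inside $B^\phi_t$) to the \emph{global} one on $\Omega=\conp(\alpha)\lev(\alpha)\conm(\alpha)$: one must check carefully that the three global subgroups are genuinely exhausted by the $\alpha$-iterates of their local pieces $W^s_a(t)$, $W^c(t)$, $W^u_b(t)$ — this is where closedness of $\conp(\alpha)$ (and the now-derived closedness of $\conm(\alpha)$) is used, to ensure the ascending/descending unions of closed subgroups are well-behaved and that no element escapes — and that the conjugation $\alpha^N$ moves an arbitrary triple simultaneously into the common coordinate patch. The openness of $\Omega$ and the global injectivity of $\pi$ both rely on this, and it is the only genuinely non-formal point; once it is in place the manifold and Lie-algebra assertions follow from the local invariant-manifold data in Proposition~\ref{inv-mfd-thm} and the linear computations of Theorem~\ref{thm-lincase}\,(b).
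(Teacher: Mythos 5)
Your local ingredients are the right ones, and your treatment of $\conp(\alpha)$ and $\lev(\alpha)$ in part~(a) is essentially the paper's (which observes that $\conp(\alpha)\cap B^\phi_r=W^s_a(r)$ and $\lev(\alpha)\cap B^\phi_r=(B^\phi_r)_+\cap(B^\phi_r)_-=W^c(r)$ are submanifolds, so Lemma~\ref{spot-liesub} applies). But there are two genuine problems. First, the identification $\conm(\alpha)\cap B^\phi_t=W^u_b(t)$ is \emph{not} symmetric to the stable case: a priori $\conm(\alpha)\cap B^\phi_t$ only lies in $(B^\phi_t)_+=W^c(t)W^u_b(t)$, and one must kill the possible centre component $N_t:=\;\conm(\alpha)\cap W^c(t)$. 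The paper does this by comparing, for $x\in N_t$, a regressive trajectory converging to $e$ with a regressive trajectory inside the $\alpha$-stable set $W^c(t)$: their pointwise quotient is a regressive trajectory in $\parm(\alpha)$ annihilated by $\alpha^n$, hence lies in $\bik(\alpha)\sub\nub(\alpha)=\{e\}$, and distality of $\alpha|_{W^c(t)}$ then forces $x=e$. Relatedly, your formula $\lev(\alpha)=\bigcup_n\alpha^n(W^c(t))$ is false: $W^c(t)$ is $\alpha$-stable, so that union equals $W^c(t)$ and would make $\lev(\alpha)$ compact.

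Second, and more seriously, your global step in (b) cannot work. Forward iteration by $\alpha^N$ contracts the $\conp$-factor but \emph{expands} the $\conm$-factor (an element $z\in\;\conm(\alpha)$ arises as $z=\alpha^N(w)$ with $w\in W^u_b(t)$, so $\alpha^N(z)=\alpha^{2N}(w)$ moves away from $e$), and it does not drive a general element of $\lev(\alpha)$ into $W^c(t)$; no power of $\alpha$ (nor of $\alpha^{-1}$, even when it exists) moves an arbitrary triple simultaneously into the coordinate patch. Moreover $\alpha^N$ need not be injective on $G$, so even after landing in the patch you could not recover $x=x'$ from $\alpha^N(x)=\alpha^N(x')$. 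The paper's route is different on all three counts: openness of $\Omega$ follows because $\conp(\alpha)\lev(\alpha)$ and $\lev(\alpha)\conm(\alpha)$ are subgroups (as $\lev(\alpha)$ normalizes the contraction groups), so $\Omega$ is a union of left/right translates of the open set $B^\phi_r$; injectivity of $\pi$ is an algebraic argument starting from $(a')^{-1}a\in\;\conp(\alpha)\,\cap\parm(\alpha)\sub\lev(\alpha)$ and again using $\nub(\alpha)=\{e\}$; and the local-diffeomorphism property at an arbitrary $(a,b,c)$ is obtained by writing $\pi(aa',bb',c'c)=ab\,\mu(b^{-1}a'b,b',c')c$, i.e.\ by composing the local diffeomorphism $\mu=\pi|_{W^s_a\times W^c\times W^u_b}$ with translations and the conjugation $a'\mto b^{-1}a'b$ of $\conp(\alpha)$. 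You would need to replace the $\alpha^N$-device by arguments of this kind.
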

\begin{proof}
\emph{Openness of~$\Omega$:}
Note first that $\Omega:=\;\conp(\alpha)\lev(\alpha)\conm(\alpha)$
contains the open identity neighbourhood $B^\phi_r=W^s_a(r)W^c(r)W^u_b(r)$
encountered in the proof of Theorem~\ref{main},
since $W^s_a(r)\sub \;\conp(\alpha)$ by~(\ref{descent-2}), $W^u_b(r)\sub \;\conm(\alpha)$
by~\ref{goodd-4}
and $W^c(r)\sub \; \lev(\alpha)$ since $W^c(r)$ is compact and $\alpha$-stable.
Since $\lev(\alpha)$ normalizes $\conp(\alpha)$ and $\conm(\alpha)$,
both $P_\alpha:=\conp(\alpha)\lev(\alpha)$ and $P_\alpha^-:=\lev(\alpha)\conm(\alpha)$
are subgroups of~$G$.
For each $g\in P_\alpha$,
the left translation $\lambda_g\colon G\to G$, $x\mto gx$
is a homeomorphism which takes the identity-neighbourhood~$\Omega$ onto the $g$-neighbourhood
\[
g\Omega=gP_\alpha\conm(\alpha)=P_\alpha\conm(\alpha)=\Omega.
\]
If $h\in\Omega$, then $h=gk$ with $g\in P_\alpha$ and $k\in \;\conm(\alpha)$.
Now the right translation $\rho_k\colon G\to G$, $x\mto xk$ is a homeomorphism
which takes the $g$-neighbourhood~$\Omega$ onto the neighbourhood
\[
\Omega k = P_\alpha \conm(\alpha) k=\; P_\alpha\conm(\alpha) =\Omega
\]
of $x=gk$. Hence~$\Omega$ is a neighbourhood of each $x\in\Omega$
and thus~$\Omega$ is open.\\[2.3mm]
\emph{Lie subgroups.}
The open subset $B^\phi_r=W^s_a(r)\times W^c(r)\times W^u_b(r)$
of~$G$ has $W^s_a(r)$ as a submanifold.
Since $\conp(\alpha)\cap B^\phi_r=W^s_a(r)$ (see (\ref{also-Ws})) is a submanifold,
we deduce that $\conp(\alpha)$ is a Lie subgroup of~$G$
(cf.\ Lemma~\ref{spot-liesub})
which has $W^s_a(r)$ as an open submanifold;
thus
\[
L(\conp(\alpha))=T_e(W^s_a(r))=\cg_{<a}=\; \conp(L(\alpha)).
\]
Recall from Remark~\ref{the-balls-tdy} that~$B^\phi_r$ is tidy for~$\alpha$.
Using the proof of \cite[Proposition~19]{END} for the first equality, we have
\[
\lev(\alpha)\cap B^\phi_r=(B^\phi_r)_+\cap (B^\phi_r)_-=W^c(r),
\]
which is a submanifold of~$B^\phi_r$. Hence $\lev(\alpha)$ is a Lie subgroup of~$G$
which has $W^c(r)$ as an open submanifold, and thus
\[
L(\lev(\alpha))=\cg_1=\lev(L(\alpha)).
\]
Next, recall from \cite[Proposition~11\,(a)]{END} that
\[
\conm(\alpha)\cap B^\phi_r\sub (B^\phi_r)_+=W^c(r)W^u_b(r).
\]
Since $W^u_b(r)\sub \; \conm(\alpha)$, this entails that
\[
\conm(\alpha)\cap B^\phi_r = L_r W^u_b(r)
\]
with $N_r:=\; \conm(\alpha)\cap W^c(r)$.
Let $\bik(\alpha)$ be the bounded iterated kernel of~$\alpha$
and $\nub(\alpha)$ be the nub subgroup (see \cite{END} and \cite{BGT}).
Then $\bik(\alpha)\sub\nub(\alpha)$ and
since $\alpha$ has small tidy subgroups by Theorem~\ref{main},
we have $\nub(\alpha)=\{e\}$ (see \cite{END}).
If $x\in N_r$, then there exists
an $\alpha$-regressive trajectory $(x_{-n})_{n\in\N_0}$ such that $x=x_0$ and
\[
\lim_{n\to\infty} x_{-n}=e.
\]
On the other hand, since $x\in W^c(r)$ which is $\alpha$-stable,
there exists an $\alpha$-regressive trajectory $(y_{-n})_{n\in\N_0}$
in $W^c(r)$ with $y_0=x$.
Then $(x_{-n}y_{-n}^{-1})_{n\in\N_0}$ is an $\alpha$-regressive trajectory
such that $\{x_{-n}y_{-n}^{-1}\colon n\in\N_0\}$ is relatively compact.
Thus $x_{-n}y_{-n}^{-1}\in\; \parm(\alpha)$ for each $n\in\N_0$
and $\alpha^n(x_{-n}y_{-n}^{-1})=e$, whence $x_{-n}y_{-n}^{-1}
\in\bik(\alpha)\sub\nub(\alpha)=\{e\}$. Hence $(x_{-n})_{n\in\N_0}=(y_{-n})_{n\in\N_0}$
is an $\alpha|_{W^c(r)}$-regressive trajectory which tends to~$e$ as $n\to\infty$.
As $\alpha|_{W^c(r)}$ is a distal automorphism of~$W^x(r)$
(cf.\ Proposition~\ref{charviaeigen}), the latter is only possible if $x=e$.
Thus $N_r=\{e\}$ and hence
\[
\conm(\alpha)\cap B^\phi_r =W^u_b(r),
\]
which is a submanfold of~$\Omega$. Hence $\conm(\alpha)$
is a Lie subgroup with Lie algebra $\cg_{>b}=\; \conm(L(\alpha))$.\\[2.3mm]
\emph{$\pi$ is injective.}
Let $a,a'\in\; \conp(\alpha)$, $b,b'\in \lev(\alpha)$, and $c,c'\in\;\conm(\alpha)$
such that $abc=a'b'c'$.
Then
\[
x:=(a')^{-1}a=b'c'c^{-1}b^{-1}\in\;\conp(\alpha)\, \cap\parm(\alpha)\sub\lev(\alpha).
\]
There exists $n\in\N_0$ such that $\alpha^n(x)\in B^\phi_r$.
Since $x\in\lev(\alpha)$, also $\alpha^n(x)\in\lev(\alpha)$ and thus
\[
\alpha^n(x)\in (B^\phi_r)_+\cap B^\phi_r)_-=W^c(r).
\]
Let $y\in W^c(r)$ such that $\alpha^n(y)=\alpha^n(x)$.
Since $W^c(r)\sub\lev(\alpha)$,
we then have $(a')^{-1}ay^{-1}\in\bik(\alpha)\sub\nub(\alpha)=\{e\}$,
whence $(a')^{-1}a=y\in\lev(\alpha)\,\cap \conm(\alpha)$.
Let $(z_{-n})_{n\in\N_0}$ be an $\alpha$-regressive trajectory
with $z_0=(a')^{-1}a$, such that $z_{-n}\to e$ as $n\to\infty$.
For each $t\in\;]0,r]$,
we have $z_{-n}\in B^\phi_t$ for some $n\in\N_0$,
entailing that
\[
z_{-n}\in B^\phi_t\cap\lev(\alpha)=(B^\phi_t)_+\cap (B^\phi_t)_-=W^c(t)
\]
and thus $(a')^{-1}a=\alpha^n(z_{-n})\in W^c(t)$.
Therefore
\[
(a')^{-1}ay^{-1}\in\bigcap_{t\in]0,r]}B^\phi_t=\{e\},
\]
whence $(a')^{-1}a=e$ and hence $a=a'$.
Now $bc=b'c'$ entails that
\[
(b')^{-1}b=c'c^{-1}\in\lev(\alpha)\, \cap\conm(\alpha).
\]
Also this group element has an $\alpha$-regressive trajectory which
converges to~$e$ and hence enters $B^\phi_t$ for each $t\in \;]0,r]$,
entailing that $(b')^{-1}b\in (B^\phi_t)_+\cap (B^\phi_t)_-=W^c(t)$
for all $t\in\;]0,r]$ and thus $b=b'$. Hence also $c=c'$.\\[2.3mm]
\emph{$\pi$ is a diffeomorphism.}
Since $\psi$ is an analytic diffeomorphsm,
also
\[
\mu:=\pi|_{W^s_a\times W^c\times W^u_b}\colon W^s_a\times W^c\times W^u_b\to B^\phi_R
\]
is an analytic diffeomorphism.
For $(a,b,c)\in \;\conp(\alpha)\times\lev(\alpha)\times\conm(\alpha)=:Y$,
let us show that~$\pi$ is a local diffeomorphism at~$(a,b,c)$.
It suffices to prove that the map
\[
h\colon Y\to \Omega,\quad (a',b',c')\mto\pi(aa',bb',c'c)
\]
is a local diffeomorphism at $(e,e,e)$.
Since $\lev(\alpha)$ normalizes $\conp(\alpha)$
and $\conp(\alpha)$ is a submanifold of~$G$, the map
\[
\beta\colon \conp(\alpha)\to\; \conp(\alpha),\quad a'\mto b^{-1}a'b
\]
is an analytic diffeomorphism. Let $Q\sub \;\conp(\alpha)$ be an open identity neighbourhood
such that $\beta(Q)\sub W^s_a$.
Then the formula
\[
h(a',b',c')=ab \,\mu(\beta(a'),b',c')c
\]
shows that $h$ is a local diffeomorphism at $(e,e,e)$.

To prove~(c), note that $\gamma:=\alpha|_{\lev(\alpha)}$ and $\delta:=\alpha|_{\conm(\alpha)}$
are local diffeomorphsms
at~$e$ (by the Inverse Function Theorem), since
$L(\gamma)=L(\alpha)|_{\cg_1}$ and $L(\delta)=L(\alpha)|_{\cg_{>1}}$
are automorphisms of the tangent spaces $L(\lev(\alpha))=\cg_1$
and $L(\conm(\alpha))=\cg_{>1}$, respectively, at~$e$.
Since~$\gamma$ and~$\delta$ are, moreover,
bijective analytic endomorphisms,
they are analytic automorphisms.
\end{proof}
\begin{rem}
(a) Note that also the groups $P_\alpha$ and $P_\alpha^-$
encountered in the preceding proof are Lie subgroups
since~$\pi$ is an analytic diffeomorphism.\\[2mm]
(b) We mention that $P_\alpha=\;\parp(\alpha)$ and $P_\alpha^-=\; \parm(\alpha)$
(see \cite[Lemma~12.1 (d) and (f)]{BGT}).\\[2mm]
(c) Since $\pi$ is an analytic diffeomorphism, we see that the ``big cell'' $\Omega$ can be
foliated into right translates of $\conp(\alpha)$ parametrized by $\parm(\alpha)$, or alternatively into right translates of~$\parp(\alpha)$, parametrized by $\conm(\alpha)$.
Likewise, we can foliate~$\Omega$ into left translates of $\conm(\alpha)$ parametrized by $\parp(\alpha)$,
or into left translates of $\parm(\alpha)$ parametrized by $\conp(\alpha)$.
\end{rem}
\begin{numba}\label{def-imme}
Consider an analytic map $f\colon M\to N$
between analytic manifolds over a totally disconnected local field~$\K$.
Recall from~\cite[Part I, Chapter~III]{Ser}
that~$f$ is called an \emph{immersion}
if $f$ locally looks like a linear injection
around each point, in suitable charts
(or equivalently, if $T_p(f)$ is injective for all $p\in M$).
If~$G$ is a Lie group over a totally disconnected local field
and $H$ a subgroup of~$G$, endowed with an analytic manifold structure
turning it into a Lie group and making the inclusion map $H\to G$ an immerson,
then~$H$ is called an \emph{immersed Lie subgroup}
of~$G$.
\end{numba}
\begin{rem}\label{rem-nonclo}
(a) If $\alpha$ is an analytic \emph{automorphism}
of a Lie group~$G$ over a totally disconnected local field~$\K$
and $\conp(\alpha)$ is not closed,
then it is still possble to turn $\conp(\alpha)$
and $\conm(\alpha)$ into \emph{immersed}
Lie subgroups of~$G$ modelled on $\conp(L(\alpha))$
and $\conm(L(\alpha))$, respectively,
such that $\alpha|_{\conp(\alpha)}$
and $\alpha^{-1}|_{\conm(\alpha)}$
are contractive analytic automorphisms
of these Lie groups (see \cite[Proposition~6.3\,(b)]{Fin}).\\[2.3mm]
(b) After this research was completed, it was shown in \cite{CLO}
that the `big cell'
\[
\Omega:=\,\conp(\alpha)\lev(\alpha)\conm(\alpha)
\]
is
open in~$G$ for each endomorphism~$\alpha$ of a totally disconnected locally compact group~$G$.
If~$G$ is a Lie group over a totally disconnected local field
and $\alpha\colon G\to G$ an analytic endomorphism,
then $\conp(\alpha)$, $\lev(\alpha)$ and $\conm(\alpha)$
can be turned into immersed Lie subgroups $\conp\hspace*{-.2mm}{\!}^*(\alpha)$,
$\lev{\!}^*(\alpha)$ and $\conm\hspace*{-.2mm}{\!}^*(\alpha)$ of $G$
modelled on $\conp(L(\alpha))$, $\lev(L(\alpha))$ and $\conm(\alpha)$,
respectively, such that~$\alpha$ induces analytic endomorphisms of the
immersed Lie subgroups and the product map
\[
\conp^{\!*}(\alpha)\times \lev^{\!*}(\alpha)\, \times \conm^{\!*}(\alpha)\to \Omega,\;\; (a,b,c)\mto abc
\]
is surjective and \'{e}tale (i.e., a local diffeomorphism at each point), see~\cite{CLO}.
\end{rem}
\subsection*{{\normalsize Closedness of contraction groups}}
We now mention a characterization and describe a criterion
for closedness of contraction groups of endomorphisms.
The next lemma is covered by \cite[Theorem D and F]{BGT};
for the case of automorphisms, see already \cite[Theorem 3.32]{BaW}
(if $G$ is metrizable).
\begin{la}\label{tidy-via-levi}
Let $\alpha$ be an endomorphism of a totally disconnected
locally compact group~$G$. Then the contraction group
$\conp(\alpha)$ is closed in~$G$ if and only if $\conp(\alpha)\cap \lev(\alpha)=\{e\}$.\,\Punkt
\end{la}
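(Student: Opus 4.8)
The plan is to treat the two implications separately: the direction ``$\conp(\alpha)$ closed $\Rightarrow\conp(\alpha)\cap\lev(\alpha)=\{e\}$'' I would prove directly and in full, while the converse I would obtain from Willis' structure theory of tidy subgroups, and I expect that converse to be the real work.

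For the forward implication, I would first note that $\conp(\alpha)\subseteq\parp(\alpha)$, so that $\conp(\alpha)\cap\lev(\alpha)=\conp(\alpha)\cap\parm(\alpha)$: an element $x$ here satisfies $\alpha^n(x)\to e$ and also admits a bounded $\alpha$-regressive trajectory $(x_{-n})_{n\in\N_0}$ with $x_0=x$. Writing $x_n:=\alpha^n(x)$ and using $\alpha^n(x_{-n})=x_0$, one checks $\alpha^m(x_{-n})=\alpha^{m-n}(x)\to e$ as $m\to\infty$, so the \emph{whole} two-sided orbit $(x_n)_{n\in\Z}$ lies in $\conp(\alpha)$ and is relatively compact; hence its closure $K$ is a compact subset of the t.d.l.c.\ group $H:=\conp(\alpha)$ — this is the only place closedness is used, to make $H$ locally compact. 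Now $\beta:=\alpha|_H$ is a contractive endomorphism of $H$, i.e.\ $\beta^n(y)\to e$ for every $y\in H$. I would then invoke the standard fact, provable by a Baire-category argument inside a fixed compact open subgroup of $H$, that such a $\beta$ contracts compact sets uniformly into any prescribed compact open subgroup: for each compact open $W\leq H$ there is $N$ with $\beta^n(K)\subseteq W$ for all $n\geq N$. Covering $K$ by finitely many cosets of a compact open subgroup and applying this to $x_0=\beta^n(x_{-n})$ with $x_{-n}\in K$ and $n$ large, one obtains $x_0\in W$ for every such $W$, hence $x=x_0=e$.

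For the converse I would argue contrapositively: assuming $\conp(\alpha)$ is not closed, I must exhibit a nontrivial element of $\conp(\alpha)\cap\lev(\alpha)=\conp(\alpha)\cap\parm(\alpha)$. Here I would fall back on the structure theory: by the tidying procedure of~\cite{END} there is a tidy subgroup $U$, one has $\conp(\alpha)\subseteq U_{--}$ with $U_{--}$ closed, and since in a locally compact space a set is closed iff its intersection with every compact set is closed, non-closedness of $\conp(\alpha)$ is witnessed already inside the compact group $U_-$; the obstruction is encoded by the nub $\nub(\alpha)$, closedness of $\conp(\alpha)$ being equivalent to $\nub(\alpha)=\{e\}$ (for metrizable automorphisms \cite[Theorem~3.32]{BaW}, in general \cite[Theorem~D]{BGT}). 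The step I expect to be the main obstacle is passing from a nontrivial nub — which lies in $\overline{\conp(\alpha)}\cap\lev(\alpha)$ but a priori not in $\conp(\alpha)$ itself — to an \emph{exact} nontrivial element of $\conp(\alpha)\cap\parm(\alpha)$, rather than merely an approximate one; this is precisely what the finer tidying machinery of~\cite{END}, and its endomorphism refinements in~\cite{BGT}, are designed to handle. Since in the present paper the converse is only ever applied through its conclusion ``$\conp(\alpha)$ is closed'', a legitimate alternative for this direction is to cite \cite[Theorems~D and~F]{BGT} (and \cite[Theorem~3.32]{BaW} for the classical automorphism case).
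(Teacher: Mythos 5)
Your proposal is correct, but note that the paper itself offers no proof of this lemma at all: the statement ends in a box, and the sentence preceding it simply declares the lemma ``covered by \cite[Theorems D and F]{BGT}'' (with \cite[Theorem~3.32]{BaW} for metrizable automorphisms). So for the converse — which you correctly identify as the deep direction — you land exactly where the paper does, namely deferring to the endomorphism structure theory; your remark that the real obstacle is upgrading a nontrivial nub (which sits in $\overline{\conp(\alpha)}\cap\lev(\alpha)$) to a genuine nontrivial element of $\conp(\alpha)\cap\lev(\alpha)$ is accurate, and that upgrade is precisely what \cite[Theorem F]{BGT} supplies. What your write-up adds beyond the paper is a complete, self-contained proof of the forward implication, and it does go through: every term of the two-sided orbit of $x\in\;\conp(\alpha)\,\cap\parm(\alpha)$ lies in $\conp(\alpha)$ (since $\alpha^m(x_{-n})=\alpha^{m-n}(x)\to e$), its closure $K$ is compact, and closedness is used exactly once, to make $H:=\;\conp(\alpha)$ a totally disconnected locally compact (hence Baire) group on which $\alpha$ restricts to a contractive endomorphism. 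The uniform-contraction-on-compacta fact you invoke is standard for contractive automorphisms (Siebert), and you should convince yourself it survives the passage to endomorphisms; it does: the sets $C_m:=H\cap\bigcap_{n\geq m}\alpha^{-n}(W)$ form an ascending sequence of closed subgroups of~$H$ with union~$H$, Baire forces some $C_{m_0}$ to be open, and covering~$K$ by finitely many cosets of $C_{m_0}$ yields an~$N$ with $\alpha^n(K)\sub W$ for $n\geq N$; then $x=\alpha^n(x_{-n})\in W$ for every compact open $W\leq H$, whence $x=e$. In short, your route buys a self-contained half of the lemma that the paper leaves entirely to the literature, at the cost of one auxiliary Baire-category lemma.
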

\begin{la}\label{crit-clo}
Let $\phi\colon G\to H$ be an injective,
continuous homomorphism between totally disconnected,
locally compact groups
and $\alpha\colon G\to G$ as well as
$\beta\colon H\to H$ be endomorphisms
such that $\beta\circ \phi=\phi\circ \alpha$.
If $\conp(\beta)$ is closed, then also $\conp(\alpha)$ is closed.
\end{la}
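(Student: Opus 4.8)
The plan is to reduce the statement to the Levi-factor criterion recorded in Lemma~\ref{tidy-via-levi}, which says that for an endomorphism $\gamma$ of a totally disconnected, locally compact group, the contraction group $\conp(\gamma)$ is closed if and only if $\conp(\gamma)\cap\lev(\gamma)=\{e\}$. Thus it suffices to show that $\conp(\alpha)\cap\lev(\alpha)=\{e\}$, using the hypothesis (via Lemma~\ref{tidy-via-levi} applied to $\beta$) that $\conp(\beta)\cap\lev(\beta)=\{e\}$.

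The first step is to observe that the intertwining relation $\beta\circ\phi=\phi\circ\alpha$, hence $\beta^n\circ\phi=\phi\circ\alpha^n$ for all $n\in\N_0$, forces $\phi$ to carry the relevant subgroups of $G$ into their counterparts in $H$. If $x\in\conp(\alpha)$, then $\alpha^n(x)\to e$, so by continuity of $\phi$ we get $\beta^n(\phi(x))=\phi(\alpha^n(x))\to\phi(e)=e$, i.e.\ $\phi(x)\in\conp(\beta)$. For the Levi factor I would use the description of $\lev(\alpha)$ as the set of elements admitting a bounded two-sided $\alpha$-orbit (see the Introduction): given $x\in\lev(\alpha)$ with a two-sided $\alpha$-orbit $(x_n)_{n\in\Z}$ such that $\{x_n:n\in\Z\}$ is relatively compact in $G$, the sequence $(\phi(x_n))_{n\in\Z}$ is a two-sided $\beta$-orbit for $\phi(x)$, since $\beta(\phi(x_n))=\phi(\alpha(x_n))=\phi(x_{n+1})$ and $\phi(x_0)=\phi(x)$; moreover $\{\phi(x_n):n\in\Z\}$ is contained in the compact set $\phi\bigl(\overline{\{x_n:n\in\Z\}}\bigr)$ and hence relatively compact, whence $\phi(x)\in\lev(\beta)$. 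In particular $\phi(\conp(\alpha)\cap\lev(\alpha))\subseteq\conp(\beta)\cap\lev(\beta)$.

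Combining the two steps finishes the argument: since $\conp(\beta)$ is closed, Lemma~\ref{tidy-via-levi} gives $\conp(\beta)\cap\lev(\beta)=\{e\}$, so $\phi(\conp(\alpha)\cap\lev(\alpha))=\{e\}$, and injectivity of $\phi$ yields $\conp(\alpha)\cap\lev(\alpha)=\{e\}$. Applying Lemma~\ref{tidy-via-levi} once more, this time to $\alpha$, shows that $\conp(\alpha)$ is closed in $G$, as desired.

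There is essentially no hard step here; the only point requiring a little care is that one really wants the relatively compact witnessing set for membership in $\lev$ to be a \emph{single} two-sided orbit, so that it is transported by $\phi$ to a two-sided $\beta$-orbit with relatively compact image — and this is exactly the form in which $\lev(\alpha)=\parp(\alpha)\cap\parm(\alpha)$ is defined. Note that no hypothesis on the image of $\phi$ (such as closedness) is needed, only continuity and injectivity.
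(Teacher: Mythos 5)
Your proposal is correct and follows exactly the paper's argument: reduce via Lemma~\ref{tidy-via-levi} to showing $\conp(\alpha)\cap\lev(\alpha)=\{e\}$, verify that $\phi$ carries $\conp(\alpha)\cap\lev(\alpha)$ into $\conp(\beta)\cap\lev(\beta)=\{e\}$, and conclude by injectivity. The paper states this tersely; you have merely spelled out the (routine) verification that $\phi$ maps contraction elements and bounded two-sided orbits to their counterparts.
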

\begin{proof}
Using Lemma~\ref{tidy-via-levi},
we get
\[
\phi(\conp(\alpha)\cap \lev(\alpha))\sub\;  \conp(\beta)\cap\lev(\beta)=\{e\}.
\]
Thus $\conp(\alpha)\cap\lev(\alpha)=\{e\}$, whence $\conp(\alpha)$
is closed (by Lemma~\ref{tidy-via-levi}).
\end{proof}
\begin{prop}\label{clolina}
For every totally disconnected local field~$\K$,
every inner automorphism of a closed subgroup $G\sub\GL_n(\K)$
has a closed contraction group.
\end{prop}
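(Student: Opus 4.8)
The plan is to reduce the assertion to the case $G=\GL_n(\K)$ and then to exploit that conjugation by~$g$ is the restriction of a \emph{linear} automorphism of the matrix algebra $M_n(\K)$, so that Theorem~\ref{thm-lincase} applies directly.

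First I would dispose of the reduction. Let $G\sub\GL_n(\K)$ be a closed subgroup; then~$G$ is a totally disconnected, locally compact group, being a closed subgroup of such a group. Fix $g\in G$, and write $I_g^G$ and $I_g$ for the inner automorphisms $x\mapsto gxg^{-1}$ of~$G$ and of $\GL_n(\K)$, respectively. The inclusion $\phi\colon G\to\GL_n(\K)$ is an injective continuous homomorphism with $I_g\circ\phi=\phi\circ I_g^G$. Hence, by Lemma~\ref{crit-clo}, it suffices to show that $\conp(I_g)$ is closed in $\GL_n(\K)$.

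Next I would pass to the ambient vector space. Conjugation by~$g$ extends to the $\K$-linear automorphism
\[
c_g\colon M_n(\K)\to M_n(\K),\qquad A\mapsto gAg^{-1},
\]
of the finite-dimensional $\K$-vector space $M_n(\K)$, and $I_g=c_g|_{\GL_n(\K)}$. By Theorem~\ref{thm-lincase}\,(b), $\conp(c_g)$ equals the sum $F:=\bigoplus_{\rho<1}(M_n(\K))_\rho$ of the characteristic subspaces of $c_g$ for characteristic values $\rho<1$; in particular, $F$ is a vector subspace of $M_n(\K)$ and hence closed. It remains to identify $\conp(I_g)$ with an affine translate of~$F$, intersected with $\GL_n(\K)$. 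Since $c_g$ fixes the identity matrix~$I$, writing an element of $\GL_n(\K)$ as $I+y$ with $y\in M_n(\K)$ gives
\[
I_g^{\,k}(I+y)=g^k(I+y)g^{-k}=I+c_g^{\,k}(y)\qquad(k\in\N_0).
\]
As $\GL_n(\K)$ is open in $M_n(\K)\cong\K^{n^2}$, a sequence in $\GL_n(\K)$ converges to $e=I$ in $\GL_n(\K)$ if and only if it converges to~$I$ in $M_n(\K)$; therefore $I+y\in\conp(I_g)$ if and only if $c_g^{\,k}(y)\to 0$, i.e.\ $y\in\conp(c_g)=F$. Consequently $\conp(I_g)=(I+F)\cap\GL_n(\K)$, which is the intersection of $\GL_n(\K)$ with the closed affine subspace $I+F$ of $M_n(\K)$, hence closed in $\GL_n(\K)$. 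Together with the first step, this proves the proposition.

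Both the reduction and the computation are routine; the one point deserving a little care is the last identification, where one must check that the contraction group formed inside the topological group $\GL_n(\K)$ coincides with the intersection of $\GL_n(\K)$ with the affine subspace $I+\conp(c_g)$ of $M_n(\K)$. This hinges precisely on $\GL_n(\K)$ being open in $M_n(\K)$, so that convergence to the identity inside the group and inside the algebra agree. I do not anticipate any genuine obstacle beyond this.
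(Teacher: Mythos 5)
Your argument is correct and is essentially the paper's own proof: you conjugate $(\GL_n(\K),I_g)$ to the linear system $A\mapsto gAg^{-1}$ on $M_n(\K)$ via translation by the identity matrix, use that the linear contraction group is a (closed) sum of characteristic subspaces from Section~\ref{sec-vector}, and note that openness of $\GL_n(\K)$ in $M_n(\K)$ makes the two notions of convergence to the identity agree. The only cosmetic difference is that you reduce from a closed subgroup $G$ to $\GL_n(\K)$ via Lemma~\ref{crit-clo}, where one could also argue directly that $\conp(I_g|_G)=\conp(I_g)\cap G$ is closed in~$G$; both reductions are fine.
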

\begin{proof}
It suffices to show that each inner automorphism of $\GL_n(\K)$
has a closed contraction group. Let $e\in\GL_n(\K)$ be the identity matrix.
Given $g\in\GL_n(\K)$,
consider the inner automorphism
\[
I_g \colon \GL_n(\K)\to\GL_n(\K),\quad h\mto ghg^{-1}
\]
and the linear endomorphism
\[
\alpha\colon M_n(\K)\to M_n(\K),\quad A\mto gAg^{-1}.
\]
We know from Section~\ref{sec-vector} that $\conp(\alpha)$ is closed.
Then $V:=\GL_n(\K)-e$ is an $\alpha$-stable
$0$-neighbourhood in~$M_n(\K)$
and
\[
\phi\colon V \to \GL_n(\K),\quad A\mto A+e
\]
is a homeomorphism. Now $I_g \circ \phi=\phi\circ\alpha|_V$
as $I_g(\phi(A))=g(A+e)g^{-1}=gAg^{-1}+e=\phi(\alpha(A))$, i.e.,
$\phi$ is a topological conjugacy between the dynamical systems $(V,\alpha|_V)$
and $(\GL_n(\K),I_g)$.
Hence
\[
\conp(I_g)=\phi(\conp(\alpha)\cap V),
\]
which is closed in $\GL_n(\K)$.]
\end{proof}
Combining Lemma~\ref{crit-clo} and Proposition~\ref{clolina},
we get:
\begin{cor}\label{lin-then-clo}
If a totally disconnected, locally compact group~$G$
admits a faithful continuous representation
$\pi\colon G\to\GL_n(\K)$
over some totally disconnected local field~$\K$, then every inner automorphism
of~$G$ has a closed contraction group.\,\Punkt
\end{cor}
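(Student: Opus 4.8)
The plan is to deduce the statement immediately from Lemma~\ref{crit-clo} and Proposition~\ref{clolina} by transporting the inner automorphism along the faithful representation. Fix $g\in G$; we must show that $\conp(I_g)$ is closed in~$G$. Put $H:=\GL_n(\K)$ and $h:=\pi(g)\in H$, and let $\beta:=I_h\colon H\to H$ be the inner automorphism of~$H$ determined by~$h$. Since $\pi$ is a homomorphism, $\pi(gxg^{-1})=\pi(g)\pi(x)\pi(g)^{-1}$ for all $x\in G$, i.e.
\[
\beta\circ\pi=\pi\circ I_g.
\]

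Next I would check that the hypotheses of Lemma~\ref{crit-clo} are met. The group $H=\GL_n(\K)=\dt^{-1}(\K^\times)$ is an open subset of $M_n(\K)\cong\K^{n^2}$, hence locally compact, and it is totally disconnected as a subspace of the totally disconnected space $\K^{n^2}$; thus $H$ is a totally disconnected, locally compact group, as is~$G$ by hypothesis, and $\pi\colon G\to H$ is an injective continuous homomorphism. It remains to see that $\conp(\beta)$ is closed. But $\beta$ is an inner automorphism of~$H$, and $H$ is (trivially) a closed subgroup of $\GL_n(\K)$, so Proposition~\ref{clolina} applies and gives that $\conp(\beta)$ is closed in~$H$.

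Finally I would invoke Lemma~\ref{crit-clo} with $\phi:=\pi$, $\alpha:=I_g$, and $\beta$ as above: since $\beta\circ\phi=\phi\circ\alpha$ and $\conp(\beta)$ is closed, the lemma yields that $\conp(\alpha)=\conp(I_g)$ is closed in~$G$. As $g\in G$ was arbitrary, every inner automorphism of~$G$ has a closed contraction group. There is essentially no real obstacle here; the only point requiring a moment's attention is that Proposition~\ref{clolina} is being applied to the group $\GL_n(\K)$ itself (i.e.\ with $G=H=\GL_n(\K)$ in the notation there), which is exactly the reduction carried out at the start of that proposition's proof, so no additional work is needed.
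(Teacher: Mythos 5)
Your proposal is correct and is exactly the paper's intended argument: the paper states the corollary as an immediate combination of Lemma~\ref{crit-clo} and Proposition~\ref{clolina}, and your write-up just makes that combination explicit (with the right intertwining relation $I_{\pi(g)}\circ\pi=\pi\circ I_g$ and the observation that Proposition~\ref{clolina} applies to $\GL_n(\K)$ itself). No gaps.
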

\begin{rem}
In particular, every group~$G$ of $\K$-rational points
of a linear algebraic group over a totally disconnected local field~$\K$
is a closed subgroup of some $\GL_n(\K)$,
whence $\conp(\alpha)$ is closed in~$G$ by Example~\ref{clolina}
and so
\begin{equation}\label{seepadic}
s_G(g)=s_{L(G)}(\Ad_g)=\prod_{|\lambda_j|_\K\geq 1}|\lambda_j|_\K
\end{equation}
in terms of the eigenvalues $\lambda_1,\ldots,\lambda_m$
of $\Ad_g$ in an algebraic closure~$\wb{\K}$,
repeated according to their algebraic multiplicities
(by Theorem~\ref{main} and Theorem~\ref{thm-lincase}).
For Zariski-connected reductive $\K$-groups,
this was already shown in \cite[Proposition~3.23]{BaW}.
See also \cite[Remark~9.7]{Lec}.
\end{rem}
The following result was announced in~\cite{DIR}
(for automorphisms).
\begin{prop}
Let $\alpha$ be an analytic endomorphism of a $1$-dimensional Lie group~$G$
over a local field~$\K$, with Lie algebra $\cg:=L(G)$.
If
\[\conm(L(\alpha))=\cg,
\]
assume that $\bigcup_{n\in\N_0}\ker(\alpha^n)$
is discrete;\footnote{Which is, of course, automatic if~$\alpha$ is an automorphism.}
if $\conm(L(\alpha))\not=\cg$,
we do not impose further hypotheses.
Then $\conp\!(\alpha)$ is closed
in~$G$
and thus $s(\alpha)=s(L(\alpha))$.
\end{prop}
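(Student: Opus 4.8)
The plan is as follows. Since $\dim_\K\cg=1$, the linear map $L(\alpha)$ is multiplication by a scalar $c\in\K$, and $c$ is the unique eigenvalue of $L(\alpha)$ in $\wb{\K}$. By Theorem~\ref{main}, it suffices to prove that the contraction group $\conp(\alpha)$ is closed in~$G$; the equality $s(\alpha)=s(L(\alpha))$ is then immediate from that theorem. Note also that, by Theorem~\ref{thm-lincase}\,(b) applied to the linear endomorphism $L(\alpha)$, we have $\conm(L(\alpha))=\cg_{>1}$ with respect to $L(\alpha)$, and this equals~$\cg$ precisely when $|c|_\K>1$; hence the hypothesis on $\bigcup_{n\in\N_0}\ker(\alpha^n)$ is at our disposal exactly in the case $|c|_\K>1$. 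I would therefore distinguish the three cases $|c|_\K<1$, $|c|_\K=1$ and $|c|_\K>1$, using in each the local invariant manifolds around the fixed point~$e$ furnished by Proposition~\ref{inv-mfd-thm} together with Lemmas~\ref{better-2} and~\ref{better-2-3}.

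\emph{Case $|c|_\K<1$.} First I would pick $a\in\;]0,1]$ with $a>|c|_\K$, so that $L(\alpha)$ is $a$-hyperbolic and $\cg=\cg_{<a}$ with respect to $L(\alpha)$. A local $a$-stable manifold $W^s_a$ for~$\alpha$ around~$e$ then has $T_e(W^s_a)=\cg_{<a}=\cg$ and hence is open in~$G$; after shrinking~$R$ (Lemma~\ref{better-2}) one may assume $\lim_{n\to\infty}\alpha^n(x)=e$ for all $x\in W^s_a$, so $W^s_a\sub\;\conp(\alpha)$. Thus $\conp(\alpha)$ is an open subgroup, hence closed. (This is the argument for Proposition~\ref{charviaeigen}\,(a); it applies verbatim to endomorphisms and covers the case $c=0$, where $L(\alpha)$ is not invertible and a naive contraction estimate would fail.)

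\emph{Case $|c|_\K=1$.} Here $\cg=\cg_1$, so any centre manifold $W^c$ for~$\alpha$ around~$e$ is open in~$G$; after shrinking~$R$ one may assume (Lemma~\ref{better-2-3} and~\ref{c-stable}) that $\alpha|_{W^c(t)}\colon W^c(t)\to W^c(t)$ is an analytic automorphism for all $t\in\;]0,R]$, while $\alpha(W^c(t))=W^c(t)$ since $W^c(t)$ is again a centre manifold. I would then run the argument of Remark~\ref{the-balls-tdy}: for $e\neq x\in W^c(t)$ choose $\tau\in\;]0,t]$ with $x\notin W^c(\tau)$ (possible as $\bigcap_{0<\tau\le t}W^c(\tau)=\{e\}$); since $\alpha|_{W^c(t)}$ is a bijection carrying $W^c(\tau)$ onto $W^c(\tau)$, it carries $W^c(t)\setminus W^c(\tau)$ onto itself, so $\alpha^n(x)\notin W^c(\tau)$ for all $n\in\N_0$, and as $W^c(\tau)$ is an identity neighbourhood this forces $\alpha^n(x)\not\to e$. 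Hence $\conp(\alpha)\cap W^c(t)=\{e\}$, and since $W^c(t)$ is an identity neighbourhood, $\conp(\alpha)$ is discrete and therefore closed. (One reads off $\conp(\alpha)=\bigcup_{n\in\N_0}\ker(\alpha^n)$, which is thus automatically discrete, consistent with no extra hypothesis being imposed in this case.)

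\emph{Case $|c|_\K>1$.} This is where the hypothesis enters and is the step I expect to require the most care. Choose a chart $\phi\colon U\to V$ of~$G$ around~$e$ with $\phi(e)=0$ and $d\phi|_\cg=\id_\cg$, and transport~$\alpha$ to an analytic map $h$, defined near~$0$ in~$\cg$, with $h(0)=0$ and $h'(0)=L(\alpha)=c\cdot\id_\cg$. The Ultrametric Inverse Function Theorem (Lemma~\ref{invfct}) then gives $\|h(y)\|=|c|_\K\,\|y\|$ for $y$ in a small ball $B^\cg_t(0)$; iterating (as long as all forward iterates remain in $B^\cg_t(0)$) yields $\|h^n(y)\|=|c|_\K^n\|y\|$, so since $|c|_\K>1$ the only $y\in B^\cg_t(0)$ whose whole forward orbit stays in $B^\cg_t(0)$ is $y=0$. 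Translating back: there is an identity neighbourhood $B:=\phi^{-1}(B^\cg_t(0))$ in~$G$ with $\{x\in B\colon\alpha^n(x)\in B\ \text{for all}\ n\in\N_0\}=\{e\}$. For $x\in\conp(\alpha)$ one has $\alpha^n(x)\in B$ for all $n$ beyond some~$m$, whence $\alpha^m(x)=e$; this shows $\conp(\alpha)=\bigcup_{n\in\N_0}\ker(\alpha^n)$, which is discrete by hypothesis and hence closed. In all three cases $\conp(\alpha)$ is closed, so Theorem~\ref{main} (the implication (d)$\impl$(a)) gives $s(\alpha)=s(L(\alpha))$. The main obstacle I anticipate is precisely this last case --- making the ``expansivity at the fixed point'' estimate watertight so that $\conp(\alpha)$ really coincides with $\bigcup_{n\in\N_0}\ker(\alpha^n)$ and the discreteness hypothesis can be invoked; the remaining points (that a submanifold with full-dimensional tangent space at~$e$ is open in~$G$, and that a discrete subgroup of the Hausdorff group~$G$ is closed) are routine.
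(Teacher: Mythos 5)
Your proof is correct and follows essentially the same route as the paper: the same three-way case distinction (which here corresponds exactly to which of $\conp(L(\alpha))$, $\lev(L(\alpha))$, $\conm(L(\alpha))$ equals~$\cg$), with the local stable manifold handling the contractive case, distality of $\alpha$ on a compact open centre manifold handling the isometric case, and the expansion estimate at~$e$ reducing $\conp(\alpha)$ to $\bigcup_{n}\ker(\alpha^n)$ in the unstable case. The only difference is cosmetic: where the paper cites Proposition~\ref{charviaeigen}\,(c) and Lemma~\ref{better-3}, you reprove their content inline, which is fine.
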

\begin{proof}
Since $\cg$ is a $1$-dimensional
$\K$-vector space and
\[
\cg=\; \conp(L(\alpha))\oplus \lev(L(\alpha))\oplus \conm(L(\alpha)),
\]
we see that~$\cg$ coincides with one of the three summands.
Let $a$, $b$, $R$, $W^s_a$, $W^c$, and $W^u_b$ be as in~\ref{phimain}.

If $\conp(L(\alpha))=\cg$, then $W^s_a$ is a submanifold of~$G$
of full dimension~$1$ and hence open in~$G$.
By~\ref{has-descen}, we may assume that $W^s_a\sub \; \conp(\alpha)$,
after shrinking~$R$ if necessary, whence
the subgroup $\conp(\alpha)$ is open and hence closed in~$G$.

If $\lev(L(\alpha))=\cg$, then $W^c$ is open in~$G$
and we may assume that $W^c(t)$ is a compact open subgroup of~$G$
for all $t\in\;]0,R]$, after shrinking~$R$ if necessary.
The bijective analytic endomorphism
$\alpha|_{W^c}\colon W^c\to W^c$ is a local analytic diffeomorphism at~$e$
(by the Inverse Function Theorem) and hence an analytic automorphism of
the Lie group~$W^c$. By Proposition~\ref{charviaeigen}\,(c),
the automorphism $\alpha|_{W^c}$ is distal and hence
\[
\conp(\alpha)\cap W^c=\; \conp(\alpha|_{W^c})=\{e\}.
\]
Thus $\conp(\alpha)$ is discrete.

If $\conm(L(\alpha))=\cg$, then $W^u_b$ open
and we choose an open neighbourhood~$P$ of~$e$ in $W^u_b$
as in Lemma~\ref{better-3} (with $M:=G$, $f:=\alpha$, and $p:=e$).
Then $\conp(\alpha)=\bigcup_{n\in\N_0}\ker(\alpha^n)$, which is discrete
(and thus closed) by hypothesis.
In fact, if $x\in\; \conp(\alpha)$, then there exists $n_0\in\N_0$ such that
$\alpha^n(x)\in P$ for all $n\geq n_0$. Then $\alpha^{n_0}(x)=e$,
as we chose~$P$ in such a way that the $\alpha$-orbit of each $y\in P\setminus\{e\}$
leaves~$P$.

In each case, the final assertion follows from Theorem~\ref{main}.
\end{proof}
\begin{rem}
If $\F$ is a field of prime order, then
$G:=(\F[\hspace*{-.2mm}[X]\hspace*{-.2mm}],+)$ is a $1$-dimensional
Lie group over $\K=\F(\hspace*{-.3mm}(X)\hspace*{-.3mm})$ and
the left shift
\[
\sum_{n=0}^\infty a_nX^n\mto \sum_{n=0}^\infty a_{n+1} X^n
\]
is an analytic endomorphism of~$G$, as it coincides with the linear
(and hence analytic) map
\[
\beta\colon \K\to\K,\quad z\mto X^{-1}z
\]
on the open subgroup $X\F[\hspace*{-.2mm}[X]\hspace*{-.2mm}]$ of~$G$.
It is easy to see that $\conm(\alpha)=G$ and
\[
\conp(\alpha)=\bigcup_{n\in\N_0}\ker(\alpha^n)\\[-1mm]
\]
is the proper dense
subgroup of all finitely supported sequences (see \cite[Remark~9.5]{BGT}).
Since~$G$ is compact, $s(\alpha)=1$. As $L(\alpha)=\beta$ with scale~$p$ (by Theorem~\ref{thm-lincase}\,(c)), we have $s(L(\alpha))=p\not=s(\alpha)$.
\end{rem}
\subsection*{{\normalsize A non-closed contraction group}}
We now describe an analytic automorphism~$\alpha$
of a Lie group over a local field
of positive characteristic
such that $\conp(\alpha)$ is not closed.
The example is taken from~\cite{Lec}.
\begin{example}\label{non-lin1}
Let $\F$ be a finite field, with $p$ elements. Consider the set
$G:=\F^\Z$ of all functions $f\colon\Z\to \F$.
Then~$G$ is a compact topological group under addition, with the product topology.
The right shift
\[
\alpha\colon G\to G\,, \quad \alpha(f)(n):=f(n-1)
\]
is an automorphism of~$G$.
It is easy to check that $\conp(\alpha)$
is the set of all functions $f\in \F^\Z$
with support bounded below (i.e., there exists $n_0\in\Z$ such that
$f(n)=0$ for all $n<n_0$).
Thus $\conp(\alpha)$
is a dense, proper subgroup
of~$G$.\\[2.3mm]
Now $G$ can be considered as a $2$-dimensional Lie group
over $\K:=\F(\!(X)\!)$, using the bijection
$G\to \F[\hspace*{-.17mm}[X]\hspace*{-0.16mm}]\times
\F[\hspace*{-.17mm}[X]\hspace*{-.16mm}]$,
\[
f\mto \left(\sum_{n=1}^\infty f(-n)X^{n-1},\,
\sum_{n=0}^\infty f(n)X^n\right)
\]
as a global chart.
The automorphism of $\F[\hspace*{-.17mm}[X]\hspace*{-.16mm}]^2$
corresponding to~$\alpha$
coincides on the open $0$-neighbourhood
$X \F[\hspace*{-.17mm}[X]\hspace*{-.16mm}]\times
\F[\hspace*{-.17mm}[X]\hspace*{-.16mm}]$
with the linear map
\[
\beta\colon \K^2\to \K^2\,,\quad
\beta(v,w)=(X^{-1}v,Xw)\,.
\]
Hence $\alpha$ is an analytic
automorphism. Since $\conp(\alpha)$ is not closed,
$G$ cannot admit a faithful continuous
representation $G\to\GL_n(\K)$ for any $n\in\N$,
see Corollary~\ref{lin-then-clo}.
\end{example}
\subsection*{{\normalsize The scale on closed subgroups and quotients}}
For $\alpha$ an automorphism of a totally disconnected locally compact group~$G$, the scale of the restriction
$\alpha|_H$ to a closed $\alpha$-stable subgroup $H\sub G$
and the scale pf the induced automorphism
on the quotient group $G/H$ (for normal~$H$) were studied in~\cite{FUR};
some generalizations for endomorphisms were obtained in~\cite{BGT}
(compare also \cite{GBV}, if~$\alpha$ has small tidy subgroups).
The following proposition generalizes a corresponding
result for inner automorphisms of $p$-adic Lie groups
established in \cite[Corollary~3.8]{FOR}.
\begin{prop}\label{subquot}
Let $G$ be a Lie group over a local field,
$\alpha$ be an analytic endomorphism of~$G$ and $H\sub G$
be an $\alpha$-invariant Lie subgroup of~$G$.
Then the following holds:
\begin{itemize}
\item[\rm(a)]
If $\conp(\alpha)$ is closed, then $s_H(\alpha|_H)$ divides $s_G(\alpha)$.
\item[\rm(b)]
If $H$ is a normal subgroup, $\conp(\alpha)$ is closed and also the induced analytic endomorphism~$\wb{\alpha}$
of $G/H$ has a closed contraction group $\conp(\wb{\alpha})$,
then $s_G(\alpha)=s_H(\alpha|_H)s_{G/H}(\wb{\alpha})$.
\end{itemize}
\end{prop}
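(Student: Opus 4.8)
The plan is to reduce both assertions to the linear case treated in Section~\ref{sec-vector} by means of the main result, Theorem~\ref{main}, and then to quote the product formula for linear endomorphisms (Corollary~\ref{vsubquot}).

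First I would set up the Lie-theoretic correspondence. Being a Lie subgroup, $H$ is an embedded submanifold of~$G$ and hence locally closed; since a locally closed subgroup of a topological group is closed, $H$ is a closed subgroup of~$G$ carrying the induced topology, and $\ch:=L(H)$ is a vector subspace of $\cg:=L(G)$. Applying the functoriality of~$L$ to the inclusion $H\emb G$, which intertwines $\alpha|_H$ and~$\alpha$, one sees that $\ch$ is $L(\alpha)$-invariant and $L(\alpha|_H)=L(\alpha)|_\ch$. In case~(b), $H$ is in addition normal and closed, so $G/H$ is a Lie group over~$\K$ with $L(G/H)\isom\cg/\ch$, the quotient homomorphism $q\colon G\to G/H$ is an analytic submersion, and the continuous endomorphism $\wb\alpha$ of $G/H$ determined by $\wb\alpha\circ q=q\circ\alpha$ is analytic. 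Applying~$L$ to this last identity and using that $L(q)\colon\cg\to\cg/\ch$ is the canonical projection shows that $L(\wb\alpha)$ is the endomorphism $\wb{L(\alpha)}$ of $\cg/\ch$ induced by $L(\alpha)$.

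Next I would transfer the closedness hypotheses. Since $H$ carries the subspace topology, $\conp(\alpha|_H)=\conp(\alpha)\cap H$, which is closed in~$H$ because $\conp(\alpha)$ and~$H$ are closed in~$G$; in case~(b), $\conp(\wb\alpha)$ is closed by assumption. Thus Theorem~\ref{main} applies to $\alpha$, to $\alpha|_H$, and (in case~(b)) to $\wb\alpha$, yielding
\[
s_G(\alpha)=s_\cg(L(\alpha)),\qquad s_H(\alpha|_H)=s_\ch(L(\alpha)|_\ch),\qquad s_{G/H}(\wb\alpha)=s_{\cg/\ch}(\wb{L(\alpha)}).
\]
It then remains to compare the scales of these linear endomorphisms. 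Corollary~\ref{vsubquot}, applied to the $L(\alpha)$-invariant subspace $\ch\sub\cg$, gives $s_\cg(L(\alpha))=s_\ch(L(\alpha)|_\ch)\,s_{\cg/\ch}(\wb{L(\alpha)})$; inserting the three identities above proves~(b). For~(a) the hypothesis on the quotient is not needed: by Theorem~\ref{thm-lincase}\,(c) the scale of a linear endomorphism is the product of the absolute values of its eigenvalues of modulus $\geq 1$, counted with multiplicity, and the eigenvalue multiset of $L(\alpha)|_\ch$ is a sub-multiset of that of $L(\alpha)$ because $\ch$ is $L(\alpha)$-invariant; hence $s_\ch(L(\alpha)|_\ch)$ divides $s_\cg(L(\alpha))$, that is, $s_H(\alpha|_H)$ divides $s_G(\alpha)$.

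The substantive point lies entirely in the first step: one must make sure that a Lie subgroup in the sense of this paper is indeed closed and carries the subspace topology (so that $\conp(\alpha|_H)=\conp(\alpha)\cap H$ and Theorem~\ref{main} is applicable to~$\alpha|_H$), and that in case~(b) the quotient $G/H$ is genuinely a Lie group over~$\K$ with Lie algebra $\cg/\ch$ and with $L(\wb\alpha)=\wb{L(\alpha)}$. Once these functorial facts are secured, the argument is a short reduction to the linear results of Section~\ref{sec-vector}; no genuine analytic difficulty arises, since Theorem~\ref{main} already encapsulates the passage between the group and its Lie algebra.
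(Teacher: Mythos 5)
Your proposal is correct and is exactly the paper's argument: the paper's proof reads ``This is immediate from Theorem~\ref{main} and Corollary~\ref{vsubquot}'', i.e.\ reduce to the Lie algebra level via Theorem~\ref{main} and apply the linear product formula. Your write-up merely fills in the (correct) routine verifications that a Lie subgroup is closed with $\conp(\alpha|_H)=\conp(\alpha)\cap H$, and that $L(\wb{\alpha})$ is the induced endomorphism of $\cg/\ch$.
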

\begin{proof}
This is immediate from Theorem~\ref{main}
and Corollary~\ref{vsubquot}.
\end{proof}
\subsection*{{\normalsize When homomorphisms are subimmersions}}
Consider an analytic mapping $f\colon M\to N$
between analytic manifolds over a totally disconnected local field~$\K$.
Recall from~\cite[Part I, Chapter~III]{Ser}
that~$f$ is called a \emph{submersion}
if~$f$ locally looks like a linear projection around each point, in suitable charts.
If $f$ locally looks like $j\circ q$ where $q$ is a submersion and $j$ an immersion
(as in \ref{def-imme}),
then $f$ is called a \emph{subimmersion}.
If $\car(\K)=0$, then an analytic map is a subimmersion
if and only if
$T_x(f)$ has constant rank for $x$ in some neighbourhood of each point $p\in M$
(see \cite[Part II, Chapter III, \S10, Theorem in 4)]{Ser}).
As a consequence, every analytic homomorphism
between Lie groups over a totally disconnected local field of characteristic~$0$ is
a subimmersion.
Analytic homomorphisms between Lie groups
over local fields of positive characteristic need not be
subimmersions, as the following example shows.
\begin{example}
Let $\F$ be a finite field with~$p$ elements
and $\K:=\F(\hspace*{-.3mm}(X)\hspace*{-.3mm})$.
Since $\car(\K)=p$,
the Frobenius homomorphism
\[
\alpha \colon \K\to\K, z\mto z^p
\]
is an injective endomorphism of the field~$\K$
and an injective endomorphism of the additive topological
group $(\K,+)$.
If $\alpha$ was a subimmersion then $\alpha$, being injective,
would be an immersion which it is not as $\alpha'(z)=0$ for all $z\in\K$.
Thus~$\alpha$ is not a subimmersion.
Note that $\conp(\alpha)$ coincides with the subgroup
$X\F[\hspace*{-.3mm}[X]\hspace*{-.3mm}]$, which is open;
hence also $\alpha|_{\conp(\alpha)}$ is not a subimmersion.
\end{example}
It is not a coincidence that the endomorphism $\alpha$ in the preceding
example is pathological also on $\conp(\alpha)$: If an endomorphism
fails to be a subimmersion, then the trouble
must be caused by its restriction
to the contraction group:
\begin{cor}
Let $\alpha$ be an analytic endomorphism
of a Lie group~$G$ over a local field~$\K$ of positive
characteristic, with closed contraction group $\conp(\alpha)$.
If $\beta:=\alpha|_{\conp(\alpha)}$
is a subimmersion, then~$\alpha$ is a subimmersion.
\end{cor}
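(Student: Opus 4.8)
The plan is to exploit the "big cell" decomposition of Theorem~\ref{bigcell} to reduce the subimmersion property of~$\alpha$ on all of~$G$ to its behaviour on the three building blocks $\conp(\alpha)$, $\lev(\alpha)$, and $\conm(\alpha)$, and then to check each of these. Since being a subimmersion is a local property (at each point of~$M$), and since $\alpha$ is equivariant for left translations in the sense that $\alpha\circ\lambda_g=\lambda_{\alpha(g)}\circ\alpha$, it suffices to show that $\alpha$ is a subimmersion near the identity~$e$. By Theorem~\ref{bigcell}, $\Omega:=\conp(\alpha)\lev(\alpha)\conm(\alpha)$ is an open $\alpha$-invariant identity neighbourhood and the product map
\[
\pi\colon \conp(\alpha)\times\lev(\alpha)\times\conm(\alpha)\to\Omega
\]
is an analytic diffeomorphism; moreover $\alpha$ respects this decomposition, carrying $\conp(\alpha)$, $\lev(\alpha)$, $\conm(\alpha)$ into themselves. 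So, transporting along~$\pi$, the restriction $\alpha|_\Omega$ is (up to the diffeomorphism~$\pi$ on source and target) the product map $\beta\times\gamma\times\delta$, where $\beta=\alpha|_{\conp(\alpha)}$ and $\gamma:=\alpha|_{\lev(\alpha)}$, $\delta:=\alpha|_{\conm(\alpha)}$. A finite product of subimmersions is a subimmersion (locally each factor looks like $j\circ q$, and the product of such factorizations is again of this form), so it is enough to know that each of $\beta,\gamma,\delta$ is a subimmersion.

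First I would dispose of $\gamma$ and $\delta$. By Theorem~\ref{bigcell}\,(c), $\alpha|_{\lev(\alpha)}$ and $\alpha|_{\conm(\alpha)}$ are analytic \emph{automorphisms} of the Lie groups $\lev(\alpha)$ and $\conm(\alpha)$, respectively; an analytic automorphism of a Lie group is in particular an analytic diffeomorphism, hence trivially an immersion and therefore a subimmersion. That leaves $\beta=\alpha|_{\conp(\alpha)}$, which is a subimmersion by hypothesis. Thus $\beta\times\gamma\times\delta$ is a subimmersion, hence $\alpha|_\Omega$ is a subimmersion, hence (by translation) $\alpha$ is a subimmersion on all of~$G$. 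Here one uses, as recorded after Theorem~\ref{bigcell}, that $\conp(\alpha)$, $\lev(\alpha)$, $\conm(\alpha)$ are genuine (closed) Lie subgroups of~$G$ when $\conp(\alpha)$ is closed, so the product map $\pi$ is an honest analytic diffeomorphism onto the open set~$\Omega$ and the transport of structure is legitimate.

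The one point that needs a little care — and which I expect to be the only real obstacle — is the precise statement that a product of subimmersions is a subimmersion, together with the compatibility of $\alpha$ with $\pi$. For the latter: $\alpha(\pi(a,b,c))=\alpha(a)\alpha(b)\alpha(c)=\pi(\alpha(a),\alpha(b),\alpha(c))$ since $\alpha$ is a homomorphism mapping each factor into itself, so $\pi^{-1}\circ(\alpha|_\Omega)\circ\pi=\beta\times\gamma\times\delta$ on $\pi^{-1}(\Omega\cap\alpha^{-1}(\Omega))$; this is clean. For the former: choose around a point $(p_1,p_2,p_3)$ charts in which $\beta$ factors as $j_1\circ q_1$, $\gamma$ as $j_2\circ q_2$, $\delta$ as $j_3\circ q_3$ with the $q_i$ submersions and the $j_i$ immersions; then $\beta\times\gamma\times\delta=(j_1\times j_2\times j_3)\circ(q_1\times q_2\times q_3)$, a product of immersions composed with a product of submersions, and products of immersions (resp.\ submersions) are immersions (resp.\ submersions) — each checked in the obvious product charts. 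One could alternatively phrase this via the local normal forms in \cite[Part I, Chapter~III]{Ser} directly. With this lemma in hand, the proof is complete.
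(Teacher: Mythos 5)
Your argument is correct and is essentially the paper's own proof: both reduce to the big cell $\Omega$ via the diffeomorphism $\pi$ of Theorem~\ref{bigcell}, identify $\alpha|_\Omega$ with the product of $\beta$ and the restrictions to $\lev(\alpha)$ and $\conm(\alpha)$ (which are analytic automorphisms by Theorem~\ref{bigcell}\,(c), hence diffeomorphisms), and conclude that a product of a subimmersion with diffeomorphisms is a subimmersion. Your additional remarks on locality, translation equivariance, and the product-of-subimmersions lemma merely make explicit what the paper leaves implicit.
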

\begin{proof}
If $\beta$ is a subimmersion,
then the restricton of $\alpha$ to the open set $\Omega$ from
Theorem~\ref{bigcell} corresponds to the self-map
\[
\pi^{-1}\circ \alpha|_\Omega\circ\pi =\beta\times \alpha|_{\lev(\alpha)\times\conm(\alpha)}
\]
of $\,\conp(\alpha)\times\lev(\alpha)\times\conm(\alpha)$
whose second factor is an analytic diffeomorphism, and thus~$\alpha$
is a subimmerson.
\end{proof}
\section{Contractive automorphisms}\label{sec-contr}
As shown in \cite{SIM},
$p$-adic Lie groups appear naturally in the classification
of the simple totally disconnected contraction groups,
and are among the building blocks for general
contraction groups. We recall some of the results
and give a new proof for the occurrence of $p$-adic Lie
groups in the classification.
\begin{defn}
An automorphism $\alpha$ of a Hausdorff topological group~$G$
is called \emph{contractive} if $\conp(\alpha)=G$.
We then call $(G,\alpha)$ a \emph{contraction group}.
If, moreover, $G$ is totally disconnected and locally compact,
we say that $(G,\alpha)$ is a \emph{totally disconnected
contraction group}.
An \emph{isomorphism} between totally disconnected contraction
groups $(G,\alpha)$ and $(H,\beta)$ is a continuous
group homomorphism $\phi\colon G\to H$ such that
$\beta\circ \phi=\phi\circ\alpha$.
A totally disconnected contraction group $(G,\alpha)$
is called \emph{simple}
if $G\not=\{e\}$ and $G$ does not have
closed $\alpha$-stable normal subgroups other than~$\{e\}$ and~$G$.
\end{defn}
\begin{rem}
We mention that
contraction groups $\conp(\alpha)$ of automorphisms
arise in many contexts:
In representation theory
in connection with the Mautner phenomenon
(see \cite[Chapter~II, Lemma~3.2]{Mar}
and (for the $p$-adic case) \cite{Wan});
in probability theory
on groups (see \cite{HaS}, \cite{Sie}, \cite{Si2}
and (for the $p$-adic case)~\cite{DaS});
and in the structure theory
of totally disconnected, locally compact groups
(see \cite{BaW}, \cite{Jaw}, and \cite{BGT}).\\[2.3mm]
If a locally compact group~$G$ admits a contractive automorphism~$\alpha$,
then there exists an $\alpha$-stable,
totally disconnected, closed normal subgroup $N\sub G$ such that
\[
G=N\times G_e
\]
internally as a topological group,
where $G_e$ is the identity component of~$G$ (see \cite{Sie}).
Thus $(G,\alpha)$ is the direct product of the totally disconnected contraction group
$(N,\alpha|_N)$ and the connected contraction group $(G_e,\alpha|_{G_e})$
(which is a simply connected, nilpotent
real Lie group, as shown by Siebert).
\end{rem}
\begin{numba}\label{weak-DP}
If $F$ is a finite group and~$X$ a set, we write $F^{(X)}$
for the group of all functions $f\colon X\to F$
whose support $\{x\in X\colon f(x)\not=e\}$ is finite.
We endow $F^{(X)}$ with the discrete topology.
\end{numba}
See \cite[Theorem~A]{SIM} for the following result.
\begin{thm}\label{classi}
If $(G,\alpha)$ is a simple totally disconneced contraction group,
then $G$ is either a torsion group or torsion free.
We have the following classification:
\begin{itemize}
\item[\rm(a)]
If $G$ is a torsion group,
then $(G,\alpha)$
is isomorphic to $F^{(-\N)}\times F^{\N_0}$
with the right shift, for some finite simple group~$F$.
\item[\rm(b)]
If $G$ is torsion free, then
$(G,\alpha)$ is isomorphic to
$(\Q_p)^d$ with a
$\Q_p$-linear contractive automorphism
for which there are no invariant vector subspaces,
for some prime number~$p$ and some $d\in\N$.
\end{itemize}
Conversely, all of these are simple contraction groups.
\end{thm}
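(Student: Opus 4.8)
The plan is to reduce the classification to two well-understood building blocks: shift maps on restricted products of finite simple groups in the torsion case, and $p$-adic linear contractions in the torsion-free case. First I would recall that for a \emph{simple} totally disconnected contraction group $(G,\alpha)$, a structural dichotomy holds: the torsion subgroup $\tor(G)$ is closed and $\alpha$-stable, hence equals $\{e\}$ or $G$ by simplicity; so $G$ is either torsion or torsion free. This uses the general theory of contraction groups (closedness of the torsion subgroup in a totally disconnected contraction group, which follows from the local structure — every contraction group is a directed union of compact $\alpha$-stable subgroups and one analyzes $p$-torsion locally).

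For case~(a), $G$ torsion: I would invoke the structure theory showing that a torsion totally disconnected contraction group is a restricted product of $p$-adic ingredients that are forced to be \emph{discrete} finite blocks (there is no torsion in $\Q_p^d$), so $(G,\alpha)$ decomposes as a shift on $F^{(\Z)}$-type pieces; simplicity then pins down $F$ to be a finite simple group and forces the block to be the full two-sided restricted shift $F^{(-\N)}\times F^{\N_0}$. The key input here is that the only simple contraction groups among products of finite groups with a shift are exactly these; any proper nontrivial $\alpha$-stable closed normal subgroup would be visible from the coordinates.

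For case~(b), $G$ torsion free: the heart is to show $G$ is a $p$-adic Lie group. This is where I would give the promised \emph{new} argument. Since $\alpha$ is contractive, $G$ has arbitrarily small compact open $\alpha$-stable subgroups; pick such a $U$. Then $U$ is a compact, torsion-free, totally disconnected group, hence (by Pontryagin duality, as flagged in Section~\ref{sec-contr}'s preview) its Pontryagin dual $\wh{U}$ is a discrete torsion abelian group — wait, $U$ need not be abelian, so instead I would pass to a descending chain of $\alpha$-stable open subgroups and work layer by layer. More carefully: contractivity forces $\bigcap_n \alpha^n(U)=\{e\}$, so $U$ is the inverse limit of the finite quotients $U/\alpha^n(U)$; each such quotient is a finite $p$-group (torsion-freeness of $U$ combined with the contraction dynamics forces a single prime $p$ — one shows the orders are prime powers for a fixed $p$ by looking at the abelianized layers via duality), so $U$ is a pro-$p$ group on which $\alpha$ acts as a contractive automorphism. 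A pro-$p$ group admitting a contractive automorphism has finite rank (the lower central / Frattini layers stabilize in dimension because $\alpha$ contracts), hence is $p$-adic analytic by Lazard's characterization; thus $G$, having a compact open $p$-adic analytic subgroup, is a $p$-adic Lie group. Then $\alpha$ is analytic by~\ref{Cartan}, and being contractive, Proposition~\ref{charviaeigen}(a) (or rather its proof) shows $L(G)=L(G)_{<1}$, i.e.\ all eigenvalues of $L(\alpha)$ have absolute value $<1$; torsion-freeness and simplicity then give $G\cong (\Q_p)^d$ with $\alpha$ identified with $L(\alpha)$ via $\exp_G$ (the exponential is a global isomorphism here because $\alpha$ is contractive, so the BCH series converges everywhere and the group is abelian — any nonabelian $\Q_p$-Lie algebra that is $L(\alpha)$-contracted still gives a contraction group, but simplicity over the field forces the linear model with no invariant subspaces). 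The converse direction is routine: the two-sided shift on $F^{(-\N)}\times F^{\N_0}$ is plainly contractive and simple because $F$ is simple and the coordinates are permuted transitively by powers of $\alpha$; and a $\Q_p$-linear contraction with no invariant subspace has no proper nontrivial closed $\alpha$-stable subgroup since any such would, by $p$-adic linear algebra, contain or be contained in an invariant subspace.

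\textbf{Main obstacle.} The delicate step is the torsion-free case: rigorously extracting a single prime $p$ and finite rank purely from Pontryagin duality plus the contraction dynamics, without circularly invoking Lazard's analytic pro-$p$ theory. One must argue that the Frattini quotients of a compact open $\alpha$-stable subgroup are finite $p$-groups of bounded rank, using that $\alpha$ strictly contracts and the group is torsion free; handling the possibly nonabelian layers (so that duality applies only after abelianizing each successive quotient) is the technical crux, and keeping track that all layers involve the \emph{same} prime requires care.
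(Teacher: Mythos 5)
Your proposal has two genuine problems. First, a circularity: you derive the torsion/torsion-free dichotomy from the closedness of $\tor(G)$, and you settle case (a) by ``invoking the structure theory'' that decomposes a torsion contraction group into shift blocks. But in a nonabelian group the torsion elements need not even form a subgroup; the fact that $\tor(G)$ is a fully invariant closed subgroup is Theorem~\ref{contraB}, which is \emph{deduced from} the classification you are trying to prove, not available beforehand. The paper's dichotomy runs the other way: the closure of the commutator subgroup is $\alpha$-stable, closed and normal, so by simplicity $G$ is abelian or topologically perfect; in the abelian case a nontrivial torsion element yields an element of prime order $p$, so the $p$-socle $\{x\in G\colon x^p=e\}$ is a nontrivial closed $\alpha$-stable subgroup and hence all of $G$. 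Your treatment of case (a) likewise assumes the conclusion (that torsion simple contraction groups are shifts on restricted products) rather than proving it.

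Second, in the torsion-free case your argument runs through Lazard: you assert that a pro-$p$ group with a contractive automorphism ``has finite rank \ldots hence is $p$-adic analytic by Lazard's characterization.'' That unproved rank bound is exactly the hard step, and this is the \emph{old} route (as in \cite{SIM} and \cite{Wan}) which the paper explicitly sets out to replace. The paper's new argument first reduces to $G$ abelian (and torsion free), chooses a compact open $\alpha$-invariant pro-$p$ subgroup $W$ via Sylow theory together with the fact that nontrivial closed $\alpha$-invariant normal subgroups of a simple contraction group are open, and then, for $0\neq x\in W$, forms the compact $\alpha$-invariant (hence open) subgroup $W(x)$, the image of $\Z_p^{\N_0}\to W$, $(z_n)_{n\in\N_0}\mapsto\sum_{n=0}^\infty\alpha^n(z_nx)$. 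Pontryagin duality identifies the torsion-free abelian pro-$p$ group $W(x)$ with $\Z_p^J$ (its dual is a divisible discrete $p$-group, hence a direct sum of Pr\"{u}fer groups indexed by $J$), and openness of $pW(x)=W(px)$ forces $J$ to be finite; a linearization via \cite{CON} then yields the linear model $(L(G),L(\alpha))$. None of this machinery appears in your proposal, and your final identification of $G$ with $(\Q_p)^d$ leans on simplicity ``forcing the linear model'' rather than on the fact, already established at that stage of the paper's argument, that $G$ is abelian.
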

To explain part (b) of the theorem,
let us recall some concepts and facts.
\begin{numba}\label{Zpmodule}
If $x$ is an element of a pro-$p$-group~$G$
(i.e., a projective limit of finite $p$-groups),
then $\Z\to G$, $n\mto x^n$ is a continuous homomorphism
with respect to the topology induced by~$\Z_p$ on~$\Z$,
and hence extends to a continuous homomorphism
\[
\phi\colon \Z_p\to G.
\]
As usual, we write $x^z:=\phi(z)$ for $z\in\Z_p$.
If $G$ is abelian and the group operation is written
additively, we write $zx:=\phi(z)$.
\end{numba}
\begin{numba}
Let $\bT:=\R/\Z$ with the quotient topology.
If $G$ is a locally compact abelian group,
we let $G^*:=\Hom_{\cts}(G,\bT)$
be its \emph{dual group}, endowed with the compact-open
topology; thus, the elements of $G^*$
are continuous homomorphisms $\xi\colon G\to\bT$
(see \cite{HaR}, \cite{HaM}, \cite{Str}).
We shall use the well-known fact that the dual group
$\Z(p^\infty)^*$ of the Pr\"{u}fer $p$-group
is isomorphic to~$\Z_p$ (compare, e.g., \cite[Exercise~23.2 and Theorem 22.6]{Str}).
\end{numba}
{\bf Some ideas of the proof of Theorem~\ref{classi}.}
As the closure $C$ of the commutator group $G'$ is $\alpha$-stable, closed and normal in~$G$,
we must have $C=\{e\}$ (in which case $G$ is abelian)
or $C=G$, in which case $G$ is topologically perfect.
If~$G$ is abelian, then either~$G$ is torsion free,
or $G$ is a torsion group of prime exponent~$p$:
In fact, if $G$ has a torsion element $g\not=e$,
then a suitable power $g^n$ is an element of order~$p$
for some prime number~$p$, entailing that the $p$-socle
$N:=\{x\in G\colon x^p=e\}$ is a non-trivial, $\alpha$-stable closed (normal) subgroup
of~$G$ and thus $N=G$.
As shown in~\cite{SIM}, $p$-adic Lie groups occur in
the case that $G$ is abelian and torsion free, which we assume now.
Like every totally disconnected contraction group, $G$~has a compact open
subgroup~$W$
such that $\alpha(W)\sub W$ (see \cite[3.1]{Sie}).
Then $W$ can be chosen as a pro-$p$-group for some~$p$.
In fact, there exists a $p$-Sylow subgroup
$P\not=\{e\}$ of~$W$ for some prime number~$p$,
which is unique as~$W$ is abelian (see \cite[Proposition~2.2.2 (a) and (d)]{Wls}).
Since every pro-$p$ subgroup of a pro-finite group is contained in a $p$-Sylow subgroup
(see \cite[Proposition~2.2.2\,(c)]{Wls}),
we deduce that $\alpha(P)\sub P$.
However, non-trivial $\alpha$-invariant closed normal subgroups
of the simple contraction group~$G$ must be open
(see \cite[Lemma~5.1]{SIM}). 
Thus~$P$ is open. Now replace $W$ with $P$ if necessary.]\\[2.3mm]
For $0\not=x\in W$, can define $z x$ for $z\in \Z_p$ by continuity (see \ref{Zpmodule}).
Let $W(x)$ be the image of the continuous homomorphism\vspace{-.5mm}
\[
\phi \colon \Z_p^{\N_0}\to W,\quad (z_n)_{n\in\N_0}\mto
\sum_{n=0}^\infty\alpha^n(z_nx).\vspace{-.5mm}
\]
Then $W(x)$ is a compact, non-trivial $\alpha$-invariant subgroup of~$G$
and hence open by \cite[Lemma~5.1]{SIM} just mentioned.
Being a torsion free abelian pro-$p$-group,
$W(x)$ is isomorphic to $\Z_p^J$ for some set~$J$.
This can be shown using Pontryagin duality: Since $W(x)$ is torsion free and a projective
limit of finite $p$-groups $F$, its dual group $W(x)^*$
is a divisible discrete group and a direct limit of the dual groups~$F^*\cong F$,
hence a $p$-group (see \cite[Corollary 23.10]{Str} as well as \cite[Proposition~7.5\,(i) and
(1)$\aeq$(2) in Corollary 8.5]{HaM}).
By the classification of the divisible abelian groups, $W(x)^*$
is isomorphic to a direct sum $\bigoplus_{j\in J}\Z(p^\infty)$
of Pr\"{u}fer $p$-groups (see \cite[Theorem (A.15)]{HaR}, cf.\ also \cite[Theorem A1.42]{HaM}).
As a consequence, $W(x)\cong W(x)^{**}$
is isomorphic to the direct product
\[
\prod_{j\in J} \Z(p^\infty)^* \cong (\Z_p)^J,
\]
as asserted (by \cite[Theorem 7.63]{HaM}
and \cite[Lemma 21.2 and Theorem 23.9]{Str}).\\[2.3mm]
Since $pW(x)=W(px)$ is a non-trival $\alpha$-invariant closed (normal)
subgroup of~$G$ and hence open,
$(p\Z_p)^J$ must be open in $\Z_p^J$,
whence $J$ is finite and $W(x)\cong \Z_p^J$
a $p$-adic Lie group.\\[2.3mm]
Now a linearization argument\footnote{Let $\beta:=L(\alpha)$.
Using the underlying additive topological group of~$L(G)$, the pair
$(L(G),\beta)$ is a $p$-adic contraction group such that $L(\beta)=L(\alpha)$.
Hence $(G,\alpha)\cong (L(G),\beta)$ by the last statement
of \cite[Proposition~5.1]{CON}.}
shows that
$(G,\alpha)\cong
(L(G),L(\alpha))$.\,\Punkt\vspace{2mm}

\noindent
The classification implies a structure theorem
for general totally disconnected contraction groups
$(G,\alpha)$ (see \cite[Theorem B]{SIM}):
\begin{thm}\label{contraB}
The set $\tor(G)$ of torsion elements
and the set $\dv(G)$ of divisible elements
are fully invariant closed subgroups of~$G$ and
\[
G \;=\; \tor(G) \,\times\,\dv(G)\,.
\]
Moreover, $\tor(G)$ has finite exponent and
\[
\dv(G)\; = \; G_{p_1}\, \times \, \cdots\,
\times \, G_{p_n}
\]
is a direct product of $\alpha$-stable $p$-adic
Lie groups~$G_p$ for certain primes~$p$.\,\Punkt
\end{thm}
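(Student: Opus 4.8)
The plan is to deduce the structure theorem from the simple‐case classification (Theorem~\ref{classi}) together with standard facts about totally disconnected contraction groups. First I would recall from \cite{Sie} that every totally disconnected contraction group $(G,\alpha)$ has a composition-like filtration: there is a descending chain $G=G_0\supseteq G_1\supseteq\cdots$ of closed $\alpha$-stable subgroups with $\bigcap G_i=\{e\}$ whose successive quotients are simple totally disconnected contraction groups, and $(G,\alpha)$ is built from these by (topological) extensions. By Theorem~\ref{classi}, each simple subquotient is either of torsion type $F^{(-\N)}\times F^{\N_0}$ with the shift (a group of finite prime exponent) or of torsion-free type $(\Q_p)^d$ with an eigenvalue-free linear contraction (a $p$-adic Lie group, hence torsion free and divisible as a topological group). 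I would separate these two "colours" of subquotients and show they cannot interact.

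The key steps are: (1) Show $\tor(G)$ is closed. This is where contractivity is essential — in a general group the torsion elements need not form a subgroup, but here one argues that the closure of the set of torsion elements is $\alpha$-stable and, using that every torsion subquotient has bounded exponent $p$, that passing to $G/\overline{\tor(G)}$ kills all torsion subquotients, so $\overline{\tor(G)}$ already contains every torsion element; finiteness of the exponent of $\tor(G)$ falls out of the fact that only finitely many primes occur among the finitely many subquotients (a contraction group has a compact open $\alpha$-contracted subgroup $W$, and $W/\alpha(W)$ is finite, bounding the number of primes). (2) Show $\dv(G)$ is closed and that $G=\tor(G)\times\dv(G)$ as a topological group: here I would use that the torsion-free simple subquotients are divisible and torsion free, so they build a divisible torsion-free closed normal subgroup $D$ with $G/D$ of bounded exponent; then $D=\dv(G)$ and the extension $1\to D\to G\to G/D\to 1$ splits because $G/D$ is a discrete bounded-exponent abelian-by-something group over which a divisible group has no nontrivial extensions in this category — more precisely one invokes the cohomological vanishing for divisible coefficients, or cites \cite{SIM} for the splitting. (3) Decompose $\dv(G)$: the distinct primes $p_1,\dots,p_n$ occurring give $\alpha$-stable closed subgroups $G_{p_i}$ (the $p_i$-primary part, well defined since $\dv(G)$ is a projective-limit-of-$p$-groups-modulo-torsion object), each of which is a $p_i$-adic Lie group by the same Pontryagin-duality argument used in the proof of Theorem~\ref{classi} (torsion-free abelian pro-$p$ of finite rank), and $\dv(G)=G_{p_1}\times\cdots\times G_{p_n}$ because the factors have coprime "divisibility behaviour."

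The main obstacle I expect is step (1)–(2): proving that $\tor(G)$ and $\dv(G)$ are genuinely closed subgroups and that the product decomposition $G=\tor(G)\times\dv(G)$ is internal and topological. In an arbitrary totally disconnected group none of this holds; it is the interplay of contractivity (forcing the subquotient filtration with only finitely many primes and bounded torsion exponents) with the rigidity of the simple pieces that makes it work, and the honest argument requires a careful induction along the filtration together with a splitting lemma for extensions of a bounded-exponent group by a divisible one. Everything downstream — the finiteness of the exponent of $\tor(G)$ and the primary decomposition of $\dv(G)$ into $p$-adic Lie groups — is then essentially bookkeeping plus the duality computation already carried out above for $W(x)$. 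Accordingly I would present the proof as: filtration from \cite{Sie}; identify torsion and torsion-free subquotients via Theorem~\ref{classi}; bound the primes; peel off $\overline{\tor(G)}$ and identify it with $\tor(G)$; identify the complementary divisible part, split, and decompose by primes using the $p$-adic-Lie-group recognition from the proof of Theorem~\ref{classi}.
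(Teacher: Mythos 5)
First, a point of calibration: the paper does not prove this theorem at all — it is quoted from \cite[Theorem~B]{SIM}, and the only argument offered is the sketch in~\ref{finitnss}: a properly ascending $\alpha$-stable series has length bounded by the number of prime factors of $\Delta(\alpha^{-1})$ (via multiplicativity of the module and $\Delta(\alpha_j^{-1})\geq 2$), so one takes a series of maximal length, whose subquotients are then simple and classified by Theorem~\ref{classi}; the series is rearranged so that the torsion factors sit at the bottom, giving $\tor(G)=G_k$; and the ``major steps'' — that $G_k$ is complemented and that $G/G_k$ is a product of $p$-adic Lie groups — are explicitly deferred to \cite{SIM}. Your outline reproduces this skeleton (finite series with simple subquotients, Theorem~\ref{classi}, separation of torsion from divisible pieces, splitting, primary decomposition), so in spirit you are on the paper's route; note however that the finite ascending series comes from the module argument of \cite{SIM}, not from \cite{Sie}, which only supplies the compact open subgroup $W$ with $\alpha(W)\subsetneq W$.

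The genuine gaps are precisely in the steps you try to fill in. (i) Your splitting argument is not valid as stated: neither $\tor(G)$ nor $\dv(G)$ need be abelian ($F$ may be a nonabelian finite simple group, and the $G_p$ are in general only nilpotent), and $G/\dv(G)$ is typically not discrete (e.g.\ $F^{(-\N)}\times F^{\N_0}$), so ``divisible coefficients have vanishing cohomology'' does not apply; the complementation of the torsion part is exactly the hard content of \cite{SIM}, and invoking \cite{SIM} here makes the argument circular rather than a proof. (ii) Closedness of $\tor(G)$ (and of $\dv(G)$) is a \emph{conclusion} of the structure theorem, obtained after the decomposition identifies $\tor(G)$ with the bottom term $G_k$ of a suitably rearranged series; your claim that $\overline{\tor(G)}$ already contains all torsion elements is asserted, not proved, and the rearrangement lemma (torsion factors can be moved to the bottom, i.e.\ torsion and torsion-free simple pieces ``do not interact'') is itself a nontrivial step you assume. (iii) In the primary decomposition you argue with ``torsion-free abelian pro-$p$ of finite rank'' and the Pontryagin-duality computation from the proof of Theorem~\ref{classi}; but the $G_p$ in Theorem~\ref{contraB} need be neither abelian nor pro-$p$ (already $(\Q_p)^d$ is not pro-$p$), so that recognition argument does not transfer; one needs instead that extensions of $p$-adic Lie contraction groups by $p$-adic Lie contraction groups are again $p$-adic Lie groups, which again is part of the work done in \cite{SIM}. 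So your proposal is a reasonable reconstruction of the strategy, but the steps where it goes beyond citing \cite{SIM} are either incorrect (the cohomological splitting, the duality recognition) or unsubstantiated (closedness of $\tor(G)$, the rearrangement), whereas the paper honestly leaves all of these to the cited reference.
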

\begin{rem}
By \cite[Theorem~3.5\,(iii)]{Wan},
each $G_p$ is nilpotent, and
it
is in fact the
group of $\Q_p$-rational points of a unipotent
linear algebraic group
defined over~$\Q_p$.
See \cite{CLO} for algebraic properties
of $\conp(\alpha)$ if $\alpha$ is an analytic \emph{endomorphism}
of a Lie group~$G$ over a totally disconnected local field;
if~$\alpha$ is an analytic automorphism, then
$\conp(\alpha)$ is nilpotent (cf.\ Remark~\ref{rem-nonclo} and~\cite{CON}).
\end{rem}
\begin{numba}\label{finitnss}
If $(G,\alpha)$ is a totally disconnected
contraction group with $G\not=\{e\}$,
then~$G$ has a compact open subgroup $U$ such that $\alpha(U)$
is a proper subgroup of~$U$ (cf.\ \cite[3.1]{Sie}),
whence $U$ is a proper subgroup of $\alpha^{-1}(U)$ and thus
\[
\Delta(\alpha^{-1})=[\alpha^{-1}(U):U]\in\{n\in\N\colon n\geq 2\}
\]
(see \cite[Lemma 3.2\,(i)]{Sie} and \cite[Proposition~1.1\,(e)]{SIM}).
If
\begin{equation}\label{aseries}
\{e\}=G_0\lhd  G_1\lhd \cdots\lhd G_n=G
\end{equation}
is a properly
ascending series of $\alpha$-stable closed subgroups of~$G$
and $\alpha_j$ the contractive automorphism of $G_j/G_{j-1}$
induced by~$\alpha$ for $j\in\{1,\ldots, n\}$, then
\[
\Delta(\alpha^{-1})=\Delta(\alpha_1^{-1})\cdots\Delta(\alpha_n^{-1}),
\]
showing that $n$ is bounded by the number of prime factors of $\Delta(\alpha^{-1})$,
counted with multiplicities (see \cite[Lemma~3.5]{SIM}).
As a consequence, we can choose a properly ascending series~(\ref{aseries})
of maximum length. Then all of the subquotients $(G_j/G_{j-1},\alpha_j)$
are simple contraction groups.
To deduce Theorem~\ref{contraB}
from Theorem~\ref{classi},
one shows that the
series can always be chosen in such a way that the torsion factors
appear at the bottom, whence $\tor(\alpha)=G_k$ for some~$k$.
A major step then is to see that $G_k$ is complemented in~$G$,
and that $G/G_k$ is a product of $p$-adic Lie groups (see~\cite{SIM}).
\end{numba}
\section{Expansive automorphisms}\label{sec-exp}
If $\alpha$ is an expansive automorphism
of a totally dsconnected locally compact group~$G$,
then the subset
\[
\conp(\alpha)\conm(\alpha)
\]
of~$G$ is an open identity neighbourhood (see \cite[Lemma~1.1\,(d)]{GaR}).
In some cases, this enables the finiteness properties of totally disconnected
contraction groups (as described in \ref{finitnss}) to be used
with profit also for the study of expansive automorphsms.
The proof of expansiveness of $\wb{\alpha}$ in
part~(b) of the following result from \cite{GaR}
is an example for this strategy.
\begin{prop}
Let $\alpha$ be an automorphism of a totally disconnected, locally compact group~$G$.
\begin{itemize}
\item[\rm(a)]
If $\alpha$ is expansive, then $\alpha|_H$ is expansive
for each $\alpha$-stable closed subgroup $H\sub G$.
\item[\rm(b)]
Let $N\sub G$ be an $\alpha$-stable closed
normal subgroup and $\wb{\alpha}$ be the induced automorphism
of $G/N$ which takes $gN$ to $\alpha(g)N$.
Then $\alpha$ is expansive if and only if $\alpha|_N$ and $\wb{\alpha}$
are expansive.\,\Punkt
\end{itemize}
\end{prop}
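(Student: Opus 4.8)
Part~(a) is a one-line check. If $V\subseteq G$ is an identity neighbourhood with $\bigcap_{n\in\Z}\alpha^n(V)=\{e\}$, then $V\cap H$ is an identity neighbourhood of~$H$, and since $H$ is $\alpha$-stable and $\alpha$ a homeomorphism we have $\alpha^n(V\cap H)=\alpha^n(V)\cap H$ for all $n\in\Z$, whence $\bigcap_{n\in\Z}(\alpha|_H)^n(V\cap H)=\big(\bigcap_{n\in\Z}\alpha^n(V)\big)\cap H=\{e\}$, so $\alpha|_H$ is expansive. For the implication ``$\alpha|_N$ and $\bar\alpha$ expansive $\Rightarrow\alpha$ expansive'' in part~(b), I would argue directly; write $q\colon G\to G/N$ for the quotient homomorphism. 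Since $q$ is open and the sets $W\cap N$, with $W$ ranging over the compact open subgroups of~$G$, form a neighbourhood basis at~$e$ in~$N$, one can fix a single compact open subgroup $W\subseteq G$ such that $q(W)$ witnesses expansiveness of~$\bar\alpha$ and $W\cap N$ witnesses expansiveness of~$\alpha|_N$. If $x\in\bigcap_{n\in\Z}\alpha^n(W)$, then $q(x)\in\bigcap_{n\in\Z}\bar\alpha^n(q(W))=\{e\}$, so $x\in N$; and then $x\in\bigcap_{n\in\Z}\alpha^n(W\cap N)=\{e\}$, using $\alpha^n(W)\cap N=\alpha^n(W\cap N)$. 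Hence $\bigcap_{n\in\Z}\alpha^n(W)=\{e\}$ and $\alpha$ is expansive.

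It remains to show that $\bar\alpha$ is expansive when $\alpha$ is (the assertion about $\alpha|_N$ being part~(a)); this is the substantial point. My plan is first to reduce to the case that $G/N$ is compact. Given any compact open subgroup $\bar U_0\subseteq G/N$, put $K:=\bigcap_{n\in\Z}\bar\alpha^n(\bar U_0)$, a compact $\bar\alpha$-stable subgroup. Then $q^{-1}(K)$ is a closed $\alpha$-stable subgroup of~$G$ with $q^{-1}(K)/N\cong K$ compact, and $\alpha|_{q^{-1}(K)}$ is expansive by part~(a). Granting the compact case for $(q^{-1}(K),\alpha|_{q^{-1}(K)},N)$, the induced automorphism $\bar\alpha|_K$ of~$K$ is expansive, say $\bigcap_{n\in\Z}(\bar\alpha|_K)^n(\bar U_1)=\{e\}$ for a compact open subgroup $\bar U_1\subseteq K$. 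Now pick a compact open subgroup $\bar U_2\subseteq G/N$ with $\bar U_2\subseteq\bar U_0$ and $\bar U_2\cap K\subseteq\bar U_1$ (possible, as the traces on~$K$ of compact open subgroups of $G/N$ form a neighbourhood basis at~$e$ in~$K$). Since $\bar U_2\subseteq\bar U_0$, the subgroup $\bigcap_{n\in\Z}\bar\alpha^n(\bar U_2)$ lies in~$K$, hence equals $\bigcap_{n\in\Z}(\bar\alpha|_K)^n(\bar U_2\cap K)\subseteq\bigcap_{n\in\Z}(\bar\alpha|_K)^n(\bar U_1)=\{e\}$; so $\bar\alpha$ is expansive on $G/N$.

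This reduces everything to the case of a compact quotient: \emph{if $\alpha$ is expansive and $G/N$ is compact, then $\bar\alpha$ is an expansive (``finite depth'') automorphism of the profinite group~$G/N$}, and this I expect to be the main obstacle. Here I would bring in the finiteness properties of totally disconnected contraction groups from~\ref{finitnss}. Since $\alpha$ is expansive, $\Omega:=\conp(\alpha)\conm(\alpha)$ is an open identity neighbourhood of~$G$, hence $q(\Omega)=q(\conp(\alpha))\,q(\conm(\alpha))\subseteq\conp(\bar\alpha)\conm(\bar\alpha)$ is an open identity neighbourhood of~$G/N$; so the two contraction groups of~$\bar\alpha$ already cover an open set, and it remains to pin down a trivial‑core compact open subgroup. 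The relevant structural input is that $\big(\conp(\alpha),\alpha|_{\conp(\alpha)}\big)$ and $\big(\conm(\alpha),\alpha^{-1}|_{\conm(\alpha)}\big)$ are totally disconnected contraction groups, and therefore each admits a properly ascending $\alpha$-stable series of \emph{finite} length whose subquotients are simple contraction groups of the two shapes in Theorem~\ref{classi} (shifts on $F^{(-\N)}\times F^{\N_0}$, or $\Q_p$-linear contractive automorphisms). The plan is to transport these finite filtrations through the open map~$q$ and glue them with a compact open subgroup of~$G/N$ on which $\bar\alpha$ is well behaved, the finiteness of the two series being precisely what forbids an infinite ``descent'' of cores $\bigcap_{n\in\Z}\bar\alpha^n(\bar U)$ and so forces one of them to be trivial. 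Turning the openness of $q(\Omega)$ together with the bounded composition lengths of $\conp(\alpha)$ and $\conm(\alpha)$ into such an explicit $\bar U$ is the technical heart of the argument; it is genuinely structural, because $q$ does not commute with the infinite intersections in the definition of expansiveness.
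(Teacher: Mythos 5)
Your part~(a) and the ``if'' direction of part~(b) are correct and complete: since $H$ is $\alpha$-stable and $\alpha$ bijective, $\alpha^n(V\cap H)=\alpha^n(V)\cap H$; and choosing one compact open subgroup $W\sub G$ small enough that $q(W)$ and $W\cap N$ sit inside given expansive neighbourhoods of $G/N$ and $N$ is legitimate, because shrinking an expansive neighbourhood preserves the trivial-core property. Your reduction of the remaining implication to the case of a compact quotient (intersecting with $K=\bigcap_{n\in\Z}\wb{\alpha}^n(\wb{U}_0)$, applying part~(a) to $q^{-1}(K)$, and then passing back via $\wb{U}_2\cap K\sub \wb{U}_1$) is also sound. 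Note that the paper itself gives no proof of this proposition: it is quoted from \cite{GaR}, with only the remark that the proof of expansiveness of $\wb{\alpha}$ uses the finiteness properties of contraction groups recalled in \ref{finitnss}.

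The gap is that the implication ``$\alpha$ expansive $\Rightarrow\wb{\alpha}$ expansive'' is precisely the substantial content of part~(b), and what you offer for it is a plan, not a proof. What has to be excluded is that $\bigcap_{n\in\Z}\wb{\alpha}^n(\wb{U})$ is a nontrivial compact $\wb{\alpha}$-stable subgroup for \emph{every} compact open $\wb{U}\sub G/N$, i.e.\ that $\wb{\alpha}$ has a nontrivial nub on which it acts ergodically. Openness of $q(\Omega)\sub\,\conp(\wb{\alpha})\conm(\wb{\alpha})$ does not rule this out: an automorphism can have $\conp\cdot\conm$ open (even equal to the whole group) without being expansive --- the shift on $F^\Z$ in Example~\ref{non-lin1} has $\conp(\alpha)\conm(\alpha)=G$ but trivial expansive cores fail since $\bigcap_{n\in\Z}\alpha^n(V)$ is never $\{e\}$ for $V$ a compact open subgroup of that compact group; so the openness of the big cell of $\wb{\alpha}$ carries no information about closedness of $\conp(\wb{\alpha})$ or triviality of $\conp(\wb{\alpha})\cap\lev(\wb{\alpha})$, which is what expansiveness requires. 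Your proposed ``transport the finite filtrations of $\conp(\alpha)$ and $\conm(\alpha)$ through $q$ and glue'' is exactly where the theorem lives, and it is not carried out: you do not say which compact open subgroup $\wb{U}$ is produced, nor why the bounded composition length of the contraction groups of $\alpha$ forces the nub of $\wb{\alpha}$ to be trivial (in \cite{GaR} this is done by a genuine induction on the composition length from \ref{finitnss}, together with a characterization of expansiveness in terms of the contraction groups, neither of which appears in your sketch). As it stands, the proposal proves (a), one direction of (b), and a correct reduction, but not the proposition.
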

If a $p$-adic Lie group~$G$ admits
an expansive automorphism~$\alpha$, then $L(\alpha)$
is a Lie algebra automorphism of~$L(G)$
such that $|\lambda|\not=1$
for all eigenvalues~$\lambda$
of $L(\alpha)$ in an algebraic closure~$\wb{\K}$
(as recalled in Proposition~\ref{charviaeigen}),
entailing that none of the $\lambda$ is a root of unity.
Hence $L(G)$ is nilpotent
(see Exercise~21\,(b) among the exercises
for Part~I of \cite{Bou}, \S4, or
\cite[Theorem 2]{Jcb}).
If, moreover, $G$ is linear in the sense
that it admits a faithful continuous representation
$G\to\GL_n(\Q_p)$ for some $n\in\N$,
then $G$ has an $\alpha$-stable, nilpotent open
subgroup \cite[Theorem~D]{GaR}.
For closed subgroup of~$\GL_n(\Q_p)$,
such an open subgroup can be made explicit
(see \cite[Proposition~7.8]{GaR}):
\begin{prop}\label{lingroup}
Let $\alpha$ be an expansive automorphism
of a $p$-adic Lie group~$G$. If $G$ is isomorphic to a closed
subgroup of~$\GL_n(\Q_p)$ for some $n\in\N$,
then $\conp(\alpha)\conm(\alpha)$ is a nilpotent, open subgroup of~$G$.\,\Punkt
\end{prop}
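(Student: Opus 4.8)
Write $\cg:=L(G)$ and $H:=\conp(\alpha)\conm(\alpha)$. Since $G$ is a $p$-adic Lie group, $\alpha$ is automatically analytic (see~\ref{Cartan}); moreover $\conp(\alpha)$ is closed by Corollary~\ref{isclosed}, and since $\alpha$ is an automorphism, $\conm(\alpha)=\conp(\alpha^{-1})$ (the only $\alpha$-regressive trajectory of $x$ is $(\alpha^{-n}(x))_{n\in\N_0}$), so $\conm(\alpha)$ is closed too. Also $\conp(\alpha)\cap\conm(\alpha)\sub\parp(\alpha)\cap\parm(\alpha)=\lev(\alpha)$, whence $\conp(\alpha)\cap\conm(\alpha)\sub\conp(\alpha)\cap\lev(\alpha)=\{e\}$ by Lemma~\ref{tidy-via-levi}. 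The plan is to show, in turn, that $H$ is (i) open, (ii) contained in a nilpotent open subgroup of~$G$, and (iii) a subgroup of~$G$; then (ii) and (iii) yield that $H$ is nilpotent, and with (i) we are done.

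For (i): as $\alpha$ is expansive, $|\lambda|\ne 1$ for every eigenvalue $\lambda$ of $L(\alpha)$, so the centre subalgebra $\cg_1=\lev(L(\alpha))$ vanishes and the Lie subgroup $\lev(\alpha)$ is $0$-dimensional, i.e.\ discrete, by Theorem~\ref{bigcell}\,(a). Theorem~\ref{bigcell}\,(b) then provides an analytic diffeomorphism $\pi\colon\conp(\alpha)\times\lev(\alpha)\times\conm(\alpha)\to\Omega$ onto an open subset of~$G$, and since $\{e\}$ is open in $\lev(\alpha)$, the set $\conp(\alpha)\times\{e\}\times\conm(\alpha)$ is open in the domain; hence $H$, being its image under~$\pi$, is open. (Alternatively, $H$ is an open identity neighbourhood by \cite[Lemma~1.1\,(d)]{GaR}.)

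For (ii): expansiveness forces $|\lambda|\ne 1$, hence $\lambda$ not a root of unity, for each eigenvalue $\lambda$ of $L(\alpha)$ in an algebraic closure of~$\Q_p$, so $\cg$ is nilpotent (the classical criterion recalled in Section~\ref{sec-exp}). Since $G$ is linear, \cite[Theorem~D]{GaR} yields an $\alpha$-stable nilpotent open subgroup $N\sub G$. If $x\in\conp(\alpha)$ then $\alpha^n(x)\to e\in N$, so $\alpha^n(x)\in N$ for some~$n$, and $\alpha^{-n}(N)=N$ gives $x\in N$; thus $\conp(\alpha)\sub N$, and symmetrically $\conm(\alpha)=\conp(\alpha^{-1})\sub N$, so $H\sub N$. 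Consequently, once $H$ is known to be a subgroup, it is nilpotent as a subgroup of the nilpotent group~$N$.

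Step (iii) is the crux. By (ii) we may replace $G$ with $N$ and assume $G$ nilpotent; it then suffices to prove $\conm(\alpha)\conp(\alpha)\sub\conp(\alpha)\conm(\alpha)$. I would induct on the nilpotency class of $G$. The base case is $G$ abelian: there $\conp(\alpha)$ and $\conm(\alpha)$ commute and meet trivially, so $H=\conp(\alpha)\times\conm(\alpha)$ is a subgroup. For the inductive step, let $Z$ be a nontrivial $\alpha$-stable closed central subgroup of~$G$ (e.g.\ $Z=Z(G)$, which is closed, characteristic, and nontrivial as $G$ is nilpotent), so that $G/Z$ has smaller nilpotency class, the induced automorphism $\wb{\alpha}$ of $G/Z$ and $\alpha|_Z$ are again expansive, and one uses that forming $\conp$ and $\conm$ is compatible with passage to the $\alpha$-stable closed normal subgroup $Z$ and to the quotient $G/Z$ (so $\conp(\alpha)\cap Z=\conp(\alpha|_Z)$, $\conm(\alpha)\cap Z=\conm(\alpha|_Z)$, and the images of $\conp(\alpha),\conm(\alpha)$ in $G/Z$ are $\conp(\wb{\alpha}),\conm(\wb{\alpha})$) to reassemble $H$ from the two pieces, which are subgroups by the inductive hypothesis. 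The difficulty here is real: $\conp(\alpha)$ and $\conm(\alpha)$ need not normalize each other (their Lie algebras $\cg_{<1},\cg_{>1}$ need not be ideals of~$\cg$), so Fitting's theorem on products of normal nilpotent subgroups does not apply directly, and the content is exactly the bookkeeping along the central series showing that the contraction and anti-contraction groups nonetheless combine into a genuine subgroup — in substance the linear case of \cite[Theorem~D]{GaR} / \cite[Proposition~7.8]{GaR}.
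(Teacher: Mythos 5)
Your steps (i) and (ii) are correct and fit the paper's framework: expansiveness forces $\cg_1=\lev(L(\alpha))=\{0\}$, so $\lev(\alpha)$ is a discrete Lie subgroup and Theorem~\ref{bigcell}\,(b) exhibits $H:=\,\conp(\alpha)\conm(\alpha)=\pi(\conp(\alpha)\times\{e\}\times\conm(\alpha))$ as an open set; and the $\alpha$-stable nilpotent open subgroup $N$ supplied by \cite[Theorem~D]{GaR} absorbs both $\conp(\alpha)$ and $\conm(\alpha)$ by the stability argument you give. But step (iii) --- that $H$ is closed under multiplication --- is the entire content of the proposition, and you do not prove it. The induction on nilpotency class is a reasonable strategy, but the inductive step as sketched does not close: even granting that $q(\conp(\alpha))=\,\conp(\wb{\alpha})$ for the quotient map $q\colon N\to N/Z$ (itself a nontrivial theorem for automorphisms, due to Baumgartner--Willis/Jaworski, which you would have to cite), an element $vu$ with $v\in\,\conm(\alpha)$, $u\in\,\conp(\alpha)$ only yields $vu=u'v'z$ with $u'\in\,\conp(\alpha)$, $v'\in\,\conm(\alpha)$ and $z$ in the central subgroup~$Z$; to conclude you need $z\in(Z\cap\conp(\alpha))(Z\cap\conm(\alpha))$, and since $\lev(\alpha|_Z)$ can be a nontrivial discrete subgroup (e.g.\ $Z\cong\Q_p\times\Z$ with $\alpha$ acting as multiplication by~$p$ on the first factor), this is not supplied by the abelian base case. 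That residual step is exactly where the work lies, and you concede it rather than do it.

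For the comparison you were asked about: the paper gives no proof of this proposition (note the box after the statement); it is quoted verbatim from \cite[Proposition~7.8]{GaR}, where the subgroup property is established via the structure theory of expansive automorphisms of nilpotent linear groups. So your reduction of the problem to ``in substance the linear case of \cite[Theorem~D]{GaR}/\cite[Proposition~7.8]{GaR}'' leaves essentially the full content of the cited result unproved; what remains after your steps (i)--(ii) is not bookkeeping but the theorem itself.
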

The following example is taken from
\cite[Remark~7.7]{GaR}.
\begin{example}\label{non-lin2}
Let $H = \Q_p ^3$ be the $3$-dimensional $p$-adic Heisenberg group
with group multiplication given by
\[
(x _1, y_1,z _1) (x_2, y_2, z_2) := (x_1+x_2 , y_1 + y _2 ,
z_1+z_2+x_1 y_2)
\]
for all $(x _1, y_1,z _1), (x_2, y_2, z_2) \in H$.
Then $N= \{ (0,0,
z ) \in H \colon |z| \leq 1 \}$ is a compact central
subgroup of $H$. Identify $G= H/N$ with $\Q_p\times\Q_p\times
(\Q_p/\Z_p)$ as a set.  Define $\alpha \colon G \to G$ by
\[
\alpha (x,y, z+\Z_p) = (px , p^{-1}y, z+\Z_p)
\]
for all $(x,y, z+\Z_p)\in G$.
Then $\alpha$ is a continuous automorphism of the $p$-adic Lie group
$G$ with
$\lev(\alpha) = \{ (0,0, z+\Z_p) \colon  z\in \Q _p \}$,
\[
\conp(\alpha) =
\{ (x ,0 , 0) \colon  x\in \Q _p \},\quad
\mbox{and}\quad \conm(\alpha) = \{(0,y ,
0) \colon y \in \Q _p \}.
\]
Since $\lev(\alpha)$ is discrete, $\alpha$ is
an expansive automorphism
(see \cite[Propostion~1.3\,(a)]{GaR}).
As
\[
[\conp(\alpha) , \,\conm(\alpha)] = \{
(0,0, z+\Z_p)\colon z \in \Q_p\}
\]
and $\conp(\alpha) \conm(\alpha) =
\{(x,y, xy+\Z_p) \colon x, y \in \Q _p \}$, we find that $\conp(\alpha)
\conm(\alpha)$ is a not a subgroup of~$G$.
Accordingly, $G$ is not isomorphic to a closed subgroup of $\GL_n(\Q_p)$
for any $n\in\N$ (see Proposition~\ref{lingroup}).
\end{example}
\section{Distality and Lie groups of type {\boldmath$R$}}\label{sec-typeR}
Following Palmer~\cite{Pal},
a totally disconnected,
locally compact group~$G$
is called \emph{uniscalar}
if $s_G(x)=1$ for each $x\in G$.
This holds if and only if each group element
$x\in G$ normalizes some compact, open subgroup~$V_x$ of~$G$
(which may depend on~$x$).
It is natural to ask
whether this condition implies
that $V_x$ can be chosen independently
of $x$, i.e., whether $G$ has a compact, open,
\emph{normal} subgroup.
The answer is negative for a
suitable $p$-adic Lie group which is not compactly generated
(see \cite[\S6]{UNI}).
But also for some totally disconnected, locally
compact groups which are compactly generated,
the answer is negative
(see \cite{BaM} together with \cite{KaW}, or also
\cite[Proposition 11.4]{Lec}, where moreover all
contraction groups for inner automorphisms are trivial
and hence closed);
the counterexamples are of the form
\[
(F^{(-\N)}\times F^{\N_0})\rtimes H
\]
with $F$ a finite simple group and a suitable action of
a specific finitely generated group~$H$ on~$\Z$.
Thus, to have a chance for a positive answer,
one has to restrict attention
to particular classes of groups
(like compactly generated $p$-adic Lie groups).
If $G$ has the (even stronger)
property that every identity neighbourhood
contains an open, compact, normal subgroup
of~$G$, then $G$ is called \emph{pro-discrete}.\footnote{Another interesting group
is the semidirect product $G:=F^T\rtimes T$,
where~$F$ is a finite simple group and
$T$ is a Tarski monster (a certain finitely generated, infinite, simple
torsion group) acting on $F^T$ via $(x.f)(y):=f(x^{-1}y)$
for $x,y\in T$. Then, for each $x\in G$, there is a basis of identity neighbourhoods
consisting of compact open subgroups of~$G$ which are normalized by~$x$.
Moreover, $G$ has $F^T$ as a compact open normal
subgroup, but this is the only such and thus~$G$ is not pro-discrete (see \cite{DIR}).}
Finally, a Lie group~$G$ over a local field~$\K$
is \emph{of type~$R$}
if all eigenvalues~$\lambda$ of $L(\alpha)$ in an algebraic closure $\wb{\K}$
have absolute value $|\lambda|_\K=1$,
for each inner automorphism~$\alpha$
(cf.\ \cite{Raj} for $\K=\Q_p$), i.e., if each inner automorphism is distal
(see Proposition~\ref{charviaeigen}).\\[2.3mm]
\noindent
Using the Inverse Function Theorem with Parameters and locally invariant manifolds
as a tool, we can generalize results for $p$-adic
Lie groups from~\cite{Raj}, \cite{UNI}, and \cite{Par}
to Lie groups over local fields of arbitrary characteristic.
The following result was announced in \cite[Proposition~11.2]{Lec}.
\begin{prop}\label{when-typeR}
Let $\alpha$ be an analytic automorphism of a Lie group~$G$
over a totally disconnected local field~$\K$.
Then the following properties
are equivalent:
\begin{itemize}
\item[\rm(a)]
$\conp(\alpha)$ is closed
and $s(\alpha)=s(\alpha^{-1})=1$;
\item[\rm(b)]
All eigenvalues of $L(\alpha)$
in~$\wb{\K}$ have absolute value~$1$;
\item[\rm(c)]
Each $e$-neighbourhood
in~$G$ contains an $\alpha$-stable compact open subgroup.
\end{itemize}
In particular, $G$ is of type~$R$
if and only if $G$ is uniscalar
and $\conp(\alpha)$ is closed
for each inner automorphism~$\alpha$ of~$G$
$($in which case $\conp(\alpha)=\{e\})$.
\end{prop}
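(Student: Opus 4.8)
The plan is to prove the cycle of implications (b)$\,\Rightarrow\,$(c)$\,\Rightarrow\,$(a)$\,\Rightarrow\,$(b) among the three conditions, and then to deduce the concluding characterization of type~$R$ by applying this equivalence to every inner automorphism $\alpha=I_g$ of~$G$ at once. Throughout I abbreviate $\cg:=L(G)$ and use that, since $\alpha$ is an automorphism, $L(\alpha)\in\GL(\cg)$ is invertible; hence $0$ is never an eigenvalue, the characteristic value $\rho=0$ does not occur, and condition~(b) is equivalent to $\cg=\cg_1$, the centre subspace of $\cg$ with respect to $L(\alpha)$.

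\textbf{(b)$\,\Rightarrow\,$(c).} Fix a norm $\|.\|$ on $\cg$ adapted to $L(\alpha)$ (Lemma~\ref{fact-adapted}). Since $\cg=\cg_1$, part~(c) of that lemma gives $\|L(\alpha)(v)\|=\|v\|$ for all $v\in\cg$, so $L(\alpha)\in\Iso(\cg,\|.\|)$. Now run the construction of Section~\ref{constsmall}: pick $\phi\colon U\to V$ with $\phi(e)=0$ and $d\phi|_\cg=\id_\cg$ and $R>0$ as in \ref{pre-laballs}--\ref{the-r}, so that the $B^\phi_t=\phi^{-1}(B^\cg_t)$ are compact open subgroups of~$G$ (Lemma~\ref{laballs}), $\alpha(B^\phi_R)\sub U$, and $\beta:=\phi\circ\alpha\circ\phi^{-1}|_{B^\cg_R}$ has $\beta'(0)=L(\alpha)$ by~(\ref{spot-der}). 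As $\beta'(0)$ is an isometry, the Ultrametric Inverse Function Theorem (Lemma~\ref{invfct}) produces $r\in\;]0,R]$ with $\beta(B^\cg_t)=B^\cg_t(\beta(0))=B^\cg_t$ for all $t\in\;]0,r]$, whence $\alpha(B^\phi_t)=\phi^{-1}(\beta(B^\cg_t))=B^\phi_t$. So $B^\phi_t$ is an $\alpha$-stable compact open subgroup for each $t\in\;]0,r]$, and since these form a basis of identity neighbourhoods, (c) holds.

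\textbf{(c)$\,\Rightarrow\,$(a) and (a)$\,\Rightarrow\,$(b).} For the first: pick any $\alpha$-stable compact open subgroup $V$. Then $\alpha(V)\sub V$ and $\alpha^{-1}(V)=V\sub V$, so Lemma~\ref{nilp}\,(a), applied to $\alpha$ and to $\alpha^{-1}$, gives $s(\alpha)=s(\alpha^{-1})=1$; moreover, if $x\in\,\conp(\alpha)$ then $\alpha^n(x)\in V$ for large~$n$, hence $x\in\alpha^{-n}(V)=V$, so $\conp(\alpha)$ lies in the intersection of a neighbourhood basis of~$e$ and therefore $\conp(\alpha)=\{e\}$, which is closed — establishing also the parenthetical claim. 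For the second: if $\conp(\alpha)$ is closed, then $\conm(\alpha)=\conp(\alpha^{-1})$ (the $\alpha$-regressive trajectory being unique for an automorphism) is closed too by Theorem~\ref{bigcell}, so Theorem~\ref{main} applies to $\alpha$ and $\alpha^{-1}$ and gives $s(L(\alpha))=s(\alpha)=1$ and $s(L(\alpha)^{-1})=s(\alpha^{-1})=1$. Writing $\lambda_1,\dots,\lambda_m$ for the eigenvalues of $L(\alpha)$ in~$\wb{\K}$, Theorem~\ref{thm-lincase}\,(c) turns $s(L(\alpha))=1$ into $\prod_{|\lambda_j|_\K\geq1}|\lambda_j|_\K=1$, forcing $|\lambda_j|_\K=1$ whenever $|\lambda_j|_\K\geq1$; applying the same to $L(\alpha)^{-1}$ (eigenvalues $\lambda_j^{-1}$) rules out $|\lambda_j|_\K<1$. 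Hence all $|\lambda_j|_\K=1$, which is~(b).

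\textbf{The concluding statement, and the main obstacle.} By definition, $G$ is of type~$R$ iff condition~(b) holds for $\alpha=I_g$ (so $L(\alpha)=\Ad_g$) for every $g\in G$; by the equivalence just proved, this holds iff for every $g$ the contraction group $\conp(I_g)$ is closed and $s(I_g)=s(I_g^{-1})=1$. Since $s(I_g)=s_G(g)$, the requirement "$s(I_g)=1$ for all $g$" is exactly uniscalarity of~$G$ (and then $s(I_{g^{-1}})=1$ is automatic), so type~$R$ is equivalent to "$G$ uniscalar and $\conp(I_g)$ closed for every $g$"; and in that case $\conp(I_g)=\{e\}$ by the implication (b)$\Rightarrow$(c)$\Rightarrow(\conp=\{e\})$ above. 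The only genuinely substantive step is (b)$\,\Rightarrow\,$(c), where the non-linearity of~$\alpha$ must be handled: the adapted norm makes $L(\alpha)$ an isometry precisely because $\rho=0$ cannot occur, and the ultrametric inverse function theorem then transfers this isometry property from $L(\alpha)$ to $\alpha$ on small balls, yielding the invariant subgroups. Everything after that — (c)$\Rightarrow$(a) via Lemma~\ref{nilp}, (a)$\Rightarrow$(b) via Theorems~\ref{main}, \ref{bigcell} and \ref{thm-lincase} — is bookkeeping with results already available.
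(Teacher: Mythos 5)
Your proof is correct, and the overall architecture (the cycle (b)$\Rightarrow$(c)$\Rightarrow$(a)$\Rightarrow$(b), then specializing to all inner automorphisms) matches the paper's. Two steps are handled by genuinely different means. For (b)$\Rightarrow$(c) the paper invokes the invariant-manifold machinery: since $\cg=\cg_1$, the centre manifold $W^c$ is open in $G$ and the balls $W^c(t)$ are $\alpha$-stable by \ref{c-stable}, so after shrinking $R$ they are the desired compact open subgroups. You instead observe that the adapted norm makes $L(\alpha)$ a linear isometry and apply the Ultrametric Inverse Function Theorem (Lemma~\ref{invfct}) directly to the chart representative $\beta$ of $\alpha$ from Section~\ref{constsmall}, obtaining $\alpha(B^\phi_t)=B^\phi_t$; this is more elementary and bypasses Proposition~\ref{inv-mfd-thm} entirely for this implication. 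For the closedness of $\conp(\alpha)$ in (c)$\Rightarrow$(a), the paper cites Lemma~\ref{tidyclo}\,(a) (small tidy subgroups imply closed contraction group), whereas you show directly that $\conp(\alpha)$ is contained in every member of an $\alpha$-stable neighbourhood basis and hence equals $\{e\}$ — which has the advantage of also delivering the parenthetical claim $\conp(\alpha)=\{e\}$ explicitly. Your (a)$\Rightarrow$(b) fills in the detail the paper leaves implicit, correctly noting that Theorem~\ref{bigcell} is needed to know $\conp(\alpha^{-1})=\;\conm(\alpha)$ is closed before Theorem~\ref{main} can be applied to $\alpha^{-1}$. All steps check out.
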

\begin{proof}
The implication ``(a)$\impl$(b)'' follows from
Theorem~\ref{main} and Theorem~\ref{thm-lincase}\,(c).

If (b) holds, then $\cg=\cg_1$
coincides with the centre subspace $\cg_1=\lev(L(\alpha))$ with respect to~$L(\alpha)$,
whence $\phi_c\colon W_c\to B^{\cg_1}_R(0)\sub \cg_1=\cg$ (as in \ref{c-stable})
is a diffeomorphism with $\phi_c(e)=0$
and $d\phi_c|_\cg=\id_\cg$. After shrinking~$R$ if necessary, we may assume that the sets
$W^c(t)=\phi_c^{-1}(B^\cg_t(0))$,
which are $\alpha$-stable by~\ref{c-stable},
are compact open subgroups of~$G$
for all $t\in\;]0,R]$ (see Lemma~\ref{laballs}).
Thus~(b) implies~(c).

If~(c) holds, then every identity neighbourhood
of~$G$ contains a compact open subgroup~$V$ which is $\alpha$-stable
and hence tidy for~$\alpha$ with
\[
s(\alpha)=[\alpha(V):\alpha(V)\cap V]=[V:V]=1
\]
and, likewise, $s(\alpha^{-1})=1$.
Moreover, $\conp(\alpha)$ is closed (by Lemma~\ref{tidyclo}\,(a)), and thus~(a) follows.
\end{proof}
As shown in~\cite{Par}
and \cite{UNI}, every
compactly generated,
uniscalar $p$-adic Lie group
is pro-discrete. For Lie groups over totally disconnected local fields,
we have the following analogue
(announced in \cite[Proposition~11.3]{Lec}):
\begin{thm}\label{typeR}
Every compactly generated
Lie group of type~$R$ over
a totally disconnected local field is pro-discrete.
\end{thm}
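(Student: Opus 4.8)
By Proposition~\ref{when-typeR}, saying that $G$ is of type~$R$ means exactly that every $\Ad_g$ ($g\in G$) has all of its eigenvalues in $\overline{\K}$ of absolute value~$1$; equivalently, by Lemma~\ref{la-periodic}, every element of the image $\Ad(G)\sub\GL(\cg)$ of the adjoint representation (which is analytic, and in particular continuous, by~\ref{Ad-ana}) generates a relatively compact subgroup of $\GL(\cg)$. The plan is: first show that $\overline{\Ad(G)}$ is compact; then use this to equip $\cg$ with an $\Ad(G)$-invariant ultrametric norm; and finally apply the Ultrametric Inverse Function Theorem with Parameters (Lemma~\ref{invpar}), taking $G$ itself as the parameter space, to produce in every identity neighbourhood of~$G$ a compact open subgroup which is normalised by a compact generating set of~$G$, hence normal in~$G$.

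\emph{Step 1 (the main obstacle).} Since $G$ is compactly generated, write $G=\langle K\rangle$ with $K$ compact, symmetric and containing~$e$. Then $\Ad(K)$ is compact and $\Ad(G)=\langle\Ad(K)\rangle$, so $\Ad(G)$ is a compactly generated subgroup of $\GL(\cg)$ \emph{all of whose elements generate relatively compact subgroups}. The claim is that such a group is relatively compact. This is where the real work lies; compact generation is essential, since without it the assertion fails (e.g.\ the group of unipotent upper triangular $2\times 2$ matrices over~$\K$ has all elements topologically periodic but is neither relatively compact nor compactly generated). One route: $\GL(\cg)$ acts properly on its Bruhat--Tits building~$X$, a complete finite-dimensional CAT(0) space, and the elements of $\Ad(G)$ are precisely the elliptic isometries (those with a fixed point); a compactly generated group of isometries of such a space, every element of which is elliptic, has a bounded orbit, and properness of the action then forces $\overline{\Ad(G)}$ to be compact. (In positive characteristic, the identification of topologically periodic elements with elliptic ones is exactly the content of Lemma~\ref{la-periodic}, which is why that lemma was proved in full generality.)

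\emph{Steps 2--3 (invariant norm; small normal subgroups).} The compact group $C:=\overline{\Ad(G)}$ stabilises a lattice $\Lambda\sub\cg$: starting from any $\bO$-lattice $\Lambda_0$, the family $\{c\Lambda_0:c\in C\}$ is bounded (as $C$ is compact), so the $\bO$-module $\Lambda$ it generates is again a lattice, and $C\Lambda=\Lambda$ because $C$ is a group. Choosing an $\bO$-basis $(e_i)$ of $\Lambda$ and setting $\|\sum a_ie_i\|:=\max_i|a_i|$ yields an ultrametric norm on $\cg$ with unit ball $\Lambda$, hence with $\Ad_g\in\Iso(\cg,\|\cdot\|)$ for every $g\in G$. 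Now fix this norm, and fix a chart $\phi\colon U\to V\sub\cg$ around~$e$ with $\phi(e)=0$ and $d\phi|_\cg=\id_\cg$; after the shrinking of Section~\ref{constsmall} we may assume the $B^\phi_t:=\phi^{-1}(B_t)$ are compact open subgroups of~$G$ for $t\in\;]0,R]$. For $g\in G$ write $\beta_g:=\phi\circ I_g\circ\phi^{-1}$ near~$0$, so $\beta_g(0)=0$ and $\beta_g'(0)=\Ad_g\in\Iso(\cg,\|\cdot\|)$. Working in a chart of~$G$ around a point $g_0$ and applying Lemma~\ref{invpar} to the analytic map $(g,x)\mapsto\beta_g(x)$, we get a neighbourhood $Q_{g_0}$ of $g_0$ in~$G$ and some $r_{g_0}\in\;]0,R]$ such that $\beta_g(B_t)=\beta_g(0)+B_t=B_t$ for all $g\in Q_{g_0}$ and $t\in\;]0,r_{g_0}]$, i.e.\ $gB^\phi_tg^{-1}=B^\phi_t$. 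Covering the compact set~$K$ by finitely many such $Q_{g_0}$ and putting $r:=\min r_{g_0}>0$, we find that every $g\in K$ normalises $B^\phi_t$ for all $t\in\;]0,r]$.

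\emph{Step 4 (conclusion).} For each $t\in\;]0,r]$ the normaliser of $B^\phi_t$ in~$G$ is a subgroup containing~$K$, hence equals~$G$; so $B^\phi_t$ is a compact open \emph{normal} subgroup of~$G$. Since $\{B^\phi_t:t\in\;]0,r]\}$ is a basis of identity neighbourhoods, $G$ is pro-discrete. Thus the entire argument hinges on Step~1 — the compactness of $\overline{\Ad(G)}$ — while Steps~2--4 are routine once an $\Ad(G)$-invariant ultrametric norm is available.
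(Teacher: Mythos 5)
Your proposal is correct and follows essentially the same route as the paper: reduce to relative compactness of $\Ad(G)$ in $\GL(\cg)$ (the paper handles your Step~1 by combining Lemma~\ref{la-periodic} with Parreau's theorem \cite[Th\'{e}or\`{e}me~1]{Par}, which is exactly the bounded-orbit/elliptic-isometry statement you sketch), then construct an $\Ad(G)$-stable lattice and the associated invariant ultrametric norm, and finally apply Lemma~\ref{invpar} with $G$ as parameter space to make the balls $B^\phi_t$ normalised by the compact generating set. The only cosmetic difference is that the paper obtains the stable lattice from \cite[Chapter~IV, Appendix~1, Theorem~1]{Ser} and uses its Minkowski functional, whereas you build it by saturating an arbitrary lattice under the compact closure of $\Ad(G)$.
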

\begin{proof}
Let $G$ be a Lie group
over a totally disconnected local field~$\K$, with Lie algebra $\cg:=L(G)$,
such that~$G$ is generated by a compact subset~$K$.
Then $\Ad(G)\sub\GL(\cg)$ is
generated by the compact set~$\Ad(K)$, since the
adjoint representation $\Ad\colon G\to\GL(\cg)$ is
a continuous homomorphism (as recalled in~\ref{Ad-ana}).
Moreover, the subgroup generated by $\Ad_x$
is relatively compact in $\GL(\cg)$ for each $x\in G$,
by Lemma~\ref{la-periodic}.
Thus $\Ad(G)$ is relatively compact in $\GL(\cg)$,
by \cite[Th\'{e}or\`{e}me 1]{Par}.
Let~$\bO$ be the valuation ring of~$\K$.
As a consequence of Theorem~1 in
Appendix~1 of \cite[Chapter IV]{Ser},
there is a compact open $\bO$-submodule $M\sub \cg$
with $\Ad_x(M)=M$ for all $x\in G$.
Let
\[
\|.\|\colon \cg\to[0,\infty[,\;\;
x\mto \inf\{|z|\colon \mbox{$z\in \K$ such that $x\in zM$}\}
\]
be the Minkowski functional of~$M$.
Then~$\|.\|$ is a norm on~$\cg$ such that $\Ad(G)\sub\Iso(\cg,\|.\|)$.
Using this norm, we abbreviate $B^\cg_t:=B^\cg_t(0)$ for all $t>0$.
Let $\phi\colon U\to V$ be an analytic diffeomorphism from
a compact open subgroup~$U$ of~$G$
onto an open $0$-neighbourhood $V\sub\cg$ such that
$\phi(e)=0$ and $d\phi|_\cg=\id_\cg$. Let~$R>0$ and the
compact open subgroups $B^\phi_t=\phi^{-1}(B^\cg_t)$ for $t\in\;]0,R]$
be as in~\ref{pre-laballs}.
Since $K$ is compact and the mapping $G\times G\to G$, $(x,y)\mto xyx^{-1}$
is continuous with $xex^{-1}=e$, we find an open identity neighbourhood
$W\sub U$ such that $xWx^{-1}\sub U$ for all $x\in K$.
Now consider the analytic mapping
\[
f\colon G\times \phi(W)\to V,\quad f(x,y):=\phi(x\phi^{-1}(y)x^{-1})
\]
and define $f_x:=f(x,.)\colon \phi(W)\to V$ for $x\in G$.
Then $f_x(0)=0$ and $(f_x)'(0)=\Ad_x$ is an isometry for all $x\in G$.
As a consequence of Lemma~\ref{invpar}, for each $x\in K$ there exists
an open neighbourhood $P_x$ of~$x$ in~$G$ and $r_x>0$ with $B^\cg_{r_x}\sub \phi(W)$
such that
\[
f_z(B^\cg_t)=B^\cg_t\quad\mbox{for all $z\in P_x$ and $t\in\;]0,r_x]$.}
\]
We may assume that $r_x\leq R$ for all $x\in K$.
There is finite subset $\Phi\sub K$ such that $K\sub\bigcup_{x\in\Phi}P_x$.
Set
\[
r:=\min\{r_x\colon x\in\Phi\}.
\]
Then $f_z(B^\cg_t)=B^\cg_t$ for all $z\in K$ and $t\in\;]0,r]$,
entailing that
\[
z B^\phi_t z^{-1}=
\phi^{-1}(\phi(z\phi^{-1}(B^\cg_t)z^{-1}))=\phi^{-1}(f_z(B^\cg_t))=\phi^{-1}(B^\cg_t)=B^\phi_t.
\]
Since~$K$ generates~$G$, we deduce that the compact open subgroup~$B^\phi_t$
is normal in~$G$, for each $t\in\;]0,r]$.
\end{proof}
Related problems were also studied in~\cite{Re3}.
{\bf Helge  Gl\"{o}ckner}, Universit\"at Paderborn, Institut f\"{u}r Mathematik,\\
Warburger Str.\ 100, 33098 Paderborn, Germany;\\[1mm]
e-mail: {\tt  glockner\at{}math.upb.de}\vfill
\end{document}